\newcommand{\margincolor}{red}      
\definecolor{darkgreen}{rgb}{0,0.7,0}
\newcounter{margincounter}
\newcommand{\marginnum}{
\ifnum\value{margincounter}<10
\textcolor{\margincolor}{\begin{picture}(0,0)\put(2.2,2.4){\circle{9}}\end{picture}\footnotesize\arabic{margincounter}}
\else\ifnum\value{margincounter}<100
\textcolor{\margincolor}{\begin{picture}(0,0)\put(4.256,2.5){\circle{11}}\end{picture}\footnotesize\arabic{margincounter}}
\else
\textcolor{\margincolor}{\begin{picture}(0,0)\put(6.8,2.5){\circle{14}}\end{picture}\footnotesize\arabic{margincounter}}
\fi\fi
}
\newcommand{\into}{\hookrightarrow}
\newcommand{\abs}[1]{\left\lvert#1\right\rvert}
\newcommand{\Z}{\ensuremath{\mathbb{Z}}}
\newcommand{\R}{\ensuremath{\mathbb{R}}}
\newcommand{\C}{\ensuremath{\mathbb{C}}}
\newcommand{\Q}{\ensuremath{\mathbb{Q}}}
\renewcommand{\P}{\ensuremath{\mathbb{P}}}
\renewcommand{\H}{\ensuremath{\mathcal{H}}}
\renewcommand{\O}{\ensuremath{\mathcal{O}}}
\newcommand{\floor}[1]{\left\lfloor#1\right\rfloor}
\newcommand{\ceil}[1]{\left\lceil#1\right\rceil}
\renewcommand{\bar}[1]{\overline{#1}}
\renewcommand{\Im}{\ensuremath{\operatorname{Im}}}
\DeclareMathOperator{\age}{age}
\DeclareMathOperator{\id}{id}
\DeclareMathOperator{\Crit}{Crit}
\DeclareMathOperator{\Res}{Res}
\DeclareMathOperator{\Id}{Id}
\DeclareMathOperator{\Spec}{Spec}
\renewcommand{\Im}{\mathop{\mathrm{Im}}}
\DeclareMathOperator{\GL}{GL}
\DeclareMathOperator{\pr}{pr}
\DeclareMathOperator{\ev}{ev}
\newcommand{\cM}{\mathcal{M}}
\theoremstyle{plain}
\newtheorem{theorem}{Theorem}
\numberwithin{theorem}{section}
\newtheorem{thm}[theorem]{Theorem}
\newtheorem{prop}[theorem]{Proposition}
\newtheorem{cor}[theorem]{Corollary}
\newtheorem{lem}[theorem]{Lemma}
\newtheorem{observation}[theorem]{Observation}
\theoremstyle{definition}
\newtheorem{Definition/Theorem}[theorem]{Definition/Theorem}
\newtheorem{Definition/Proposition}[theorem]{Definition/Proposition}
\newtheorem{Def}[theorem]{Definition}
\newtheorem{Corollary/Definition}[theorem]{Corollary/Definition}
\theoremstyle{remark}
\newtheorem{rem}[theorem]{Remark}
\newtheorem{terminology}[theorem]{Terminology}
\newtheorem{notation}[theorem]{Notation}
\newcommand{\Mbar}{\bar{\cM}}
\renewcommand{\setminus}{\smallsetminus}
\DeclareMathOperator{\LGQ}{LGQ}
\DeclareMathOperator{\LGQG}{LGQG}
\DeclareMathOperator{\UQ}{UQ}
\DeclareMathOperator{\UQG}{UQG}
\DeclareMathOperator{\rig}{rig}
\DeclareMathOperator{\uns}{uns}
\DeclareMathOperator{\tw}{tw}
\DeclareMathOperator{\mult}{mult}
\DeclareMathOperator{\loc}{loc}
\DeclareMathOperator{\vir}{vir}
\DeclareMathOperator{\In}{In}
\DeclareMathOperator{\Rec}{Rec}
\DeclareMathOperator{\Giv}{Giv}
\DeclareMathOperator{\old}{old}
\DeclareMathOperator{\nar}{nar}
\DeclareMathOperator{\mov}{mov}
\renewcommand{\hat}{\widehat}
\newcommand{\sslash}{\mathbin{/\mkern-6mu/}}
\author{Rob Silversmith}
\address{Department of Mathematics, University of Michigan, 
Ann Arbor, MI 48109}
\email{rsilvers@umich.edu}
\date{\today}
\title{Gromov-Witten theory of toroidal orbifolds and GIT wall-crossing}
\begin{document}
\begin{abstract}
  Toroidal 3-orbifolds $(S^1)^6/G$, for $G$ a finite group, were some
  of the earliest examples of Calabi-Yau 3-orbifolds to be studied in
  string theory. While much mathematical progress towards the
  predictions of string theory has been made in the meantime, most of
  it has dealt with hypersurfaces in toric varieties. As a result,
  very little is known about curve-counting theories on toroidal
  orbifolds. In this paper, we initiate a program to study mirror
  symmetry and the Landau-Ginzburg/Calabi-Yau (LG/CY) correspondence
  for toroidal orbifolds. We focus on the simplest example
  $[E^3/\mu_3],$ where $E\subseteq\P^2$ is the elliptic curve
  $\mathbb{V}(x_0^3+x_1^3+x_2^3).$ We study this orbifold from the
  point of GIT wall-crossing using the gauged linear sigma model, a
  collection of moduli spaces generalizing spaces of stable maps. Our
  main result is a mirror symmetry theorem that applies simultaneously
  to the different GIT chambers. Using this, we analyze wall-crossing
  behavior to obtain an LG/CY correspondence relating the genus-zero
  Gromov-Witten invariants of $[E^3/\mu_3]$ to generalized
  Fan-Jarvis-Ruan-Witten invariants.
\end{abstract}
\maketitle
\section{Introduction}
\subsection{The gauged linear sigma model}\label{intro:GaugedLinearSigmaModel}
Landau-Ginzburg/Calabi-Yau (LG/CY) correspondences are conjectural
relations between invariants of certain moduli spaces. On one hand,
Gromov-Witten theory provides a collection of ``virtual curve counts''
on a Calabi-Yau orbifold $Z$. These are integrals over the moduli
stacks $\Mbar_{g,n}(Z,\beta)$ of twisted stable maps
(\cite{AbramovichGraberVistoli2008}). On the other hand,
Fan-Jarvis-Ruan (\cite{FanJarvisRuan2013}, based on ideas of Witten)
constructed moduli stacks $\mathcal{W}_{g,n}^Z$ parametrizing roots of
line bundles on orbifold curves. These are ``combinatorial'' in
nature, whereas the spaces $\Mbar_{g,n}(Z,\beta)$ are
``geometric''. One may generate \emph{Fan-Jarvis-Ruan-Witten (FJRW)
  invariants} by integrating cohomology classes over
$\mathcal{W}_{g,n}^Z$. Using motivation from string theory, Witten
\cite{Witten1993} predicted that either of these sets of invariants
--- Gromov-Witten or FJRW --- could be computed from the other. A
far-reaching conjecture was precisely formulated by Ruan
(\cite{Ruan2011}), and was recently proven by Chiodo-Iritani-Ruan
(\cite{ChiodoIritaniRuan2013}) in the case where $Z$ is a Calabi-Yau
hypersurface in weighted projective space . The form of the conjecture
is described in more detail below.

Toroidal 3-orbifolds $[(S^1)^6/G]$ are some of the earliest examples
studied in string theory (\cite{DixonHarveyVafaWitten1986}). They form
a rich class of very explicit Calabi-Yau orbifolds (see the
classification
\cite{FischerRatzTorradoVaudrevange2013}). Nevertheless, in many ways
we know very little about them; for example, the program of
\emph{mirror symmetry} for Calabi-Yau orbifolds has been worked out
only for complete intersections in toric stacks, whereas most toroidal
orbifolds are not of this type. Similarly LG/CY correspondences have
not been studied in this context.

Our goal in this paper is to initiate a program towards filling both
of these gaps. Our strategy is based on a common generalization of the
moduli stacks $\Mbar_{g,n}(Z,\beta)$ and $\mathcal{W}_{g,n}^Z$ above,
collectively called the \emph{gauged linear sigma model}, or GLSM. It
was proposed by Witten and formulated mathematically by
Fan-Jarvis-Ruan (\cite{FanJarvisRuan2015}). The main feature of these
new stacks is that they take as input a GIT presentation
$[V\sslash_\theta G].$ Dolgachev-Hu (\cite{DolgachevHu1998}) and
Thaddeus (\cite{Thaddeus1996}) studied how such GIT quotients change
if $\theta$ crosses a wall of a certain finite chamber decomposition,
and similarly the GLSM stacks depend on a chamber of this
decomposition. There is a \emph{geometric chamber} of this
decomposition whose GLSM moduli stack is $\Mbar_{g,n}(Z,\beta),$ and a
so-called \emph{pure Landau-Ginzburg (LG) chamber} whose GLSM moduli
stack is $\mathcal{W}_{g,n}^Z.$ The LG/CY correspondence is thus
recast as a ``GIT wall-crossing'' phenomenon.

We fully work out the genus-zero LG/CY correspondence in the simplest
example of a toroidal orbifold, $[E^3/\mu_3]$ for $E$ an elliptic
curve, and will apply our technique more generally in a subsequent
article. $[E^3/\mu_3]$ is a complete intersection in a GIT quotient
$[\C^{13}\sslash_\theta(\C^*)^4].$ This quotient has a chamber
decomposition with 16 chambers, namely the 16 hyperoctants
$(\pm,\pm,\pm,\pm)$ in $\R^4$. We find the geometric chamber to be the
hyperoctant $(+,+,+,+)$, and the pure LG-chamber to be $(-,-,-,+).$ We
choose a sequence of wall-crossings
\begin{align*}
  (+,+,+,+)\to(+,+,-,+)\to(+,-,-,+)\to(-,-,-,+)
\end{align*}
connecting these two chambers, and in each of the four chambers we
describe the corresponding GLSM moduli stacks as parametrizing
sections of certain line bundles on orbifold curves. To each of these
chambers is then associated a collection of numerical invariants.

GLSM moduli stacks depend upon an additional parameter
$\epsilon\in\Q_{>0}$. The special cases $\Mbar_{g,n}(Z,\beta)$ and
$\mathcal{W}_{g,n}^Z$ above correspond to $\epsilon\to\infty.$ We use
techniques based on those of Ciocan-Fontanine and Kim
(\cite{CiocanFontanineKim2014}) to prove an all-chamber \emph{mirror
  theorem}\footnote{In fact, as $\epsilon$ moves within a chamber
  decomposition of $\Q_{>0}$, this theorem is also a type of
  \emph{wall-crossing}. To avoid confusion, we use the term only to
  refer to walls of GIT chambers.}:

\medskip

\noindent\textbf{Theorem \ref{thm:MirrorTheorem}.}
Let $\{J^{\epsilon,\theta}\}$ be the generating functions of GLSM
invariants defined in Section \ref{sec:JFunction}. Then there is an
explicit invertible transformation identifying $J^{\epsilon,\theta}$
with $J^{\infty,\theta}.$

\medskip

Theorem \ref{thm:MirrorTheorem} is the core of the paper, as well as
its most notable aspect. Finding an appropriate statement of mirror
symmetry for each GIT chamber is the most difficult part of the LG/CY
correspondence. Previous examples of LG/CY correspondences have all
relied on proving mirror theorems in each chamber separately, whereas
our proof is \emph{uniform in $\theta$}, i.e. applies simultaneously
in all chambers.

Setting $\epsilon\to0$ and $\theta=(+,+,+,+)$ recovers an instance of
the \emph{mirror theorem for toric stacks}
(\cite{CoatesCortiIritaniTseng2015,CheongCiocanFontanineKim2015}), and
setting $\epsilon\to0$ and $\theta=(-,-,-,+)$ gives a
\emph{Landau-Ginzburg mirror theorem} similar to the one in
\cite{ChiodoRuan2010}. The purpose of Theorem \ref{thm:MirrorTheorem}
is that one may compute (a restriction of) $J^{0+,\theta}$ for each
$\theta$:

\medskip

\noindent\textbf{Corollary} (Section \ref{sec:CalculatingI})\textbf{.} There are explicit hypergeometric
functions $I^\theta$ that encode the GLSM invariants of
$[\C^{13}\sslash_\theta(\C^*)^4].$ For example (using notation defined
throughout the paper), the series
\begin{align*}
  I^{(+,+,-,+)}(q,\hbar)=\sum_{\substack{\beta_x\in\Z_{\ge0}\\\beta_y\in\Z_{\ge0}\\\beta_z\in(-1/3)\Z_{>0}\setminus\Z\\\beta_a\in(1/3)\Z\ge0}}q_x^{\beta_x}q_y^{\beta_y}q_z^{\beta_z+1/3}q_a^{\beta_a}\frac{\prod_{\substack{\rho\in\mathbf{R}\\\beta_\rho<-1}}\prod_{\floor{\beta_\rho}+1\le\nu\le-1}(D_\rho+(\beta_\rho-\nu)\hbar)}{\prod_{\substack{\rho\in\mathbf{R}\\\beta_\rho\ge0}}\prod_{0\le\nu\le\ceil{\beta_\rho}-1}(D_\rho+(\beta_\rho-\nu)\hbar)}A_xA_y1_{\langle-\beta\rangle}
\end{align*}
encodes the GLSM invariants of the chamber $(+,+,-,+)$.

\medskip

Finally, we relate the functions $I^\theta$ across the four GIT
chambers above. We think of $I^\theta$ as a holomorphic map from a certain
space $\mathcal{N}$ to a subquotient $\H(\theta)$ of the Chen-Ruan
cohomology $H^*_{CR}([\C^{13}\sslash_\theta(\C^*)^4])$. There are two
problems with comparing the functions $I^\theta.$ First, they do not
have the same codomain. Second, they are not defined on all of
$\mathcal{N};$ in fact, each is defined on a different small open
set. Thus we relate them in two steps:
\begin{enumerate}
\item We find a natural sequence of graded isomorphisms (Theorem
  \ref{thm:StateSpaceIsom})
\begin{align*}
  \H(+,+,+,+)\cong\H(+,+,-,+)\cong\H(+,-,-,+)\cong\H(-,-,-,+).
\end{align*}
\item We find a sequence of analytic continuations of $I^\theta$ on
  $\mathcal{N}$ (Section \ref{sec:RelatingIFunctions}).
\end{enumerate}
The method of analytic continuation is based on
\cite{ChiodoRuan2010}. Together, these prove:

\medskip

\noindent\textbf{Theorem \ref{thm:LGCY}.} After analytic continuation
and identification of GLSM state spaces, the functions $I^\theta$
differ by (explicit) linear transformations.

\medskip

The first genus-zero LG/CY correspondence was proved for the quintic
threefold by Chiodo and Ruan (\cite{ChiodoRuan2010}). It has since
been proven for several other classes of targets, including Calabi-Yau
hypersurfaces in weighted projective spaces
(\cite{ChiodoIritaniRuan2013}), many classes of Calabi-Yau complete
intersections in weighted projective spaces
(\cite{Clader2013,CladerRoss2015}), and some other examples
(\cite{PriddisShoemaker2014,Schaug2015}). (Acosta (\cite{Acosta2014})
also developed a similar correspondence for non-Calabi-Yau
hypersurfaces in weighted projective spaces.) All of these used
techniques quite different from those presented here;
\cite{ChiodoIritaniRuan2013}, \cite{Clader2013},
\cite{PriddisShoemaker2014}, and \cite{Schaug2015} used a direct
computational method, and \cite{CladerRoss2015} (following previous
work for hypersurfaces in \cite{LeePriddisShoemaker2014}) used a
reduction to the crepant transformation conjecture, previously
established in the relevant cases in
\cite{CoatesIritaniJiang2014}.

While toroidal 3-orbifolds are our primary motivation, we expect
Theorem \ref{thm:MirrorTheorem} to apply much more broadly, to large
classes of complete intersections in GIT quotients carrying certain
torus actions. Because of this, we view it as a general conceptual
approach to LG/CY correspondences. In the future we hope to explore
the generality in which this technique applies.

\medskip

\noindent\textbf{Plan of the paper.} Section \ref{sec:Background}
contains background facts about GIT quotients and orbifold curves. In
Sections \ref{sec:Setup} and \ref{sec:ModuliSpaces}, we introduce the
``target'' stacks $Z(\theta)\subseteq[\C^{13}\sslash_\theta(\C^*)^4]$
and their associated moduli stacks
$\LGQ_{0,m}^\epsilon(Z(\theta),\beta)$. In Section
\ref{sec:EvaluationMapsInvariants} we define the GLSM state spaces
$\H(\theta)$, and the GLSM invariants, which are integrals over the
moduli stacks $\LGQ_{0,m}^\epsilon(Z(\theta),\beta)$. In Section
\ref{sec:Localization} we define natural group actions on our moduli
stacks, to be used for fixed-point localization. Section
\ref{sec:GeneratingFunctions} contains the definitions of various
generating functions, which we use to prove our all-chamber mirror
theorem in Section \ref{sec:MirrorTheorems}. Finally, in Sections
\ref{sec:CalculatingI} and \ref{sec:RelatingIFunctions} we compute the
series $I^\theta(q,\hbar)$ for general $\theta$, and relate the
resulting formulas to each other. Section \ref{sec:NotationTable} is a
table of notation.

\medskip

\noindent \textbf{Acknowledgements.} I would like to thank Yongbin
Ruan for introducing me to the field, suggesting the problem, and
providing guidance along the way. I am grateful to Rohini Ramadas and
Dustin Ross for helpful conversations.

\section{Background}\label{sec:Background}
We work over $\C$. We denote by $\mu_d$ the group of $d$th roots of
unity in $\C.$
\subsection{Geometric invariant theory and stack quotients}\label{sec:GIT}
\begin{Def}
  Let $V$ be a smooth affine variety, let $G$ be a reductive algebraic
  group acting on $V$, and let $\theta:G\to\C^*$ be a character of
  $G$. This defines a $G$-action on the trivial bundle $V\times\C$ by
  $g\cdot(v,z)=(g\cdot v,\theta(g)z)$. The $G$-invariant sections of
  $V\times\C$ are called \emph{$\theta$-equivariant functions}. The
  \emph{$\theta$-unstable locus} $V^{uns}(\theta)$ in $V$ is the
  subvariety defined by the vanishing of all $N\theta$-equivariant
  functions, for $N\ge1$. The \emph{$\theta$-semistable locus}
  $V^{ss}(\theta)$ is the complement of $V^{uns}(\theta).$ The
  \emph{$\theta$-stable locus} $V^s(\theta)$ is the set of semistable
  points with finite $G$-stabilizer whose $G$-orbit is closed in
  $V^{ss}(\theta)$.
\end{Def}
In this paper we will always have $V^s(\theta)=V^{ss}(\theta).$ In
this case the \emph{GIT stack quotient} $[V\sslash_\theta
G]:=[V^{ss}(\theta)/G]$ is a smooth separated Deligne-Mumford stack.
\begin{prop}\label{prop:MapPrincipalBundle}
  Let $G$ be a group acting (on the left) on a variety $V$, and let
  $S$ be any scheme. There is a natural correspondence between
  \begin{enumerate}
  \item Maps $f:S\to[V/G]$,\label{item:Map}
  \item Principal $G$-bundles $\mathcal{P}$ on $S$ together with a
    $G$-equivariant map $\phi:\mathcal{P}\to V$,
    and\label{item:EquivMap}
  \item Principal $G$-bundles $\mathcal{P}$ on $S$ together with a
    section $\sigma_\phi$ of the associated fiber
    bundle $\mathcal{P}\times_GV\to S.$\label{item:Section}
  \end{enumerate}
\end{prop}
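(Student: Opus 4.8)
The plan is to establish the two correspondences $\ref{item:Map}\leftrightarrow\ref{item:EquivMap}$ and $\ref{item:EquivMap}\leftrightarrow\ref{item:Section}$ separately: the former is essentially the definition of the stack quotient, and the latter is the associated-bundle construction together with faithfully flat descent.

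First I would dispatch $\ref{item:EquivMap}\leftrightarrow\ref{item:Section}$, fixing a principal $G$-bundle $\pi:\mathcal{P}\to S$. Given a $G$-equivariant map $\phi:\mathcal{P}\to V$, the graph $(\mathrm{id}_{\mathcal{P}},\phi):\mathcal{P}\to\mathcal{P}\times V$ is equivariant for the diagonal $G$-action; since $\mathcal{P}\to S$ and $\mathcal{P}\times V\to\mathcal{P}\times_G V$ are fppf $G$-torsors, it descends to a section $\sigma_\phi$ of $\mathcal{P}\times_G V\to S$. Conversely, given a section $\sigma$, the canonical trivialization $\mathcal{P}\times_S\mathcal{P}\cong\mathcal{P}\times G$ identifies $\mathcal{P}\times_S(\mathcal{P}\times_G V)\cong\mathcal{P}\times V$, and under this identification the pullback $\pi^{*}\sigma$ becomes a map $\mathcal{P}\to\mathcal{P}\times V$ of the form $(\mathrm{id}_{\mathcal{P}},\phi)$; one checks that $\phi$ is $G$-equivariant by descent along $\mathcal{P}\to S$. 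These two constructions are mutually inverse, and they carry isomorphisms to isomorphisms (a morphism of $G$-bundles over $S$ intertwines $\phi$ and $\phi'$ if and only if it intertwines $\sigma_\phi$ and $\sigma_{\phi'}$), so they assemble into an equivalence of groupoids natural in $S$.

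Next I would treat $\ref{item:Map}\leftrightarrow\ref{item:EquivMap}$. By construction the quotient stack $[V/G]$ is the category fibered in groupoids over $\mathrm{Sch}/\C$ whose fiber over $S$ is precisely the groupoid in $\ref{item:EquivMap}$ --- pairs $(\mathcal{P},\phi)$, with morphisms the $G$-equivariant $S$-isomorphisms commuting with the maps to $V$; that this is a stack for the fppf (equivalently \'etale) topology is faithfully flat descent for principal bundles together with descent for morphisms into the scheme $V$. The $2$-categorical Yoneda lemma then identifies the groupoid $\Hom(S,[V/G])$ of maps $f:S\to[V/G]$ with the fiber $[V/G](S)$, i.e.\ with $\ref{item:EquivMap}$; concretely the bundle attached to $f$ is the fibered product of $f$ with the atlas $V\to[V/G]$, equipped with its tautological equivariant map to $V$. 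Composing with the equivalence of the previous paragraph yields all three correspondences.

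I expect the only genuinely substantive point to be the ``inverse'' half of $\ref{item:EquivMap}\leftrightarrow\ref{item:Section}$: producing an honest $G$-equivariant morphism $\mathcal{P}\to V$ out of a mere section of the associated bundle. This is exactly where one must use the torsor identity $\mathcal{P}\times_S\mathcal{P}\cong\mathcal{P}\times G$ and faithfully flat descent to see that the map $\mathcal{P}\to\mathcal{P}\times V$ obtained by pulling back the section has the claimed form and that its $V$-component is equivariant; the rest is bookkeeping about conventions for left versus right $G$-actions on $\mathcal{P}$ and on $\mathcal{P}\times_G V$. (If one instead takes $\ref{item:Section}$, or a more economical description of $[V/G]$, as the primitive definition, the division into ``formal'' and ``substantive'' steps changes, but some appeal to faithfully flat descent is unavoidable.)
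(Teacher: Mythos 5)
Your proposal is correct and takes essentially the same route as the paper: the equivalence $\ref{item:Map}\leftrightarrow\ref{item:EquivMap}$ is by definition of the quotient stack, and $\ref{item:EquivMap}\leftrightarrow\ref{item:Section}$ is the associated-bundle construction. The only difference is cosmetic — for $\ref{item:Section}\Rightarrow\ref{item:EquivMap}$ the paper composes $\sigma$ with the projection $\mathcal{P}\times_G V\to[V/G]$ and routes through $\ref{item:Map}$, whereas you produce $\phi$ directly from $\sigma$ via the torsor trivialization $\mathcal{P}\times_S\mathcal{P}\cong\mathcal{P}\times G$, which is a bit more explicit about the descent that the paper's informal formula $s\mapsto(p,\phi(p))$ leaves implicit.
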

\begin{proof}
  The equivalence of \eqref{item:Map} and \eqref{item:EquivMap} is by
  definition of a map $S\to[V/G]$. We show that \eqref{item:EquivMap}
  and \eqref{item:Section} are equivalent. A section
  $\sigma:S\to\mathcal{P}\times_GV$ gives a map $S\to[V/G]$ by
  composition with the projection $\mathcal{P}\times_GV\to[V/G].$
  Conversely, given a $G$-equivariant map $\mathcal{P}\to V$, we
  define a section $S\to\mathcal{P}\times_GV$ by mapping
  $s\mapsto(p,\phi(p)).$ It is straightforward to check these are
  inverse to each other.
\end{proof}
With $V$, $G$, and $S$ as in Proposition
\ref{prop:MapPrincipalBundle}, let $\rho:G\to\C^*$ be a character of
$G$. This induces a $G$-equivariant structure on the trivial bundle
$V\times\C$ by
$$g\cdot(v,z):=(g\cdot v,\rho(g)z).$$ (We write $V\times\C_\rho$ to
keep track of the $G$-action.) There is an associated line bundle
$L_\rho=[(V\times\C_\rho)/G]$ on $[V/G].$ Abusing notation, we also
write $L_\rho$ for restrictions of $L_\rho$ to substacks
$[V\sslash_\theta G]$.
\begin{prop}
  Let $S$ be a scheme, and let $f:S\to[V/G]$ be a map, with
  corresponding principal $G$-bundle $\mathcal{P}$ and map
  $\phi:\mathcal{P}\to V.$ Then for any character $\rho$ of $G$,
  \begin{align*}
    f^*L_\rho\cong\mathcal{P}\times_G\C_\rho\cong\sigma_\phi^*(\mathcal{P}\times_G(V\times\C_\rho)).
  \end{align*}
\end{prop}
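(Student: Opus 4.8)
The plan is to unwind everything in terms of the defining functor of a stack quotient and use the fact, established in Proposition \ref{prop:MapPrincipalBundle}, that a map $f\colon S\to[V/G]$ is the same data as a principal $G$-bundle $\mathcal{P}\to S$ together with a $G$-equivariant map $\phi\colon\mathcal{P}\to V$. The key point is that all the line bundles in sight are \emph{pulled back from $[V/G]$ along $f$}, so it suffices to compare the associated bundles on the total space and track how the associated-bundle construction interacts with $G$-equivariant maps and with pullback.

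First I would establish the right-hand isomorphism $f^*L_\rho\cong\mathcal{P}\times_G\C_\rho$. By definition $L_\rho=[(V\times\C_\rho)/G]$, and the map $f$ factors as $S\xrightarrow{\phi/G}[V/G]$ in the sense that the principal bundle pulled back from the universal bundle $V^{ss}\to[V/G]$ (or more precisely the tautological $G$-bundle on $[V/G]$) along $f$ is $\mathcal{P}$. Pulling back the line bundle $[(V\times\C_\rho)/G]$ along $f$ is, by the functoriality of the associated-bundle construction, the same as forming the bundle associated to $\mathcal{P}$ via the one-dimensional representation $\rho\colon G\to\C^*$, i.e. $\mathcal{P}\times_G\C_\rho$. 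Concretely, one checks this on $T$-points for a test scheme $T$: a $T$-point of $f^*L_\rho$ is a $T$-point $s$ of $S$ together with a $G$-equivariant trivialization-compatible element of $(V\times\C_\rho)$ over $\phi(s)$, which is exactly a point of $\mathcal{P}_s\times_G\C_\rho$.

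Next I would establish the left-hand isomorphism $\sigma_\phi^*(\mathcal{P}\times_G(V\times\C_\rho))\cong\mathcal{P}\times_G\C_\rho$. Here $\mathcal{P}\times_G(V\times\C_\rho)\to\mathcal{P}\times_GV$ is the fiber bundle on the total space $\mathcal{P}\times_GV$ obtained by the associated-bundle construction applied to the $G$-space $V\times\C_\rho\to V$, and $\sigma_\phi\colon S\to\mathcal{P}\times_GV$ is the section from part \eqref{item:Section} of Proposition \ref{prop:MapPrincipalBundle}, namely $s\mapsto[p,\phi(p)]$. Pulling a bundle on $\mathcal{P}\times_GV$ back along $\sigma_\phi$ amounts to restricting to the image of the section; since $\sigma_\phi(s)=[p,\phi(p)]$, the fiber of $\sigma_\phi^*(\mathcal{P}\times_G(V\times\C_\rho))$ at $s$ is the fiber of $V\times\C_\rho\to V$ at $\phi(p)$, twisted by the $G$-action, which is again $\mathcal{P}_s\times_G\C_\rho$. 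Equivalently, this follows formally by noting that $\mathcal{P}\times_G(V\times\C_\rho)$ is the pullback of $L_\rho$ along the projection $\mathrm{pr}\colon\mathcal{P}\times_GV\to[V/G]$, and $\mathrm{pr}\circ\sigma_\phi=f$, so $\sigma_\phi^*\mathrm{pr}^*L_\rho=f^*L_\rho$, which is the middle term by the previous paragraph.

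I do not expect a serious obstacle here — the statement is essentially a compatibility of the associated-bundle construction with pullback and composition, and the only care needed is bookkeeping: keeping the $G$-action on $\C_\rho$ straight, and making sure the identification of $f$ with $(\mathcal{P},\phi)$ from Proposition \ref{prop:MapPrincipalBundle} is applied consistently. The mildly delicate point, if any, is the second isomorphism, where one must be careful that the section $\sigma_\phi$ is defined using the \emph{given} equivariant map $\phi$ rather than an arbitrary one, so that $\mathrm{pr}\circ\sigma_\phi$ really equals $f$ and not some other map to $[V/G]$; once that is checked, both isomorphisms reduce to the naturality statement already implicit in Proposition \ref{prop:MapPrincipalBundle} and its proof.
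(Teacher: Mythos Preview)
Your argument is correct. The paper actually states this proposition without proof, treating it as an elementary consequence of the definitions and of Proposition~\ref{prop:MapPrincipalBundle}. Your unwinding via the associated-bundle construction, and in particular the clean observation that $\mathcal{P}\times_G(V\times\C_\rho)=\pr^*L_\rho$ together with $\pr\circ\sigma_\phi=f$, is exactly the sort of verification the paper is implicitly leaving to the reader.
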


\subsection{3-stable curves and their line bundles}\label{sec:3StableCurves}
\begin{Def}\label{Def:OrbifoldCurve}
  An \emph{$m$-marked prestable orbifold curve} $(C,b_1,\ldots,b_m)$
  is a balanced twisted nodal $m$-pointed curve in the sense of
  \cite{AbramovichVistoli2002}. That is, \'etale locally at each point
  $P$ it is either:
  \begin{enumerate}
  \item isomorphic to $[\C/\mu_{d_P}]$ for some $d_P$, where
    $\mu_{d_P}$ acts by multiplication, and $P$ is identified with 0,
    or\label{item:OrbifoldPoint}
  \item isomorphic to $[\mathbb{V}(xy)/\mu_{d_P}],$ where $\mathbb{V}(xy)\subseteq\C^2$
    is the union of the coordinate axes, $\mu_{d_P}$ acts by
    multiplication by opposite roots of unity on $x$ and $y$, and $P$
    is identified with $(0,0)$,\label{item:Node}
  \end{enumerate}
  together with $m$ distinct marked points $b_1,\ldots,b_m$ of type
  \eqref{item:OrbifoldPoint}, including all of those with $d_P>1$. We
  refer to $d_P$ as the \emph{order} of $P$ and $\mu_{d_P}$ as the
  \emph{isotropy group} of $P$. We often write $C$ instead of
  $(C,b_1,\ldots,b_m).$
\end{Def}
\begin{rem}\label{rem:ChooseBranchOfNode}
  For points $P$ of type \eqref{item:OrbifoldPoint}, there is a
  canonical identification of the isotropy group of $P$ with
  $\mu_{d_P};$ the canonical generator is that which acts by
  multiplication by $e^{2\pi i/d_P}$ on $T_PC.$ However, this is not
  true for points of type \eqref{item:Node}, since each element acts
  by opposite roots of unity on the two branches. Instead, there is a
  canonical identification \emph{after} choosing a branch of the node.
\end{rem}
Note that $d_P=1$ for all but finitely many points $P$ of $C$. An
$m$-marked prestable orbifold curve admits a \emph{coarse moduli
  space} map to an ordinary $m$-marked prestable curve
$\bar{C}$. Olsson (\cite{Olsson2007}) proved that families of
$m$-marked orbifold curves whose coarse moduli spaces have arithmetic
genus $g$ form an algebraic stack $\mathfrak{M}_g^{\tw}$. We will only
be interested in the case $g=0.$ For the purposes of this paper we
restrict to an open substack of $\mathfrak{M}_0^{\tw}$, as follows.
\begin{Def}[\cite{RossRuan2015}]
  A genus zero $m$-marked orbifold curve is \emph{stable} if each
  irreducible component has at least three marked points or nodes. An
  $m$-marked \emph{3-stable curve} is a stable genus zero $m$-marked
  orbifold curve such that all marked points and nodes are orbifold
  points of order 3.
\end{Def}
Next we review some facts about line bundles on orbifold curves.
\begin{Def}
  Let $C$ be an $m$-marked prestable orbifold curve. A \emph{line
    bundle} on $C$ is a stack $\mathcal{L}$ with a map to $C,$ such
  that $\mathcal{L}$ is \'etale locally isomorphic on $C$ it is
  isomorphic to one of the following, corresponding to the cases in
  Definition \ref{Def:OrbifoldCurve}:
  \begin{enumerate}
  \item $[\C\times\C/\mu_{d_P}],$ where $\mu_{d_P}$ acts by
    multiplication on the first copy of $\C$ and linearly on
    the second copy, or\label{item:LineBundleAtOrbifoldPoint}
  \item $[\mathbb{V}(xy)\times\C/\mu_{d_P}],$ where $\mu_{d_P}$ acts on $\mathbb{V}(xy)$
    as in item \eqref{item:Node} of Definition
    \ref{Def:OrbifoldCurve}, and linearly on $\C$.
  \end{enumerate}
\end{Def}
\begin{Def}\label{Def:Multiplicity}
  In case \eqref{item:LineBundleAtOrbifoldPoint}, $e^{2\pi
    i/d_P}\in\mu_{d_P}$ acts on the second copy of $\C$ by
  multiplication by $e^{2\pi ik/d_P}$ for some $0\le k<d_P.$ We call
  the rational number $\mult_P(\mathcal{L}):=k/d_P$ the
  \emph{multiplicity} or the \emph{monodromy} of $\mathcal{L}$ at
  $P$. If $\mult_P(\mathcal{L})=0$, we say $\mathcal{L}$ has
  \emph{trivial monodromy} at $P$.
\end{Def}
\begin{rem}
  We also refer to the multiplicity of $\mathcal{L}$ at a node of
  $C$. As in Remark \ref{rem:ChooseBranchOfNode}, this is well-defined
  only after choosing a branch of the node. In this case we will refer
  to the multiplicity of $\mathcal{L}$ ``on one side of the node.''
\end{rem}
One can similarly define vector bundles and their duals, sections,
tensor products, and direct sums on orbifold curves. These behave
largely the same as on nonstacky curves, with a few differences. For
example, local sections of $\mathcal{L}$ at an orbifold point $P$ of
$C$ are $\mu_{d_P}$-invariant sections as in
\eqref{item:LineBundleAtOrbifoldPoint} above, so in particular, if
$\mathcal{L}$ has nontrivial monodromy at $P$ then every \emph{local}
section of $\mathcal{L}$ vanishes at $P$. More specifically, if we
define the order of vanishing of a section via pulling back along an
\'etale cover by a scheme, then the order of vanishing of a section at
an orbifold point $P$ is an element of $\mult_P(\mathcal{L})+\Z.$
Also, the monodromy of a tensor product of line bundles is given by
$$\mult_P(\mathcal{L}\otimes\mathcal{L}')=\mult_P(\mathcal{L})+\mult_P(\mathcal{L}')\mod1.$$

Isomorphism classes of line bundles on orbifold curves may be easily
understood via the \emph{divisor-line bundle correspondence for smooth
  orbifold curves}. We state it only for genus zero curves, as these
are all we consider.
\begin{Def}
  A \emph{Weil divisor} on a smooth orbifold curve is a (finite)
  formal sum $D=\sum_{P\in C}a_PP$ of points. The \emph{degree}
  $\deg(D)$ of $D$ is $\sum_{P\in C}\frac{a_P}{d_P},$ where $d_P$ is
  the order of $P$. The degree is clearly additive under addition of
  Weil divisors.
\end{Def}
The notion of rational equivalence of Weil divisors on orbifold curves
is identical to that for schemes. For genus zero orbifold curves, it
reduces to the following.
\begin{Def}
  Two Weil divisors $D=\sum_{P\in C}a_PP$ and $D'=\sum_{P\in C}a_P'P$
  on a genus zero smooth orbifold curve are \emph{rationally
    equivalent} if for each $P\in C$ we have $a_P\equiv a_P'\mod d_P$
  for all $P$, and $\deg(D)=\deg(D').$ We write $[D]$ of the rational
  equivalence class of $D.$
\end{Def}
In particular, the divisors $d_PP$ are rationally equivalent for all
points $P$. We have the following correspondence:
\begin{prop}[See \cite{VoightZureickBrown2015}]\label{prop:DivisorLineBundle}
  The additive group of Weil divisors on a smooth orbifold curve $C$
  up to rational equivalence is naturally isomorphic to the group of
  line bundles on $C$ up to isomorphism, with the operation of tensor
  product.
\end{prop}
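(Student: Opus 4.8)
The plan is to realise the claimed isomorphism through the classical construction $D \mapsto \O_C(D)$ and to verify its four defining properties directly, using that $C$ is smooth, so that étale-locally at each point it is one of the models of Definition~\ref{Def:OrbifoldCurve}: near an orbifold point $P$ of order $d_P$, the curve looks like $[\C/\mu_{d_P}]$ with $P = [0]$. Write $\cK_C$ for the constant sheaf of rational functions on $C$ (this makes sense because $d_P = 1$ for all but finitely many $P$, so the generic point of $C$ is an honest scheme point), and for a Weil divisor $D = \sum_P a_P P$ let $\O_C(D) \subseteq \cK_C$ be the subsheaf whose sections over an open $U$ are the rational functions $f$ with $(\operatorname{div}(f) + D)|_U \ge 0$. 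First I would check $\O_C(D)$ is a line bundle and $D \mapsto \O_C(D)$ is a homomorphism to $(\Pic(C),\otimes)$. Away from orbifold points this is the classical statement for smooth curves. At an orbifold point $P$ one pulls back along the étale chart $\C \to [\C/\mu_{d_P}]$: since the substack $P$ pulls back to the reduced origin with multiplicity one, $\O_C(a_P P)$ pulls back to the ordinary twisting sheaf $\O_\C(a_P\cdot 0) = z^{-a_P}\C[z]$ with its natural $\mu_{d_P}$-equivariant structure, which is a line bundle with monodromy $\mult_P = \langle -a_P/d_P\rangle$. The same local computation shows that multiplication $\O_C(D)\otimes_{\O_C}\O_C(D') \to \O_C(D+D')$, $f\otimes g \mapsto fg$, is an isomorphism; this gives the homomorphism and re-derives the additivity of $\mult_P$ modulo $1$.

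Next I would show the homomorphism factors through rational equivalence. If $D = \sum a_P P$ and $D' = \sum a_P' P$ are rationally equivalent, then by the genus-zero criterion $a_P \equiv a_P' \pmod{d_P}$ for all $P$ and $\deg D = \deg D'$; hence $\sum_P \frac{a_P - a_P'}{d_P}\,\bar P$ is a divisor on $\P^1$ with integer coefficients (by the congruences) and degree $0$ (by the equality of degrees), so it equals $\operatorname{div}_{\P^1}(h)$ for some $h \in K(\P^1)^* = K(C)^*$. Pulling $h$ back to $C$ and using that $\ord_P$ on $C$ equals $d_P$ times $\ord_{\bar P}$ on $\P^1$ at an orbifold point, one gets $\operatorname{div}_C(h) = D - D'$, so multiplication by $h$ is an isomorphism $\O_C(D') \xrightarrow{\ \sim\ }\O_C(D)$. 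We thus obtain a group homomorphism $\overline\Phi$ from Weil divisors modulo rational equivalence to $\Pic(C)$.

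It then remains to check $\overline\Phi$ is bijective. For surjectivity, a line bundle $\L$ is locally free of rank one, so its restriction to the schematic generic point of $C$ is a one-dimensional $K(C)$-vector space; any nonzero vector spreads out to a nonzero rational section $s$ of $\L$, and $\operatorname{div}(s) = \sum_P \ord_P(s)\,P$ is a Weil divisor with integer coefficients (in the local chart, $s = g\cdot e$ with $e$ a local generator and $g \in \C(z)$, and the coefficient of $P$ is $\ord_z(g) \in \Z$), with $\L \isom \O_C(\operatorname{div}(s))$ via $s \leftrightarrow 1$. For injectivity, if $\O_C(D) \isom \O_C(D')$ then $\O_C(D - D') \isom \O_C$; the image of $1$ under such an isomorphism $\O_C \xrightarrow{\ \sim\ }\O_C(D-D') \subseteq \cK_C$ is a rational function $h$ with $h\cdot\O_C = \O_C(D-D')$, which forces $D - D' = -\operatorname{div}_C(h)$ after comparing coefficients; so $D - D'$ is principal, and hence, by the scheme-theoretic description of rational equivalence recalled just before the Proposition, $D \sim D'$.

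I expect the main obstacle to be the surjectivity part — or rather the preliminary bookkeeping it requires: making the notions of ``rational section'' and ``order of vanishing at an orbifold point'' precise enough that $\operatorname{div}(s)$ is an honest integer-coefficient Weil divisor and the classical ``divisor of a rational section'' arguments transfer verbatim to the stacky charts. Once that is set up, the whole proof reduces to the one-variable equivariant computations indicated above. A more economical alternative, at the cost of citing more, is to use that a smooth genus-zero orbifold curve with coarse space $\P^1$ is the iterated root stack of $\P^1$ along its orbifold points, combine the standard presentation of $\Pic$ of a root stack with the evident presentation of the divisor class group coming from the relations $d_P P \sim d_Q Q$, and match the two.
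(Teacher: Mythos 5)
The paper does not prove this proposition: it is stated with a citation to \cite{VoightZureickBrown2015}, followed only by a two-sentence remark that ``the zeroes and poles of a rational section of a line bundle $\L$ define a divisor, and this determines one direction of the correspondence.'' Your argument therefore cannot be compared to a proof in the text; it correctly supplies what the paper delegates to the reference, via the expected construction $D \mapsto \O_C(D)$ inside the constant sheaf of rational functions, inverted by taking divisors of rational sections. Two small remarks. First, the monodromy you assign to $\O_C(a_P P)$, namely $\langle -a_P/d_P\rangle$, is the opposite sign from what the paper records two sentences after the statement (``$\mult_P(\L)$ is identified with $a_P/d_P \bmod 1$''); this is a pure convention about whether the $\mu_{d_P}$-linearization acts on the local generator $z^{-a_P}$ by $\zeta^{a_P}$ or by $\zeta^{-a_P}$, and it has no bearing on the bijectivity of $D\mapsto\O_C(D)$. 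Second, your argument quietly uses \emph{both} characterizations of rational equivalence given just before the proposition --- the ``congruence mod $d_P$ plus equal degree'' form in your well-definedness paragraph, and the ``principal divisor'' form at the end of your injectivity step --- but you only explicitly prove one implication between them. The missing direction is elementary (a rational function on $C$ descends to the coarse space $\P^1$, so its order at an orbifold point on the \'etale chart is a multiple of $d_P$, and a principal divisor has degree zero), and it is worth recording if you want the proof self-contained, since the paper does not prove this equivalence either.
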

As for schemes, the zeroes and poles of a rational section of a line
bundle $\mathcal{L}$ define a divisor, and this determines one
direction of the correspondence. From this we see that the
multiplicity $\mult_P(\mathcal{L})$ is identified with the rational
number $\frac{a_P}{d_P}\mod1$. The correspondence also allows us to
define the degree $\deg(\mathcal{L})$ of a line bundle, additive under
tensor product.

We will often refer to the \emph{log-canonical bundle}
$\omega_{C,\log}$ on a 3-stable curve $C$. This is a line bundle whose
sections are (correctly defined) holomorphic 1-forms on $C$, twisted
by the divisor $\sum_ib_i$ of marked points. As with ordinary nodal
curves, these holomorphic 1-forms may have simple poles at nodes. It
will be important that $\omega_{C,\log}$ has trivial monodromy at each
orbifold point, and has degree $2g-2+m=-2+m.$

The curve $\P_{3,1}:=[(\C^2\setminus\{0\})/\C^*]$, where $\C^*$ acts
with weights 3 and 1 on the coordinates respectively, will be
particularly useful to us. This curve is smooth, and has a single
orbifold marked point of order 3 at $[1:0],$ which we refer to as
$\infty$. (In particular, it is not 3-stable.) The group of Weil
divisor classes (and hence the group of isomorphism classes of line
bundles) is generated by a single element $[\infty]$. Following
convention we refer to the corresponding isomorphism class of line
bundles as $\O_{\P_{3,1}}(1),$ and its tensor powers by
$\O_{\P_{3,1}}(n).$ Note that $\deg(\O_{\P_{3,1}}(n))=n/3.$ The log
canonical bundle $\omega_{\P_{3,1},\log}$ (viewing $\P_{3,1}$ as a
1-marked orbifold curve) has degree $-2+1=-1,$ so it is isomorphic to
$\O_{\P_{3,1}}(-3).$

\subsection{The inertia stack and Chen-Ruan cohomology}\label{sec:ChenRuan} Let $X$
be a smooth complex orbifold, i.e. a smooth connected Deligne-Mumford
stack of finite type over $\C$.
\begin{Def}
  Suppose $X=[M/G],$ where $M$ is a smooth scheme and $G$ is an
  abelian group. Then the \emph{inertia stack} of $X$ is the quotient
  $IX:=[\tilde{M}/G],$ where $\tilde{M}$ is the scheme parametrizing
  pairs $(m,g)$ where $m\in M$ and $g\in G_m,$ where $G_m\subseteq G$
  is the stabilizer of $m$.

  For fixed $g\in G$, let $\tilde{M}(g)$ be the open closed subscheme
  of $\tilde{M}$ of elements of the form $(m,g).$ The \emph{rigidified
    inertia stack} is the union
  \begin{align*}
    \bar{I}X:=\bigcup_{g\in G}[\tilde{M}(g)/(G/\langle g\rangle)].
  \end{align*}
\end{Def}
(Note: In the cases we consider, $\tilde{M}(g)$ is empty for all but
finitely many $g$. More generally, the rigidified inertia stack is
slightly more difficult to define.) The inertia stack and rigidified
inertia stack are defined in much more generality (see
\cite{AbramovichGraberVistoli2008}), but we will only need the cases
above.
\begin{rem}
  The rigidified inertia stack has the same coarse moduli space as the
  inertia stack. Indeed, the only difference between the two is that
  the inertia stack has ``extra'' stack structure. For example,
  $B\mu_3:=[\Spec\C/\mu_3]$ has inertia stack $IB\mu_3\cong
  B\mu_3\sqcup B\mu_3\sqcup B\mu_3$, and rigidified inertia stack
  $\bar{I}B\mu_3\cong B\mu_3\sqcup\Spec\C\sqcup\Spec\C$.
\end{rem}
\begin{terminology}
  Connected components of $IX$ and $\bar{I}X$ are called
  \emph{sectors}. Both $IX$ and $\bar{I}X$ contain $X$ as a connected
  component, namely the quotient $[\tilde{M}(e)/G]$ for $e\in G$ the
  identity. This component is referred to as the \emph{untwisted
    sector} of $IX$ or $\bar{I}X$, and other components are called
  \emph{twisted sectors}. A twisted sector $X'$ has a corresponding
  element $g\in G$, so we refer to $X'$ as a \emph{$g$-twisted
    sector}.
\end{terminology}
\begin{rem}
  Note that there is a forgetful map $IX\to X$ that realizes each
  component of $IX$ as a closed substack of $X$. There are also
  \emph{inversion} automorphisms $\upsilon$ on $IX$ and $\bar{I}X,$
  which send $(m,g)\mapsto(m,g^{-1}).$
\end{rem}
\begin{Def}
  The \emph{Chen-Ruan cohomology} of $X$ is defined, as a $\C$-vector
  space, to be
  \begin{align*}
    H^*_{CR}(X):=H^*(\bar{I}X,\C),
  \end{align*}
  where the right side denotes the singular cohomology of the coarse
  moduli space.
\end{Def}
The grading of $H^*_{CR}(X)$ is different from the usual one. To
describe it we use the following:
\begin{Def}
  If $X'$ is a $g$-twisted sector, then for generic $(m,g)\in X'$ we
  diagonalize the automorphism of $T_mM$ induced by $g$. The
  eigenvalues are roots of unity $e^{2\pi i\alpha_j}$ with
  $0\le\alpha_j<1.$ The \emph{age} of $X'$, denoted $\age(X'),$ is
  defined to be $\sum_j\alpha_j.$
\end{Def}
We may now define the grading:
\begin{align*}
  H^k_{CR}(X):=\bigoplus_{X'}H^{k-2\age(X')}(X',\C).
\end{align*}
\begin{rem}
  There is a natural graded embedding $H^*(X,\C)\into H^*_{CR}(X)$,
  induced by the inclusion $X\into IX$ of the untwisted sector.
\end{rem}
\begin{notation}
  For each sector $X'$, there is a class $1_{X'}\in H^*_{CR}(X)$ that
  is the unit in $H^*(X',\C).$ (Its degree may be nonzero under the
  grading above.) If $\tilde{M}(g)$ is connected we will write
  $1_{X'}=1_g.$ We will also write $1_{X'}$ or $1_g$ for the
  corresponding class on the \emph{nonrigidified} inertia stack.
\end{notation}
Here are two important properties of Chen-Ruan cohomology:
\begin{enumerate}
\item There is a natural notion of cup product on $H^*_{CR}(X)$,
  compatible with the grading. (Note that the cup product on
  $H^*(IX,\C)$ is not compatible with the grading as defined.)
\item If $X$ is proper, there is a (perfect) Poincar\'e pairing on
  $H^*_{CR}(X),$ defined by
  $\langle\alpha,\beta\rangle_X=\int_{IX}\alpha\cup\upsilon^*\beta.$
  (The $\upsilon^*\beta$ in the integrand makes the pairing compatible
  with the grading.)
\end{enumerate}

\subsection{Cohomology of
  $\P^2/\mu_3$}\label{sec:CohomologyOfLineBundle}
One of the basic objects of this paper is the stack quotient
$[\P^2/\mu_3],$ where $\mu_3$ acts by multiplication on the first
coordinate. In this section we consider only singular cohomology of
the coarse moduli space $\P^2/\mu_3$, not Chen-Ruan cohomology. Write
$[x_0:x_1:x_2]$ for points of $\P^2$, and $\eta:\P^2\to[\P^2/\mu_3]$
for the quotient map. We use the following notation:
\begin{table}[h]
  \centering
  \begin{tabular}{l|l|l|l}
    Symbol&Locus in $\P^2$&Symbol&Locus in $[\P^2/\mu_3]$\\\hline
    $\tilde{L_0}$&Line $\mathbb{V}(x_0)\subseteq\P^2$&$L_0$&$\eta(\tilde{L_0})$\\
    $\tilde{P_0}$&Point $[1:0:0]\in\P^2$&$P_0$&$\eta(\tilde{P_0})$\\
    $\tilde{L'}$&Any line through $\tilde{P_0}$\quad\quad\quad\quad\quad\quad\quad\quad&$L'$&$\eta(\tilde{L'})$\\
    $\tilde{P'}$&Any point on $\tilde{L_0}$&$P'$&$\eta(\tilde{P'})$
  \end{tabular}
\end{table}
The $\mu_3$-fixed locus in $\P^2$ is $\tilde{L_0}\cup\tilde{P_0}.$ The
lines $\tilde{L'}$ are $\mu_3$-invariant, and these are the only
$\mu_3$-invariant lines (other than $\tilde{L_0}$).

$H^*(\P^2/\mu_3,\Z)$ is generated by $\{1,[L_0],[L'],[P_0],[P']\}.$
Note that $3[L_0]=3[L']$ and $3[P_0]=3[P'].$ Therefore we define the
generators $H:=[L_0]=[L']$ and $P:=[P_0]=[P']$ of the \emph{complex}
cohomology $H^*(\P^2/\mu_3,\C).$
\begin{rem}
  We usually consider not $[\P^2/\mu_3]$ but the line bundle
  $[\O_{\P^2}(-3)/\mu_3]$. Here $\O_{\P^2}(-3)$ is the quotient
  $((\C^3\setminus\{(0,0,0)\})\times\C)/\C^*$, where $\C^*$ acts with
  weights $(1,1,1,-3).$ The group $\mu_3$ acts on the
  (quasi-)homogeneous coordinates of $\O_{\P^2}(-3)$ by
  $\zeta\cdot[x_0:x_1:x_2:p_x]=[\zeta x_0:x_1:x_2:p_x]$. The pullback
  map $H^*(\P^2/\mu_3,\C)\to H^*(\O_{\P^2}(-3)/\mu_3,\C)$ is an
  isomorphism, and we will also use the symbols $H$ and $P$ to denote
  the corresponding classes in the latter.
\end{rem}

\section{The targets $Z(\theta)$}\label{sec:Setup}
\subsection{Notation}
We begin by fixing notation that we will use throughout the paper. It
is essentially in agreement with the notation of
\cite{FanJarvisRuan2015}. We let $V=\C^{13}$ with coordinates
\begin{align*}
  (x_0,x_1,x_2,y_0,y_1,y_2,z_0,z_1,z_2,a,p_x,p_y,p_z).
\end{align*}
Let $G=(\C^*)^4,$ with action on $V$ by
\begin{align*}
  (t_x,t_y,t_z,t_a)\cdot&(x_0,x_1,x_2,y_0,y_1,y_2,z_0,z_1,z_2,a,p_x,p_y,p_z)\\
  &=(t_xt_a^{-1}x_0,t_xx_1,t_xx_2,t_yt_a^{-1}y_0,t_yy_1,t_yy_2,t_zt_a^{-1}z_0,t_zz_1,t_zz_2,t_a^3a,t_x^{-3}p_x,t_x^{-3}p_y,t_x^{-3}p_z).
\end{align*}
Define another group $\C^*_R=\C^*$ (denoted thus to avoid confusion)
acting on $V$ by
\begin{align*}
  t_R\cdot&(x_0,x_1,x_2,y_0,y_1,y_2,z_0,z_1,z_2,a,p_x,p_y,p_z)\\
  &=(x_0,x_1,x_2,y_0,y_1,y_2,z_0,z_1,z_2,a,t_Rp_x,t_Rp_y,t_Rp_z).
\end{align*}
\begin{rem}
  The groups $G$ and $\C^*_R$ are ``independent'' in that
  $\langle G,\C^*_R\rangle\subseteq\GL(V)$ is isomorphic to
  $G\times\C^*_R$.
\end{rem}
Let $W:V\to\C$ be the $G$-invariant function
\begin{align*}
  W:=p_x(ax_0^3+x_1^3+x_2^3)+p_y(ay_0^3+y_1^3+y_2^3)+p_z(az_0^3+z_1^3+z_2^3).
\end{align*}
It is $\C^*_R$-homogeneous of degree 1.
\begin{terminology}
  In the literature, $W$ is referred to as a \emph{superpotential} on
  $V$, and $\C^*_R$ is known as an \emph{$R$-charge}.
\end{terminology}

As $G\times\C^*_R$ acts diagonally, $V$ is a direct sum of
1-dimensional $G\times\C^*_R$-representations, corresponding to the
list of characters
\begin{align*}
  \mathbf{R}=\{\hat{t_x}-\hat{t_a},\hat{t_x},\hat{t_x},\hat{t_y}-\hat{t_a},\hat{t_y},\hat{t_y},\hat{t_z}-\hat{t_a},\hat{t_z},\hat{t_z},3\hat{t_a},-3\hat{t_x}+\hat{t_R},-3\hat{t_y}+\hat{t_R},-3\hat{t_z}+\hat{t_R}\},
\end{align*}
where $\hat{t_x}$ is the character $(t_x,t_y,t_z,t_a,t_R)\mapsto t_x,$
and similarly for the others.

The critical locus $\Crit(W)$ of $W$ is $G$-invariant. Let
\begin{align*}
  X:&=[V/G]\\
  Z:&=[\Crit(W)/G],
\end{align*}
with $\iota:Z\into X$ the natural embedding. $X$ and $Z$ are
nonseparated Artin stacks, but we consider certain open GIT quotient
substacks, as follows.

\subsection{The characters $\theta$}\label{sec:Characters}
We study GIT quotients $[V\sslash_\theta G]=[V^{ss}(\theta)/G]$ where
$\theta:G\to\C^*$ varies. The Euclidean space parametrizing
$\theta:(t_x,t_y,t_z,t_a)\mapsto t_x^{e_x}t_y^{e_y}t_z^{e_z}t_a^{e_a}$
is isomorphic to $\Z^4\otimes\R=\R^4$. The 16 GIT chambers are those
on which the signs of $e_x$, $e_y,$ $e_z,$ and $e_a$ are constant. We
define characters
$\Theta=\{\theta^{xyza},\theta_{z}^{xya},\theta_{yz}^{xa},
\theta_{xyz}^{a}\}$ representing four of the chambers:
\begin{align*}
  \theta^{xyza}(t_x,t_y,t_z,t_a)&=t_x^3t_y^3t_z^{3}t_a^3\\
  \theta_{z}^{xya}(t_x,t_y,t_z,t_a)&=t_x^3t_y^3t_z^{-3}t_a^3\\
  \theta_{yz}^{xa}(t_x,t_y,t_z,t_a)&=t_x^3t_y^{-3}t_z^{-3}t_a^3\\
  \theta_{xyz}^{a}(t_x,t_y,t_z,t_a)&=t_x^{-3}t_y^{-3}t_z^{-3}t_a^3.
\end{align*}
(The multiples of 3 will simplify notation later.) We then define for
$\theta\in\Theta:$
\begin{align*}
  X(\theta):&=[V\sslash_\theta G]\subseteq X\\
  Z(\theta)=[\Crit(W)\sslash_\theta G]:&=[(\Crit(W)\cap
  V^{ss}(\theta))/G]\subseteq Z.
\end{align*}
Again, we use $\iota$ to denote the embedding
$Z(\theta)\into X(\theta)$.
\begin{terminology}
  If $x$ (resp. $y$, $z$) is in the subcript of $\theta,$ we will say
  ``$x$ (resp. $y$, $z$) is a subscript variable.'' Similarly we refer
  to ``superscript variables,'' and to modifying a character by
  ``moving $x$ from the superscript to the subscript.'' (In every
  case, $a$ is a superscript variable.)
\end{terminology}
\begin{rem}\label{rem:Interpolation}
  As mentioned in the introduction, the characters $\theta^{xyza}$ and
  $\theta_{xyz}^a$ are of primary interest, as from them we will
  construct moduli spaces previously studied in Gromov-Witten theory
  and Fan-Jarvis-Ruan-Witten theory, respectively. The characters
  $\theta^{xya}_z$ and $\theta^{xa}_{yz}$ will provide a means of
  interpolating between these moduli spaces. By symmetry of $x$, $y$,
  and $z$, everything that follows regarding the characters in
  $\Theta$ works equally well for the characters $\theta^{xza}_y,$
  $\theta^{yza}_x,$ $\theta^{ya}_{xz}$, and $\theta^{za}_{xy}.$

  Characters on walls of the chamber decomposition (such as
  $\theta_y^{xa}(t_x,t_y,t_z,t_a):=t_x^3t_y^{-3}t_a^3$) are not
  considered. The corresponding GIT quotients are not well-behaved,
  and the moduli spaces of Section \ref{sec:ModuliSpaces} are not
  defined in this situation. 

  The other missing characters are those where $a$ is a subscript
  variable, e.g.
  $\theta_{za}^{xy}(t_x,t_y,t_z,t_a):=t_x^3t_y^{3}t_z^{-3}t_a^{-3}.$
  These are not needed to carry out the interpolation
  mentioned. However, this case may be of independent interest and we
  hope to return to it in the future.
\end{rem}
\begin{terminology}
  The GIT chambers are called \emph{phases} in the physics literature,
  and various manifestations of GIT wall-crossing (such as the LG/CY
  correspondence) are known as \emph{phase transitions}.
\end{terminology}
The following will help us state the definitions of moduli spaces in
Section \ref{sec:ModuliSpaces}. Define characters
\begin{align*}
  \vartheta^{xyza}(t_x,t_y,t_z,t_a)&=t_x^3t_y^3t_z^{3}t_a^3\\
  \vartheta_{z}^{xya}(t_x,t_y,t_z,t_a)&=t_x^3t_y^3t_z^{-3}t_a^3t_R\\
  \vartheta_{yz}^{xa}(t_x,t_y,t_z,t_a)&=t_x^3t_y^{-3}t_z^{-3}t_a^3t_R^2\\
  \vartheta_{xyz}^{a}(t_x,t_y,t_z,t_a)&=t_x^{-3}t_y^{-3}t_z^{-3}t_a^3t_R^3.
\end{align*}
of $G\times\C^*_R.$ These lift the characters in $\Theta$ to
$G\times\C^*_R\supseteq G.$

\medskip

\noindent\textbf{The GIT quotients $X(\theta)$ and $Z(\theta)$.} A routine calculation of the equations defining $V^{\uns}(\theta)$ and
$\Crit(W)$ yields the following characterization:
\begin{enumerate}
\item If $x$ is a superscript variable of $\theta$, then
  $\mathbb{V}(x_0,x_1,x_2)\subseteq V^{\uns}(\theta)$. If $x$ is a subscript
  variable of $\theta,$ then $\mathbb{V}(p_x)\subseteq V^{\uns}(\theta)$. For
  every $\theta$ we have $\mathbb{V}(a)\subseteq V^{\uns}(\theta).$ These three
  conditions entirely cut out $V^{\uns}(\theta)$ in $V$.
\item $X(\theta^{xyza})$ is isomorphic to the total space of the rank
  3 vector bundle $[\O_{\P^2}(-3)^3/\mu_3]$ over $[(\P^2)^3/\mu_3].$
  Here $\mu_3$ acts on each copy of $\P^2$ and $\O_{\P^2}(-3)$ as in
  Section \ref{sec:CohomologyOfLineBundle}. $Z(\theta^{xyza})$ is
  isomorphic to the complete intersection $[E^3/\mu_3]$ inside the
  zero section of this vector bundle, where $E$ is the
  $\mu_3$-invariant elliptic curve
  $\mathbb{V}(x_0^3+x_1^3+x_2^3)\subseteq\P^2.$\\
\item
  $X(\theta^{xya}_z)\cong\left[(\O_{\P^2}(-3)^2\times[\C^3/\mu_3])\Big/\mu_3\right]$,
  where $\mu_3$ acts on $\C^3$ by
  scaling. $Z(\theta^{xya}_z)\cong[(E^2\times B\mu_3)/\mu_3]$, where
  $B\mu_3\subseteq[\C^3/\mu_3]$ is the origin.\\
\item
  $X(\theta^{xa}_{yz})\cong\left[(\O_{\P^2}(-3)\times[\C^3/\mu_3]^2)\Big/\mu_3\right]$,
  and $Z(\theta^{xa}_{yz})\cong[(E\times(B\mu_3)^2)/\mu_3]$.\\
\item
  $X(\theta^{a}_{xyz})\cong\left[[\C^3/\mu_3]^3/\mu_3\right]\cong[\C^9/(\mu_3)^4]$,
  and $Z(\theta^{a}_{xyz})\cong B((\mu_3)^4)$ is the origin.
\end{enumerate}
\begin{rem}\label{rem:CalabiYau}
  Using e.g. the $j$-invariant, we may check that $E$ is
  isomorphic to a quotient of $\C$ by the lattice generated by
  $\{1,e^{2\pi i/6}\},$ and the $\mu_3$-action lifts to the
  multiplication action of $\mu_3$ on $\C$. In this picture, we may
  identify $H^{1,0}(E)$ with $\C d\tau,$ where $\tau$ is the
  coordinate on $\C.$ The $\mu_3$-action on $H^{1,0}(E)$ is
  by multiplication.

  Since we have $H^{3,0}(E^3)\cong H^{1,0}(E)^{\otimes3},$ the
  diagonal $\mu_3$-action on $H^{3,0}(E^3)$ is trivial. In other
  words, the nonvanishing holomorphic 3-form on $E^3$ (unique up to
  scaling) is invariant under the $\mu_3$-action, so it descends to
  $Z(\theta^{xyza})$. That is, $Z^{xyza}$ is Calabi-Yau.
\end{rem}
\begin{rem}\label{rem:RActsTrivially}
  In every case, $\C^*_R$ acts trivially on $Z(\theta)$. For example,
  $\C^*_R$ acts on $X(\theta^{xyza}$ by scaling on the fibers of the
  vector bundle $[(\O_{\P^2}(-3))^3/\mu_3],$ so acts trivially on
  $Z(\theta^{xyza})$ since $Z(\theta^{xyza})$ lies inside the zero
  section. Similarly, $\C^*_R$ acts on $X(\theta^a_{xyz})$ by scaling
  the coordinates of $[\C^9/\mu_3]$, so acts trivially on the origin.
\end{rem}
\begin{rem}
  We may check that for $\theta\in\Theta,$ $V^{ss}(\theta)$ is equal
  to $V^{ss}(\vartheta),$ for $\vartheta$ the lift of $\theta$ defined
  above.
\end{rem}

\subsection{Toric divisors}\label{sec:ToricDivisors}
We will often refer to the \emph{toric divisors}
$D_\rho\in H^2(X(\theta),\C)$ and their pullbacks
$\iota^*D_\rho\in H^2(Z(\theta),\C).$ A character $\rho:G\to\C^*$
defines a line bundle $L_\rho$ as in Section \ref{sec:GIT}. For
$\rho\in\mathbf{R},$ the corresponding coordinate $s_\rho$ is a
section of $L_\rho$. As usual, abusing notation we also write $L_\rho$
and $s_\rho$ for the restriction to each quotient
$X(\theta)\subseteq X.$ We define $D_\rho:=c_1(L_\rho)$.

For $\theta=\theta^{xya}_z,$ we compute $D_\rho$ and $\iota^*D_\rho$
explicitly. Observe that $X(\theta)$ admits projection maps
$\pr_x,\pr_y:X(\theta)\to[\O_{\P^2}(-3)/\mu_3]$ and
$\pr_z:X(\theta)\to[\C^3/(\mu_3)^2].$ We consider the vanishing loci
in $X(\theta)$ of the sections $s_\rho.$ The sections
$s_{\rho_{x_0}},s_{\rho_{x_1}},s_{\rho_{x_2}}$ are pulled back along
$\pr_x$. They cut out the fibers in $[\O_{\P^2}(-3)/\mu_3]$ over the
coordinate lines in $[\P^2/\mu_3]$, and similarly for
$s_{\rho_{y_0}},s_{\rho_{y_1}},s_{\rho_{y_2}}$. Using Section
\ref{sec:CohomologyOfLineBundle}, these substacks give classes
$[L_0],[L'],[L'],$ respectively, and all of these are equal to $H.$
Thus we have
\begin{align*}
  D_{\rho_{x_0}}=D_{\rho_{x_1}}=D_{\rho_{x_2}}&=\pr_x^*(H)=:H_x\\
  D_{\rho_{y_0}}=D_{\rho_{y_1}}=D_{\rho_{y_2}}&=\pr_y^*(H)=:H_y.
\end{align*}
The sections $s_{\rho_{z_0}},s_{\rho_{z_1}},s_{\rho_{z_2}}$ are pulled
back along $\pr_z$. They cut out the coordinate planes in
$[\C^3/\mu_3]$. These are trivial in $H^2(X(\theta),\C)$, i.e.
\begin{align*}
  D_{\rho_{z_0}}=D_{\rho_{z_1}}=D_{\rho_{z_2}}&=0.
\end{align*}
The section $s_{\rho_a}$ is nonvanishing, so $D_{\rho_a}=0$. By the
same argument, $D_{\rho_{p_z}}=0$. Finally, $s_{\rho_{p_x}}$ is again
pulled back along $\pr_x$, and vanishes along the zero section of
$[\O_{\P^2}(-3)/\mu_3].$ Since $x_1^3p_x$ is a well-defined function
on $[\O_{\P^2}(-3)/\mu_3]$ (with coordinates as above), which vanishes
to order 3 along $L'=\{x_1=0\}$ and to order 1 along the zero
section. Hence the class of the zero section is $-3[L']=-3H,$ so
\begin{align*}
  D_{\rho_{p_x}}&=-3H_x\\
  D_{\rho_{p_y}}&=-3H_y.
\end{align*}

\section{Moduli spaces of sections of line bundles}\label{sec:ModuliSpaces}
\subsection{LG-quasimaps}
For each $\theta\in\Theta$, we define moduli spaces of
\emph{LG-quasimaps}. These were introduced in
\cite{FanJarvisRuan2015}, though in our examples the definitions
simplify significantly.
\begin{Def}\label{Def:LGQuasimapToX}
  A \emph{prestable genus zero $m$-marked LG-quasimap to $X(\theta)$}
  is a tuple $(C,u,\kappa),$ where
  \begin{enumerate}[(i)]
  \item $(C,b_1,\ldots,b_m)$ is an $m$-marked 3-stable curve,
  \item $u:C\to[V/(G\times\C^*_R)]=[X/\C^*_R]$ is a morphism of
    stacks, and\label{item:Quasimap}
  \item $\kappa:u^*L_{\widehat{t_R}}\to\omega_{C,\log}$ is an
    isomorphism of line bundles on $C$,\label{item:Isom}
  \end{enumerate}
  such that all marked points, nodes, and generic points of components
  map to $[X(\theta)/\C^*_R]=[V\sslash_{\vartheta}(G\times\C^*_R)]$. A
  \emph{prestable genus zero $m$-marked LG-quasimap to $Z(\theta)$} is
  a prestable genus zero $m$-marked LG-quasimap to $X(\theta)$ that
  factors through $[Z/\C^*_R]\into[X/\C^*_R]$.
\end{Def}
\begin{Def}
  Let $(C,u,\kappa)$ be a prestable genus zero $m$-marked LG-quasimap
  to $X(\theta)$. A point $P$ of $C$ for which
  $u(P)\in[V^{un}(\theta)/(G\times\C^*_R)]\subseteq[V/(G\times\C^*_R)]$
  is called a \emph{basepoint} of $u$.
\end{Def}
We will now reinterpret these definitions more algebraically. From
Section \ref{sec:GIT}, Definition
\ref{Def:LGQuasimapToX}\eqref{item:Quasimap} is the same as a
principal $(\C^*)^5$-bundle $\mathcal{P}$ on $C$ --- we will denote
the five corresponding line bundles by
$\mathcal{L}_x,\mathcal{L}_y,\mathcal{L}_z,\mathcal{L}_a,\mathcal{L}_R$,
and their degrees by $\beta_x,\beta_y,\beta_z,\beta_a,\beta_R$ --- and
a section $\sigma$ of
\begin{align*}
  \mathcal{E}:=\mathcal{P}\times_{(\C^*)^5}V=(&\mathcal{L}_x\otimes\mathcal{L}_a^*)\oplus\mathcal{L}_x\oplus\mathcal{L}_x\oplus(\mathcal{L}_y\otimes\mathcal{L}_a^*)\oplus\mathcal{L}_y\oplus\mathcal{L}_y\oplus(\mathcal{L}_z\otimes\mathcal{L}_a^*)\oplus\mathcal{L}_z\oplus\mathcal{L}_z\oplus\\
                                              &\oplus\mathcal{L}_a^{\otimes3}\oplus(\mathcal{L}_x^{\otimes-3}\otimes\mathcal{L}_R)\oplus(\mathcal{L}_y^{\otimes-3}\otimes\mathcal{L}_R)\oplus(\mathcal{L}_z^{\otimes-3}\otimes\mathcal{L}_R).
\end{align*}
Using \eqref{item:Isom} we may forget about
$\mathcal{L}_R$ altogether and replace the data
\eqref{item:Quasimap} and \eqref{item:Isom} with the data of the
line bundles
$\mathcal{L}_x,\mathcal{L}_y,\mathcal{L}_z,\mathcal{L}_a$ and a
section $\sigma$ of 
\begin{align*}
  \mathcal{E}=(&\mathcal{L}_x\otimes\mathcal{L}_a^*)\oplus\mathcal{L}_x\oplus\mathcal{L}_x\oplus(\mathcal{L}_y\otimes\mathcal{L}_a^*)\oplus\mathcal{L}_y\oplus\mathcal{L}_y\oplus(\mathcal{L}_z\otimes\mathcal{L}_a^*)\oplus\mathcal{L}_z\oplus\mathcal{L}_z\oplus\\
               &\oplus\mathcal{L}_a^{\otimes3}\oplus(\mathcal{L}_x^{\otimes-3}\otimes\omega_{C,\log})\oplus(\mathcal{L}_y^{\otimes-3}\otimes\omega_{C,\log})\oplus(\mathcal{L}_z^{\otimes-3}\otimes\omega_{C,\log}).
\end{align*}
We will write $\mathcal{L}_\rho:=u^*L_\rho$ for the summands of $\mathcal{E}$, i.e.
\begin{align*}
  \mathcal{E}&=\bigoplus_{\rho\in\mathbf{R}}\mathcal{L}_\rho,
\end{align*}
and
$$\sigma=(\sigma_{x_0},\sigma_{x_1},\sigma_{x_2},\sigma_{y_0},\sigma_{y_1},\sigma_{y_2},\sigma_{z_0},\sigma_{z_1},\sigma_{z_2},\sigma_a,\sigma_{p_x},\sigma_{p_y},\sigma_{p_z}).$$
Thus an LG-quasimap $(C,u,\kappa)$ to $X(\theta)$ is the same data
as a tuple $(C,\mathcal{L},\sigma),$ where $\mathcal{L}$ is
shorthand for
$(\mathcal{L}_x,\mathcal{L}_y,\mathcal{L}_z,\mathcal{L}_a).$
LG-quasimaps to $Z(\theta)$ are similarly reinterpreted, and we use
the notations $(C,u,\kappa)$ and $(C,\mathcal{L},\sigma)$
interchangeably.
\begin{Def}
  Let $(C,\mathcal{L},\sigma)$ be an LG-quasimap to $X(\theta)$ or
  $Z(\theta)$. The \emph{degree} of $(C,\mathcal{L},\sigma)$ is the
  tuple of rational numbers
  $\beta:=(\beta_x,\beta_y,\beta_z,\beta_a).$ (We do not need to
  include $\beta_R$ as it is necessarily equal to $-2+m$.)
\end{Def}
\begin{notation}\label{not:BetaRho}
  For an arbitrary character $\rho$ of $G\times\C^*_R,$ we define
  $\mathcal{L}_\rho:=u^*L_\rho=\mathcal{P}\times_{G\times\C^*_R}\C_\rho$
  and denote by $\beta_\rho$ the degree of $\mathcal{L}_\rho.$
\end{notation}
\begin{Def}\label{Def:EpsilonStability}
  For $\epsilon\in\Q_{>0}$, we say an $m$-marked, genus-zero
  LG-quasimap to $Z(\theta)$ or $X(\theta)$ is
  \emph{$\epsilon$-stable} if
  \begin{enumerate}
  \item The \emph{length} $\ell^\sigma(P)$ of $\sigma$ at each point
    $P$ is
    at most $1$, and
  \item $\omega_{C,\log}\otimes\mathcal{L}_\vartheta^{\epsilon}$ is
    ample.\label{item:Ample}
  \end{enumerate}
\end{Def}
For the general definition of length, see \cite{FanJarvisRuan2015}.
We describe it in the case $\theta=\theta^{xya}_z$, from which the
other cases are clear. The length is a sum over components of the
unstable locus $V^{\uns}(\theta)$. If
$(\sigma_{x_0},\sigma_{x_1},\sigma_{x_2})=(0,0,0)$ at a point
$P\in C,$ then the minimum order of vanishing of these sections at $P$
is the contribution to the $\ell^\sigma(P)$ from the component
$\{(x_0,x_1,x_2)=(0,0,0)\}$ of $V^{\uns}(\theta).$ We denote this
contribution by $\ell_x^\sigma(P)$. The contribution
$\ell_y^\sigma(P)$ is defined similarly. The contribution
$\ell_z^\sigma(P)$ from the component $\{p_z=0\}$ is even simpler ---
it is just the order of vanishing of $\sigma_{p_z}$ at $P$. Similarly
$\ell_a^\sigma(P)$ is the order of vanishing of $\sigma_a$ at
$P$. Finally, we define
$\ell^\sigma(P)=\ell^\sigma_x(P)+\ell^\sigma_y(P)+\ell^\sigma_z(P)+\ell^\sigma_a(P).$
\begin{Def}\label{Def:DegreeOfBasepoint}
  Let $(C,u,\kappa)$ be an LG-quasimap to $X(\theta)$ or $Z(\theta)$
  of degree $\beta$, and let $P$ be a basepoint of $u$. The
  \emph{degree
    $\beta(P)=(\beta_x(P),\beta_y(P),\beta_z(P),\beta_a(P))$ of the
    basepoint $P$} is defined, for $\theta=\theta_z^{xya},$ by
  \begin{align*}
    (\beta_x(P),\beta_y(P),\beta_z(P),\beta_a(P)):=(\ell_x^\sigma(P),\ell_y^\sigma(P),\frac{-1}{3}\ell_z^\sigma(P),\frac{1}{3}\ell_a^\sigma(P)).
  \end{align*}
\end{Def}
The reason for this definition is as follows. Restricting
$(C,u,\kappa)$ to $C\setminus P$ gives a section
$\sigma|_{C\setminus P}$ of $\mathcal{E}|_{C\setminus P}$. There is a
way (unique up to isomorphism) to extend
$(\mathcal{E}|_{C\setminus P},\sigma|_{C\setminus P})$ to
$(\mathcal{E}',\sigma'),$ where $\mathcal{E}'$ is a vector bundle on
$C$ and $\sigma'\in H^0(C,\mathcal{E}')$, such that
$\ell^{\sigma'}(P)=0.$ This bundle $\mathcal{E}'$ is associated to a
space of LG-quasimaps of degree $\beta-\beta(P).$ Therefore the degree
of $(C,u,\kappa)$ is equal to the degree ``over the generic points of
$C$'', plus the sum of the degrees of all basepoints.

\begin{rem}
  As in \cite{CiocanFontanineKim2014}, we may also define
  $\epsilon$-stability for $\epsilon$ equal to either of the symbols
  $0+$ and $\infty.$ A quasimap is $(0+)$-stable if it is
  $\epsilon$-stable for all $\epsilon$ sufficiently small, and
  $\infty$-stable if it is $\epsilon$-stable for all $\epsilon$
  sufficiently large.
\end{rem}
\begin{thm}[\cite{FanJarvisRuan2015}, Theorem 1.1.1]\label{thm:DMStack}
  For each $\theta\in\Theta$, $\epsilon\in[0+,\infty]$,
  $m\in\Z_{\ge0}$ and $\beta\in(\frac{1}{3}\Z)^4,$ there is a finite
  type, separated Deligne-Mumford stack
  $\LGQ_{0,m}^{\epsilon}(X(\theta),\beta)$ of families of
  $\epsilon$-stable genus zero $m$-marked LG-quasimaps to $X(\theta)$
  of degree $\beta$.
  
  There is also a finite type, separated, \emph{proper}
  Deligne-Mumford stack $\LGQ_{0,m}^{\epsilon}(Z(\theta),\beta)$ of
  families of $\epsilon$-stable genus zero $m$-marked LG-quasimaps to
  $Z(\theta)$ of degree $\beta$. 
\end{thm}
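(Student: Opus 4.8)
The plan is to cite \cite{FanJarvisRuan2015} for the general existence result, and then verify that the hypotheses there hold in our setting; thus the main content is a translation between our concrete description and their abstract framework. First I would check that the GLSM input data $(V, G\times\C^*_R, W, \vartheta, \epsilon)$ satisfies the axioms of \cite{FanJarvisRuan2015}: that $W$ is $\C^*_R$-homogeneous of degree $1$ (noted in Section \ref{sec:Setup}), that $\C^*_R$ acts with strictly positive weights on $\Crit(W)$-transverse directions so that the ``good lift'' condition holds, and that $G$ acts with finite stabilizers on $V^{ss}(\vartheta)$ — the latter follows from the explicit descriptions of $X(\theta)$ in Section \ref{sec:Characters}, each of which is a Deligne-Mumford stack. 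The degree constraint $\beta\in(\tfrac13\Z)^4$ is forced because all line bundles live on $3$-stable curves, whose Picard groups are generated by classes of degree in $\tfrac13\Z$ (Proposition \ref{prop:DivisorLineBundle} and the discussion of $\P_{3,1}$).

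Next I would establish finite-typeness and separatedness for $\LGQ_{0,m}^\epsilon(X(\theta),\beta)$. Finite type: boundedness follows because $\epsilon$-stability condition \eqref{item:Ample} forces $\omega_{C,\log}\otimes\mathcal{L}_\vartheta^\epsilon$ ample, which bounds the number of rational components and the degrees of the $\mathcal{L}_\rho$ on each component, while condition (1) — length at most $1$ — bounds basepoint contributions; together with the fixed total degree $\beta$ this gives a bounded family, and one invokes Olsson's result that $\mathfrak{M}_0^{\tw}$ is algebraic plus the fact that sections of a bundle on a fixed curve form a scheme. Separatedness and the Deligne-Mumford property: use the valuative criterion, noting that given a family over a punctured curve (a DVR), the curve $C$ extends uniquely by properness of $\mathfrak{M}_0^{\tw}$, and the bundle-plus-section data extends uniquely once one normalizes away basepoints using the length-$\le 1$ condition — this is exactly the mechanism recorded in the paragraph following Definition \ref{Def:DegreeOfBasepoint}, where restricting to $C\setminus P$ and re-extending is shown to be canonical. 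Automorphisms are finite because a $3$-stable curve has finite automorphisms and the bundle data rigidifies things further; this gives the DM property.

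Finally, for $\LGQ_{0,m}^\epsilon(Z(\theta),\beta)$ I would first observe it is a \emph{closed} substack of $\LGQ_{0,m}^\epsilon(X(\theta),\beta)$, cut out by the condition that $\sigma$ factor through $[\Crit(W)/(G\times\C^*_R)]$ — this is a closed condition (vanishing of $dW$ pulled back along $\sigma$), so all the finite-type/separated/DM properties are inherited. The remaining point, and the one I expect to be the main obstacle, is \textbf{properness}: one must verify the full valuative criterion for properness (existence as well as uniqueness of limits) for quasimaps landing in $Z(\theta)$. The key is that $Z(\theta)$ itself is proper — indeed compact — in each of the five cases of Section \ref{sec:Characters} (it is $[E^3/\mu_3]$, or $[E^2\times B\mu_3/\mu_3]$, etc., all proper DM stacks, in contrast to the noncompact $X(\theta)$). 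So given a family over a punctured disc, after extending $C$ one extends the map to $Z$ using properness of $Z(\theta)$ on the generic point, controls basepoints via length $\le 1$, and checks $\epsilon$-stability of the limit can be achieved by the standard modification (bubbling/base-point-to-rational-tail exchange) argument from \cite{CiocanFontanineKim2014}; the subtlety is ensuring the modifications stay inside the $W=0$ / $\Crit(W)$ locus and respect the isomorphism $\kappa$ to $\omega_{C,\log}$, i.e. that the $R$-charge bookkeeping is preserved under the limit. This is precisely Theorem 1.1.1 of \cite{FanJarvisRuan2015}, so in the write-up I would simply check that our data meets their running hypotheses and cite their proof, remarking on the role of compactness of $Z(\theta)$ for the properness half.
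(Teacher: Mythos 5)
Your proposal is correct and takes the same route as the paper: Theorem \ref{thm:DMStack} is stated with the attribution to Theorem 1.1.1 of \cite{FanJarvisRuan2015} and is not reproved here, so the entire content is exactly what you describe --- checking that the data $(V,G\times\C^*_R,W,\vartheta)$ with $V^{s}(\theta)=V^{ss}(\theta)$ meets the running hypotheses of that paper and importing their result, with properness of the $Z(\theta)$ version resting on properness of $[\Crit(W)\sslash_\theta G]$. Your additional sketch of the finite-type, separatedness, and valuative-criterion mechanisms is consistent with how \cite{FanJarvisRuan2015} proceeds and raises no issues.
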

\begin{Def}[Graph spaces]
  We will often use the slightly modified moduli spaces
  $\LGQG_{0,m}^\epsilon(X(\theta),\beta)$ and
  $\LGQG_{0,m}^\epsilon(Z(\theta),\beta)$, called \emph{graph spaces}
  or spaces of \emph{LG-graph quasimaps}. They parametrize
  $\epsilon$-stable $m$-marked genus zero LG-quasimaps \emph{with a
    parametrized component}, i.e. a map $\tau:C\to\P^1$ of degree
  1. The stability condition is then imposed only on (the closure of)
  $C\setminus\hat{C}$, where $\hat{C}$ is the parametrized component.
\end{Def}

\subsection{Effective and extremal degrees}\label{sec:EffectiveDegrees}
\begin{Def}\label{Def:Effective}
  We say $(\beta,m)$ is \emph{$\theta$-effective} if
  $\LGQ_{0,m}^\epsilon(Z(\theta),\beta)$ is
  nonempty.
\end{Def}
\begin{prop}\label{prop:CriterionForEffectiveness}
  If $(\beta,m)$ is effective, we have:
  \begin{itemize}
  \item $\beta_a\ge0$,
  \item $\beta_x\ge0$ if $x$ is a superscript variable, and
  \item $\beta_x\le\frac{m-2}{3}$ if $x$ is a subscript variable.
  \end{itemize}
  (By symmetry these statements hold with $x$ replaced by $y$ or $z$.)
\end{prop}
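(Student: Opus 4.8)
The plan is to extract each inequality directly from the combinatorial structure of an $\epsilon$-stable LG-quasimap $(C,\mathcal{L},\sigma)$ of degree $\beta$ to $Z(\theta)$, using the description of $V^{\uns}(\theta)$ from Section \ref{sec:Characters} together with the length bound $\ell^\sigma(P)\le 1$ and the divisor-line bundle correspondence (Proposition \ref{prop:DivisorLineBundle}). Throughout I take $\theta=\theta_z^{xya}$ for concreteness, since the superscript/subscript dichotomy is symmetric in $x,y,z$ and the remaining characters in $\Theta$ behave identically with the roles of the variables permuted.

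First I would prove $\beta_a\ge 0$. Since $\mathbb{V}(a)\subseteq V^{\uns}(\theta)$ for every $\theta$, the generic point of each component of $C$ must map to the semistable locus, so $\sigma_a$ does not vanish identically on any component; hence $\sigma_a$ is a nonzero global section of $\mathcal{L}_a^{\otimes 3}$. A line bundle on a genus-zero orbifold curve with a nonzero global section has nonnegative degree (the associated divisor is effective, and degree of an effective Weil divisor is $\sum a_P/d_P\ge0$ by the degree formula in Section \ref{sec:3StableCurves}), so $3\beta_a=\deg(\mathcal{L}_a^{\otimes3})\ge0$, giving $\beta_a\ge0$. For the second bullet, if $x$ is a superscript variable then $\mathbb{V}(x_0,x_1,x_2)\subseteq V^{\uns}(\theta)$, so $(\sigma_{x_0},\sigma_{x_1},\sigma_{x_2})$ is not identically zero; since $\mathcal{L}_{x_1}=\mathcal{L}_{x_2}=\mathcal{L}_x$, the section $\sigma_{x_1}$ (or whichever of the three is nonzero) is a nonzero global section of $\mathcal{L}_x$ — using that the summands $\mathcal{L}_{x_1},\mathcal{L}_{x_2}$ of $\mathcal{E}$ are literally $\mathcal{L}_x$, not a twist — so $\beta_x=\deg\mathcal{L}_x\ge0$ by the same effectivity argument. (One small point to address: $\sigma_{x_0}$ is a section of $\mathcal{L}_x\otimes\mathcal{L}_a^*$, so if only $\sigma_{x_0}\neq0$ one instead gets $\beta_x-\beta_a\ge0$, which combined with $\beta_a\ge0$ still yields $\beta_x\ge0$; but generically at least one of $\sigma_{x_1},\sigma_{x_2}$ is nonzero, since $\mathbb V(x_1,x_2)$ — which doesn't meet the unstable locus for superscript $x$ — would otherwise have to map into it.)

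For the third bullet, suppose $x$ is a subscript variable, so $\mathbb{V}(p_x)\subseteq V^{\uns}(\theta)$. As above, $\sigma_{p_x}$ is a nonzero global section of $\mathcal{L}_x^{\otimes-3}\otimes\omega_{C,\log}$, hence $\deg(\mathcal{L}_x^{\otimes-3}\otimes\omega_{C,\log})\ge0$. Since $\omega_{C,\log}$ has degree $2g-2+m=m-2$ (Section \ref{sec:3StableCurves}), this reads $-3\beta_x+(m-2)\ge0$, i.e. $\beta_x\le\frac{m-2}{3}$, which is exactly the claimed bound. This third case requires no length/stability input at all, only the basic effectivity of the divisor of a nonzero section; the first two cases likewise only use that generic points of components land in $V^{ss}(\theta)$, which is part of the definition of an LG-quasimap (Definition \ref{Def:LGQuasimapToX}), so $\epsilon$-stability as such is not even needed — nonemptiness of $\LGQ_{0,m}^\epsilon(Z(\theta),\beta)$ for \emph{any} $\epsilon$ suffices, consistent with the statement of Definition \ref{Def:Effective}.

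The one genuine subtlety — the main obstacle — is the orbifold bookkeeping: a ``nonzero global section'' of a line bundle on a $3$-stable curve $C$ can only exist when the monodromy at every marked point and node is compatible with the section's order of vanishing (an element of $\mult_P(\mathcal{L})+\Z_{\ge0}$), and I must make sure the degrees $\beta_\rho$ really lie in $\frac13\Z$ and that the effectivity estimate $\deg D\ge0$ is applied correctly to Weil divisors whose coefficients can be non-integral multiples of $\tfrac13$ at the order-$3$ points. This is handled by Proposition \ref{prop:DivisorLineBundle} and the degree formula $\deg D=\sum a_P/d_P$: an effective Weil divisor has all $a_P\ge0$, hence nonnegative degree, regardless of the orbifold structure. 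I would also remark that reducibility of $C$ causes no trouble: a global section nonzero on the generic point of \emph{some} component still has a globally effective divisor, and on a genus-zero nodal curve one can either argue component-by-component (each component is $\P_{3,\ldots}$-like with nonnegative-degree restriction, and degrees add) or simply invoke that the total degree of an effective divisor on all of $C$ is nonnegative. With these points dispatched, all three inequalities follow.
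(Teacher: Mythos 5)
Your proof is correct and follows the same route as the paper's: in each case you extract from the unstable-locus description a section that is nonzero on the generic point of every component, and conclude nonnegativity of the relevant degree, handling the superscript case by the same dichotomy between $\sigma_{x_0}$ (giving $\beta_x-\beta_a\ge0$, then combine with $\beta_a\ge0$) and $\sigma_{x_1},\sigma_{x_2}$ (giving $\beta_x\ge0$ directly). The only blemish is the closing sentence of the second bullet claiming that ``generically $\sigma_{x_1}$ or $\sigma_{x_2}$ is nonzero because $\mathbb{V}(x_1,x_2)$ misses the unstable locus'' --- that reasoning does not support the claim (and the claim is not needed), but your preceding parenthetical already handles the $\sigma_{x_0}$-only case correctly, so the argument stands.
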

\begin{proof}
  First, for all $\theta$ the condition in Definition
  \ref{Def:LGQuasimapToX}, together with the fact that
  $\{a=0\}\subseteq V^{\uns}(\theta),$ implies that $\mathcal{L}_a^3$
  has a global section that is nonvanishing at the generic point of
  each component of $C$. This implies $\beta_a\ge0.$

  If $x$ is a superscript variable, the fact that
  $\{(x_0,x_1,x_2)=(0,0,0)\}\subseteq V^{\uns}(\theta)$ shows that at
  least one of $\mathcal{L}_x\otimes\mathcal{L}_a^*$ and
  $\mathcal{L}_x$ has nonnegative degree. Since $\beta_a\ge0$ we have
  $\beta_x\ge0.$

  If $x$ is a subscript variable, since $\{p_x=0\}\subseteq V^{\uns}$,
  we have that $\mathcal{L}_x^{\otimes-3}\otimes\omega_{C,\log}$ has
  nonnegative degree, i.e. $\beta_x\le\frac{m-2}{3}$.
\end{proof}
\begin{cor}\label{rem:BetaThetaPositive}
  Let $\beta_\vartheta$ be as in Notation \ref{not:BetaRho}. Whenever
  $(\beta,m)$ is effective, we have $\beta_\vartheta\in\Z_{\ge0}.$
\end{cor}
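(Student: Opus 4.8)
The plan is to unwind the definition of $\beta_\vartheta$ in terms of the degrees $\beta_x,\beta_y,\beta_z,\beta_a$ and the marking count $m$, and then feed in the inequalities from Proposition \ref{prop:CriterionForEffectiveness}. Recall from Notation \ref{not:BetaRho} that $\beta_\vartheta$ is the degree of the line bundle $\mathcal{L}_\vartheta = u^*L_\vartheta$, where $\vartheta$ is the appropriate lift from the list $\vartheta^{xyza},\vartheta_z^{xya},\vartheta_{yz}^{xa},\vartheta_{xyz}^a$. Since $\deg$ is additive under tensor product and each $\vartheta$ is an explicit monomial in $t_x,t_y,t_z,t_a,t_R$, the degree $\beta_\vartheta$ is the corresponding $\Z$-linear combination of $\beta_x,\beta_y,\beta_z,\beta_a$ and $\beta_R=-2+m$. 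So the first step is simply to write down, case by case, the expression for $\beta_\vartheta$: for instance for $\theta=\theta_z^{xya}$ one gets $\beta_\vartheta = 3\beta_x+3\beta_y-3\beta_z+3\beta_a+(m-2)$, and analogously with more sign flips and higher powers of $t_R$ for $\theta_{yz}^{xa}$ and $\theta_{xyz}^a$, while for $\theta^{xyza}$ there is no $t_R$ and $\beta_\vartheta = 3\beta_x+3\beta_y+3\beta_z+3\beta_a$.

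Next I would check integrality. Each coefficient of $\beta_x,\beta_y,\beta_z,\beta_a$ in these expressions is a multiple of $3$ (this is exactly why the factors of $3$ were built into the definitions of the characters in $\Theta$), and $\beta\in(\tfrac13\Z)^4$, so $3\beta_x$, etc., are integers; the remaining term $(m-2)\cdot k$ for $k\in\{0,1,2,3\}$ is clearly an integer. Hence $\beta_\vartheta\in\Z$ in every case.

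For nonnegativity I would split into the terms involving superscript variables, the term involving $a$, and the terms involving subscript variables (plus the $t_R$-contribution). By Proposition \ref{prop:CriterionForEffectiveness}, $\beta_a\ge 0$ and $\beta_x\ge 0$ for every superscript variable $x$, so $3\beta_a$ and each $3\beta_x$ (superscript) contribute nonnegatively. For each subscript variable $x$ the character $\vartheta$ contributes $-3\beta_x$, and the lift carries an extra power of $t_R$; grouping one factor of $\beta_R=m-2$ with each $-3\beta_x$ gives a term $(m-2)-3\beta_x = 3(\tfrac{m-2}{3}-\beta_x)\ge 0$, again by Proposition \ref{prop:CriterionForEffectiveness}. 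Summing: in each of the four cases the number of subscript variables equals the power of $t_R$ in $\vartheta$, so the total $m-2$ contribution is exactly distributed, one copy per subscript variable, and $\beta_\vartheta$ is a sum of manifestly nonnegative terms. This gives $\beta_\vartheta\in\Z_{\ge0}$.

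The computation is entirely routine; the only thing requiring care — and the step I would treat as the ``main point'' — is the bookkeeping that the power of $t_R$ in each lift $\vartheta$ exactly matches the number of subscript variables, so that the $(m-2)$ contributions pair up perfectly with the $-3\beta_x$ terms and nothing is left over with the wrong sign. Once that matching is observed, nonnegativity is immediate from Proposition \ref{prop:CriterionForEffectiveness}. I would also remark that, since it suffices to verify the claim for a single effective $(\beta,m)$ in each chamber and the four cases are parallel, one can present the argument for $\theta_z^{xya}$ in detail and note that the others follow by the same grouping.
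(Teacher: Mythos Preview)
Your proposal is correct and is precisely the intended argument: the paper states this as a corollary of Proposition~\ref{prop:CriterionForEffectiveness} without further proof, and your case-by-case expansion of $\beta_\vartheta$ together with the grouping of each $-3\beta_x$ (subscript) with one copy of $\beta_R=m-2$ is exactly the computation that justifies the word ``Corollary.'' One small phrasing slip: in your final remark you write ``it suffices to verify the claim for a single effective $(\beta,m)$ in each chamber,'' but of course you mean for all such $(\beta,m)$; the point is rather that it suffices to carry out the argument for a single chamber (say $\theta_z^{xya}$) and observe that the other three are formally identical.
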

\begin{Def}\label{Def:UnstableTuples}
  Even if the conditions of Proposition
  \ref{prop:CriterionForEffectiveness} are satisfied, we may have
  $-2+m+\epsilon\beta_\vartheta<0,$ in which case Condition
  \ref{item:Ample} of Definition \ref{Def:EpsilonStability} is never
  satisfied. We call such tuples $(\beta,m)$
  \emph{unstable}. Explicitly, $(\beta,m)$ is unstable when
  \begin{enumerate}
  \item $m=2$ and $\beta=\beta_0(\theta,2),$ or
  \item $m=1$ and $\beta_\vartheta>1/\epsilon,$ or
  \item $m=0$ and $\beta_\vartheta>2/\epsilon$. 
  \end{enumerate}
\end{Def}
\begin{rem}
  It is the existence of unstable tuples that allows explicit
  calculation in Section \ref{sec:CalculatingI}.
\end{rem}

\noindent\textbf{Extremal Degrees.} Let $\theta=\theta^{xya}_z,$ as this is the example we work out in
later sections. As we observed in Section \ref{sec:EffectiveDegrees},
if $\beta_x$, $\beta_y,$ or $\beta_a$ is negative, the moduli space is
empty. Also, if $\beta_z>\frac{m-2}{3},$ then the line bundle
$\mathcal{L}_z^{-3}\otimes\omega_{C,\log}$ has negative degree, so the
moduli space is empty for the same reason. Therefore we say that the
pair $((0,0,\frac{m-2}{3},0),m)$ is \emph{extremal}, and write
$\beta_0(\theta,m):=(0,0,\frac{m-2}{3},0)$. 
\begin{observation}
  We will often use the fact that
  $\beta_0(\theta,m_1+m_2)=\beta_0(\theta,m_1+1)+\beta_0(\theta,m_2+1).$
\end{observation}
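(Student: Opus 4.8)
The plan is simply to unwind the definition of $\beta_0(\theta,m)$ and reduce to an elementary identity of rational numbers. Recall that for $\theta=\theta^{xya}_z$ we set $\beta_0(\theta,m)=(0,0,\tfrac{m-2}{3},0)$, so the only component that depends on $m$ is the one attached to the (unique) subscript variable $z$, and it depends affinely on $m$ with slope $\tfrac13$. (The analogous statement holds for the other $\theta\in\Theta$: $\tfrac{m-2}{3}$ sits in the slot of each subscript variable and all remaining slots are $0$.) Hence all four coordinates of the claimed identity except the $z$-coordinate read $0=0+0$, and the $z$-coordinate reads
\begin{align*}
  \frac{(m_1+m_2)-2}{3}=\frac{(m_1+1)-2}{3}+\frac{(m_2+1)-2}{3},
\end{align*}
i.e. $\tfrac{m_1+m_2-2}{3}=\tfrac{m_1-1}{3}+\tfrac{m_2-1}{3}$, which is immediate.

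There is no real obstacle here; the only content is the scalar arithmetic above. It is nonetheless worth recording why the identity has this shape, since that is how it will be used: it is exactly the bookkeeping for gluing two genus-zero curves along a node, where the node contributes one additional marked point to each of the two pieces (so $m_i\mapsto m_i+1$) and the $-2$ reflects the $2g-2$ contribution of a genus-zero component to $\deg\omega_{C,\log}$. Additivity of $\beta_0$ under $m_1+m_2 = (m_1+1)+(m_2+1)-2$ then mirrors additivity of $\deg\mathcal{L}_z$ under normalization at a node, which is the context in which the observation is invoked (e.g. in splitting extremal quasimaps across nodes). One could phrase the proof in those geometric terms, but the one-line computation suffices.
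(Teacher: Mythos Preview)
Your proof is correct. The paper states this observation without proof, treating it as immediate from the definition $\beta_0(\theta,m)=(0,0,\tfrac{m-2}{3},0)$; your one-line arithmetic verification is exactly what is intended, and your remark about the gluing interpretation correctly identifies why the identity is formulated this way.
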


\begin{rem}\label{rem:BetaThetaZero}
  It follows from Remark \ref{rem:BetaThetaPositive} that for
  $(\beta,m)$ effective, we have $\beta_\vartheta=0$ if and only if
  $(\beta,m)=(\beta_0(\theta,m),m)$ is extremal.
\end{rem}

\begin{Def}\label{Def:Contract}
  If $C$ is irreducible and the ``degree over the generic point''
  $\beta-\sum_P\beta(P)$ from Definition \ref{Def:DegreeOfBasepoint}
  is equal to $\beta_0(\theta,m),$ then we say $C$ is contracted by
  $u$. Similarly we can say an irreducible component $C'$ of $C$ is
  \emph{contracted}.
\end{Def}

The extremal degree $\beta_0(\theta,m)$ will play essentially the same
role for us as $\beta=0$ does in Gromov-Witten theory, with matters
complicated slightly by the fact that for $\theta\ne\theta^{xyza},$
$\beta_0(\theta,m)$ is a function of $m$.

\subsection{Connections to quasimaps and spin
  structures}\label{sec:Connections}
Given an $\epsilon$-stable LG-quasimap to $Z(\theta),$ we may extract
a quasimap to $Z(\theta)$ (in the sense of
\cite{CheongCiocanFontanineKim2015}) as follows.
\begin{Def}\label{Def:AssociatedQuasimap}
  By Remark \ref{rem:RActsTrivially}, $\C^*_R$ acts trivially on
  $\Crit(W).$ This implies that
  $$[Z/\C^*_R]\cong Z\times
  B\C^*_R$$
  has a projection map $\pr$ to $Z.$ For an LG-quasimap
  $(C,u,\kappa),$ the pair $(C,\pr\circ u)$ is a prestable quasimap to
  $Z(\theta)$, called the \emph{quasimap associated to
    $(C,u,\kappa)$}.
\end{Def}
\begin{rem}\label{rem:AssociatedQuasimapToX}
  As $\C^*_R$ acts nontrivially on $X(\theta),$ there is no way to
  extract a quasimap from a LG-quasimap $(C,u,\kappa)$ to
  $X(\theta)$, unless $u$ maps $C$ into the locus
  $X_R(\theta)\subseteq[X(\theta)/\C^*_R]$ of points whose isotropy
  group contains $\C^*_R.$ In this case we obtain a quasimap to
  $X_R^{\rig}(\theta),$ the rigidification of $X_R(\theta)$ by
  $\C^*_R$ (see \cite{AbramovichGraberVistoli2008}). From the
  $\C^*_R$-action on $X(\theta)$ we see that $X_R^{\rig}(\theta)$ is
  isomorphic to
  $$[((\P^2)^i\times B\mu_3^{3-i})/\mu_3]\subseteq[(\O_{\P^2}(-3)^i\times[\C^3/\mu_3]^{3-i})/\mu_3]$$
  for some $i$ depending on $\theta,$ where $\P^2$ denotes the zero
  section of $\O_{\P^2}(-3)$ and $B\mu_3$ denotes the origin in
  $[\C^3/\mu_3].$ For instance, for $\theta=\theta^{xya}_z$ there is a
  quasimap to $X(\theta)$ associated to $(C,\mathcal{L},\sigma)$
  exactly when
  $\sigma_{z_0}=\sigma_{z_1}=\sigma_{z_2}=\sigma_{p_x}=\sigma_{p_y}=0.$
  The space $X_R^{\rig}(\theta)$ will later allow us to reduce
  statements about LG-quasimaps to known facts about quasimaps.
\end{rem}

Notice that the quasimap associated to $(C,u,\kappa)$ captures
information about the superscript variables.  We may also extract
complementary data related to the subscript variables, as follows:
\begin{prop}
  If $x$ is in the subscript of $\theta$, then forgetting everything
  except for $C,$ $\mathcal{L}_x,$ and $\sigma_{p_x}$ gives maps from
  $\LGQ_{0,m}^\epsilon(Z(\theta),\beta)$ and
  $\LGQ_{0,m}^\epsilon(X(\theta),\beta)$ to a space
  $\mathcal{R}_{m,\epsilon(-3\beta_x+m-2)}^3$ of (prestable)
  \emph{spin structures}, see \cite{ChiodoZvonkine2009}.

  If there are several subscript variables, this gives maps to a
  product of spaces of spin structures, fibered over $\Mbar_{0,m}$.
\end{prop}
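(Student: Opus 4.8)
The plan is to construct the forgetful morphism directly on families and then identify its target; the statement for several subscript variables will follow formally by taking a fiber product over the stack of $3$-stable curves. I would begin by recalling the algebraic reinterpretation of LG-quasimaps from Section \ref{sec:ModuliSpaces}: a family over a base $S$ of $\epsilon$-stable $m$-marked genus-zero LG-quasimaps to $X(\theta)$ (resp. $Z(\theta)$) of degree $\beta$ is the same as a family $\mathcal{C}\to S$ of $3$-stable $m$-pointed curves, line bundles $\mathcal{L}_x,\mathcal{L}_y,\mathcal{L}_z,\mathcal{L}_a$ with $\deg\mathcal{L}_x=\beta_x$ on fibers, and a section $\sigma=(\sigma_\rho)_{\rho\in\mathbf{R}}$ of $\mathcal{E}=\bigoplus_\rho\mathcal{L}_\rho$. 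When $x$ is a subscript variable, among these data are $\mathcal{C}\to S$, the line bundle $\mathcal{L}_x$, and the component $\sigma_{p_x}\in H^0(\mathcal{C},\mathcal{L}_x^{\otimes-3}\otimes\omega_{\mathcal{C}/S,\log})$. The assignment $(\mathcal{C},\mathcal{L},\sigma)\mapsto(\mathcal{C},\mathcal{L}_x,\sigma_{p_x})$ is visibly compatible with base change, so the only real content is to show the triple $(\mathcal{C},\mathcal{L}_x,\sigma_{p_x})$ is a family of objects in the relevant moduli stack $\mathcal{R}^3_{m,\epsilon(-3\beta_x+m-2)}$.

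Next I would check that this triple is a family of prestable $3$-spin structures in the sense of \cite{ChiodoZvonkine2009}: $\mathcal{C}\to S$ is a family of $3$-stable curves; $\mathcal{L}_x$ is a line bundle whose cube is equipped, via $\sigma_{p_x}$, with a morphism $\mathcal{L}_x^{\otimes3}\to\omega_{\mathcal{C}/S,\log}$, equivalently $\mathcal{L}_x$ is a cube root of $\omega_{\mathcal{C}/S,\log}$ twisted down by the relative effective divisor $\mathrm{div}(\sigma_{p_x})$. The degree of that divisor on each fiber is $\deg(\mathcal{L}_x^{\otimes-3}\otimes\omega_{C,\log})=-3\beta_x+m-2$, a locally constant invariant, which together with the stability parameter $\epsilon$ selects the component $\mathcal{R}^3_{m,\epsilon(-3\beta_x+m-2)}$; by Proposition \ref{prop:CriterionForEffectiveness} and Corollary \ref{rem:BetaThetaPositive} this number is nonnegative whenever $\LGQ_{0,m}^\epsilon(Z(\theta),\beta)$ is nonempty, so the component is the expected one. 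One also observes that, since $x$ is a subscript variable, $\ell_x^\sigma(P)$ is exactly the order of vanishing of $\sigma_{p_x}$ at $P$, so the length bound $\ell_x^\sigma(P)\le\ell^\sigma(P)\le1$ from Definition \ref{Def:EpsilonStability} is precisely the length-$\le1$ condition on the spin datum.

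The main obstacle I anticipate is matching the remaining stability conditions: the ampleness requirement \eqref{item:Ample} of Definition \ref{Def:EpsilonStability} is phrased through $\mathcal{L}_\vartheta$, which mixes all of $\mathcal{L}_x,\mathcal{L}_y,\mathcal{L}_z,\mathcal{L}_a$ and $\omega_{C,\log}$, whereas the intrinsic stability of a prestable spin structure sees only $\mathcal{L}_x$ and the curve. Reconciling these requires using the effectiveness bounds of Proposition \ref{prop:CriterionForEffectiveness} — $\beta_y,\beta_a\ge0$, $\beta_z\le(m-2)/3$, together with nonnegativity of all basepoint contributions on each component — to bound the other factors of $\mathcal{L}_\vartheta$ component-by-component, so that ampleness of $\omega_{C,\log}\otimes\mathcal{L}_\vartheta^{\epsilon}$ forces the positivity condition defining $\mathcal{R}^3_{m,\epsilon(\cdot)}$ for $\mathcal{L}_x$ alone; if instead one reads $\mathcal{R}^3_{m,\cdot}$ as parametrizing genuinely prestable (unstabilized) spin data, this step is vacuous and only flatness and representability of the forgetful morphism remain, both routine. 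Finally, for several subscript variables one performs this construction for each variable independently and takes the fiber product of the resulting morphisms over $\Mbar_{0,m}$ (the stack of $3$-stable $m$-pointed curves), yielding the asserted map to the corresponding product of spin-structure spaces.
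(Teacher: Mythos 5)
Your approach differs from the paper's. The paper gives no direct derivation; it says only that the proposition ``follows from'' Lemma~\ref{lem:Concave}, the concavity statement that $\mathcal{L}_x$ and $\mathcal{L}_x\otimes\mathcal{L}_a^*$ have no global sections when they carry nontrivial monodromy at every marked point. Your proof instead constructs the forgetful morphism directly on families via the reinterpretation of LG-quasimaps as tuples $(C,\mathcal{L},\sigma)$, then checks the degree of the vanishing divisor of $\sigma_{p_x}$ and the length bound, without invoking concavity at all. For establishing the bare existence of the morphism to a stack of \emph{prestable} spin structures --- and the parenthetical ``prestable'' in the statement indicates that is what is asked --- your route is cleaner and self-contained, and I believe it is correct; Lemma~\ref{lem:Concave} is doing work elsewhere in the paper (governing the fibers of this map and justifying the vanishing conventions of Remark~\ref{rem:BroadVanish}) rather than underwriting the existence of the forgetful morphism itself. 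The one hedged step in your write-up is the matching of $\epsilon$-stability conditions: you identify the right strategy (bounding the other factors of $\mathcal{L}_\vartheta$ using Proposition~\ref{prop:CriterionForEffectiveness}) but correctly note this becomes vacuous if $\mathcal{R}^3_{m,\epsilon(-3\beta_x+m-2)}$ is read as a prestable, unstabilized space. The paper's own treatment is at least as terse on this point, so this is not a defect in your argument so much as an invitation to unpack the definition of $\mathcal{R}^3$ from \cite{ChiodoZvonkine2009} before declaring victory.
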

This follows from the following lemma, adapted from
\cite{RossRuan2015}.
\begin{lem}\label{lem:Concave}
  If $x$ is in the subscript of $\theta,$ then if $\mathcal{L}_x$ and
  $\mathcal{L}_x\otimes\mathcal{L}_a^*$ have nontrivial monodromy at
  each marked point, they have no global sections.
\end{lem}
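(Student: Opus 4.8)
The plan is to isolate the role of the two line bundles and then reduce everything to a single statement about line bundles on the $3$-stable curve $C$, which I would prove by a degree count localized at a leaf of the dual tree. For an irreducible component $C_v$ of $C$ write $\mu_v$ and $\nu_v$ for the numbers of marked points and nodes of $C$ lying on $C_v$, so that $\deg(\omega_{C,\log}|_{C_v})=\mu_v+\nu_v-2$. Write $\mathcal{M}$ for either $\mathcal{L}_x$ or $\mathcal{L}_x\otimes\mathcal{L}_a^*$. The claim I would prove is: $(\star)$ if $\mathcal{M}$ has nontrivial monodromy at every marked point and $\deg(\mathcal{M}|_{C_v})\le\tfrac13(\mu_v+\nu_v-2)$ for every component $C_v$, then $H^0(C,\mathcal{M})=0$. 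Both of our bundles satisfy the hypotheses of $(\star)$. The monodromy condition is assumed in the lemma for each of $\mathcal{L}_x$ and $\mathcal{L}_x\otimes\mathcal{L}_a^*$. For the degree condition, since $x$ is a subscript variable we have $\mathbb{V}(p_x)\subseteq V^{\uns}(\theta)$, so the LG-quasimap condition forces $\sigma_{p_x}$ to be nonzero at the generic point of each component; hence $\sigma_{p_x}|_{C_v}$ is a nonzero section of $(\mathcal{L}_x^{\otimes -3}\otimes\omega_{C,\log})|_{C_v}$, giving $-3\deg(\mathcal{L}_x|_{C_v})+(\mu_v+\nu_v-2)\ge0$. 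Likewise $\mathbb{V}(a)\subseteq V^{\uns}(\theta)$ makes $\sigma_a$ generically nonzero on each component, so $\deg(\mathcal{L}_a|_{C_v})\ge0$, and the bound for $\mathcal{L}_x$ only improves upon tensoring with $\mathcal{L}_a^*$.

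To prove $(\star)$, suppose $s\in H^0(C,\mathcal{M})$ is nonzero, and let $S$ be the nonempty set of components on which $s$ does not vanish identically. The subcurve $\bigcup_{v\in S}C_v$ is a union of subtrees of the dual tree of $C$; choose a leaf component $C_v$ of one of them. Then at most one node of $C_v$ connects $C_v$ to another component of $S$, so $s$ vanishes at each of the remaining $\ge\nu_v-1$ nodes of $C_v$ (there it vanishes identically on the adjacent component, hence is zero at the node), and $s|_{C_v}$ also vanishes at each of the $\mu_v$ marked points by the monodromy hypothesis. Each such vanishing contributes at least $\tfrac13$ to $\deg(\operatorname{div}(s|_{C_v}))=\deg(\mathcal{M}|_{C_v})$, so $\deg(\mathcal{M}|_{C_v})\ge\tfrac13(\mu_v+\nu_v-1)$, contradicting the hypothesis $\deg(\mathcal{M}|_{C_v})\le\tfrac13(\mu_v+\nu_v-2)$. (If the subtree is a single component one instead gets $\deg(\mathcal{M}|_{C_v})\ge\tfrac13(\mu_v+\nu_v)$, an even stronger contradiction.) Hence $s=0$, and applying $(\star)$ to $\mathcal{M}=\mathcal{L}_x$ and to $\mathcal{M}=\mathcal{L}_x\otimes\mathcal{L}_a^*$ proves the lemma. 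This follows the strategy of the analogous statement in \cite{RossRuan2015}.

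The main obstacle is the reducible case. On an irreducible genus-zero orbifold curve, $(\star)$ is the one-line observation that a nonzero section vanishes at all $m$ marked points and so has degree at least $m/3>(m-2)/3$. For a general $3$-stable curve one must first identify the correct subcurve — the support of a hypothetical section — and then run the count at a leaf of its dual tree, being careful about two things: that on a leaf component it is exactly the nodes pointing \emph{away} from the support which are forced to be zeros of $s|_{C_v}$, and that a zero at a point of nontrivial monodromy contributes at least $\tfrac13$ to the degree of the zero divisor while a zero at a point of trivial monodromy contributes at least $1$ (which only helps). Keeping this bookkeeping straight so that the per-component upper bound coming from $\sigma_{p_x}$ collides with the lower bound — the two differing by exactly $\tfrac13$ — is the delicate part.
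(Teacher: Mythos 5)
Your proof is correct and follows the same route the paper relies on: the paper's proof simply cites Lemma 1.5 of \cite{RossRuan2015} and observes that its argument generalizes to any line bundle whose tensor power is a twist-down of $\omega_{C,\log}$ by an effective divisor, which is exactly what your per-component degree bound from $\sigma_{p_x}$ (and $\sigma_a$) being generically nonvanishing encodes. The leaf-of-the-dual-tree degree count you give is the content of that cited lemma, spelled out in full.
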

\begin{proof}
  The proof of Lemma 1.5 of \cite{RossRuan2015} immediately generalizes to
  any line bundle on a nodal genus zero twisted curve such that a
  tensor power is a twist down of $\omega_{C,\log}$ by a effective
  divisor.
\end{proof}
\begin{rem}
  The assumption of nontrivial monodromy is very important, and we
  will later define numerical invariants to vanish when this
  assumption is not satisfied. (See Sections
  \ref{sec:CompactTypeStateSpace} and \ref{sec:VirtualCycle}.)
\end{rem}
\begin{rem}
  In the case $\theta=\theta_{xyz}^a$ one can straightforwardly mimic
  the entire argument of \cite{RossRuan2015}, which proves the analog
  of Theorem \ref{thm:MirrorTheorem} for a different class of moduli
  spaces coming from hypersurfaces in weighted projective
  spaces. Indeed for $\theta_{xyz}^a$, Sections \ref{sec:Localization}
  and \ref{sec:MirrorTheorems} are extremely simple, since the torus
  action of Section \ref{sec:TorusAction} is trivial.
\end{rem}
Consider the space $\LGQ_{0,m}^\epsilon(Z(\theta^{xyza}),\beta).$ Let
$(C,\mathcal{L},\sigma)$ be an $\epsilon$-stable genus-zero $m$-marked
LG-quasimap to $Z(\theta)$. As
$\sigma_{p_x}=\sigma_{p_y}=\sigma_{p_z}=0$ by the condition that
$\sigma$ land in $Z(\theta)$, we may rephrase the data of
$(C,\mathcal{L},\sigma)$ once again, and we arrive at exactly the data
of an $\epsilon$-stable quasimap (in the sense of
\cite{CiocanFontanineKim2014}) to $Z(\theta)$. That is, we have
\begin{align*}
  \LGQ_{0,m}^\epsilon(Z(\theta),\beta)=Q_{0,m}^\epsilon(Z(\theta),\beta),
\end{align*}
where the latter space is a moduli stack of quasimaps. (The $\beta$ on
the right must be reinterpreted slightly as a character of $G$.) This
isomorphism is, of course, the motivation for Definition
\ref{Def:LGQuasimapToX}. Note, however, that
$\LGQ_{0,m}^\epsilon(X(\theta),\beta)$ is \emph{not} isomorphic to
$Q_{0,m}^\epsilon(X(\theta),\beta).$

One may wonder why, in this setup, we express the orbifold
$\left[(\O_{\P^2}(-3))^3/\mu_3\right]$ as the complicated toric
variety $X(\theta^{xyza})=[\C^{13}\sslash(\C^*)^4]$, rather than the
more natural-seeming (and isomorphic) toric variety
$$[\C^{12}\sslash((\C^*)^3\times\mu_3)].$$ The reason is a slightly 
complicated one. In fact, we could have used either
presentation. However, we will later calculate an important generating
function $I^\theta(q,z)$, which is defined using the moduli space
$\LGQG_{0,1}^{0+}(Z(\theta),\beta).$ This space parametrizes
LG-quasimaps $(C,u,\kappa)$ to $Z(\theta),$ where $C\cong\P_{3,1}.$
One can easily see from the definitions in \cite{FanJarvisRuan2015}
that there is no change if $\P_{3,1}$ is replaced with $\P^1$; in
other words, the stack structure plays no role! The reason is that
part of the data of $u$ is a principal $\mu_3$-bundle on $C$, and a
principal $\mu_3$-bundle on $\P_{3,1}$ is trivial, since the orbifold
fundamental group of $\P_{3,1}$ is trivial.

This issue disappears in our setup, essentially because the line
bundle $\mathcal{L}_a$ may still be nontrivial for
$(C,\mathcal{L},\sigma)\in\LGQG_{0,1}^{0+}(Z(\theta),\beta).$ As a
result, $I^\theta(q,z)$ contains much less information when using the
``natural'' presentation of $X(\theta^{xyza})$ than it does when using
our presentation. This method of finding more informative
presentations of orbifolds is alluded to in
\cite{CiocanFontanineKim2013}, and is related to the notion of
\emph{$S$-extended $I$-functions} from
\cite{CoatesCortiIritaniTseng2015}.

\section{Evaluation maps, compact type state space, and invariants}\label{sec:EvaluationMapsInvariants}
\noindent\textbf{Evaluation maps.} The universal curve $\UQ_{0,m}^\epsilon(Z(\theta),\beta)$ over
$\LGQ_{0,m}^\epsilon(Z(\theta),\beta)$ has a universal map $u^{\rig}$
to $Z$, by Definition \ref{Def:AssociatedQuasimap}. By the condition
in Definition \ref{Def:LGQuasimapToX}, all marked points and
(relative) nodes of $\UQ_{0,m}^\epsilon(Z(\theta),\beta)$ map to
$Z(\theta)\subseteq Z.$ Thus by Section 4.4 of
\cite{AbramovichGraberVistoli2008}, there are \emph{evaluation maps}
$\ev_i:\LGQ_{0,m}^\epsilon(Z(\theta),\beta)\to\bar{I}Z(\theta),$ the
rigidified inertia stack of $Z(\theta)$, recording the image of the
$i$th marked point.  In \cite{FanJarvisRuan2015} there is a more
general (and more subtle) notion of evaluation map defined on
$\LGQ_{0,m}^\epsilon(X(\theta),\beta)$, and taking values in
$\bar{I}X(\theta)$. There are also evaluation maps on graph spaces:
$$ev_i:\LGQG_{0,m}^\epsilon(X(\theta),\beta)\to\bar{I}X(\theta)\times\P^1.$$

\subsection{Compact type state space}\label{sec:CompactTypeStateSpace}
Part of the setup of the gauged linear sigma model in
\cite{FanJarvisRuan2015} is a special graded vector space
$\mathscr{H}(\theta)$, with a pairing, called the \emph{state
  space}. It is defined via \emph{relative} Chen-Ruan cohomology
groups. In our case, we may work with a particularly simple subspace
$\H(\theta)$.
\begin{Def}
  A sector of $\bar{I}X(\theta)$ is called \emph{narrow}\footnote{Note
    that our definition of a narrow sector is different from that in
    \cite{FanJarvisRuan2015}; however, it is the correct one for this
    setting.} if for each subscript variable, the corresponding
  sector of $[\C^3/\mu_3]$ is compact, i.e. is isomorphic to $B\mu_3.$
  We denote the (open and closed) substack of narrow sectors by
  $\bar{I}X(\theta)^{\nar}\subseteq\bar{I}X(\theta)$. The cohomology
  classes of narrow sectors are a direct summand
  $H^*_{CR}(X(\theta))^{\nar}\subseteq H^*_{CR}(X(\theta)).$
\end{Def}
\begin{Def}\label{Def:AmbientNarrowStateSpace}
  The \emph{ambient narrow state space} $\H(\theta)$ is the image
  $\iota^*(H^*_{CR}(X(\theta))^{\nar})\subseteq H^*_{CR}(Z(\theta))$.
\end{Def}
\begin{rem}
  The Poincar\'e pairing on $H^*_{CR}(Z(\theta))$ descends to
  $\H(\theta).$
\end{rem}
$\H(\theta)$ inherits a grading from $H^*_{CR}(Z(\theta))$, but it is
not the correct one for our purposes. For example, for
$\theta=\theta_{xyz}^a$ this would result in a vector space
concentrated in degree zero, due to the fact that
$Z(\theta)\cong[\Spec\C/(\mu_3)^4].$ In fact, this issue also arises
when trying to define a graded pullback map on Chen-Ruan
cohomology. Instead, we define the grading as follows.

First, the ages of elements of $H^*_{CR}(Z(\theta))$ are calculated
from the normal bundle to the embedding of a $g$-twisted sector not
into $Z(\theta),$ but into the ambient space $X(\theta).$ (These are
equivalent in the case where $Z(\theta)$ intersects the $g$-fixed
locus in $X(\theta)$ transversely, which is the case for
$\theta=\theta^{xyza}.$ However, it is not true for
$\theta=\theta_{xyz}^a,$ where $Z(\theta)$ is contained in every
$g$-fixed locus.)

Second, we add to each degree the somewhat mysterious shift
$\dim(Z(\theta))-3.$ We do this in order to obtain the following:
\begin{thm}\label{thm:StateSpaceIsom}
  There is a graded isomorphism between $\H(\theta)$ and $\H(\theta')$
  for any $\theta,\theta'\in\Theta$. In particular, the graded
  subspaces have the dimensions:
  \begin{align}\label{eqn:DegreeTable}
    \begin{array}{c|c|c|c}
    \H^0(\theta)&\H^2(\theta)&\H^4(\theta)&\H^6(\theta)\\\hline
    1&4&4&1
  \end{array}
  \end{align}
\end{thm}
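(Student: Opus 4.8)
The plan is to compute $\H(\theta)$ explicitly for each of the four characters $\theta\in\Theta$ and exhibit that the resulting graded vector spaces all have the dimension profile in \eqref{eqn:DegreeTable}, after which the isomorphisms are obtained by matching the natural bases sector-by-sector. Since $\H(\theta)=\iota^*(H^*_{CR}(X(\theta))^{\nar})$, the first task is to enumerate the sectors of $\bar{I}X(\theta)$, decide which are narrow, compute the age of each (using the normal bundle of the $g$-fixed locus inside $X(\theta)$, not inside $Z(\theta)$, as stipulated), and then identify the image of the restriction map $\iota^*$. For $\theta=\theta^{xyza}$ this is the cleanest case: $X(\theta^{xyza})$ is the total space of $[\O_{\P^2}(-3)^3/\mu_3]$ over $[(\P^2)^3/\mu_3]$, every sector is narrow, $Z(\theta^{xyza})=[E^3/\mu_3]$ meets each $g$-fixed locus transversally, and $\dim Z(\theta^{xyza})-3=0$ so there is no shift; one reads off the Chen-Ruan cohomology of $[E^3/\mu_3]$ directly and checks the profile $1,4,4,1$.

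Next I would do the opposite extreme $\theta=\theta^a_{xyz}$, where $Z(\theta)\cong B((\mu_3)^4)$ and the honest Chen-Ruan grading collapses everything to degree zero; here the whole point of the age-in-$X(\theta)$ prescription and the shift $\dim(Z(\theta))-3=-3$ is to redistribute the sectors into the correct degrees. The sectors of $\bar I X(\theta^a_{xyz})$ correspond to elements $g\in(\mu_3)^4$; narrowness forces the $\C^3/\mu_3$-component of each of $x,y,z$ to be $B\mu_3$, i.e. the three ``subscript'' $\mu_3$-factors act nontrivially, and the age is computed from the $\mu_3^4$-action on $\C^9\times(\text{fiber directions})$. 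A careful bookkeeping of which of the $3^4$ group elements give narrow sectors, together with their shifted degrees, should again produce $1,4,4,1$. The intermediate characters $\theta^{xya}_z$ and $\theta^{xa}_{yz}$ are handled the same way, using the descriptions $X(\theta^{xya}_z)\cong[(\O_{\P^2}(-3)^2\times[\C^3/\mu_3])/\mu_3]$ and $X(\theta^{xa}_{yz})\cong[(\O_{\P^2}(-3)\times[\C^3/\mu_3]^2)/\mu_3]$, with $\dim Z(\theta)-3$ equal to $-1$ and $-2$ respectively; narrowness removes precisely the non-compact sectors of the $[\C^3/\mu_3]$-factors, and the toric-divisor computations of Section \ref{sec:ToricDivisors} give the surviving untwisted-sector classes $H_x,H_y,\dots$ that account for the degree-$2$ and degree-$4$ pieces.

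Having tabulated all four, the graded isomorphisms are built by a uniform recipe: each narrow sector of $\bar I X(\theta)$ is indexed by a group element together with a class on a product of copies of $E$, $\P^2$, and points, and ``moving a variable from superscript to subscript'' replaces an $E$- or $\P^2$-factor by a $B\mu_3$-factor while shifting ages in a way exactly compensated by the change in $\dim Z(\theta)-3$; I would make this compensation precise and use it to define the isomorphism $\H(\theta)\cong\H(\theta')$ on basis elements, then check it respects the grading. The main obstacle I anticipate is the age and narrowness bookkeeping in the cases $\theta^{xya}_z$, $\theta^{xa}_{yz}$, and especially $\theta^a_{xyz}$: one must correctly account for the twisting contributed by the $\L_a$ and $\L_{p_x}$ directions (the characters $3\hat t_a$ and $-3\hat t_x+\hat t_R$ in $\mathbf R$), verify that the ambient-space age prescription and the shift genuinely land every class in an even degree between $0$ and $6$, and confirm that $\iota^*$ does not kill any narrow class — this last point is where transversality fails for $\theta\ne\theta^{xyza}$ and must be argued separately, presumably by comparing with the known structure of quasimap state spaces via $X^{\rig}_R(\theta)$ as in Remark \ref{rem:AssociatedQuasimapToX}. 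Once the four tables agree, the isomorphism statement and the dimension table \eqref{eqn:DegreeTable} follow.
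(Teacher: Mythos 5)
Your proposal is correct and follows essentially the same route as the paper: a case-by-case enumeration of the narrow sectors of $\bar IX(\theta)$ for each $\theta\in\Theta$, computation of ages in the ambient space together with the shift $\dim Z(\theta)-3$, identification of $\Im(\iota^*)$ (via the homotopy equivalence with $[(\P^2)^i\times B\mu_3^{3-i}/\mu_3]$ in the mixed cases), and then the explicit basis-matching recipe of replacing $1\mapsto 1_\zeta$ and $H_x\mapsto 1_{\zeta^2}$ when a variable moves from superscript to subscript. The anticipated bookkeeping difficulties you flag are exactly the content of the paper's tables, and no step of your outline would fail.
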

\begin{rem}
  Proving this is just a calculation; we call it a theorem because it
  is one of the parts of Ruan's original LG/CY conjecture
  (\cite{Ruan2011}). There is a method (\cite{ChiodoNagel2015}) for
  proving more general statements of this form, via careful use of an
  orbifold Thom isomorphism theorem, but it does not yet apply to this
  case.
\end{rem}
\begin{proof}
  We record the enlightening parts of the proof here.
  
\medskip

\noindent\textbf{Compact type state space of $Z(\theta^{xyza})$.}
Let $\theta=\theta^{xyza}$. The inclusion
$\iota:Z(\theta)\into X(\theta)$ factors as $\iota'\circ\iota''$,
where $\iota'$ is the inclusion $[(\P^2)^3/\mu_3]\into X(\theta).$
Since $\iota'$ is a homotopy equivalence, we have
$\Im(\iota^*)\cong Im((\iota'')^*)$.

The points of $(\P^2)^3$ with nontrivial stabilizer are those points
$(p_1,p_2,p_3),$ where $p_1,p_2,p_3\in\P^2$ are all fixed by
multiplication of the first coordinate by $\zeta$. In other words,
using the notation of Section \ref{sec:CohomologyOfLineBundle},
$p_i\in\tilde{L_0}$ or $p_i=\tilde{P_0}.$ From this we see that the
orbifold locus in $[(\P^2)^3/\mu_3]$ is a union of eight
components. The seven components isomorphic to $L_0\times L_0\times
P_0$, $L_0\times P_0\times P_0,$ and $P_0\times P_0\times P_0$ do not
intersect the critical locus, since $\tilde{P_0}\not\in E.$
The component $L_0\times L_0\times L_0$ intersects
$[E^3/\mu_3]$ in $3^3=27$ points (each isomorphic to
$B\mu_3$). That is,
\begin{prop}
  The rigidified inertia stack $\bar{I}Z(\theta)$ of $Z(\theta)$ is
  isomorphic to the disjoint union of $Z(\theta)$ and $27\cdot2=54$
  points.
\end{prop}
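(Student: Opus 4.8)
The plan is to read off $\bar{I}Z(\theta)$ directly from the presentation $Z(\theta^{xyza})\cong[E^3/\mu_3]$ --- with $\mu_3$ acting diagonally, by multiplication on the first homogeneous coordinate of each $\P^2$ factor --- using the description of the rigidified inertia stack of an abelian quotient from Section~\ref{sec:ChenRuan}. Writing $\mu_3=\{1,\zeta,\zeta^2\}$, that stack is the disjoint union of the untwisted sector, which is $Z(\theta)$ itself by definition, together with the $\zeta$-twisted and the $\zeta^2$-twisted sectors; so everything reduces to identifying those two.

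For $g=\zeta$, the $\zeta$-twisted sector is $[(E^3)^\zeta/(\mu_3/\langle\zeta\rangle)]$, and since $\langle\zeta\rangle=\mu_3$ the acting group is trivial, so this sector is simply the fixed locus $(E^3)^\zeta$. A point of $E^3$ is fixed by $\zeta$ exactly when each of its three coordinates lies in the $\mu_3$-fixed locus of $E$, namely $E\cap(\tilde{L_0}\cup\tilde{P_0})=E\cap\tilde{L_0}=\mathbb{V}(x_0,\,x_1^3+x_2^3)$, using $\tilde{P_0}\notin E$; this is a set of three reduced points. Hence $(E^3)^\zeta$ consists of $3^3=27$ reduced points --- exactly the $27$ points of $(L_0\times L_0\times L_0)\cap[E^3/\mu_3]$ already identified above. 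Since being fixed by $\zeta$ is the same as being fixed by $\zeta^2$ and $\langle\zeta^2\rangle=\mu_3$ as well, the $\zeta^2$-twisted sector is the same $27$ points. Assembling the three sectors gives $\bar{I}Z(\theta)\cong Z(\theta)\sqcup 27\sqcup 27=Z(\theta)\sqcup 54$ points.

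The only genuinely checkable point --- the rest being bookkeeping with the definitions of Section~\ref{sec:ChenRuan} --- is the scheme-theoretic structure on the twisted sectors. One must verify that $E\cap\tilde{L_0}$ is reduced, which holds because the line $\tilde{L_0}=\mathbb{V}(x_0)$ meets the smooth Fermat cubic $E$ transversally (equivalently, $x_1^3+x_2^3$ has three distinct roots on $\{x_0=0\}$), so the $27$ points carry no infinitesimal thickening; and that each of these points has stabilizer exactly $\mu_3=\langle\zeta\rangle$, so that --- as in the local model $\bar{I}B\mu_3\cong B\mu_3\sqcup\Spec\C\sqcup\Spec\C$ recalled earlier --- the two twisted sectors over such a point are reduced points rather than gerbes. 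I do not expect any real difficulty beyond these routine checks.
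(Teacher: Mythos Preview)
Your proof is correct and follows essentially the same approach as the paper: both identify the twisted sectors with the $\mu_3$-fixed locus in $E^3$, use that $\tilde{P_0}\notin E$ so that only $E\cap\tilde{L_0}$ (three points) contributes in each factor, and count $3^3=27$ fixed points for each of the two nontrivial group elements. The paper routes the computation through the eight components of the orbifold locus in the ambient $[(\P^2)^3/\mu_3]$ before intersecting with $E^3$, whereas you compute $(E^3)^\zeta$ directly; this is a cosmetic difference, and your added remarks on reducedness and the absence of residual gerbes are welcome but not strictly needed for the paper's purposes.
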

It is easy to check that a $\C$-basis of $\Im((\iota'')^*)$ consists
of the pullbacks of the classes
$$\{1,H_x,H_y,H_z,H_xH_y,H_xH_z,H_yH_z,H_xH_yH_z\}.$$
The images of the classes $1_\zeta,1_{\zeta^2}\in H^*_{CR}(X(\theta))$
are the sums of the 27 $\zeta$-twisted and $\zeta^2$-twisted classes,
respectively, in $H^*_{CR}(Z(\theta))$. The corresponding ages are 1
and 2, respectively. We thus obtain the table \eqref{eqn:DegreeTable}
for $\theta^{xyza}$ for $\H(\theta^{xyza}).$.
\begin{notation}
  We refer to e.g. $(\iota'')^*(H_yH_z)$ by the more-cumbersome
  notation $(1\otimes H_y\otimes H_z)_1,$ and denote
  $(\iota'')^*(1_\zeta)$ and $(\iota'')^*(1_{\zeta^2})$ by
  $(1\otimes1\otimes1)_\zeta$ and $(1\otimes1\otimes1)_{\zeta^2}$,
  respectively, since it will simplify our notation in the remaining
  cases.
\end{notation}

\noindent\textbf{Compact type state space of $Z(\theta_{xyz}^a)$.}
Let $\theta=\theta^{a}_{xyz}.$ Recall from Section
\ref{sec:Characters} that $X(\theta)\cong[\C^9/(\mu_3)^4],$ with
$Z(\theta)=[pt/(\mu_3)^4]$ the origin. As all fixed loci are
connected, the components of $\bar{I}X(\theta)$ and $\bar{I}Z(\theta)$
are both in bijection with $(\mu_3)^4.$ The pullback map $\iota^*$ is
surjective, and the narrow sectors correspond to elements of
$(\mu_3)^4$ that act trivially on $\C^9$. We can easily write down
these elements, and calculating their ages gives
\begin{align*}
  \begin{array}{c|c}
  \mbox{Element}&\mbox{Age}\\\hline
  (\zeta,\zeta,\zeta,1)&3\\
  (\zeta^2,\zeta,\zeta,1)&4\\
  (\zeta,\zeta^2,\zeta,1)&4\\
  (\zeta,\zeta,\zeta^2,1)&4\\
  (\zeta^2,\zeta^2,\zeta,1)&5\\
  (\zeta^2,\zeta,\zeta^2,1)&5\\
  (\zeta,\zeta^2,\zeta^2,1)&5\\
  (\zeta^2,\zeta^2,\zeta^2,1)&6\\
  (\zeta^2,\zeta^2,\zeta^2,\zeta)&5\\
  (\zeta,\zeta,\zeta,\zeta^2)&4
  \end{array}
\end{align*}
We denote the class associated to $(\zeta,\zeta,\zeta,1)$ by
$(1_\zeta\otimes1_\zeta\otimes1_\zeta)_1,$ and similarly for the first
eight rows of this list. The last two we denote
$(1_{\zeta^2}\otimes1_{\zeta^2}\otimes1_{\zeta^2})_{\zeta}$ and
$(1_{\zeta}\otimes1_{\zeta}\otimes1_{\zeta})_{\zeta^2}$, respectively.
The shifted degree of a class $\gamma$ is
$2\age(\gamma)-2\dim Z(\theta)-6=2\age(\gamma)-6$. Thus again we
obtain \eqref{eqn:DegreeTable}.

\medskip

\noindent\textbf{Compact type state space of $Z(\theta_{z}^{xya})$.}
Finally, we include the computation for $\H^*(\theta)$ where
$\theta=\theta_z^{xya},$ because this case will be worked out in
detail throughout the paper.

Recall that
\begin{align*}
  X(\theta)&\cong[((\O_{\P^2}(-3))^2\times[\C^3/\mu_3])/\mu_3]\\
  Z(\theta)&\cong[(E^2\times B\mu_3)/\mu_3]=[E^2/(\mu_3)^2].
\end{align*}
The elements
$(1,1),(1,\zeta),(1,\zeta^2),(\zeta,\zeta),(\zeta^2,\zeta^2)\in(\mu_3)^2$
do not give narrow sectors. We write the narrow sectors associated to
the other elementsof $(\mu_3)^2$:
\begin{table}[h]
  \centering
  \begin{tabular}{c|c}
    Group element&Sectors\\
\hline
    $(\zeta,1)$&$[E^2/\mu_3]$\\
    $(\zeta^2,1)$&$[E^2/\mu_3]$\\
    $(\zeta^2,\zeta)$&9 points\\
    $(\zeta,\zeta^2)$&9 points\\
  \end{tabular}
\end{table}

The image of the pullback is calculated in the same way as it was for
$\theta^{xyza}.$ In particular, we have a basis for $\H(\theta)$:
\begin{align*}
  \{&(1\otimes1\otimes1_\zeta)_1,(H_x\otimes1\otimes1_\zeta)_1,(1\otimes
  H_y\otimes1_\zeta)_1,(H_x\otimes H_y\otimes1_\zeta)_1,\\&(1\otimes1\otimes1_{\zeta^2})_1,(H_x\otimes1\otimes1_{\zeta^2})_1,(1\otimes
  H_y\otimes1_{\zeta^2})_1,(H_x\otimes H_y\otimes1_{\zeta^2})_1,\\&(1\otimes1\otimes1_{\zeta^2})_\zeta,(1\otimes1\otimes1_\zeta)_{\zeta^2}\}.
\end{align*}
Here $(1\otimes1\otimes1_{\zeta^2})_\zeta$ is the sum of the first set
of 9 points above, and $(H_x\otimes1\otimes1_\zeta)_1$ is the
pullback of the class $H_x$ on the $(\zeta,1)$-twisted sector of
$X(\theta),$ isomorphic to $[\O_{\P^2}(-3)^2/\mu_3].$ Calculating the
(properly shifted) degrees gives \eqref{eqn:DegreeTable} again.

It is straightforward to carry out the calculation for
$\theta_{yz}^{xa}$, with the same result. This proves the theorem.
\end{proof}
\begin{rem}
  This graded isomorphism holds for the larger state spaces
  $\mathscr{H}(\theta)$ defined in \cite{FanJarvisRuan2015} as well.
\end{rem}

\noindent\textbf{Explicit isomorphisms.} In Section
\ref{sec:RelatingIFunctions}, use an explicit identification of
$\H(\theta)$ with $\H(\theta')$, which we describe here. Let $\theta$
be such that $x$ is a superscript variable, and let $\theta'$ be the
character obtained by moving $x$ to the subscript. Elements of
$\H(\theta)$ are of the form $(1\otimes\alpha_y\otimes\gamma_z)_g$
or$(H_x\otimes\alpha_y\otimes\gamma_z)_g$ with $g\in\mu_3.$ We send:
\begin{align*}
  (1\otimes\alpha_y\otimes\gamma_z)_g&\mapsto(1_\zeta\otimes\alpha_y\otimes\gamma_z)_g\in\H(\theta')\\
  (H_x\otimes\alpha_y\otimes\gamma_z)_g&\mapsto(1_{\zeta^2}\otimes\alpha_y\otimes\gamma_z)_g\in\H(\theta').
\end{align*}
Repeating this process and its inverse gives explicit graded
isomorphisms between $H(\theta)$ and $\H(\theta')$ for any
$\theta,\theta'\in\Theta.$
\begin{rem}\label{rem:AnalogOfUntwistedSector}
  For each $\theta$, there is a special generator with degree
  zero. This is the element where $g=1\in\mu_3$ and all entries of the
  tensor are $1$ or $1_\zeta$, for $x$ a superscript or subscript
  variable respectively. We will abbreviate it by $1_\theta$.
\end{rem}

\subsection{Virtual class and invariants}\label{sec:VirtualCycle}
We define open and closed substacks:
\begin{align*}
  \LGQ_{0,m}^{\epsilon}(X(\theta),\beta)^{\nar}:&=\bigcap_{i=1}^m\ev_i^{-1}(\bar{I}X(\theta)^{\nar})\subseteq\LGQ_{0,m}^{\epsilon}(X(\theta),\beta)\\
  \LGQ_{0,m}^{\epsilon}(Z(\theta),\beta)^{\nar}:&=\bigcap_{i=1}^m\ev_i^{-1}(\bar{I}Z(\theta)^{\nar})\subseteq\LGQ_{0,m}^{\epsilon}(Z(\theta),\beta).
\end{align*}
\begin{thm}[\cite{FanJarvisRuan2015}]\label{thm:VirtualCycle}
  The complex $R^\bullet\pi_*\mathcal{E}$ is a perfect obstruction
  theory on $\LGQ_{0,m}^{\epsilon}(X(\theta),\beta)^{\nar}.$ It
  induces (via \emph{cosection localization}, see
  \cite{KiemLi2013,FanJarvisRuan2015}) a virtual fundamental class
  $$[\LGQ_{0,m}^\epsilon(Z(\theta),\beta)^{\nar}]^{\vir}\in
  H_*(\LGQ_{0,m}^\epsilon(Z(\theta),\beta)^{\nar},\C).$$
  There is similarly a virtual fundamental class on each graph space
  $\LGQG_{0,m}^\epsilon(Z(\theta),\beta)^{\nar}$.
\end{thm}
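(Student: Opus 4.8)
The plan is to deduce the statement from the general GLSM machinery of \cite{FanJarvisRuan2015}, checking only that our concrete setup (genus zero, $3$-stable curves, the explicit $W$ and $\C^*_R$ of Section \ref{sec:Setup}) satisfies the hypotheses under which their construction runs. First I would introduce the algebraic stack $\mathfrak{M}$ parametrizing a $3$-stable curve $C$ together with line bundles $\mathcal{L}_x,\mathcal{L}_y,\mathcal{L}_z,\mathcal{L}_a$ of the prescribed degrees; it is smooth, by Olsson's theorem on $\mathfrak{M}_0^{\tw}$ combined with smoothness of the relative Picard stack. The forgetful morphism $\LGQ_{0,m}^{\epsilon}(X(\theta),\beta)^{\nar}\to\mathfrak{M}$ exhibits the moduli stack (restricted to the $\epsilon$-stable, narrow open substack) as parametrizing sections of $\mathcal{E}=\bigoplus_{\rho\in\mathbf{R}}\mathcal{L}_\rho$ on the universal curve $\pi$, hence carrying the standard relative perfect obstruction theory with virtual tangent complex $R^\bullet\pi_*\mathcal{E}$. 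Since $\pi$ has one-dimensional genus-zero fibers, $R^\bullet\pi_*\mathcal{E}$ is concentrated in cohomological degrees $0$ and $1$, so the obstruction theory is perfect of the correct amplitude; as $\mathfrak{M}$ is smooth, this yields an absolute perfect obstruction theory on $\LGQ_{0,m}^{\epsilon}(X(\theta),\beta)^{\nar}$ in the usual way.

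Next I would construct the cosection of the obstruction sheaf $\Ob=R^1\pi_*\mathcal{E}$ from the superpotential. The essential inputs are that $W$ is $G$-invariant and $\C^*_R$-homogeneous of degree $1$, and that an LG-quasimap carries the isomorphism $\kappa\colon u^*L_{\widehat{t_R}}\xrightarrow{\sim}\omega_{C,\log}$ of Definition \ref{Def:LGQuasimapToX}. Differentiating $W$ along the fibers of $\mathcal{E}$ gives a map $\mathcal{E}\to u^*L_{\widehat{t_R}}$, which $\kappa$ turns into a map $\mathcal{E}\to\omega_{C,\log}$; pushing forward and composing with the residue trace $R^1\pi_*\omega_{C,\log}\to\mathcal{O}$ produces the cosection $\sigma_W\colon\Ob\to\mathcal{O}$, exactly as in \cite{FanJarvisRuan2015,KiemLi2013}. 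Here I would check that the orbifold structure causes no trouble: $\omega_{C,\log}$ has trivial monodromy at every order-$3$ point (Section \ref{sec:3StableCurves}) and a simple pole at each node, so the relative-duality trace is defined. The crucial point is to identify the degeneracy locus of $\sigma_W$: the standard $dW$-computation shows $\sigma_W$ fails to be surjective precisely where the section $\sigma$ takes values in $\Crit(W)$ at every point, i.e. where the LG-quasimap factors through $[Z/\C^*_R]$. Thus the degeneracy locus is exactly $\LGQ_{0,m}^{\epsilon}(Z(\theta),\beta)^{\nar}$, which is proper by Theorem \ref{thm:DMStack}, so Kiem--Li cosection localization delivers the localized virtual class $[\LGQ_{0,m}^\epsilon(Z(\theta),\beta)^{\nar}]^{\vir}\in H_*(\LGQ_{0,m}^\epsilon(Z(\theta),\beta)^{\nar},\C)$.

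The graph-space version requires no new ideas: one replaces $\mathfrak{M}$ by the (still smooth) stack of $3$-stable curves equipped with a parametrized $\P^1$-component and the four line bundles, and reruns the identical argument, noting that $\epsilon$-stability, imposed only on the closure of the complement of the parametrized component, does not interact with the obstruction theory or the cosection. The step I expect to be the main obstacle is the second one: verifying that the superpotential cosection is globally well-defined over the whole narrow moduli stack and that its degeneracy locus is \emph{exactly} $\LGQ_{0,m}^{\epsilon}(Z(\theta),\beta)^{\nar}$, not something larger. This hinges on relative Serre duality on the universal orbifold curve --- in particular the behavior of $\omega_{C,\log}$ at nodes and order-$3$ points --- and on the compatibility of cosection localization with restriction to the open-closed narrow substack; it is here that one genuinely uses the structural hypotheses of \cite{FanJarvisRuan2015} rather than merely citing it as a black box.
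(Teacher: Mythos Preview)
The paper does not give its own proof of this theorem: it is stated as a direct citation of \cite{FanJarvisRuan2015}, with no argument supplied beyond the attribution. Your proposal is a correct and careful outline of how the Fan--Jarvis--Ruan construction specializes to the present setup, and in that sense you have written considerably more than the paper does; there is nothing to compare it against except the reference itself, whose argument you have accurately summarized.
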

By a general fact about cosection localization,
\begin{align*}
  \iota_*[\LGQ_{0,m}^\epsilon(Z(\theta),\beta)^{\nar}]^{\vir}=[\LGQ_{0,m}^\epsilon(X(\theta),\beta)^{\nar}]^{\vir},
\end{align*}
where the latter is the virtual fundamental induced by the perfect
obstruction theory $R^\bullet\pi_*\mathcal{E}.$

Using this, we may define \emph{LG-quasimap invariants}:
\begin{Def}
  Let $\alpha_1,\ldots,\alpha_m\in\H(\theta).$ Then we define
  \begin{align*}
    \langle\alpha_1\psi^{a_1},\ldots,\alpha_m\psi^{a_m}\rangle_{0,m,\beta}^{\epsilon,\theta}:&=\int_{[\LGQ_{0,m}^\epsilon(Z(\theta),\beta)^{\nar}]^{\vir}}\prod_{i=1}^m(\psi_i^{a_i}\ev_i^*\alpha_i)\\
    \langle\alpha_1\psi^{a_1},\ldots,\alpha_m\psi^{a_m}\rangle_{0,m,\beta}^{\epsilon,\theta,Gr}:&=\int_{[\LGQG_{0,m}^\epsilon(Z(\theta),\beta)^{\nar}]^{\vir}}\prod_{i=1}^m(\psi_i^{a_i}\ev_i^*\alpha_i).
  \end{align*}
\end{Def}
\begin{rem}
  Theorem \ref{thm:VirtualCycle} also states that the \emph{unshifted
    virtual dimension} of
  $\LGQ_{0,m}^\epsilon(Z(\theta),\beta)^{\nar}$ is $m$. This implies
  that if $\alpha_1,\ldots,\alpha_m$ have degrees $k_1,\ldots,k_m,$
  then
  $\langle\alpha_1\psi^{a_1},\ldots,\alpha_m\psi^{a_m}\rangle_{0,m,\beta}^{\epsilon,\theta}$
  vanishes unless $\sum_i(k_i+a_i)=m.$\footnote{The term
    \emph{unshifted virtual dimension} is nonstandard, and we define
    it by this property. In the statement in \cite{FanJarvisRuan2015},
    the condition instead would read
    $\sum_i(k_i+a_i-\age_i)=m-\sum\age_i$, where $\age_i$ is the age
    of the twisted sector in which $\ev_i$ lands.} Similarly
  $\LGQG_{0,m}^\epsilon(Z(\theta),\beta)^{\nar}$ has unshifted virtual
  dimension $m+3.$
\end{rem}
\begin{rem}\label{rem:BroadVanish}
  We may define these invariants for arbitrary
  $\alpha\in H^*_{CR}(Z(\theta)),$ with the convention that they
  vanish if $\alpha_i$ is supported on
  $\bar{I}X(\theta)\setminus\bar{I}X(\theta)^{\nar}$ for some $i$. (We
  refer to such $\alpha_i$ as \emph{broad}, as we used \emph{narrow}
  to refer to elements of $\H(\theta).$)
\end{rem}

\section{Equivariant localization on
  $\LGQG_{0,m}^\epsilon(X(\theta),\beta)$ and
  $\LGQ_{0,m}^\epsilon(X(\theta),\beta)$
  }\label{sec:Localization}
  In this section we define two natural group actions on the moduli
  spaces; a $\C^*$-action on
  $\LGQG_{0,m}^\epsilon(X(\theta),\beta)$ induced by the $\C^*$-action
  on $P^1,$ and a torus action on
  $\LGQ_{0,m}^\epsilon(X(\theta),\beta)$ induced by the torus action
  on the toric variety $X(\theta).$

\subsection{The $\C^*$-action on
  $\LGQG_{0,m}^\epsilon(X(\theta),\beta)$}\label{sec:CStarAction}
The $\C^*$-action we define, as well as the graph spaces themselves,
are essentially combinatorial tools for analyzing the generating
functions defined in Section \ref{sec:GeneratingFunctions}. Recall
that an LG-graph quasimap to $X(\theta)$ is a tuple
$(C,\mathcal{L},\sigma,\tau)$, where $\tau:C\to\P^1$ is a degree-one
(nonrepresentable) map. For $\lambda\in\C^*$, let
$\lambda\cdot[s:t]=[\lambda s:t]$ denote the standard (left) action on
$\P^1.$ Then
\begin{align*}
  (C,\mathcal{L},\sigma,\tau)\mapsto(C,\mathcal{L},\sigma,\tau\circ\lambda)
\end{align*}
is a (right) $\C^*$-action on $\LGQG_{0,m}^\epsilon(X(\theta),\beta)$.

\medskip

\noindent\textbf{$\C^*$-fixed locus and normal bundles.} An LG-graph quasimap $(C,\mathcal{L},\sigma,\tau)$ to $X(\theta)$ is
$\C^*$-fixed if for each $\lambda\in\C^*$ there exists an automorphism
$\phi$ of $C$ commuting with $\sigma$ such that
$\phi\circ\tau=\tau\circ\lambda.$ Alternatively, let
$\hat{C}^\circ:=C\setminus\{\tau^{-1}(0),\tau^{-1}(\infty)\}$ and
$\hat{C}:=\bar{\hat{C}^\circ}\subseteq C$. Then
$(C,\mathcal{L},\sigma,\tau)$ is $\C^*$-fixed if
\begin{enumerate}
\item $\hat{C}^\circ$ contains no marked points, nodes, or basepoints
  of $\sigma$, and
\item $\hat{C}$ is contracted by $u$.
\end{enumerate}

\begin{notation}[From \cite{RossRuan2015}]
  We denote by $\hat{C}$ the closure of $\hat{C}^\circ,$ and we write
  $C_0$ and $C_\infty$ for $\tau^{-1}(0)$ and $\tau^{-1}(\infty)$,
  respectively.  We write $\bullet:=C_0\cap\hat{C}$ and
  $\check\bullet:=C_\infty\cap\hat{C}$. The point $\bullet$ may be a
  smooth (possibly orbifold) point (in which case $C_0$ is a single
  point), or it may be a node (in which case $C_0$ is a nodal curve).
\end{notation}

\begin{prop}\label{prop:PartitionFromFixedQuasimap}
  A $\C^*$-fixed $m$-marked LG-quasimap $(C,\mathcal{L},\sigma,\tau)$
  to $X(\theta)$ of degree $\beta$ defines a partition
  $B_0\sqcup B_\infty$ of $\{1,\ldots,m\}$ and a partition of tuples
  $\beta^0+\beta^\infty=\beta$, such that $(\beta^0,\abs{B_0}+1)$ and
  $(\beta^\infty,\abs{B_\infty}+1)$ are $\theta$-effective or
  unstable.
\end{prop}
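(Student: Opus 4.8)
The plan is to strip off the two ``ends'' of a $\C^*$-fixed LG-graph quasimap, over $0$ and over $\infty$, and to exhibit each end as an LG-quasimap of the asserted degree and marking data, which is moreover either $\epsilon$-stable (hence witnessing $\theta$-effectiveness) or manifestly unstable. First I would use the description of the $\C^*$-fixed locus recalled just above to write $C = C_0 \cup \hat C \cup C_\infty$, where $\hat C$ is the parametrized component --- an irreducible smooth genus-zero curve whose only special points are the two nodes $\bullet = \hat C \cap C_0$ and $\check\bullet = \hat C \cap C_\infty$ --- and where $C_0 = \tau^{-1}(0)$ and $C_\infty = \tau^{-1}(\infty)$ are the (possibly one-point) fibres. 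Since $\hat{C}^\circ$ contains no marked points, every $b_i$ lies on $C_0$ or on $C_\infty$, so setting $B_0 = \{\, i : b_i \in C_0 \,\}$ and $B_\infty = \{\, i : b_i \in C_\infty \,\}$ gives a partition of $\{1,\dots,m\}$; the only potential ambiguity (a marked point equal to both $\bullet$ and $\check\bullet$) cannot arise because $C_0\cap C_\infty$ is empty.

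Next I would split the degree. Degrees of line bundles are additive over the components of a nodal curve, so $\beta = \beta^0 + \beta^{\hat C} + \beta^\infty$ with $\beta^0 = \deg(\mathcal L|_{C_0})$ and so on. Restricting $(C,\mathcal L,\sigma)$ to $\hat C$ with the two markings $\bullet,\check\bullet$ produces a prestable $2$-marked LG-quasimap that is contracted by the second fixed-locus condition, so by Definition~\ref{Def:Contract} its degree over its generic point equals $\beta_0(\theta,2)$, and $\beta_0(\theta,2) = 0$ for every $\theta\in\Theta$; by the first fixed-locus condition its only possible basepoints are at $\bullet$ and $\check\bullet$, whence $\beta^{\hat C} = \beta^{\hat C}(\bullet) + \beta^{\hat C}(\check\bullet)$ in the notation of Definition~\ref{Def:DegreeOfBasepoint}. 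When $C_0$ is a genuine curve, $\bullet$ is a node of $C$ and therefore maps to $V^{ss}(\theta)$, so it is not a basepoint and $\beta^{\hat C}(\bullet) = 0$; when $C_0$ is a point, I instead absorb $\beta^{\hat C}(\bullet)$ into the $C_0$-end. In all cases the degree of the LG-quasimap extracted from the $C_0$-end is a well-defined tuple $\beta^0$, likewise $\beta^\infty$ over $\infty$, and $\beta^0 + \beta^\infty = \beta$.

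Finally I would verify the effective-or-unstable dichotomy for the $C_0$-end (the $\infty$-end being symmetric). If $C_0$ is a genuine $3$-stable curve, I promote $\bullet$ to its $(|B_0|+1)$-st marked point: it is an orbifold point of order $3$, each component still carries at least three special points, the length bound is inherited pointwise, and the identity $\omega_{C,\log}|_{C_0} \cong \omega_{C_0,\log}$ --- adjunction at $\bullet$ exactly compensating the new marking --- shows both that the bundle $\mathcal L_\vartheta$ of the $C_0$-end agrees with $\mathcal L_\vartheta|_{C_0}$ and that ampleness of $\omega_{C,\log}\otimes\mathcal L_\vartheta^{\epsilon}$ on $C_0$, which holds because graph-quasimap stability is imposed on $\overline{C\setminus\hat C}\supseteq C_0$, is precisely the ampleness condition in Definition~\ref{Def:EpsilonStability}. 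Thus the $C_0$-end is an $\epsilon$-stable $(|B_0|+1)$-marked LG-quasimap of degree $\beta^0$, which realizes $(\beta^0,|B_0|+1)$ as $\theta$-effective. If $C_0$ is a single point, then $|B_0|\le 1$; if $\bullet$ is a marked point then it maps to $V^{ss}(\theta)$, so $\beta^0 = 0$ and $(\beta^0,|B_0|+1) = (\beta_0(\theta,2),2)$ is unstable by Definition~\ref{Def:UnstableTuples}, while if $\bullet$ is unmarked then $(\beta^0,1)$ has $\beta^0$ a single basepoint degree and I must show it lies in the unstable list --- which I would do by computing $\beta^0$ from Definition~\ref{Def:DegreeOfBasepoint} and using Corollary~\ref{rem:BetaThetaPositive} together with the $\C^*$-fixed geometry of $\hat C$ to see that $-2 + 1 + \epsilon\beta^0_\vartheta \le 0$, or that the subscript-variable inequality of Proposition~\ref{prop:CriterionForEffectiveness} fails (so that $(\beta^0,1)$ is again not $\theta$-effective, hence counted as unstable).

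The step I expect to be the real obstacle is exactly this degenerate one: confirming that a one-point end, carrying only the degree that the $\C^*$-action has concentrated at $0$, can never produce a tuple that ``looks $\epsilon$-stable'' yet is not $\theta$-effective. Controlling the orders of vanishing $\ell^\sigma_\rho(\bullet)$ of the section along the contracted component $\hat C$, and hence pinning down $\beta^0$ and $\beta^0_\vartheta$, is the one place where genuine geometry intervenes; the remainder is the standard graph-space fixed-locus bookkeeping familiar from \cite{RossRuan2015,CiocanFontanineKim2014}.
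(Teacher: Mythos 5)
Your proposal matches the paper's proof: both decompose $C$ as $C_0\cup\hat C\cup C_\infty$, read off the partition $B_0\sqcup B_\infty$ from $\tau(b_i)\in\{0,\infty\}$, and define $\beta^0,\beta^\infty$ by restriction to $C_0,C_\infty$ when $\bullet,\check\bullet$ are nodes, or as basepoint degrees (Definition~\ref{Def:DegreeOfBasepoint}) when they are smooth points, using that $\hat C$ carries degree zero away from its two endpoints. The degenerate one-point case you flag as the ``real obstacle'' is in fact glossed over in the paper as well (it is only asserted in the remark following the proposition); note only that your fallback---declaring $(\beta^0,1)$ ``unstable'' whenever a necessary condition of Proposition~\ref{prop:CriterionForEffectiveness} fails---does not literally match Definition~\ref{Def:UnstableTuples}, which is a specific finite list, but this is a wrinkle in the paper's own terminology rather than a flaw introduced by you.
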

\begin{proof}
  The fact that $\tau(b_i)$ is either 0 or $\infty$ for each $i$
  defines a partition $\{1,\ldots,m\}=B_0\sqcup B_\infty$.

  If $\bullet$ is a node, then
  $(C_0,\mathcal{L}|_{C_0},\sigma|_{C_0})$ is an $\epsilon$-stable
  LG-quasimap to $X(\theta).$ (Here $C_0$ has the extra marking
  $\bullet$.) We denote by $\beta^0$ its degree, and similarly
  $\beta^\infty$. If there is no node at $\bullet$
  (resp. $\check\bullet$), then we define $\beta^0$ to be the degree
  of the basepoint at $\bullet$ (resp $\check\bullet$, see Definition
  \ref{Def:DegreeOfBasepoint}).

  If there are nodes at $\bullet$ and $\check\bullet,$ we can check
  that $\hat\beta=0,$ so $\beta_0+\beta^\infty=\beta.$ If one or both
  of $\bullet$ and $\check\bullet$ is not a node,
  $\beta_0+\beta^\infty$ by the definition of the degree of a
  basepoint.
\end{proof}
\begin{rem}
  The tuple $(B_0,\beta^0)$ is unstable exactly when $\bullet$ is a
  smooth point.
\end{rem}
Proposition \ref{prop:PartitionFromFixedQuasimap} allows us to define
open and closed substacks $F_{B_0,\beta^0}^{B_\infty,\beta^\infty}$ of
$\LGQG_{0,m}^\epsilon(X(\theta),\beta)^{\C^*}$, consisting of those
LG-quasimaps that induce the partition $B_0\sqcup B_\infty$ of
$\{1,\ldots,m\}$ and the partition $\beta^0+\beta^\infty$ of $\beta.$
(We refer to these as ``components'' of
$\LGQG_{0,m}^\epsilon(X(\theta),\beta)^{\C^*}$, though they are almost
never connected.) If $(\beta^0,\abs{B_0}+1)$ and
$(\beta^\infty,\abs{B_\infty}+1)$ are effective rather than unstable, then
\begin{align}\label{eqn:DecomposeFixedLocus}
  F_{B_0,\beta^0}^{B_\infty,\beta^\infty}\cong\LGQ_{0,\abs{B_0}+\bullet}^\epsilon(X(\theta),\beta^0)\times_{\bar{I}Z(\theta)}\LGQ_{0,\abs{B_\infty}+\check\bullet}^\epsilon(X(\theta),\beta^\infty),
\end{align}
fibered over the evaluation maps $\ev_{\bullet}$ and
$\ev_{\check\bullet}.$\footnote{As in \cite{CiocanFontanineKim2014},
  one may define the factors by convention so that this remains true
  for $(\beta^0,\abs{B_0}+1)$ and
$(\beta^\infty,\abs{B_\infty}+1)$ unstable.}

When $\bullet$ and $\check\bullet$ are both nodes, we may calculate
the $\C^*$-equivariant Euler class of the virtual normal bundle to
$F_{B_0,\beta^0}^{B_\infty,\beta^\infty}.$ One may check that the
$\C^*$-moving infinitesimal deformations come from smoothing the nodes
and deforming the map $\tau$ (equivalently, moving the points
$\tau(\bullet)$ and $\tau(\check\bullet)$). By a classical
computation, smoothing the nodes contributes factors
$\hbar-\psi_\bullet$ and $-\hbar-\psi_{\check\bullet},$ pulled back to
the fiber product \eqref{eqn:DecomposeFixedLocus}. (These are the
weights of the deformation spaces
$T_\bullet C_0\otimes T_\bullet\hat{C}$ and
$T_{\check\bullet}C_\infty\otimes T_{\check\bullet}\hat{C}$,
respectively. Here we use the natural identification
$H^*_{\C^*}(\Spec\C,\C)\cong\C[\hbar]$.) Deforming $\tau$ gives
factors are $\hbar$ and $-\hbar$, the weights of the tangent spaces
$T_0\P^1$ and $T_\infty\P^1.$ Thus the $\C^*$-equivariant Euler class
of the virtual normal bundle is
$(-\hbar^2)(\hbar-\psi_\bullet)(-\hbar-\psi_{\check\bullet}).$

\begin{Def}\label{Def:FBetaPrime}
  We define here a special component
  $F_\beta':=F_{\star,\beta_0(2)}^{m,\beta}$
  of $\LGQG_{0,m+\star}^\epsilon(X(\theta),\beta)^{\C^*}$,
  which we will use in Sections \ref{sec:UnstableConventions} and
  \ref{sec:CalculatingI}.
\end{Def}

\begin{Def}\label{Def:FBeta}
  We may restrict all constructions in this section to the space
  $\LGQG_{0,m}^{\epsilon,\theta}(Z(\theta),\beta)$. Denote by
  $F_\beta$ the analog of the $F_\beta';$ then there is a fibered
  square
  \begin{center}
    \begin{tikzpicture}
      \matrix(m)[matrix of math nodes,row sep=2em,column
      sep=3em,minimum width=2em] {
        F_\beta&F_\beta'\\
        \LGQG_{0,m+1}^{\epsilon}(Z(\theta),\beta)&\LGQG_{0,m+1}^{\epsilon}(X(\theta),\beta)\\
      };
      \path[-stealth] (m-1-1) edge (m-2-1);
      \path[-stealth] (m-1-1) edge (m-1-2);
      \path[-stealth] (m-2-1) edge (m-2-2);
      \path[-stealth] (m-1-2) edge (m-2-2);
    \end{tikzpicture}
  \end{center}
\end{Def}

Finally, we define special classes in $H^2_{\C^*}(\P^1,\C).$ Let $p_0$
and $p_\infty$ denote the pushforwards of
$1\in H^*_{\C^*}(\Spec\C,\C)\cong\C[[\hbar]]$ along the equivariant
inclusions $0\into\P^1$ and $\infty\into\P^1,$ respectively. Then
(choosing a $\C^*$-action on $\O_{\P^1}(1)$) we have
  \begin{align*}
    p_0|_0&=\hbar&p_\infty|_\infty&=-\hbar&p_0|_\infty&=p_\infty|_0=0.
  \end{align*}

\subsection{The torus action on
  $\LGQ_{0,m}^\epsilon(X(\theta),\beta)$}\label{sec:TorusAction}
Torus actions on spaces of stable maps were used by Kontsevich to
carry out explicit computations of Gromov-Witten invariants of toric
varieties. They reduce the complicated geometry of curves in toric
varieties to combinatorics of fixed point sets, which are finite and
explicit. We will use the torus actions on spaces of LG-quasimaps to
obtain a recursive structure, leading to the proof of Theorem
\ref{thm:MirrorTheorem}. In fact, to our knowledge all of the many
such ``mirror'' theorems in Gromov-Witten theory use torus-fixed-point
localization.

For clarity, in this section we take $\theta=\theta_z^{xya}$ unless
stated otherwise. For everything we do, the appropriate changes to
make for the other characters will be clear.

There is a natural $T=(\C^*)^{13}$ action on $V$ by scaling the
coordinates. As all group actions on $V$ that we have discussed are by
scaling coordinates, they all commute. Thus we obtain $T$-actions on
$X(\theta)$ and $[X(\theta)/\C^*_R)]$ for each $\theta\in\Theta.$ The
latter induces a $T$-action on $\LGQ_{0,m}^\epsilon(X(\theta),\beta),$
and the various bundles and maps we consider have natural
$T$-equivariant lifts. For example, $\psi_i$ and $\ev_i$ have natural
equivariant lifts since since they are defined via the geometry of
maps to $[X(\theta)/\C^*_R]$. Similarly
$\mathcal{E}=\mathcal{P}\times_{G\times\C^*_R}V$ has a natural lift
induced by the action on $V$.

\medskip

\noindent\textbf{$T$-fixed locus.} 
By a classical argument of Gromov-Witten theory, $T$-fixed
LG-quasimaps to $X(\theta)$ are those that send $C$ into the closure
of 1-dimensional $T$-orbits in $[X/\C^*_R]$, and send all nodes,
markings, and ramification points of $(C,u,\kappa)$ to the $T$-fixed
locus of $[X/\C^*_R]$.

We check that the $T$-fixed locus of $[X/\C^*_R]$ is where:
\begin{itemize}
\item $p_x=p_y=0$,
\item $z_0=z_1=z_2=0$,
\item at most one of $x_0$, $x_1,$ and $x_2$ is nonzero, and
\item at most one of $y_0$, $y_1,$ and $y_2$ is nonzero.
\end{itemize}
These are exactly the coordinate points of
$X_R(\theta)\cong[((\P^2)^2\times B\mu_3)/\mu_3]\times B\C^*_R.$

Similarly, the 1-dimensional $T$-orbits
of $[X/\C^*_R]$
(with proper closure) are the coordinate lines in $X_R(\theta)$.

\begin{cor}\label{cor:TFixedAssociatedQuasimap}
  A $T$-fixed LG-quasimap $(C,u,\kappa)$ to $X(\theta)$ has an
  associated $T$-fixed quasimap $u^{\rig}$ to $X(\theta).$
\end{cor}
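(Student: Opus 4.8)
The plan is to combine the explicit description of the $T$-fixed locus of $[X/\C^*_R]$ established just above with Remark \ref{rem:AssociatedQuasimapToX}. First I would note that if $(C,u,\kappa)$ is $T$-fixed, then by the classical Gromov-Witten argument recalled above, $u$ sends $C$ into the union of the closures of the $1$-dimensional $T$-orbits of $[X/\C^*_R]$, while sending all nodes, markings, and ramification points to the $T$-fixed locus. By the computation of these loci, the relevant orbit closures are exactly the coordinate lines of $X_R(\theta)\cong[((\P^2)^2\times B\mu_3)/\mu_3]\times B\C^*_R$, and the fixed points are the coordinate points of $X_R(\theta)$. Hence the image of $u$ is contained in $X_R(\theta)$; in particular \emph{every} point of $C$, not merely the special ones, maps to a point whose isotropy group contains $\C^*_R$.

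Next I would invoke Remark \ref{rem:AssociatedQuasimapToX}: once $u$ factors through $X_R(\theta)\subseteq[X(\theta)/\C^*_R]$, rigidifying by $\C^*_R$ produces a quasimap $u^{\rig}$ from $C$ to $X_R^{\rig}(\theta)\subseteq X(\theta)$. This is the asserted associated quasimap to $X(\theta)$.

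Finally, it remains to see that $u^{\rig}$ is itself $T$-fixed. Since every group acting on $V$ we have considered acts by scaling coordinates, the $T$-action and the $\C^*_R$-action on $[X/\C^*_R]$ commute, so the assignment $(C,u,\kappa)\mapsto(C,u^{\rig})$ is $T$-equivariant and therefore carries the $T$-fixed object $(C,u,\kappa)$ to a $T$-fixed quasimap. Concretely, the automorphism $\phi$ of $C$ witnessing $T$-fixedness of $(C,u,\kappa)$ descends along the rigidification to an automorphism of $C$ witnessing $T$-fixedness of $(C,u^{\rig})$.

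The one point I would be careful about is the stack-theoretic meaning of ``$T$-fixed'': a $T$-fixed point of the moduli stack carries $2$-isomorphism data to each of its $T$-translates, rather than being invariant on the nose, so the statement ``$u$ maps $C$ into the closure of the $1$-dimensional orbits'' must be read off from that data (this is precisely the classical argument cited), and the $T$-equivariance of the rigidification must be phrased as an equivalence of the respective fixed-point stacks. Once this bookkeeping is in place the corollary is immediate; I do not expect any genuine obstacle beyond making these compatibilities explicit.
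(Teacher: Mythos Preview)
Your proposal is correct and follows exactly the route the paper intends: the corollary is stated without explicit proof because it is meant to follow immediately from the preceding description of the $T$-fixed locus and $1$-dimensional $T$-orbits (both lying in $X_R(\theta)$) together with Remark \ref{rem:AssociatedQuasimapToX}. Your added care about the stack-theoretic meaning of $T$-fixedness and the $T$-equivariance of the rigidification is appropriate bookkeeping that the paper leaves implicit.
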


\begin{cor}
  The $T$-fixed locus in $\LGQ_{0,m}^\epsilon(X(\theta),\beta)$ is
  proper.
\end{cor}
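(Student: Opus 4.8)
The plan is to exhibit the $T$-fixed locus $F:=\LGQ_{0,m}^\epsilon(X(\theta),\beta)^T$ as a closed substack of a \emph{proper} moduli stack. First I would record the formal facts. By Theorem~\ref{thm:DMStack}, $\LGQ_{0,m}^\epsilon(X(\theta),\beta)$ is a separated Deligne--Mumford stack of finite type, and the fixed locus of a torus acting on such a stack is closed; hence $F$ is automatically separated and of finite type, and it remains only to check that $F$ is universally closed. Since properness is insensitive to nilpotents, it suffices to prove that $F_{\mathrm{red}}$ is proper.

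Next I would use the preceding description of the $T$-fixed locus together with Corollary~\ref{cor:TFixedAssociatedQuasimap}. A $T$-fixed $\epsilon$-stable LG-quasimap $(C,u,\kappa)$ carries $C$ into the union of the $T$-fixed points and the one-dimensional $T$-orbit closures of $[X/\C^*_R]$, and these were just identified as lying inside the closed substack $X_R(\theta)\subseteq[X(\theta)/\C^*_R]$, which is the \emph{proper} Deligne--Mumford stack $[((\P^2)^2\times B\mu_3)/\mu_3]\times B\C^*_R$ (taking $\theta=\theta_z^{xya}$; the other characters require only cosmetic changes). Concretely, $F_{\mathrm{red}}$ is contained in the closed substack $N\subseteq\LGQ_{0,m}^\epsilon(X(\theta),\beta)$ cut out by the vanishing of the components of $\sigma$ dual to the coordinate subspace $V_R\subseteq V$ complementary to $X_R(\theta)$ (for $\theta_z^{xya}$, the equations $\sigma_{p_x}=\sigma_{p_y}=\sigma_{z_0}=\sigma_{z_1}=\sigma_{z_2}=0$). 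I would then observe that $N$ is itself a moduli stack of $\epsilon$-stable LG-quasimaps --- for the GIT data obtained by restricting $(V,G\times\C^*_R,\vartheta,W)$ to $V_R$, whose GIT quotient is exactly $X_R(\theta)$: one has $W|_{V_R}=0$, and the unstable locus, the line bundle $\cL_\vartheta$, and hence the length function and the $\epsilon$-stability condition are all unchanged by the restriction. Because $X_R(\theta)$ is proper, the properness statement of Theorem~\ref{thm:DMStack} (equivalently, properness of $\epsilon$-stable quasimap spaces with proper target, \cite{CiocanFontanineKim2014,CheongCiocanFontanineKim2015}) gives that $N$ is proper; a closed substack of a proper stack is proper, so $F$ is proper.

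I expect the one non-formal point to be the identification of $N$ with a genuine Fan--Jarvis--Ruan LG-quasimap moduli stack over the proper target $X_R(\theta)$, i.e. verifying that restricting the GIT data to $V_R$ does not alter the moduli problem for objects landing in $X_R(\theta)$; once that is in hand the conclusion is purely formal (fixed loci of torus actions are closed, closed substacks of proper stacks are proper, properness ignores nilpotents). An alternative route that avoids introducing $N$ is to use Corollary~\ref{cor:TFixedAssociatedQuasimap} directly: it produces a morphism from $F$ to the space of $T$-fixed $\epsilon$-stable quasimaps to $X_R^{\rig}(\theta)$, a proper stack whose quasimap moduli is proper, and this morphism is of finite type with proper fibres (in fact finite fibres in our situation), which again forces $F$ to be proper.
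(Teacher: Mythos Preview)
Your proposal is correct and follows the same idea the paper relies on: the paper states this corollary without proof, treating it as immediate from Corollary~\ref{cor:TFixedAssociatedQuasimap} (every $T$-fixed LG-quasimap has an associated quasimap to the proper target $X_R^{\rig}(\theta)$), and your two routes are precisely two ways of making that implication explicit. Your first route, identifying the closed substack $N$ as an LG-quasimap space for the restricted GIT data $(V_R,G\times\C^*_R,\vartheta,W|_{V_R}=0)$ with proper critical-locus quotient $X_R(\theta)$, is a clean packaging that the paper does not spell out; your alternative via the associated-quasimap morphism is exactly the paper's implicit argument.
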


As a result of the last fact, we can very closely mimic the
$T$-localization arguments for quasimaps in
\cite{CiocanFontanineKim2014,CheongCiocanFontanineKim2015}.

\begin{Def}
  Write $K\cong\C(\lambda_1,\ldots,\lambda_{13})$ for the \emph{localized} $T$-equivariant cohomology of
  a point.
\end{Def}
\begin{Def}
  Consider a 1-dimensional $T$-orbit $X_{\mu,\nu}$ in $X(\theta)$
  between $T$-fixed points $\mu$ and $\nu.$ (If such an $X_{\mu,\nu}$
  exists we say $\mu$ and $\nu$ are \emph{$T$-adjacent}.) We define
  the \emph{tangent weight} $w(\mu,\nu)$ to be
  $c_1(T_\mu X_{\mu,\nu})\in H^2_T(\mu,\C)\cong K.$
\end{Def}

\begin{rem}
  Everything in this section also applies to the graph space
  $\LGQG_{0,m}^\epsilon(X(\theta),\beta).$
\end{rem}

\section{Generating functions for genus zero LG-quasimap
  invariants}\label{sec:GeneratingFunctions}
Sections \ref{sec:GeneratingFunctions}, \ref{sec:MirrorTheorems} and
\ref{sec:CalculatingI} are based on Sections 5 and 7 of
\cite{CiocanFontanineKim2014} and Section 5 of
\cite{CheongCiocanFontanineKim2015}, respectively, with minor but
necessary modifications at each step. (The techniques in
\cite{CiocanFontanineKim2014} and \cite{CheongCiocanFontanineKim2015}
follow those of Givental (\cite{Givental1998}).) We define and compare
generating functions $J^{\epsilon,\theta}(t,q,\hbar)$,
$S^{\epsilon,\theta}(t,q,\hbar)$, and $P^{\epsilon,\theta}(t,q,\hbar)$,
encoding LG-quasimap invariants and LG-graph quasimap invariants of
$Z(\theta)$. (Note that the space
$\LGQ_{0,m}^\epsilon(X(\theta),\beta)$ and the $T$-action defined in
the last section do not appear in this section.) We continue to work
with $\theta=\theta_z^{xya}.$

\subsection{Double brackets}
From now on, we fix a basis $\{\gamma_j\}$ for $\H(\theta)$. Let
$\{\gamma^j\}$ be a dual basis with respect to the Poincar\'e pairing
on the \emph{nonrigidified} inertia stack $IZ(\theta)$.\footnote{Using
  $IZ(\theta)$ instead of $\bar{I}Z(\theta)$ will make our notation
  much simpler. This is discussed in Section 3.1 of
  \cite{CheongCiocanFontanineKim2015}.} Let
$t=\sum_jt_j\gamma_j\in\H(\theta)$. For
$\alpha_1,\ldots,\alpha_k\in\H(\theta),$ and
$a_1,\ldots,a_k\in\Z_{\ge0}$, we define the \emph{double bracket}
(compare with \cite{CiocanFontanineKim2014,RossRuan2015}):
\begin{align}
  \langle\langle\alpha_1\psi_1^{a_1},\ldots,\alpha_k\psi_k^{a_k}\rangle\rangle^{\epsilon,\theta}_{0,k}:&=\sum_{\beta,m}\frac{q^\beta
                                                                                                         q_z^{\frac{2-(k+m)}{3}}}{m!}\langle\alpha_1\psi_1^{a_1},\ldots,\alpha_k\psi_k^{a_k},t,\ldots,t\rangle_{0,k+m,\beta}^{\epsilon,\theta}\label{eqn:DoubleBracket}\\
                                                                                                       &=\sum_{\beta,m}\frac{q^{\beta-\beta_0(\theta,k+m)}}{m!}\langle\alpha_1\psi_1^{a_1},\ldots,\alpha_k\psi_k^{a_k},t,\ldots,t\rangle_{0,k+m,\beta}^{\epsilon,\theta}.\nonumber
\end{align}
Here $m\ge0$ and $\beta$ runs over degrees with $(\beta,m)$
$\theta$-effective.  The shifting factor $q_z^{\frac{2-(k+m)}{3}}$,
which does not appear in \cite{CiocanFontanineKim2014}, makes the
double bracket an element of $\C[[q_x,q_y,q_z^{-1},q_a]]$ rather
than $\C[[q_x,q_y,q_a]]((q_z^{-1}))$. We also define \emph{graph
  space double brackets} by replacing
$\langle\cdot\rangle_{0,k+m,\beta}^{\epsilon,\theta}$ in
\eqref{eqn:DoubleBracket} with
$\langle\cdot\rangle_{0,k+m,\beta}^{\epsilon,\theta,Gr}$.
\begin{notation}
  We write $\C[[q]]$ as shorthand for
  $\C[[q_x,q_y,q_z^{-1},q_a]]$. (Analogously for
  $\theta\ne\theta_z^{xya}$.)
\end{notation}

\subsection{Conventions for unstable
  tuples}\label{sec:UnstableConventions}
For small $k$, some terms of \eqref{eqn:DoubleBracket} correspond to
unstable tuples $(\beta,k+m)$ (recall Definition
\ref{Def:UnstableTuples}). In the following sections, setting those
terms to zero would not give the correct relations between generating
functions. To fix this, we now \emph{define} certain invariants
corresponding to unstable tuples.

First, we motivate these conventions. We apply $\C^*$-localization to
the graph space invariant
\begin{align}
\langle\alpha_1\psi_1^{a_1},\ldots,\alpha_m\psi_m^{a_m},\alpha_{\star}\otimes
p_\infty\rangle_{0,m+\star,\beta}^{\epsilon,\theta,Gr}=\int_{[\LGQG_{0,m}^{\epsilon}(Z(\theta),\beta)]^{\vir}}\prod_i\ev_i^*(\alpha_i)\psi_i^{a_i}\cup\ev_\star^*(\alpha_{\star}\otimes
p_\infty).\label{eqn:ConventionInvariant}
\end{align}

The result is a sum over the fixed loci
$F_{m_\infty,\beta_\infty}^{m_0,\beta_0}$. Consider the term
corresponding to the locus
$F_\beta=F_{\star,\beta_0(\theta,2)}^{m,\beta}.$ The tuple
$(\beta_0(\theta,2),2)$ is unstable, which implies that
$\check\bullet$ is a smooth point with the marking $\star$. Thus by
the computation in Section \ref{sec:CStarAction}, if the tuple
$(m,\beta)$ is stable, the normal bundle to $F_\beta$ is
$(-\hbar^2)(\hbar-\psi_\bullet),$ under the identification of $F_\beta$ with
$\LGQ_{0,m+\{\bullet\}}^\epsilon(Z(\theta),\beta).$ Also,
$\ev_\star^*(p_\infty)$ restricts on this locus to $-\hbar$ and
$\ev_\star$ is identified with $\ev_\bullet$. It follows that
\eqref{eqn:ConventionInvariant} can be written as
\begin{align*}
  \int_{[\LGQ_{0,m+\{\bullet\}}^\epsilon(Z(\theta),\beta)]^{\vir}}\prod_i\ev_i^*(\alpha_i)\psi_i^{a_i}\cup\frac{\ev_\star^*(\alpha_\star)}{\hbar(\hbar-\psi_\bullet)}=\langle\alpha_1\psi_1^{a_1},\ldots,\alpha_m\psi_m^{a_m},\frac{\alpha_{\star}}{\hbar(\hbar-\psi_\bullet)}\rangle_{0,m+\bullet,\beta}^{\epsilon,\theta}.
\end{align*}
This relation allows us to define invariants for $(\beta,m)$ unstable,
in the case where one entry of the bracket is of the form
$\frac{\alpha}{\hbar(\hbar-\psi_i)}$. That is, we set
$\langle\alpha_1\psi_1^{a_1},\ldots,\alpha_m\psi_m^{a_m},\frac{\alpha}{\hbar(\hbar-\psi_{m+1})}\rangle_{0,m+1,\beta}^{\epsilon,\theta}$
to be the contribution of $F_\beta$ to the equivariant integral
$$\langle\alpha_1\psi_1^{a_1},\ldots,\alpha_m\psi_m^{a_m},\alpha
\otimes p_\infty\rangle_{0,m+\star,\beta}^{\epsilon,\theta,Gr}.$$
\begin{rem}
  When used in LG-quasimap invariants (rather than LG-graph quasimap
  invariants), we may treat $\hbar$ as a formal variable, rather than a
  $\C^*$-equivariant class on $\P^1.$
\end{rem}

\subsection{The function $J^{\epsilon,\theta}(t,q,\hbar)$}\label{sec:JFunction}
Using the conventions in the last section, we define
\begin{align*}
  J^{\epsilon,\theta}(t,q,\hbar):=\sum_j\gamma_j\langle\langle\frac{\gamma^j}{\hbar(\hbar-\psi_1)}\rangle\rangle_{0,1}^{\epsilon,\theta}\in\H(\theta)[[q]]((\hbar^{-1})).
\end{align*}
(This should be thought of formally as a function
$\H(\theta)\to\H(\theta)[[q]]((\hbar^{-1})),$ without worrying about
convergence.) The unstable tuples contributing to
$J^{\epsilon,\theta}(t,q,\hbar)$ are:
\begin{align}\label{eqn:UnstableTermsOfJ}
  (\beta,m+1)&=(\beta_0(\theta,1),1)&(\beta,m+1)&=(\beta_0(\theta,2),2)\\
  (\beta,m+1)&=(\beta,1)\mbox{ with $\beta_\vartheta<1/\epsilon$}&&\nonumber
\end{align}

Calculating the terms coming from the tuples $(\beta,1)$ with
$\beta_\vartheta<1/\epsilon$ (in the case $\epsilon=0+$) is the
subject of Section \ref{sec:CalculatingI}. We compute the other two
terms here.

\medskip

\noindent\textbf{The term $(\beta_0(\theta,1),1)$.} This term is
defined, according to Section \ref{sec:UnstableConventions}, as the
$F_\beta$-contribution to the sum
\begin{align}
\sum_j\gamma_j\langle\gamma^j\otimes p_\infty\rangle_{0,\star,\beta_0(\theta,1)}^{\epsilon,\theta,Gr}.\label{eqn:UnstableJ1}
\end{align}

\noindent\textbf{Claim.} $\LGQG_{0,\star}^\epsilon(Z(\theta),\beta_0(\theta,1))$ parametrizes
the data:
\begin{itemize}
\item A parametrized curve $C\xrightarrow{\tau}{}\P^1$, with a marked
  orbifold point $\star$, and
\item A constant map $C\to[E^2/\mu_3]$ without basepoints, and with
  trivial monodromy at $\star$.
\end{itemize}
\begin{proof}
  The line bundles $\mathcal{L}_x,$ $\mathcal{L}_y,$ $\mathcal{L}_a$,
  and $\mathcal{L}_{\rho_{p_z}}$ have degree zero, and thus are
  trivial. (We may see from Proposition \ref{prop:DivisorLineBundle}
  that line bundles on $\P_{3,1}$ have trivial monodromy at $\star$.)
  Since $u$ lands in $[Z(\theta)/\C^*_R],$ the sections
  $\sigma_{z_0},\sigma_{z_1},\sigma_{z_2},\sigma_{p_x},\sigma_{p_y}$
  are all zero. 
  Thus up to isomorphism, $(C,\mathcal{L},\sigma)$ carries only the
  data of the parametrized marked curve $C$, the sections
  $\sigma_{x_i}$ and $\sigma_{y_i}$, and the line bundle
  $\mathcal{L}_z.$

  As $\mathcal{L}_{\rho_{p_z}}$ is trivial, we have
  $\mathcal{L}_z^{\otimes3}\cong\omega_{C,\log}$.  However, there is a
  unique such bundle up to isomorphism, with monodromy 2/3 at
  $\star$. It has automorphism group $\mu_3$, acting by multiplication
  on fibers, which commutes with
  $\kappa:\mathcal{L}_z^{\otimes3}\to\omega_{C,\log}.$

  The sections $\sigma_{x_i}$ and $\sigma_{y_i}$ define a map
  $C\to[E^2/\mu_3].$ It has trivial monodromy as $\mathcal{L}_a$ is
  trivial, and has no basepoints since $\mathcal{L}_x$ and
  $\mathcal{L}_y$ are trivial.
\end{proof}

$F_\beta$ is the locus where $\tau(\star)=\infty,$ so it is isomorphic
to $[E^2/\mu_3]\times B\mu_3.$ (The $B\mu_3$ comes from the
automorphisms of $\mathcal{L}_z$.) We see that $F_\beta$ is a twisted
sector of $IZ(\theta)$.

The virtual fundamental class is $[F_\beta]^{\vir}=[F_\beta]$, and
$\ev_\star$ is the $\mu_3$-rigidification map to a sector
$[E^2/\mu_3]\subseteq\bar{I}(Z(\theta))$. The class $p_\infty$
restricts to $-\hbar$ on $F_\beta,$ and the normal bundle to
$F_\beta\into\LGQG_{0,\star}^\epsilon(Z(\theta),\beta_0(\theta,1))$
comes from moving the image of $\star$ on $\P^1$, and has Euler class
$-\hbar$. Thus \eqref{eqn:UnstableJ1} is equal to:
\begin{align}\label{eqn:UnstableTermOfJ1}
  \sum_j\gamma_j\int_{[E^2/\mu_3]\times
  B\mu_3}\ev_\star^*(\gamma^j)=(1\otimes1\otimes1_\zeta)_1\int_{[E^2/\mu_3]\times B\mu_3}\ev_\star^*((H_x\otimes H_y\otimes1_{\zeta^2})_1)=1_\theta,
\end{align}
the twisted sector of $\H(\theta)$ from Remark
\ref{rem:AnalogOfUntwistedSector}.

\medskip

\noindent\textbf{The term $(\beta_0(\theta,2),2)$.} This term is the
$F_\beta$-contribution to the sum
\begin{align}
\sum_j\gamma_j\langle\gamma^j\otimes p_\infty,t\rangle_{0,\star,\beta_0(\theta,2)}^{\epsilon,\theta,Gr}.\label{eqn:UnstableJ2}
\end{align}
$\LGQG_{0,1+\star}^\epsilon(Z(\theta),\beta_0(\theta,2))$ parametrizes
constant maps from a parametrized curve $C$ with \emph{two} order 3
orbifold points to $[E^2/\mu_3],$ together with a 3rd root of
$\omega_{C,\log}\cong\O_C$. $\mathcal{L}_a$ may be nontrivial, and
there are three choices (one trivial) for the 3rd root
$\mathcal{L}_z.$ The resulting stack is isomorphic to $IZ(\theta),$
and in particular the isomorphism is $\ev_\star$ (after composition
with the rigidification $IZ(\theta)\to\bar{I}Z(\theta)$).

The virtual fundamental class was defined to vanish on the components
where $\mathcal{L}_z$ has trivial monodromy (Remark
\ref{rem:BroadVanish}), and on the other components it restricts to
the fundamental class. The union of these components is
$IZ(\theta)^{\nar}$. Since
$\mathcal{L}_x,\mathcal{L}_y,\mathcal{L}_z,\mathcal{L}_a$ have degree
zero, they have opposite monodromies at $b_1$ and $\star,$ so
$\ev_1=\upsilon\circ\ev_1.$ The normal bundle has contributions $\hbar$
and $-\hbar$ from moving the images of $\star$ and $b_1$, respectively, on $\P^1.$ Again,
$p_\infty|_{F_\beta}=-\hbar$. Thus \eqref{eqn:UnstableJ2} is equal to
\begin{align}\label{eqn:UnstableTermOfJ2}
  \sum_j\gamma_j\int_{IZ(\theta)^{\nar}}\frac{1}{\hbar}\upsilon^*\gamma^j\cup
  t=\frac{1}{\hbar}\sum_j\gamma_j\langle t,\gamma^j\rangle_{Z(\theta)}=t/\hbar.
\end{align}

\begin{rem}\label{rem:JIndependentOfEpsilon}
  In both of these calculations, the moduli spaces described
  parametrized maps LG-quasimaps \emph{without basepoints}. More
  generally, $m$-marked LG-quasimaps of degree $\beta_0(\theta,m)$
  never have basepoints. Thus we observe that integrals over these
  moduli spaces are \emph{independent of $\epsilon$}. In particular,
  the coefficient of $q^{(0,0,0,0)}$ in $J^{\epsilon,\theta}(t,q,\hbar)$
  is independent of $\epsilon.$
\end{rem}

\begin{prop}
  $J^{\epsilon,\theta}(0,q,\hbar)$ is homogeneous of degree zero, when
  $\deg(q):=0$ and $\deg(\hbar)=2$.
\end{prop}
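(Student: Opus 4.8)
The plan is to reduce the claim to a routine bookkeeping check on monomials. First I would set $t=0$ in the definition of $J^{\epsilon,\theta}$: by multilinearity of the LG-quasimap invariants, every term of the double bracket $\langle\langle\gamma^j/(\hbar(\hbar-\psi_1))\rangle\rangle^{\epsilon,\theta}_{0,1}$ involving at least one insertion of $t$ vanishes, so only the $m=0$ part survives and
\[
J^{\epsilon,\theta}(0,q,\hbar)=\sum_j\gamma_j\sum_{\beta}q^{\beta-\beta_0(\theta,1)}\Big\langle\tfrac{\gamma^j}{\hbar(\hbar-\psi_1)}\Big\rangle^{\epsilon,\theta}_{0,1,\beta},
\]
the sum running over degrees with $(\beta,1)$ effective, together with the unstable $(\beta,1)$ tuples handled by the conventions of Section~\ref{sec:UnstableConventions}. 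Using $\frac{1}{\hbar(\hbar-\psi_1)}=\sum_{a\ge0}\hbar^{-a-2}\psi_1^a$ and choosing the $\gamma_j$ homogeneous (harmless, as $J^{\epsilon,\theta}$ is basis-independent), it is enough to show that every nonzero monomial $\gamma_j\,\hbar^{-a-2}\,q^{\beta-\beta_0(\theta,1)}\,\langle\gamma^j\psi_1^a\rangle^{\epsilon,\theta}_{0,1,\beta}$ has degree $0$.

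The one substantive input is the dimension constraint from the remark following Theorem~\ref{thm:VirtualCycle}: since $\LGQ^\epsilon_{0,1}(Z(\theta),\beta)^{\nar}$ has unshifted virtual dimension $1$, the invariant $\langle\gamma^j\psi_1^a\rangle^{\epsilon,\theta}_{0,1,\beta}$ vanishes unless $\deg(\gamma^j)+2a=2$. Since $\gamma^j$ is Poincar\'e-dual to $\gamma_j$ and, by the way the grading on $\H(\theta)$ was set up in Section~\ref{sec:CompactTypeStateSpace} (and as reflected in the symmetry of the table \eqref{eqn:DegreeTable}), the descended Poincar\'e pairing pairs degree $d$ with degree $6-d$, this forces $\deg(\gamma_j)=6-(2-2a)=4+2a$. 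With $\deg(\hbar)=2$ and $\deg(q)=0$ the monomial above then has degree $(4+2a)-2(a+2)+0=0$, which gives the proposition.

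Finally I would dispatch the contributions of the unstable tuples $(\beta,1)$. By Section~\ref{sec:UnstableConventions} these equal $F_\beta$-contributions to graph-space invariants; the $\C^*$-localization computation — as performed in \eqref{eqn:UnstableTermOfJ1} for $(\beta_0(\theta,1),1)$, where it yields precisely $1_\theta$, of degree $0$ by Remark~\ref{rem:AnalogOfUntwistedSector} — rewrites each such contribution via honest integrals over $\LGQ^\epsilon_{0,1}(Z(\theta),\beta)^{\nar}$, to which the same dimension count applies. I expect the only delicate point to be purely notational: keeping straight the various grading conventions (the even Chen--Ruan degrees on $\H(\theta)$ versus the ``half-degrees'' appearing in the unshifted-virtual-dimension constraint, and the weight $2$ assigned to $\hbar$) so that the cancellation comes out exactly right. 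There is no geometric obstacle here; the statement is, in the end, the virtual dimension formula together with Poincar\'e duality.
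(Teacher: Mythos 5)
Your argument for the stable tuples is correct, and it is a genuinely different (slightly more pedestrian, but perfectly valid) route from the paper's. You use the unshifted virtual dimension of $\LGQ_{0,1}^\epsilon(Z(\theta),\beta)^{\nar}$ together with the fact that the Poincar\'e pairing on $\H(\theta)$ pairs degree $d$ with degree $6-d$; the paper instead passes uniformly through the graph space $\LGQG_{0,1}^\epsilon(Z(\theta),\beta)$, using its unshifted virtual dimension $m+3=4$ and the pushforward along $\ev_1$. The trade-off is that the paper's single dimension count on the graph space handles every term at once, whereas yours forces a case split.

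The gap is in your last paragraph. For an unstable tuple $(\beta,1)$ (i.e.\ $\beta_\vartheta<1/\epsilon$) the moduli stack $\LGQ_{0,1}^\epsilon(Z(\theta),\beta)$ is \emph{empty} --- that is precisely what ``unstable'' means --- so there are no ``honest integrals over $\LGQ^\epsilon_{0,1}(Z(\theta),\beta)^{\nar}$'' for the $\C^*$-localization to produce, and the dimension count you ran on $\LGQ_{0,1}$ simply does not apply. The logical direction in Section~\ref{sec:UnstableConventions} is the reverse of what you write: the $F_\beta$-contribution to the graph-space invariant agrees with an honest $\LGQ_{0,1}$-integral \emph{only when} $(\beta,1)$ is stable; when it is not, the $F_\beta$-contribution is taken as the \emph{definition}, and there is nothing for it to reduce to. Your own cited example illustrates this: the computation \eqref{eqn:UnstableTermOfJ1} of the term at $\beta_0(\theta,1)$ is an integral over $F_\beta\subseteq\LGQG_{0,\star}^\epsilon(Z(\theta),\beta_0(\theta,1))$, not over $\LGQ_{0,1}^\epsilon$, which for that degree is empty.

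The fix is small, and it recovers the paper's argument: the graph space $\LGQG_{0,1}^\epsilon(Z(\theta),\beta)$ is nonempty for every effective $(\beta,1)$ regardless of $\epsilon$ (stability is only imposed away from the parametrized component), it has unshifted virtual dimension $4$, and the $\C^*$-equivariant localization formula preserves degree. So each $F_\beta$-contribution to $\sum_j\gamma_j\langle\gamma^j\otimes p_\infty\rangle_{0,1,\beta}^{\epsilon,\theta,Gr}$ inherits degree zero directly from the graph-space dimension count, with no reference to $\LGQ_{0,1}$ at all. If you route the unstable case through that count instead, the proof closes.
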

\begin{proof}
  By Section \ref{sec:UnstableConventions}, any term of
  $J^{\epsilon,\theta}(0,q,\hbar)$ may be expressed as the $F_\beta$
  contribution to
  \begin{align}
    \sum_j\gamma_j\langle\gamma^j\otimes
    p_\infty\rangle_{0,1,\beta}^{\epsilon,\theta,Gr}=(\ev_1)_*\left(\ev_1^*p_\infty\right)\in\H(\theta).\label{eqn:JZero}
  \end{align}
  From Theorem \ref{thm:DMStack}, $\LGQG_{0,1}(Z(\theta),\beta)$ has
  unshifted virtual dimension 4, so this pushforward has relative
  dimension 1. (This statement depends on correctly shifting degrees
  as in Section \ref{sec:CompactTypeStateSpace}.) Thus
  \eqref{eqn:JZero} has degree zero.
\end{proof}
\begin{cor}\label{cor:JPowerSeries}
  $J^{\epsilon,\theta}(t,q,\hbar)\in\H(\theta)[[q,\hbar^{-1}]].$
\end{cor}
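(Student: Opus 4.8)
The plan is to show that no positive power of $\hbar$ occurs in $J^{\epsilon,\theta}(t,q,\hbar)$; since $J^{\epsilon,\theta}(t,q,\hbar)$ already lies in $\H(\theta)[[q]]((\hbar^{-1}))$, this is exactly what is needed. I would organize the argument around the number $m$ of insertions of $t$ in the double bracket $\langle\langle\tfrac{\gamma^j}{\hbar(\hbar-\psi_1)}\rangle\rangle_{0,1}^{\epsilon,\theta}$ defining $J$: the $m=0$ contributions assemble to $J^{\epsilon,\theta}(0,q,\hbar)$, and every other contribution carries a factor $t^{\otimes m}$ with $m\ge1$.

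For the $m=0$ part I would simply quote the preceding proposition: $J^{\epsilon,\theta}(0,q,\hbar)$ is homogeneous of degree $0$ for the grading with $\deg q=0$, $\deg\hbar=2$. By Theorem \ref{thm:StateSpaceIsom} and the table \eqref{eqn:DegreeTable}, $\H(\theta)$ is concentrated in cohomological degrees $0,2,4,6$; hence a nonzero homogeneous monomial $\gamma\,q^{\delta}\hbar^{n}$ with $\gamma\in\H^{d}(\theta)$ must satisfy $d+2n=0$, forcing $n=-d/2\in\{0,-1,-2,-3\}$. In particular $n\le0$, so $J^{\epsilon,\theta}(0,q,\hbar)$ contains no positive power of $\hbar$ (indeed it is a polynomial in $\hbar^{-1}$ of degree $\le3$). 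This is the step that absorbs the potentially $\hbar$-heavy numerators occurring in the $I^{\theta}$-type terms with $\beta_{\vartheta}<1/\epsilon$: without the degree bound on $\H(\theta)$ one could only conclude a Laurent series in $\hbar^{-1}$, so the finiteness of the grading is genuinely being used here.

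For the $m\ge1$ part, \eqref{eqn:UnstableTermsOfJ} shows that the only unstable tuple with $m\ge1$ is $(\beta_0(\theta,2),2)$, whose contribution was computed in \eqref{eqn:UnstableTermOfJ2} to be $t/\hbar$ — no positive power of $\hbar$. Every remaining $m\ge1$ term is a genuine summand $\tfrac{q^{\beta-\beta_0(\theta,m+1)}}{m!}\langle\tfrac{\gamma^j}{\hbar(\hbar-\psi_1)},t,\dots,t\rangle_{0,m+1,\beta}^{\epsilon,\theta}$, and expanding $\tfrac{1}{\hbar(\hbar-\psi_1)}=\sum_{k\ge0}\psi_1^{k}\hbar^{-k-2}$ rewrites it as $\sum_{k\ge0}\hbar^{-k-2}\langle\gamma^j\psi_1^k,t,\dots,t\rangle_{0,m+1,\beta}^{\epsilon,\theta}$. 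By the unshifted-virtual-dimension constraint of Theorem \ref{thm:VirtualCycle}, for fixed $(\beta,m)$ and a fixed homogeneous decomposition of the $t$-insertions only finitely many $k$ survive, so this summand is a polynomial in $\hbar^{-1}$ with lowest term $\hbar^{-2}$; again no positive power of $\hbar$.

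Combining the two cases, every term of $J^{\epsilon,\theta}(t,q,\hbar)$ lies in $\H(\theta)[[q]][[\hbar^{-1}]]=\H(\theta)[[q,\hbar^{-1}]]$, which gives the corollary. The infinite sum over $(\beta,m)$ makes sense in this ring for the usual reasons: fixing a $q$-monomial pins down $\beta$ in terms of $m$ via Proposition \ref{prop:CriterionForEffectiveness}, and fixing in addition a power of $\hbar^{-1}$ leaves only finitely many $m$ by the dimension constraint (or one simply works with $t$ formal, in the $q$- and $\hbar^{-1}$-adic topology, as the paper does). I do not expect any real obstacle: the substantive input is the preceding proposition together with the boundedness of the grading on $\H(\theta)$, and the rest is the bookkeeping above; the one point to check carefully is that the explicitly computed unstable contributions \eqref{eqn:UnstableTermOfJ1}, \eqref{eqn:UnstableTermOfJ2} and the $m\ge1$ stable terms do not reintroduce positive powers of $\hbar$, which the term-by-term check confirms.
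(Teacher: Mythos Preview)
Your proposal is correct and follows essentially the same approach as the paper: the only possible source of positive powers of $\hbar$ is the unstable terms, and the homogeneity of $J^{\epsilon,\theta}(0,q,\hbar)$ together with the nonnegativity of the grading on $\H(\theta)$ rules these out. The paper's proof is just two sentences and slightly glosses over the one unstable term with $m\ge1$, namely $(\beta_0(\theta,2),2)$; you handle this more carefully by noting it was already computed to be $t/\hbar$, which is a welcome clarification but not a different argument.
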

\begin{proof}
  A priori, $J^{\epsilon,\theta}(t,q,\hbar)$ may have positive powers of
  $\hbar$ coming from the unstable terms. However,
  $J^{\epsilon,\theta}(0,q,\hbar)$ includes all unstable terms, and is
  homogeneous of degree zero, so this does not occur.
\end{proof}

\begin{rem}\label{rem:JInftyPowerSeries}
  By Remark \ref{rem:BetaThetaZero}, only the tuples on the first line
  of \eqref{eqn:UnstableTermsOfJ} appear in the case
  $\epsilon=\infty$. In particular,
  \begin{align*}
    J^{\infty,\theta}(t,q,\hbar)=1_\theta+\frac{t}{\hbar}+O(1/\hbar^2).
  \end{align*}
  Combining this with Remark \ref{rem:JIndependentOfEpsilon} implies
  $J^{\epsilon,\theta}(t,q,\hbar)=1_\theta+\frac{t}{\hbar}+O(q)+O(1/\hbar^2),$
  where $O(q)\in\H(\theta)[[q]]$ has no $q^{(0,0,0,0)}$-coefficient.
\end{rem}

\subsection{The $S^{\epsilon,\theta}$-operator and its inverse}\label{sec:SOperator}
We define operators on $\H(\theta)[[q]]((\hbar^{-1}))$ by:
\begin{align*}
  S^{\epsilon,\theta}(t,q,\hbar)(\gamma)&=\sum_j\gamma_j\langle\langle\frac{\gamma^j}{\hbar-\psi},\gamma\rangle\rangle_{0,2}^{\epsilon,\theta}\\
  (S^{\epsilon,\theta})^\star(t,q,-\hbar)(\gamma)&=\sum_j\gamma^j\langle\langle\gamma_j,\frac{\gamma}{-\hbar-\psi}\rangle\rangle_{0,2}^{\epsilon,\theta}.
\end{align*}
\begin{rem}
  Under some conditions, we may make sense of applying these operators
  to power series in $\hbar$. The details are in Section 5.1 of
  \cite{CiocanFontanineKim2014}.
\end{rem}
\begin{prop}\label{prop:SOperatorIdentity}
  $S^{\epsilon,\theta}(t,q,\hbar)=\Id+O(1/\hbar)$ and
  $(S^{\epsilon,\theta})^{\star}(t,q,-\hbar)=\Id+O(1/\hbar)$.
\end{prop}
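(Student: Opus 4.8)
The plan is to read both statements off the definition \eqref{eqn:DoubleBracket} of the double bracket, isolating the single ``unstable'' contribution that accounts for the leading $\Id$. First I would expand $\frac{1}{\hbar-\psi}=\sum_{a\ge 0}\hbar^{-a-1}\psi^{a}$. For every \emph{stable} tuple $(\beta,2+m)$ occurring in the scalar $\langle\langle\frac{\gamma^{j}}{\hbar-\psi},\gamma\rangle\rangle_{0,2}^{\epsilon,\theta}$, the associated summand is $\frac{q^{\beta-\beta_{0}(\theta,2+m)}}{m!}\sum_{a\ge 0}\hbar^{-a-1}\langle\gamma^{j}\psi_{1}^{a},\gamma,t,\ldots,t\rangle_{0,2+m,\beta}^{\epsilon,\theta}$, which is $O(1/\hbar)$; hence the stable part of $S^{\epsilon,\theta}(t,q,\hbar)$ is $O(1/\hbar)$, with in particular no nonnegative powers of $\hbar$.

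Next I would pin down the unstable terms. Since $\langle\langle\frac{\gamma^{j}}{\hbar-\psi},\gamma\rangle\rangle_{0,2}^{\epsilon,\theta}$ always carries at least two marked points, Definition \ref{Def:UnstableTuples} forces the only unstable tuple to be the one with no $t$-insertions and $\beta=\beta_{0}(\theta,2)$, which by Remark \ref{rem:BetaThetaZero} sits in the $q^{(0,0,0,0)}$-coefficient. Writing $\frac{\gamma^{j}}{\hbar-\psi_{1}}=\hbar\cdot\frac{\gamma^{j}}{\hbar(\hbar-\psi_{1})}$ and relabelling the two marked points, the conventions of Section \ref{sec:UnstableConventions} identify $\sum_{j}\gamma_{j}\langle\tfrac{\gamma^{j}}{\hbar-\psi_{1}},\gamma\rangle_{0,2,\beta_{0}(\theta,2)}^{\epsilon,\theta}$ with $\hbar$ times the $F_{\beta}$-contribution to $\sum_{j}\gamma_{j}\langle\gamma^{j}\otimes p_{\infty},\gamma\rangle_{0,\star,\beta_{0}(\theta,2)}^{\epsilon,\theta,Gr}$. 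This is precisely the graph-space computation already carried out in \eqref{eqn:UnstableJ2}--\eqref{eqn:UnstableTermOfJ2} for $J^{\epsilon,\theta}$, with the insertion $t$ replaced by $\gamma$; it therefore equals $\hbar\cdot(\gamma/\hbar)=\gamma$, with no further $\hbar^{-1}$-corrections. Combined with the previous paragraph, this yields $S^{\epsilon,\theta}(t,q,\hbar)(\gamma)=\gamma+O(1/\hbar)$, i.e.\ $S^{\epsilon,\theta}(t,q,\hbar)=\Id+O(1/\hbar)$.

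For the dual operator I would not repeat the computation but instead use adjointness. Using that $\{\gamma_{j}\}$ and $\{\gamma^{j}\}$ are dual for the Poincar\'e pairing of $IZ(\theta)$, and that the double bracket is symmetric under permuting its insertions (the unstable convention being independent of how the marked points are labelled), one checks $(S^{\epsilon,\theta}(t,q,\hbar)\beta,\alpha)_{Z(\theta)}=\langle\langle\tfrac{\alpha}{\hbar-\psi},\beta\rangle\rangle_{0,2}^{\epsilon,\theta}=(\beta,(S^{\epsilon,\theta})^{\star}(t,q,\hbar)\alpha)_{Z(\theta)}$ for all $\alpha,\beta\in\H(\theta)$, so $(S^{\epsilon,\theta})^{\star}(t,q,\hbar)$ is the Poincar\'e-adjoint of $S^{\epsilon,\theta}(t,q,\hbar)$. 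Since the adjoint of $\Id+O(1/\hbar)$ is again $\Id+O(1/\hbar)$, and replacing $\hbar$ by $-\hbar$ preserves this property, I conclude $(S^{\epsilon,\theta})^{\star}(t,q,-\hbar)=\Id+O(1/\hbar)$.

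I expect the only genuinely delicate point to be the bookkeeping in the unstable term: one must remember that the insertion $\frac{\gamma^{j}}{\hbar-\psi}$ appearing in $S^{\epsilon,\theta}$ carries one fewer power of $\hbar^{-1}$ than the $\frac{\gamma^{j}}{\hbar(\hbar-\psi)}$ in $J^{\epsilon,\theta}$ and in the unstable convention, so that the extra scalar $\hbar$ exactly cancels the $1/\hbar$ produced by $F_{\beta}$-localization and produces the identity on the nose rather than merely $O(1/\hbar)$; and one should check that no nonnegative powers of $\hbar$ sneak in, which is immediate since there is a single unstable contribution and it was evaluated explicitly. Everything else is routine manipulation of \eqref{eqn:DoubleBracket}.
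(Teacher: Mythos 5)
Your proof is correct and follows essentially the same route as the paper: expand $\frac{1}{\hbar-\psi}$ to see that all stable tuples contribute $O(1/\hbar)$, observe that the unique unstable tuple is $(\beta_0(\theta,2),2)$, and evaluate its contribution via the graph-space convention of Section \ref{sec:UnstableConventions} and the computation \eqref{eqn:UnstableJ2}--\eqref{eqn:UnstableTermOfJ2}, the inserted factor of $\hbar$ cancelling the $1/\hbar$ to give exactly $\gamma$. The only (cosmetic) difference is that you handle $(S^{\epsilon,\theta})^{\star}(t,q,-\hbar)$ by Poincar\'e adjointness and symmetry of the double bracket, where the paper simply repeats the same direct computation; both are fine.
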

\begin{proof}
  As in Corollary \ref{cor:JPowerSeries}, the only terms of
  $S^{\epsilon,\theta}(t,q,\hbar)(\gamma)$ with nonnegative powers of $\hbar$
  come from unstable tuples $(\beta,m+2).$ The only such tuple is
  $(\beta_0(\theta,2),2).$ The corresponding term is the contribution
  of $F_\beta=F_{1,(0,0,0,0)}^{1,(0,0,0,0)}$ to
  \begin{align*}
    \sum_j\gamma_j\langle \hbar\gamma^j\otimes p_\infty,\gamma\rangle_{0,2,\beta_0(\theta,2)}^{\epsilon,\theta}
  \end{align*}
  This contribution was calculated (Section \ref{sec:JFunction},
  Equation \eqref{eqn:UnstableJ2}), and
  it is equal to $\gamma$. The argument for
  $(S^{\epsilon,\theta})^{\star}(t,q,-\hbar)$ is the same.
\end{proof}

\begin{prop}\label{prop:SOperatorInverse}
  $$(S^{\epsilon,\theta})^\star(t,q,-\hbar)\left(S^{\epsilon,\theta}(t,q,\hbar)(\gamma)\right)=\gamma.$$
\end{prop}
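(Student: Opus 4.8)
The plan is to follow the template of \cite{CiocanFontanineKim2014} and break the identity $(S^{\epsilon,\theta})^\star(t,q,-\hbar)\circ S^{\epsilon,\theta}(t,q,\hbar)=\Id$ into one formal step and one geometric step. The formal step is that $(S^{\epsilon,\theta})^\star(t,q,-\hbar)$ is the adjoint of $S^{\epsilon,\theta}(t,q,-\hbar)$ with respect to the Poincar\'e pairing $\langle\cdot,\cdot\rangle$ on $IZ(\theta)$. Indeed, unwinding the definitions and using that $\{\gamma^j\}$ is dual to $\{\gamma_j\}$ (so that $\mu=\sum_j\langle\mu,\gamma^j\rangle\gamma_j=\sum_j\langle\mu,\gamma_j\rangle\gamma^j$ for every $\mu\in\H(\theta)$), one obtains
\[
  \langle(S^{\epsilon,\theta})^\star(t,q,-\hbar)(\mu),\nu\rangle=\langle\langle\nu,\tfrac{\mu}{-\hbar-\psi}\rangle\rangle^{\epsilon,\theta}_{0,2}=\langle\langle\tfrac{\mu}{-\hbar-\psi},\nu\rangle\rangle^{\epsilon,\theta}_{0,2}=\langle\mu,S^{\epsilon,\theta}(t,q,-\hbar)(\nu)\rangle,
\]
the middle equality being the symmetry of two-pointed LG-quasimap invariants under interchanging the two marked points. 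Granting this, the Proposition reduces to the \emph{unitarity} statement $\langle S^{\epsilon,\theta}(t,q,\hbar)(\alpha),S^{\epsilon,\theta}(t,q,-\hbar)(\beta)\rangle=\langle\alpha,\beta\rangle$ for all $\alpha,\beta\in\H(\theta)$: pairing $(S^{\epsilon,\theta})^\star(t,q,-\hbar)(S^{\epsilon,\theta}(t,q,\hbar)(\gamma))$ against an arbitrary $\beta$ and applying adjointness gives $\langle S^{\epsilon,\theta}(t,q,\hbar)(\gamma),S^{\epsilon,\theta}(t,q,-\hbar)(\beta)\rangle=\langle\gamma,\beta\rangle$, after which nondegeneracy of the pairing on $\H(\theta)$ finishes the argument.

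It then remains to prove unitarity, which is the geometric heart. Expanding both operators in the dual basis and using $\sum_i\langle\gamma_i,\gamma_j\rangle\gamma^i=\gamma_j$ reduces it to the quadratic identity $\sum_j\langle\langle\tfrac{\gamma_j}{\hbar-\psi},\alpha\rangle\rangle^{\epsilon,\theta}_{0,2}\,\langle\langle\tfrac{\gamma^j}{-\hbar-\psi},\beta\rangle\rangle^{\epsilon,\theta}_{0,2}=\langle\alpha,\beta\rangle$. I would establish this exactly as in Section \ref{sec:CStarAction}, by $\C^*$-localization on the graph spaces: take a graph-space double bracket carrying the insertions $\alpha\otimes p_0$ and $\beta\otimes p_\infty$ with the $\psi$-classes at those two points inverted. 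The factors $p_0$, $p_\infty$ force the two insertions to lie over $0$ and $\infty$ respectively; on a fixed locus with genuine nodes at $\bullet$ and $\check\bullet$, the node-smoothing normal-bundle factors $\hbar-\psi_\bullet$ and $-\hbar-\psi_{\check\bullet}$ computed in Section \ref{sec:CStarAction} cancel the inserted denominators and reconstruct the left-hand side above; the degenerate fixed loci, where $\bullet$ or $\check\bullet$ is a smooth point, are handled by the conventions for unstable tuples in Section \ref{sec:UnstableConventions}; and the total sum is $\hbar$-independent because the graph-space invariant is. Evaluating the surviving contributions — at the relevant order only the extremal degrees $\beta_0(\theta,m)$ appear, parametrizing basepoint-free constant maps just as in the computation of the $(\beta_0(\theta,1),1)$-term of $J^{\epsilon,\theta}$ in Section \ref{sec:JFunction} — collapses everything to the Poincar\'e pairing $\langle\alpha,\beta\rangle$.

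The main obstacle is this last step: choosing the right graph-space bracket and correctly bookkeeping the degenerate $\C^*$-fixed loci against the unstable-tuple conventions. A secondary point worth verifying is that the whole argument uses only formal properties of the double brackets together with the graph-space $\C^*$-action, so it is uniform in $\theta\in\Theta$; this is automatic since Section \ref{sec:CStarAction} was set up uniformly. (One could alternatively bypass unitarity by introducing the fundamental solution $L^{\epsilon,\theta}$ of the quasimap connection and recognizing both $S^{\epsilon,\theta}$ and $(S^{\epsilon,\theta})^\star$ as built from it, but the localization route reuses machinery already in place.)
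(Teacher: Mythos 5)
Your proposal is correct and is essentially the paper's argument: the paper proves exactly your quadratic identity $\sum_j\langle\langle\frac{\gamma^j}{\hbar-\psi},\gamma\rangle\rangle_{0,2}^{\epsilon,\theta}\langle\langle\delta,\frac{\gamma_j}{-\hbar-\psi}\rangle\rangle_{0,2}^{\epsilon,\theta}=\int_{IZ(\theta)}\upsilon^*\gamma\cup\delta$ by applying $\C^*$-localization to $\langle\langle\gamma\otimes[0],\delta\otimes[\infty]\rangle\rangle_2^{\epsilon,\theta,Gr}$ and isolating the unique nonnegative-$\hbar$-power contribution, which comes from the extremal fixed locus $F_{1,\beta_0(\theta,2)}^{1,\beta_0(\theta,2)}$ and reduces to the computation of Equation \eqref{eqn:UnstableJ2}; it then concludes by direct substitution rather than by your (equivalent) detour through adjointness, unitarity, and nondegeneracy of the pairing. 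One correction to your setup: the graph-space insertions should be just $\gamma\otimes p_0$ and $\delta\otimes p_\infty$ with \emph{no} inverted $\psi$-classes in the integrand --- the denominators $\hbar-\psi_\bullet$ and $-\hbar-\psi_{\check\bullet}$ are produced (not cancelled) by the node-smoothing factors in $e(N^{\vir})$ of the fixed loci, and inserting them by hand as well would double them.
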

\begin{proof}
  The series
  $$\langle\langle\gamma\otimes[0],\delta\otimes[\infty]\rangle\rangle_2^{\epsilon,\theta,Gr}$$
  is a sum of equivariant integrals, hence a power series in $\hbar.$
  Applying localization gives a sum over fixed components where
  $\tau(b_1)=0\in\P^1$ and $\tau(b_2)=\infty\in\P^1.$ These fixed
  components were described in Section \ref{sec:CStarAction} as
  fibered products, and their normal bundles were calculated. These
  yield:
  \begin{align}\label{eqn:LocalizedP}
    \langle\langle\gamma\otimes[0],\delta\otimes[\infty]\rangle\rangle_2^{\epsilon,\theta,Gr}=\sum_j\langle\langle\frac{\gamma^j}{\hbar-\psi},\gamma\rangle\rangle_{0,2}^{\epsilon,\theta}\langle\langle\delta,\frac{\gamma_j}{-\hbar-\psi}\rangle\rangle_{0,2}^{\epsilon,\theta}.
  \end{align}
  The constant term in $\hbar$ of the right side is the contribution
  from the fixed component
  $F_{1,\beta_0(\theta,2)}^{1,\beta_0(\theta,2)}.$ Again, this
  calculation is essentially the one from Section \ref{sec:JFunction}
  (Equation \eqref{eqn:UnstableJ2}), and the answer is
  $\int_{IZ(\theta)}\upsilon^*\gamma\cup\delta$.

  As this is the only term of the right side of \eqref{eqn:LocalizedP}
  with a nonnegative power of $\hbar$, and the left side of
  \eqref{eqn:LocalizedP} is a power series in $\hbar$, we conclude:
  $$\sum_j\langle\langle\frac{\gamma^j}{\hbar-\psi},\gamma\rangle\rangle_{0,2}^{\epsilon,\theta}\langle\langle\delta,\frac{\gamma_j}{-\hbar-\psi}\rangle\rangle_{0,2}^{\epsilon,\theta}=\int_{IZ(\theta)}\upsilon^*\gamma\cup\delta.$$
  Using this, we have:
  \begin{align*}
    (S^{\epsilon,\theta})^\star(t,q,-\hbar)\left(S^{\epsilon,\theta}(t,q,\hbar)(\gamma)\right)&=\sum_j\gamma^j\langle\langle\gamma_j,\frac{\sum_{j'}\gamma_{j'}\langle\langle\frac{\gamma^{j'}}{\hbar-\psi},\gamma\rangle\rangle_{0,2}^{\epsilon,\theta}}{-\hbar-\psi}\rangle\rangle_{0,2}^{\epsilon,\theta}\\
                                                                                      &=\sum_{j,j'}\gamma^j\langle\langle\frac{\gamma^{j'}}{\hbar-\psi},\gamma\rangle\rangle_{0,2}^{\epsilon,\theta}\langle\langle\gamma_j,\frac{\gamma_{j'}}{-\hbar-\psi}\rangle\rangle_{0,2}^{\epsilon,\theta}\\
                                                                                      &=\sum_j\gamma^j\int_{IZ(\theta)}\upsilon^*\gamma\cup\gamma_j=\gamma.\qedhere
  \end{align*}
\end{proof}

\subsection{The $P$-series}\label{sec:PSeries}
Finally, we define:
\begin{align*}
  P^{\epsilon,\theta}(t,q,\hbar):=\sum_j\gamma^j\langle\langle
  \gamma_j\otimes p_\infty\rangle\rangle_1^{\epsilon,\theta,Gr}\in\H(\theta)[[q,\hbar]].
\end{align*}
\begin{prop}
  $P^{\epsilon,\theta}(t,q,\hbar)=1_\theta+O(q)$.
\end{prop}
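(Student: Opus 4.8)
The plan is to compute the $q^{(0,0,0,0)}$-coefficient of $P^{\epsilon,\theta}$ and check that it equals $1_\theta$; every other monomial in $q$ is automatically part of the error term $O(q)$. Since the $(\beta,1+m)$-summand of $\langle\langle\gamma_j\otimes p_\infty\rangle\rangle_1^{\epsilon,\theta,Gr}$ carries the factor $q^{\beta-\beta_0(\theta,1+m)}$, which by Remark \ref{rem:BetaThetaZero} is $q^{(0,0,0,0)}$ exactly when $(\beta,1+m)$ is extremal, the proposition is equivalent to
\[
  \sum_{m\ge0}\frac{1}{m!}\sum_j\gamma^j\,\langle\gamma_j\otimes p_\infty,t,\dots,t\rangle^{\epsilon,\theta,Gr}_{0,1+m,\beta_0(\theta,1+m)}=1_\theta .
\]
These are all honest graph-space integrals — unlike for $J^{\epsilon,\theta}$, no unstable-tuple conventions intervene, since graph spaces are always stable Deligne--Mumford stacks. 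Moreover, by Remark \ref{rem:JIndependentOfEpsilon}, LG-quasimaps of extremal degree have no basepoints, so each summand is independent of $\epsilon$.

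For the $m=0$ summand I would combine a dimension count with $\C^*$-localization. The space $\LGQG_{0,1}^\epsilon(Z(\theta),\beta_0(\theta,1))$ has unshifted virtual dimension $4$ and $p_\infty\in H^2_{\C^*}(\P^1)$, so $\langle\gamma_j\otimes p_\infty\rangle^{\epsilon,\theta,Gr}_{0,1,\beta_0(\theta,1)}$ vanishes unless $\gamma_j$ lies in the top piece $\H^6(\theta)$, in which case $\gamma^j\in\H^0(\theta)=\C\cdot1_\theta$ and the integral is $\hbar$-independent. To pin down the constant, localize: $\ev_1^*p_\infty$ restricts to $0$ on every $\C^*$-fixed locus on which the marked point does not lie over $\infty\in\P^1$, so only the component $F_\beta$ of Definition \ref{Def:FBeta} contributes, and its contribution is exactly the integral evaluated in \eqref{eqn:UnstableTermOfJ1} — with the roles of $\gamma_j$ and $\gamma^j$ interchanged, which does not change the answer because $\sum_j\gamma_j\langle\gamma^j,\cdot\rangle$ and $\sum_j\gamma^j\langle\gamma_j,\cdot\rangle$ are both the identity (the Poincar\'e pairing on $IZ(\theta)$ is symmetric). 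So the $m=0$ summand is $1_\theta$.

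What remains — and this is the step I expect to be the \emph{main obstacle} — is to show the $m\ge1$ summands vanish; the degree count above does not by itself exclude $t$-dependent contributions. I would localize again: in $\langle\gamma_j\otimes p_\infty,t,\dots,t\rangle^{\epsilon,\theta,Gr}_{0,1+m,\beta_0(\theta,1+m)}$ only fixed loci $F^{B_\infty,\beta^\infty}_{B_0,\beta^0}$ with the marked point carrying $p_\infty$ in $B_\infty$ survive. Using the product decomposition \eqref{eqn:DecomposeFixedLocus} and the normal-bundle formula $(-\hbar^2)(\hbar-\psi_\bullet)(-\hbar-\psi_{\check\bullet})$, the contribution of each such locus factors into a piece over $0$ and a piece over $\infty$; summing over $m$ and over all ways of distributing the $t$-insertions, these pieces assemble into values of the operators $S^{\epsilon,\theta}$ and $(S^{\epsilon,\theta})^\star$ evaluated on extremal-degree data, exactly as in the proof of Proposition \ref{prop:SOperatorInverse}. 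Then $S^{\epsilon,\theta}=\Id+O(1/\hbar)$ (Proposition \ref{prop:SOperatorIdentity}) together with the inversion identity of Proposition \ref{prop:SOperatorInverse} should force the sum to telescope, leaving only the $m=0$ contribution. (An alternative would be to identify the extremal graph space directly as a $\mu_3$-gerbe over a product of $[E^2/\mu_3]$ with a graph space of $3$-spin structures and run a fiberwise dimension count over the spin factor, on which the absence of $\psi$-insertions forces vanishing once $m\ge1$.) Combining the three steps gives $[q^{(0,0,0,0)}]P^{\epsilon,\theta}=1_\theta$, i.e.\ $P^{\epsilon,\theta}(t,q,\hbar)=1_\theta+O(q)$.
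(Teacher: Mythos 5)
Your $m=0$ step is correct and matches the paper's in spirit: since $\ev_1^*p_\infty$ vanishes on every $\C^*$-fixed component of $\LGQG_{0,1}^\epsilon(Z(\theta),\beta_0(\theta,1))$ except the one where the marked point sits over $\infty$, the whole integral reduces to the computation in \eqref{eqn:UnstableTermOfJ1}, and the interchange of $\gamma_j$ with $\gamma^j$ is harmless by symmetry of the pairing on $IZ(\theta)$.

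For $m\ge1$, the paper takes exactly your parenthetical alternative, not your main route: it separates the components of the extremal graph space according to the monodromy of $\mathcal L_a$, identifies the $\mathcal L_a$-trivial piece as the product $[E^2/\mu_3]\times\mathcal W_{0,1+m}(\P^1)$ (the others are only fibered products), and then applies the projection formula to $\P^1$ to kill the spin factor whenever $m\ge1$. Your main route---localize, factor, invoke $S$-operators---points at a genuinely different and workable argument, but not as you've sketched it, and with a concrete gap. Localizing the \emph{fixed-degree} invariant $\langle\gamma_j\otimes p_\infty,t,\dots,t\rangle^{\epsilon,\theta,Gr}_{0,1+m,\beta_0(\theta,1+m)}$ cannot directly ``assemble into'' $S^{\epsilon,\theta}$ or $(S^{\epsilon,\theta})^\star$, because those operators sum over all degrees; the clean version is to run $\C^*$-localization on the full series $P^{\epsilon,\theta}$, obtaining the factorization $P^{\epsilon,\theta}=(S^{\epsilon,\theta})^\star(t,q,-\hbar)\bigl(J^{\epsilon,\theta}(t,q,\hbar)\bigr)$ of Section \ref{sec:PSeries} (this uses only the fixed-locus analysis of Section \ref{sec:CStarAction}, so it is not circular to invoke here), and only then extract $q^{(0,0,0,0)}$-coefficients. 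Because all $q$-exponents appearing are nonnegative, this gives $[q^0]P^{\epsilon,\theta}=\bigl([q^0](S^{\epsilon,\theta})^\star\bigr)\bigl([q^0]J^{\epsilon,\theta}\bigr)$. The lever is then \emph{not} the inversion identity of Proposition \ref{prop:SOperatorInverse}, but two separate inputs: $[q^0](S^{\epsilon,\theta})^\star=\Id+O(1/\hbar)$ from Proposition \ref{prop:SOperatorIdentity}, and---crucially, and missing from your sketch---$[q^0]J^{\epsilon,\theta}=1_\theta+t/\hbar+O(1/\hbar^2)$, which requires combining the $\epsilon$-independence of Remark \ref{rem:JIndependentOfEpsilon} with the $\epsilon=\infty$ computation of Remark \ref{rem:JInftyPowerSeries}. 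Without that second input nothing forces the ``telescoping'' to close, since $[q^0]J^{\epsilon,\theta}$ is not a priori simple at finite $\epsilon$. Granting it, $[q^0]P^{\epsilon,\theta}=1_\theta+O(1/\hbar)$, and since $P^{\epsilon,\theta}\in\H(\theta)[[q,\hbar]]$ has no negative powers of $\hbar$, the proposition follows. What the two routes buy: the paper's is self-contained within the section and geometrically explicit; the factorization route reuses machinery already needed for the Birkhoff factorization but imports the $[q^0]J$ computation as an extra ingredient.
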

\begin{proof}
  This coefficient of $q^{(0,0,0,0)}$ in $P^{\epsilon,\theta}(t,q,\hbar)$
  is
  \begin{align}
    \sum_{m,j}\frac{\gamma_j}{m!}\langle\gamma^j\otimes p_{\infty},t,\ldots,t\rangle_{1+m,\beta_0(\theta,m)}^{\epsilon,\theta,Gr},\label{eqn:CoeffOfQZeroP}
  \end{align}
  The moduli spaces
  $\LGQG_{0,1+m}^{\epsilon,\theta}(Z(\theta),(0,0,\frac{-2+m}{3},0))$
  are similar to the one described in Section \ref{sec:JFunction}, but
  somewhat more complicated. They have
  \begin{enumerate}
  \item Components corresponding to LG-quasimaps where $\mathcal{L}_a$
    is trivial, and\label{item:UntwistedComponents}
  \item Components corresponding to LG-quasimaps where $\mathcal{L}_a$
    has nontrivial monodromy at some marked
    point.\label{item:TwistedComponents}
  \end{enumerate}
  Since $\mathcal{L}_a$ has degree zero, these are the only
  possibilities.

  First we consider components of type
  \eqref{item:UntwistedComponents}. As $\mathcal{L}_x$ and
  $\mathcal{L}_y$ are trivial the union of these components is
  isomorphic to $[E^2/\mu_3]\times\mathcal{W}_{0,1+m}(\P^1)$, where
  $\mathcal{W}_{0,1+m}(\P^1)$ is a moduli space of \emph{spin curves}
  (see \cite{ChiodoZvonkine2009}). It has dimension $1+m$ and
  parametrizes
  \begin{itemize}
  \item A parametrized $1+m$-marked curve $C$, and
  \item A 3rd root of $\omega_{C,\log}.$
  \end{itemize}
  Under this identification the evaluation maps $\ev_i$ are given by
  the product
$$(\id,\mult_{b_i}(\mathcal{L}_z)):[E^2/\mu_3]\times\mathcal{W}_{0,1+m}(\P^1)\to[E^2/\mu_3]\times\bar{I}B\mu_3.$$
Write $\gamma_j=\gamma_{j,1}\otimes\gamma_{j,2},$ where
$\gamma_{j,1}\in H^*(E^2/\mu_3,\C)$ and
$\gamma_{j,2}\in H^*_{CR}(B\mu_3).$ Similarly write
$t=t_1\otimes t_2\in H^*(E^2/\mu_3,\C)\otimes H^*_{CR}(B\mu_3)$. Then
\eqref{eqn:CoeffOfQZeroP} is equal to the product
\begin{align}\label{eqn:UnstableCoefficientsOfP}
  \frac{1}{m!}\left(\int_{[E^2/\mu_3]}\gamma_{j,1}\cup
  t_1^m\right)\left(\int_{[\mathcal{W}_{0,1+m}(\P^1)]^{\vir}}\ev_1^*(\gamma_{j,2}\otimes
  p_\infty)\cup\prod_{i=1}^m\ev_i^*(t_2)\right),
\end{align}
Using the projection formula, we rewrite the second integral as
$\int_{\P^1}p_\infty\cup\alpha,$ where $\alpha\in H^{1+m}(\P^1,\C)$ is
a nonequivariant class pushed forward from
$\mathcal{W}_{0,1+m}(\P^1)$. Thus the second integral vanishes unless
$m=0$. The case $m=0$ has been computed (Section
\ref{sec:JFunction}, Equation \eqref{eqn:UnstableJ2}), and this term is equal to
$1_\theta$.

For components of type \eqref{item:TwistedComponents}, the moduli
space is only a fibered product, not a product, but we similarly find
that terms with $m>0$ do not contribute. However, there are no
components of type \eqref{item:TwistedComponents} with $m=0$, since
$\mathcal{L}_a$ is a 3rd root of the trivial bundle, and when $m=0$
there are no nontrivial 3rd roots of the trivial bundle. Thus
components of type \eqref{item:TwistedComponents} do not contribute to
the coefficient of $q^{(0,0,0,0)}.$
\end{proof}

\noindent\textbf{Factorization of $J^{\epsilon,\theta}(t,q,\hbar)$.} Next we apply $\C^*$-localization to $P^{\epsilon,\theta}$. As in the proof of
Proposition \ref{prop:SOperatorInverse}, we have
\begin{align*}
  P^{\epsilon,\theta}(t,q,\hbar)=\sum_{j,j'}\gamma_j\langle\langle\frac{\gamma^{j'}}{\hbar(\hbar-\psi)}\rangle\rangle_{0,1}^{\epsilon,\theta}\langle\langle(-\hbar)\gamma_j,\frac{\gamma_{j'}}{-\hbar(-\hbar-\psi)}\rangle\rangle_{0,2}^{\epsilon,\theta}.
\end{align*}
Now factoring gives
\begin{align*}
  P^{\epsilon,\theta}(t,q,\hbar)&=\sum_j\gamma_j\langle\langle(-\hbar)\gamma_j,\frac{\sum_{j'}\gamma_{j'}\langle\langle\frac{\gamma^{j'}}{\hbar(\hbar-\psi)}\rangle\rangle_{0,1}^{\epsilon,\theta}}{-\hbar(-\hbar-\psi)}\rangle\rangle_{0,2}^{\epsilon,\theta}\\
  &=\sum_j\gamma_j\langle\langle(-\hbar)\gamma_j,\frac{J^{\epsilon,\theta}(t,q,\hbar)}{-\hbar(-\hbar-\psi)}\rangle\rangle_{0,2}^{\epsilon,\theta}\\
  &=\sum_j\gamma_j\langle\langle\gamma_j,\frac{J^{\epsilon,\theta}(t,q,\hbar)}{(-\hbar-\psi)}\rangle\rangle_{0,2}^{\epsilon,\theta}=(S^{\epsilon,\theta})^\star(t,q,-\hbar)(J^{\epsilon,\theta}(t,q,\hbar)).
\end{align*}
The last expression contains no positive powers of $\hbar$, but
$P^{\epsilon,\theta}(t,q,\hbar)$ contains no negative powers of $\hbar$. Thus
$P^{\epsilon,\theta}(t,q,\hbar)=P^{\epsilon,\theta}(t,q)\in\H(\theta)[[q]].$
Applying Proposition \ref{prop:SOperatorInverse}, we have:
\begin{cor}\label{cor:BirkhoffFactorization}
  $J^{\epsilon,\theta}(t,q,\hbar)=S^{\epsilon,\theta}(t,q,-\hbar)(P^{\epsilon,\theta}(t,q)).$
\end{cor}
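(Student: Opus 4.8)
The plan is to read off Corollary~\ref{cor:BirkhoffFactorization} as the ``Birkhoff factorization'' statement dual to the $\C^*$-localization of $P^{\epsilon,\theta}$ carried out just above. That localization produces the identity
\[
  P^{\epsilon,\theta}(t,q)=(S^{\epsilon,\theta})^\star(t,q,-\hbar)\bigl(J^{\epsilon,\theta}(t,q,\hbar)\bigr),
\]
so the corollary is exactly what one gets by inverting the operator $(S^{\epsilon,\theta})^\star(t,q,-\hbar)$. Thus the argument splits into (i) verifying this localization identity and (ii) performing the inversion; only (i) involves any geometry.

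For (i), I would run $\C^*$-fixed-point localization on the graph spaces $\LGQG_{0,1+m}^{\epsilon}(Z(\theta),\beta)$ contributing to $P^{\epsilon,\theta}$. The class $p_\infty$ restricts to $0$ over $0\in\P^1$, so the only surviving fixed loci are the components $F_{B_0,\beta^0}^{B_\infty,\beta^\infty}$ of Proposition~\ref{prop:PartitionFromFixedQuasimap} in which the marked point carrying $p_\infty$ lies over $\infty$, where $p_\infty$ restricts to $-\hbar$. Using the virtual normal bundle computed in Section~\ref{sec:CStarAction} --- the node-smoothing weights $\hbar-\psi_\bullet$ and $-\hbar-\psi_{\check\bullet}$ together with the $\pm\hbar$ from deforming $\tau$ --- the localization sum factors across the node over $0$ into a dual-basis sum of a ``$J$-type'' contribution, namely $\sum_{j'}\gamma_{j'}\langle\langle\gamma^{j'}/(\hbar(\hbar-\psi))\rangle\rangle_{0,1}^{\epsilon,\theta}=J^{\epsilon,\theta}(t,q,\hbar)$, against an ``$S^\star$-type'' leg; after cancelling the residual $-\hbar$ factors this is precisely $(S^{\epsilon,\theta})^\star(t,q,-\hbar)(J^{\epsilon,\theta}(t,q,\hbar))$. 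This is the displayed chain of equalities preceding the statement, and --- as the text notes --- it parallels the fixed-locus computation already used for Proposition~\ref{prop:SOperatorInverse}, so it needs no new ingredient.

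For (ii), by Proposition~\ref{prop:SOperatorIdentity} the operator $(S^{\epsilon,\theta})^\star(t,q,-\hbar)$ has the form $\Id+O(1/\hbar)$, hence is an invertible $\C[[q]]$-linear automorphism of $\H(\theta)[[q]]((\hbar^{-1}))$; and Proposition~\ref{prop:SOperatorInverse} exhibits a one-sided inverse, which for an invertible operator is automatically two-sided. Applying this inverse, $S^{\epsilon,\theta}(t,q,-\hbar)$, to both sides of the identity in (i) gives $J^{\epsilon,\theta}(t,q,\hbar)=S^{\epsilon,\theta}(t,q,-\hbar)(P^{\epsilon,\theta}(t,q))$. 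As a consistency check, $P^{\epsilon,\theta}(t,q)$ is genuinely $\hbar$-free (shown just above, since the factored expression has no positive $\hbar$-powers while $P^{\epsilon,\theta}$ a priori has no negative ones), so the right-hand side lies in $\H(\theta)[[q]]((\hbar^{-1}))$ and, since $S^{\epsilon,\theta}(t,q,-\hbar)=\Id+O(1/\hbar)$, has no positive $\hbar$-powers, matching Corollary~\ref{cor:JPowerSeries}.

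The only genuinely delicate point is the bookkeeping inside (i): correctly matching the fixed loci, their virtual normal bundle weights, and the restriction of $p_\infty$ against the definitions of $J^{\epsilon,\theta}$, $S^{\epsilon,\theta}$ and $(S^{\epsilon,\theta})^\star$ --- in particular tracking which occurrences of $\hbar$ become $-\hbar$ for factors living over $\infty$ rather than $0$. Granting that computation (the display just before the statement, itself a variant of the argument for Proposition~\ref{prop:SOperatorInverse}), the proof of the corollary reduces to the short formal step (ii): invert $(S^{\epsilon,\theta})^\star(t,q,-\hbar)$ via Propositions~\ref{prop:SOperatorIdentity} and~\ref{prop:SOperatorInverse}.
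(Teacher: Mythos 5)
Your proposal is correct and follows essentially the same route as the paper: the displayed $\C^*$-localization of $P^{\epsilon,\theta}$ into a $J$-leg over $0$ and an $S^\star$-leg over $\infty$ yields $P^{\epsilon,\theta}=(S^{\epsilon,\theta})^\star(t,q,-\hbar)(J^{\epsilon,\theta}(t,q,\hbar))$, the comparison of positive and negative $\hbar$-powers shows $P^{\epsilon,\theta}$ is $\hbar$-free, and Propositions \ref{prop:SOperatorIdentity} and \ref{prop:SOperatorInverse} let you invert. The only caveat is a sign you inherit from the statement itself: Proposition \ref{prop:SOperatorInverse} says $(S^{\epsilon,\theta})^\star(t,q,-\hbar)\circ S^{\epsilon,\theta}(t,q,\hbar)=\Id$, so the inverse you should apply is $S^{\epsilon,\theta}(t,q,\hbar)$ rather than $S^{\epsilon,\theta}(t,q,-\hbar)$ --- consistent with the later use $J^{\infty,\theta}(t,q,\hbar)=S^{\infty,\theta}(t,q,\hbar)(1_\theta)$.
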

By Proposition \ref{prop:SOperatorIdentity} and Section
\ref{sec:JFunction}, we have
$$P^{\epsilon,\theta}(t,q)=P^{\epsilon,\theta}(q)\cdot1_\theta\in\C[[q]]\cdot1_\theta$$
is independent of $t$ and
$$J^{\epsilon,\theta}(t,q,\hbar)=P^{\epsilon,\theta}(q)\cdot1_\theta+O(1/\hbar).$$
In particular, Remark \ref{rem:JInftyPowerSeries} shows that for
$\epsilon=\infty$,
\begin{align*}
  J^{\infty,\theta}(t,q,\hbar)=1_\theta+O(1/\hbar),
\end{align*}
hence $P^{\infty,\theta}(q)=1_\theta.$ This implies
\begin{align*}
  J^{\infty,\theta}(t,q,\hbar)=S^{\infty,\theta}(t,q,\hbar)(1_\theta),
\end{align*}
which also follows from the string equation, correctly adapted as a
combination of that for stable maps (\cite{AbramovichGraberVistoli2008},
Theorem 8.3.1) and that appearing in FJRW theory
(\cite{FanJarvisRuan2013}, Theorem 4.2.9).

\section{Mirror theorems}\label{sec:MirrorTheorems}
\subsection{Setup}\label{sec:MirrorSetup}
In this section we will relate $J^{\infty,\theta}(t,q,\hbar)$ to
$J^{\epsilon,\theta}(t,q,\hbar),$ and we describe here precisely how they
are related.

In Section \ref{sec:GeneratingFunctions}, we could have taken
$t\in\H(\theta)[[q]]$ rather than $t\in\H(\theta)$ with no
changes. Doing so, we may formally view
$J^{\epsilon,\theta}(t,q,\hbar)$ as a map
$\H(\theta)[[q]]\to\H(\theta)[[q,\hbar^{-1}]].$ It is well-known (see
\cite{Tseng2010}) that the image of $J^{\infty,\theta}(t,q,\hbar)$
lies on and determines a (germ of a) cone
$\mathfrak{L}_\theta\subseteq\H(\theta)[[q,\hbar^{-1}]].$\footnote{Here
  the word \emph{cone} refers to a subset that is preserved under
  multiplication by elements from the ``base ring'' $\C[[q]].$ It is
  also a fact (which we will not need) that
  $\H(\theta)[[q]]((\hbar^{-1}))$ has a symplectic structure and that
  the cone is a Lagrangian submanifold.} We will show that
$J^{\epsilon,\theta}(t,q,\hbar)$ also lies on this cone also for all
$\epsilon$.

From Section \ref{sec:PSeries}, $J^{\epsilon,\theta}(t,q,\hbar)$ differs
by an element of $\C[[q]]$ from
$S^{\epsilon,\theta}(t,q,\hbar)(1_\theta).$ Thus the claim that
$J^{\epsilon,\theta}(t,q,\hbar)$ lies on the cone $\mathfrak{L}_\theta$
follows from the claim that $S^{\epsilon,\theta}(t,q,\hbar)(1_\theta)$
lies on $\mathfrak{L}_\theta.$

We will prove:
\begin{thm}[All-chamber mirror theorem]\label{thm:MirrorTheorem}
  There is an automorphism $\mathscr{T}^{\epsilon,\theta}$ of
  $\H(\theta)[[q]]$ such
  that
  $$S^{\epsilon,\theta}(t,q,\hbar)(1_\theta)=S^{\infty,\theta}(\mathscr{T}^{\epsilon,\theta}(t),q,\hbar)(1_\theta)=J^{\infty,\theta}(\mathscr{T}^{\epsilon,\theta}(t),q,\hbar).$$
  In particular, the image of $S^{\epsilon,\theta}(t,q,\hbar)(1_\theta)$
  is the same as the image of $J^{\infty,\theta}(t,q,\hbar).$
\end{thm}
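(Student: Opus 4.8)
The plan is to run the Givental-style argument of \cite{CiocanFontanineKim2014} (Sections 5 and 7), adapting each step to the LG-quasimap setting over an arbitrary chamber $\theta\in\Theta$. The key players are already assembled in Section \ref{sec:GeneratingFunctions}: the factorization $J^{\epsilon,\theta}(t,q,\hbar)=S^{\epsilon,\theta}(t,q,-\hbar)(P^{\epsilon,\theta}(t,q))$ of Corollary \ref{cor:BirkhoffFactorization}, together with the fact that $P^{\epsilon,\theta}(t,q)=P^{\epsilon,\theta}(q)\cdot 1_\theta$ is a $t$-independent element of $\C[[q]]\cdot 1_\theta$, reduces everything to the single statement that $S^{\epsilon,\theta}(t,q,\hbar)(1_\theta)$ lies on the cone $\mathfrak{L}_\theta$. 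The overlying principle is that $\mathfrak{L}_\theta$ is characterized by a recursion + polynomiality property (Givental's characterization, as in \cite{CiocanFontanineKim2014} Lemma 5.5.1 / Theorem 7.4.1 style statements): a family $\hbar$-series lies on $\mathfrak{L}_\theta$ if and only if its localized components satisfy the recursion coming from the $T$-fixed-point structure of the graph space, and its ``principal part'' is a polynomial in $\hbar$. So the first step is to state precisely the recursion characterization of $\mathfrak{L}_\theta$ in our setting, using the $T$-action of Section \ref{sec:TorusAction}, the $T$-fixed locus description, the tangent weights $w(\mu,\nu)$, and the properness of the $T$-fixed locus in $\LGQ_{0,m}^\epsilon(X(\theta),\beta)$ (the Corollary after \ref{cor:TFixedAssociatedQuasimap}).

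Next I would show that $S^{\epsilon,\theta}(t,q,\hbar)(1_\theta)$ satisfies this characterization. The recursion is obtained by applying $T$-localization to the graph-space series $\langle\langle \gamma\otimes[0],\ \delta\otimes[\infty]\rangle\rangle_2^{\epsilon,\theta,Gr}$ (the same device used in Proposition \ref{prop:SOperatorInverse}), but now decorating with torus-equivariant classes so that the fixed loci correspond to $T$-fixed points of $X_R(\theta)$ and orbit-lines between them. The contribution of each fixed locus, after computing its virtual normal bundle, factors into a ``vertex'' term and ``edge/leg'' terms; expanding the leg term in $\hbar$ and matching residues at $\hbar=w(\mu,\nu)$ gives the required recursion relating the $\mu$-component of $S^{\epsilon,\theta}(1_\theta)$ to other components. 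This is where the local structure of the graph space near its $T$-fixed loci — the decomposition \eqref{eqn:DecomposeFixedLocus} of $\C^*$-fixed components, now refined by $T$-fixed data — does the work. The polynomiality input is exactly Corollary \ref{cor:JPowerSeries} (and the homogeneity/degree-zero statement preceding it), which guarantees $S^{\epsilon,\theta}(t,q,\hbar)(1_\theta)=\Id+O(1/\hbar)$ has no spurious positive $\hbar$-powers. Since $J^{\infty,\theta}(t,q,\hbar)=S^{\infty,\theta}(t,q,\hbar)(1_\theta)$ also lies on $\mathfrak{L}_\theta$ and generates it, and since $S^{\epsilon,\theta}(1_\theta)=1_\theta+O(1/\hbar)$ has the same leading term, the two series lie on the same cone, and the change-of-variables $\mathscr{T}^{\epsilon,\theta}$ is recovered by the standard procedure of reading off the $\hbar^0$-coefficient of $S^{\epsilon,\theta}(t,q,\hbar)(1_\theta)$ and inverting the resulting (identity-plus-higher-order) map on $\H(\theta)[[q]]$; that this map is an automorphism follows because it is $\Id+O(q)$.

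The main obstacle I expect is verifying the recursion — i.e. that the $T$-localization contributions of the graph-space invariants genuinely reproduce Givental's recursive relations in the LG-quasimap setting. Two subtleties must be handled with care. First, the moduli spaces $\LGQG_{0,m}^\epsilon(X(\theta),\beta)$ are not spaces of quasimaps to $X(\theta)$ (Remark after the $\LGQ=Q$ discussion), so one cannot cite \cite{CiocanFontanineKim2014} verbatim; one must pass through the associated quasimap to $X_R^{\rig}(\theta)$ (Remark \ref{rem:AssociatedQuasimapToX}, Corollary \ref{cor:TFixedAssociatedQuasimap}) to identify the $T$-fixed data, then carefully account for the extra line bundle $\mathcal{L}_a$ and the $R$-charge twist $\kappa$ in the normal bundle computation. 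Second, the unstable tuples and the conventions of Section \ref{sec:UnstableConventions} must be threaded consistently through the recursion, since the ``initial terms'' of the recursion (the $\beta_\vartheta$ small terms, the extremal degrees $\beta_0(\theta,m)$) are precisely the unstable ones; getting the normalization right is exactly why those conventions were set up. Once the recursion and polynomiality are in hand, identifying the cones and extracting $\mathscr{T}^{\epsilon,\theta}$ is formal, uniform in $\theta$, and this uniformity is the whole point.
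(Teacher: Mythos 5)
Your high-level plan matches the paper's: apply the CFK uniqueness-lemma argument, reducing to a recursion, a polynomiality condition, and initial conditions, uniformly in $\theta$. The factorization $J^{\epsilon,\theta}=S^{\epsilon,\theta}(P^{\epsilon,\theta})$, the definition of $\mathscr{T}^{\epsilon,\theta}$ by reading off the leading $1/\hbar$-coefficient, the need to pass through $X_R^{\rig}(\theta)$ because $\LGQG(X(\theta),\beta)$ is not a space of quasimaps to $X(\theta)$, and the role of the unstable-tuple conventions are all correctly identified.

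However, there is a genuine gap: you misidentify the polynomiality input. You claim it is ``exactly Corollary~\ref{cor:JPowerSeries}'' (that $J^{\epsilon,\theta}\in\H(\theta)[[q,\hbar^{-1}]]$, i.e.~no positive $\hbar$-powers). That is not the polynomiality condition required by the Uniqueness Lemma 7.7.1 of \cite{CiocanFontanineKim2014}. The condition actually needed is Lemma~\ref{lem:Polynomiality}: for the localized operators $\mathfrak{Z}^{\epsilon,\theta}_\mu$ (not $S$, but its $\underline{Z}$-twisted pushforward to $X(\theta)$, cf.~\eqref{eqn:ZOperator}), the product
\begin{align*}
D(\mathfrak{Z}_\mu^{\epsilon,\theta})
=\left(\mathfrak{Z}_\mu^{\epsilon,\theta}(t,qe^{Y\hbar L_\vartheta},\hbar)(\gamma)\right)
\left(\mathfrak{Z}_{\upsilon(\mu)}^{\epsilon,\theta}(t,q,-\hbar)(\gamma)\right)
\end{align*}
must be a formal \emph{power series in $\hbar$} (not just a Laurent series). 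This is a statement about cancellation of poles between the two factors, not about either factor separately, and it is a priori surprising. Its proof is the most technical part of the argument: it requires constructing the ``quasimap degree'' line bundle $U(\mathcal{L})_\vartheta^{\beta(1),\beta(2)}$ on the graph space by composing with the embedding induced by $L_\vartheta$, passing to the nonorbifold projective space $\mathcal{Q}'(\beta_\vartheta+\cdots)$, and then running $\C^*$-localization against $e^{c_1(U(\mathcal{L})_\vartheta^{\beta(1),\beta(2)})Y}$ and $p_0\otimes p_\infty$ to show that the convolution of the two $\mathfrak{Z}_\mu$'s is the pushforward of a genuinely equivariant class, hence polynomial in $\hbar$. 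Corollary~\ref{cor:JPowerSeries} and the homogeneity argument preceding it simply do not deliver this, and without it the uniqueness lemma does not apply, so the recursion alone cannot pin down $\mathfrak{Z}_\mu^{\epsilon,\theta}(1_\theta)$ against $\mathfrak{Z}_\mu^{\infty,\theta}(\mathscr{T}^{\epsilon,\theta}(t))(1_\theta)$.

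A smaller issue: you do not quite set up the object to which localization is applied. $T$-localization must be done on $\LGQ(X(\theta),\beta)$, not $\LGQ(Z(\theta),\beta)$; the paper handles this by rewriting the invariants via the cosection-localized pushforward $\iota_*[\LGQ(Z(\theta),\beta)]^{\vir}=[\LGQ(X(\theta),\beta)]^{\vir}$ and twisting by the class $\underline{Z}$ so that a perfect pairing on the noncompact $X_R^{\rig}(\theta)$ is available, producing the operators $\mathfrak{Z}^{\epsilon,\theta}_\mu$ to which Lemmas~\ref{lem:SMuRationalFunction}, \ref{lem:Recursion}, and \ref{lem:Polynomiality} apply. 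Your remark about $X_R^{\rig}(\theta)$ gestures at this but does not supply the $\underline{Z}$-twist. Also, the change of variables $\mathscr{T}^{\epsilon,\theta}$ is read off the $\hbar^{-1}$-coefficient of $S^{\epsilon,\theta}(1_\theta)$, not the $\hbar^0$-coefficient (which is just $1_\theta$).
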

To find $\mathscr{T}^{\epsilon,\theta}(t)$, we expand:
\begin{align*}
  S^{\epsilon,\theta}(t,q,\hbar)(1_\theta)&=1_\theta+\frac{1}{\hbar}\sum_{\substack{\beta,m,j\\(\beta,m)\ne(\beta_0(\theta,2),2)}}\gamma_j\frac{q^{\beta-\beta_0(\theta,m)}}{m!}\langle\gamma^j,1_\theta,t,\ldots,t\rangle_{0,2+m,\beta}^{\epsilon,\theta}+O(1/\hbar^2)\\
                                      &=1_\theta+\frac{1}{\hbar}\left(\sum_j\gamma_j\langle\langle\gamma^j,1_\theta\rangle\rangle_{0,2}^{\epsilon,\theta}-1_\theta\right)+O(1/\hbar^2).
\end{align*}
(For consistency, the unstable part
$\sum_j\gamma_j\langle\gamma^j,1_\theta\rangle_{0,2,\beta_0(\theta,2)}^{\epsilon,\theta}$
is defined to be $1_\theta$.) Set:
$$\mathscr{T}^{\epsilon,\theta}(t):=\sum_j\gamma_j\langle\langle\gamma^j,1_\theta\rangle\rangle_{0,2}^{\epsilon,\theta}-1_\theta.$$
\begin{observation}\label{obs:AgreeToFirstOrder}
  $J^{\infty,\theta}(\mathscr{T}^{\epsilon,\theta}(t),q,\hbar)=S^{\epsilon,\theta}(t,q,\hbar)(1_\theta)+O(1/\hbar^2).$
\end{observation}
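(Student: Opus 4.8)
The plan is to prove the Observation by expanding both sides in powers of $1/\hbar$ and comparing only the coefficients of $\hbar^0$ and $\hbar^{-1}$; the transformation $\mathscr{T}^{\epsilon,\theta}$ has been defined precisely so that these two coefficients agree, so no input beyond bookkeeping is required.

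First I would invoke Remark \ref{rem:JInftyPowerSeries}, which gives $J^{\infty,\theta}(s,q,\hbar)=1_\theta+\frac{s}{\hbar}+O(1/\hbar^2)$ for any $s\in\H(\theta)[[q]]$. The derivation there uses only that for $\epsilon=\infty$ the sole unstable contributions to the $J$-function come from the tuples $(\beta_0(\theta,1),1)$ and $(\beta_0(\theta,2),2)$, computed to be $1_\theta$ and $s/\hbar$ respectively, whereas every stable term carries a factor $\frac{1}{\hbar(\hbar-\psi_1)}=\frac{1}{\hbar^2}+O(1/\hbar^3)$; and, as noted in Section \ref{sec:MirrorSetup}, we are free to let the argument range over $\H(\theta)[[q]]$. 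Substituting $s=\mathscr{T}^{\epsilon,\theta}(t)$ is legitimate because each double bracket $\langle\langle\gamma^j,1_\theta\rangle\rangle_{0,2}^{\epsilon,\theta}$ lies in $\C[[q]]$, so $\mathscr{T}^{\epsilon,\theta}(t)\in\H(\theta)[[q]]$, and moreover $\mathscr{T}^{\epsilon,\theta}(0)=O(q)$ (the $q^{(0,0,0,0)}$-part of $\sum_j\gamma_j\langle\langle\gamma^j,1_\theta\rangle\rangle_{0,2}^{\epsilon,\theta}$ is exactly $1_\theta$, by the unstable convention for $(\beta_0(\theta,2),2)$), so $\mathscr{T}^{\epsilon,\theta}$ is a well-defined change of variable equal to the identity plus higher-order terms. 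This yields $J^{\infty,\theta}(\mathscr{T}^{\epsilon,\theta}(t),q,\hbar)=1_\theta+\frac{1}{\hbar}\mathscr{T}^{\epsilon,\theta}(t)+O(1/\hbar^2)$.

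Then I would compare with the expansion of $S^{\epsilon,\theta}(t,q,\hbar)(1_\theta)$ displayed just before the Observation, namely $1_\theta+\frac{1}{\hbar}\bigl(\sum_j\gamma_j\langle\langle\gamma^j,1_\theta\rangle\rangle_{0,2}^{\epsilon,\theta}-1_\theta\bigr)+O(1/\hbar^2)$, whose $1/\hbar$-coefficient is by the definition of $\mathscr{T}^{\epsilon,\theta}$ exactly $\mathscr{T}^{\epsilon,\theta}(t)$. Subtracting the two expansions, the $\hbar^0$-terms cancel ($1_\theta$ against $1_\theta$) and the $\hbar^{-1}$-terms cancel ($\mathscr{T}^{\epsilon,\theta}(t)$ against $\mathscr{T}^{\epsilon,\theta}(t)$), leaving a difference in $O(1/\hbar^2)$, which is the assertion. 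I expect no genuine obstacle here: the only points deserving care are recalling that nonnegative powers of $\hbar$ in these generating functions arise exclusively from the unstable conventions of Section \ref{sec:UnstableConventions} (so that the $\hbar^0$- and $\hbar^{-1}$-coefficients are the finite, explicitly computed quantities above), and verifying the formal legitimacy of the substitution $s\mapsto\mathscr{T}^{\epsilon,\theta}(t)$. The substantive content — that the two sides agree to \emph{all} orders in $1/\hbar$, i.e.\ Theorem \ref{thm:MirrorTheorem} — is what the remainder of Section \ref{sec:MirrorTheorems} is devoted to.
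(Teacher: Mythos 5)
Your proof is correct and is essentially the one the paper intends: the Observation is placed immediately after the displayed expansion of $S^{\epsilon,\theta}(t,q,\hbar)(1_\theta)$ and the definition of $\mathscr{T}^{\epsilon,\theta}(t)$ precisely so that it follows by matching the $\hbar^0$- and $\hbar^{-1}$-coefficients against Remark~\ref{rem:JInftyPowerSeries}. Your added care about the legitimacy of substituting $s=\mathscr{T}^{\epsilon,\theta}(t)\in\H(\theta)[[q]]$ into $J^{\infty,\theta}$ is a reasonable point that the paper handles implicitly via the remark in Section~\ref{sec:MirrorSetup} that $t$ may be taken in $\H(\theta)[[q]]$.
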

To see that $\mathscr{T}^{\epsilon,\theta}(t)$ is an automorphism of
$\H(\theta)[[q]],$ we equate coefficients of $q^{(0,0,0,0)}\hbar^{-1}$ in
Corollary \ref{cor:BirkhoffFactorization}. Using Remark
\ref{rem:JInftyPowerSeries}, we see that
$\mathscr{T}^{\epsilon,\theta}(t)=t+O(q).$

Our strategy for establishing the equality of
$S^{\epsilon,\theta}(t,q,\hbar)(1_\theta)$ and
$J^{\infty,\theta}(\mathscr{T}^{\epsilon,\theta}(t),q,\hbar)$ is to
calculate both using $T$-localization. Since both involve integrals
over $\LGQ_{0,m}^\epsilon(Z(\theta),\beta),$ not
$\LGQ_{0,m}^\epsilon(X(\theta),\beta),$ we need to rewrite them. In
particular, the coefficients of
$S^{\epsilon,\theta}(t,q,\hbar)(\gamma)$ are integrals of the form
\begin{align}\label{eqn:QuasimapInvariant}
  \int_{[\LGQ_{0,m}^\epsilon(Z(\theta),\beta)]^{\vir}}\prod_{i=1}^m\psi_i^{a_i}\ev_i^*(\alpha_i).
\end{align}
From Section \ref{sec:CompactTypeStateSpace}, we only consider classes
$\alpha_i=\iota^*\mathfrak{a}_i$ pulled back from $\bar{I}X(\theta).$
Thus if
$e:\LGQ_{0,m}^\epsilon(Z(\theta),\beta)\into\LGQ_{0,m}^\epsilon(X(\theta),\beta)$
is the natural embedding, we may rewrite the above as
\begin{align*}
  \int_{e_*[\LGQ_{0,m}^\epsilon(Z(\theta),\beta)]^{\vir}}\prod_{i=1}^m\psi_i^{a_i}\ev_i^*(\mathfrak{a}_i).
\end{align*}
Here we use the fact that evaluation maps and $\psi$ classes are
compatible with $\iota.$ Now we use the general fact about cosection
localization that
\begin{align}\label{eqn:VirtualCyclePushforward}
  \iota_*[\LGQ_{0,m}^\epsilon(Z(\theta),\beta)]^{\vir}=[\LGQ_{0,m}^\epsilon(X(\theta),\beta)]^{\vir}
\end{align}
to we rewrite the $S$-operator (see \cite{KiemLi2013}). For
$\alpha\in\H(\theta),$ write $\tilde{\alpha}$ for the corresponding element of
$H^*_{CR}(X(\theta))^{\nar}/(\ker\iota^*).$ Then we have
\begin{align*}
  S^{\epsilon,\theta}(t,q,\hbar)(\gamma)=\sum_{\beta,m,j}\frac{q^{\beta-\beta_0(\theta,2+m)}}{m!}\gamma_j\int_{[\LGQ_{0,2+m}^\epsilon(X(\theta),\beta)]^{\vir}}\frac{\ev_1^*\tilde{\gamma^j}}{\hbar-\psi_1}\cup\ev_2^*\tilde{\gamma}\cup\ev_3^*\tilde{t}\cdots\ev_{m+2}^*\tilde{t}.
\end{align*}
(Note $\LGQ_{0,m}^\epsilon(X(\theta),\beta)$ is not proper; however,
by \eqref{eqn:VirtualCyclePushforward} the virtual fundamental class is
a homology class, in particular compactly supported, so the integral
makes sense.) We also see that $S^{\epsilon,\theta}(t,q,\hbar)(\gamma)$ is
pulled back from $\bar{I}X(\theta)$:
\begin{align*}
  \tilde{S}^{\epsilon,\theta}(t,q,\hbar)(\gamma):=\sum_{\beta,m,j}\frac{q^{\beta-\beta_0(\theta,2+m)}}{m!}\tilde{\gamma_j}\int_{[\LGQ_{0,2+m}^\epsilon(X(\theta),\beta)]^{\vir}}\frac{\ev_1^*\tilde{\gamma^j}}{\hbar-\psi_1}\cup\ev_2^*\tilde{\gamma}\cup\ev_3^*\tilde{t}\cdots\ev_{m+2}^*\tilde{t}.
\end{align*}
There is no Poincar\'e pairing on $\bar{I}X(\theta)$, as it is not
proper; however, we may still view the elements
$\tilde{\gamma_j}\in H_{CR}^*(X(\theta))^{nar}/(\ker\iota^*)$ as a
dual basis to $\{\tilde{\gamma^j}\}$ in the following sense. Since
$\bar{I}X(\theta)$ deformation retracts to
$\bar{I}X_R^{\rig}(\theta),$ which is proper and contains
$\bar{I}Z(\theta),$ there is a cohomology class $\underline{Z}$ such
that
$\underline{Z}\cap X_R^{\rig}(\theta)=\iota_*[\bar{I}Z(\theta)]\in
H_*(\bar{I}X(\theta))$.
Then the Poincar\'e pairing on $Z(\theta)$ induces the perfect pairing
on $H_{CR}^*(X(\theta))^{\nar}/(\ker\iota^*)$:
\begin{align*}
  \langle\alpha,\beta'\rangle_{\underline{Z}}:=\int_{[X_R^{\rig}(\theta)]}\alpha\cup\beta\cup\underline{Z},
\end{align*}
and under this pairing $\{\tilde{\gamma_j}\}$ and
$\{\tilde{\gamma^j}\}$ are again dual bases. Alternatively,
$\{\underline{Z}\cup\tilde{\gamma_j}\}$ and $\{\tilde{\gamma^j}\}$ are
dual bases with respect to a perfect pairing
$$\underline{Z}\cup H_{CR}^*(X(\theta))^{\nar}/(\ker\iota^*)\otimes
H_{CR}^*(X(\theta))^{\nar}/(\ker\iota^*)\to\C.$$
For this reason, we define
\begin{align}\label{eqn:ZOperator}
  \mathfrak{Z}^{\epsilon,\theta}(t,q,\hbar)(\gamma):=\underline{Z}\cup\tilde{S}^{\epsilon,\theta}(t,q,\hbar)(\gamma).
\end{align}
It is then sufficient to show:
\begin{align*}
  \mathfrak{Z}^{\infty,\theta}(\mathscr{T}^{\epsilon,\theta}(t),q,\hbar)=\mathfrak{Z}^{\epsilon,\theta}(t,q,\hbar)(1_\theta).
\end{align*}
\begin{rem}
  It will simplify things to choose lifts of classes in $\H(\theta)$, rather
  than working with elements of
  $H^*_{CR}(X(\theta))^{\nar}/(\ker\iota^*).$ Therefore in our
  notation we view $\tilde{\gamma_j},$ $\tilde{\gamma^j},$
  $\tilde{\gamma},$ $\tilde{t}$, and
  $\mathfrak{Z}^{\epsilon,\theta}(t,q,\hbar)(\gamma)$ as elements of
  $H^*_{CR}(X(\theta))^{\nar}[[q,\hbar^{-1}]].$ Of course, they are not
  well-defined, but their pullbacks to $Z(\theta)$ are.
\end{rem}

We are now in a situation to follow the arguments of
\cite{CiocanFontanineKim2014}. For each fixed point $\mu$ of
$\bar{I}X(\theta)$, consider the $T$-equivariant integral
$$\mathfrak{Z}^{\epsilon,\theta}_\mu(t,q,\hbar)(\gamma):=i_\mu^*\mathfrak{Z}^{\epsilon,\theta}(t,q,\hbar)(\gamma)=\int_{[X_R^{\rig}(\theta)]}\delta_\mu\cup\mathfrak{Z}^{\epsilon,\theta}(t,q,\hbar)(\gamma),$$
where $\delta_\mu\in H^*_{CR,T,\loc}(X(\theta))$ is the equivariant
fundamental class of $\mu$ and $i_\mu:\mu\into\bar{I}X(\theta)$ is the
inclusion. Precisely, as $\bar{I}X(\theta)$ has isolated fixed points,
the Atiyah-Bott localization formula states that its localized
equivariant cohomology groups $H^*_{CR,T,\loc}(X(\theta))$ are
generated by pushforwards of fundamental classes of the fixed points
from the equivariant cohomology of the fixed locus. We rewrite
\begin{align}\label{eqn:SMu}
  \mathfrak{Z}_\mu^{\epsilon,\theta}(t,q,\hbar)(\gamma)=\sum_{m,\beta}\frac{q^{\beta-\beta_0(\theta,2+m)}}{m!}\int_{[\LGQ_{0,2+m}^\epsilon(X(\theta),\beta)]^{\vir}}\frac{\ev_1^*\delta_\mu}{\hbar-\psi_1}\cup\ev_2^*\tilde{\gamma}\cup\ev_3^*\tilde{t}\cup\cdots\cup\ev_{m+2}^*\tilde{t}.
\end{align}
These are the objects of interest in the next section.

\subsection{Localization and recursion}
From Section \ref{sec:TorusAction}, \eqref{eqn:SMu} has a natural
equivariant lift, so we apply the fixed-point localization formula to
write:
\begin{align}\label{eqn:LocalizedSTilde}
  \mathfrak{Z}^{\epsilon,\theta}_\mu(t,q,\hbar)(\gamma)=\sum_{m,\beta,F}\frac{q^{\beta-\beta_0(\theta,2+m)}}{m!}\int_{[F]^{\vir}}\frac{i_F^*\left(\frac{\ev_1^*\delta_\mu}{\hbar-\psi_1}\cup\ev_2^*\tilde{\gamma}\cup\cdots\cup\ev_{m+2}^*\tilde{t}\right)}{e(N_F^{\vir})},
\end{align}
where $i_F:F\into\LGQ_{0,2+m}^\epsilon(X(\theta),\beta)$ is the
inclusion of a component of the fixed locus, $[F]^{\vir}$ is the
virtual fundamental class from the $T$-fixed part of
$R^\bullet\pi_*\mathcal{E}$, and $N_F^{\vir}$ is the $T$-equivariant
virtual normal bundle, defined to by the $T$-moving part of
$R^\bullet\pi_*\mathcal{E}$.

We recall terminology from \cite{CiocanFontanineKim2014}.
\begin{Def}\label{Def:ComponentTypes}
  For each fixed point $\mu$ of
  $\bar{I}X(\theta),$ we partition the components of the $T$-fixed
  locus of $\LGQ_{0,2+m}^\epsilon(X(\theta),\beta)$ into three
  subsets:
  \begin{itemize}
  \item $V(\mu,\beta,2+m)$ consists of components for which the first
    marking does not map to $\mu$,
  \item $\In(\mu,\beta,2+m)$ consists of components for which the
    first marking maps to $\mu$ and is on a contracted component of
    $C$ (see Definition \ref{Def:Contract}), and
  \item $\Rec(\mu,\beta,2+m)$ consists of components for which the
    first marking maps to $\mu$ and is not on a contracted component
    of $C$. In this case, $u^{\rig}$ sends this component to a fixed
    curve in $\bar{I}X(\theta)$ connecting $\mu$ to a unique other
    fixed point, which we denote by $\nu$.
  \end{itemize}
\end{Def}
This is Lemma 7.5.1 of \cite{CiocanFontanineKim2014}:
\begin{lem}\label{lem:SMuRationalFunction}
  For each $(\beta,(k_j)_j)$ with $\sum_jk_j=m$ the coefficient of
  $q^{\beta-\beta_0(\theta,2+m)}\prod_jt_j^{k_j}$ in
  $\mathfrak{Z}^{\epsilon,\theta}_\mu(\gamma)$ is a rational function
  of $\hbar$ with coefficients in $K$ (see Section
  \ref{sec:TorusAction}). This rational function decomposes as a
  finite sum of rational functions with denominators either powers of
  $\hbar$, of powers of linear factors $\hbar-\alpha$, where $-n\alpha$ is one
  of the weights of the $T$-representation $T_\mu(\bar{I}X(\theta))$
  for some $n\in\Z_{>0}$. 
\end{lem}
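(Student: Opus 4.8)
The plan is to follow the proof of Lemma~7.5.1 in \cite{CiocanFontanineKim2014} essentially line by line, tracking the dependence on $\hbar$ through the localization formula \eqref{eqn:LocalizedSTilde}. First I would observe that once $\beta$ and the exponents $(k_j)_j$ are fixed, the constraint $\sum_j k_j=m$ determines $m$, hence a single moduli space $\LGQ_{0,2+m}^\epsilon(X(\theta),\beta)$; by Theorem~\ref{thm:DMStack} this stack is of finite type, so its $T$-fixed locus has only finitely many components $F$, and the coefficient in question is a \emph{finite} sum of terms $\int_{[F]^{\vir}} i_F^*\!\big(\tfrac{\ev_1^*\delta_\mu}{\hbar-\psi_1}\cup\ev_2^*\tilde\gamma\cup\ev_3^*\tilde t\cup\cdots\big)/e(N_F^{\vir})$, each of which, after extracting the relevant monomial coefficient, lies in $K$ times a rational function of $\hbar$. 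The crucial point is that $\hbar$ enters only through the factor $\tfrac{1}{\hbar-\psi_1}$: the remaining classes $\ev_i^*\tilde\gamma$, $\ev_i^*\tilde t$, $\delta_\mu$, and $e(N_F^{\vir})\in K$ are all independent of $\hbar$, since $\hbar$ is a formal variable unrelated to the torus $T$ of Section~\ref{sec:TorusAction}. So it suffices to understand $i_F^*\psi_1$ for each type of fixed component.

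Next I would invoke the trichotomy of Definition~\ref{Def:ComponentTypes}. For $F\in V(\mu,\beta,2+m)$ the map $\ev_1$ sends $F$ to a $T$-fixed point of $\bar{I}X(\theta)$ different from $\mu$, so $i_F^*\ev_1^*\delta_\mu=0$ and the whole term vanishes. For $F\in\In(\mu,\beta,2+m)$ the first marking sits on a component contracted by $u$ (Definition~\ref{Def:Contract}), so $i_F^*\psi_1$ is a nonequivariant --- hence nilpotent --- $\psi$-class on the finite-dimensional stack $F$; expanding $\tfrac{1}{\hbar-\psi_1}=\tfrac{1}{\hbar}\sum_{k\ge0}(\psi_1/\hbar)^k$, which truncates after finitely many terms, and integrating, the contribution is a Laurent polynomial in $\hbar^{-1}$, i.e.\ a rational function whose denominator is a power of $\hbar$. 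For $F\in\Rec(\mu,\beta,2+m)$ the first marking is a smooth point on a component that $u^{\rig}$ maps onto a $T$-fixed line $X_{\mu,\nu}$; a standard vertex--edge computation --- carried out via the associated $T$-fixed quasimap to $X_R^{\rig}(\theta)$ provided by Corollary~\ref{cor:TFixedAssociatedQuasimap}, where the geometry is the familiar toric one --- shows $i_F^*\psi_1=\alpha+(\text{nilpotent})$ with $\alpha=-w(\mu,\nu)/n$ for the relevant line-bundle degree $n\in\Z_{>0}$; equivalently $-n\alpha=w(\mu,\nu)$ is one of the weights of $T_\mu(\bar{I}X(\theta))$. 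Expanding $\tfrac{1}{\hbar-\psi_1}$ as a geometric series about $\hbar=\alpha$ and truncating the nilpotent part, this term is a rational function of $\hbar$ whose only pole is along $\hbar-\alpha$. Summing the finitely many contributions gives the desired decomposition; the nonproperness of $X(\theta)$ causes no trouble, since, as in Section~\ref{sec:MirrorSetup}, all integrals are taken against $[X_R^{\rig}(\theta)]$ with the (proper) class $\underline Z$.

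I expect the main obstacle to be the recursion-component computation, i.e.\ pinning down $i_F^*\psi_1$ precisely: one must describe the $T$-fixed LG-quasimaps whose distinguished component maps onto a fixed line $X_{\mu,\nu}$, determine which of $\mathcal L_x,\mathcal L_y,\mathcal L_z,\mathcal L_a$ (and hence which degree $n$) controls the ramification of that map, and then verify that the resulting weight $-n\alpha$ genuinely occurs in the tangent representation $T_\mu(\bar{I}X(\theta))$ of the relevant twisted sector. Since Corollary~\ref{cor:TFixedAssociatedQuasimap} reduces this to the toric fixed-point geometry of $X_R^{\rig}(\theta)$ --- where fixed curves are coordinate lines and tangent weights are differences of the $\lambda_i$ --- the computation parallels those in \cite{CiocanFontanineKim2014,CheongCiocanFontanineKim2015}, with the $\mu_3$-stabilizers and the extra factor $B\C^*_R$ handled exactly as in Section~\ref{sec:TorusAction}.
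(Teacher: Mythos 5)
Your proof is correct and follows essentially the same route as the paper, which (as you anticipate) simply adapts Lemma 7.5.1 of Ciocan-Fontanine--Kim: apply $T$-localization, invoke the trichotomy $V(\mu,\beta,2+m)$, $\In(\mu,\beta,2+m)$, $\Rec(\mu,\beta,2+m)$ of Definition~\ref{Def:ComponentTypes}, kill the $V$-contributions via $i_F^*\ev_1^*\delta_\mu=0$, use nilpotence of $\psi_1$ on $\In$-components to get poles only at $\hbar=0$, and identify the equivariant part of $\psi_1$ on $\Rec$-components with $-w(\mu,\nu)/d$ via the weight of $(T_{b_1}^*C)^{\otimes d}\cong T_\mu^*X_{\mu,\nu}$. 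Your additional observations (finiteness from Theorem~\ref{thm:DMStack}, properness of the $T$-fixed locus handling the nonproperness of $X(\theta)$, reduction to $X_R^{\rig}(\theta)$ via Corollary~\ref{cor:TFixedAssociatedQuasimap}) are all consistent with the paper's setup.
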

\begin{proof}
  The proof in \cite{CiocanFontanineKim2014} requires essentially no
  modification, and we summarize it here.
  
  From \eqref{eqn:LocalizedSTilde}, the coefficient of
  $q^{\beta-\beta_0(\theta,2+m)}\prod_jt_j^{k_j}$ in
  $\mathfrak{Z}^{\epsilon,\theta}_\mu(t,q,\hbar)(\gamma)$ is:
  \begin{align}\label{eqn:CoefficientRationalFunction}
    \sum_F\frac{1}{(m!)}\int_{[F]^{\vir}}\frac{i_F^*\left(\frac{\ev_1^*\delta_u}{\hbar-\psi}\cup
    A\right)}{e(N_F^{\vir})}&=\sum_{F,a}\frac{1}{(m!\hbar^{a+1})}\int_{[F]^{\vir}}\frac{i_F^*\left(\psi_1^a\ev_1^*\delta_u\cup
    A\right)}{e(N_F^{\vir})},
  \end{align}
  where $A$ is the product of factors from the evaluation maps
  $2,\ldots,2+m$, and depends on $\beta$ and $(k_j)_j$.
  \begin{itemize}
  \item On components in $V(\mu,\beta,2+m)$, the factor
    $\ev_1^*\delta_\mu$ restricts to zero.
  \item On components in $\In(\mu,\beta,2+m),$ $\psi_1$ is
    nonequivariant, hence nilpotent, so the denominators are (bounded)
    powers of $\hbar$.
  \item On components in $\Rec(\mu,\beta,2+m)$, the $\psi_1$ is an
    equivariant class. However, if $d$ is the degree of $u^{\rig}$ on
    the component containing $b_1$, then the fibers of the
    $(T_{b_1}^*C)^{\otimes d}$ is naturally isomorphic to
    $T_\mu^*X_{\mu,\nu}$ from Section \ref{sec:TorusAction}. Thus the
    left side of \eqref{eqn:CoefficientRationalFunction} has a simple
    pole at $\psi_1=\frac{-w(\mu,\nu)}{d}$.\qedhere
  \end{itemize}
\end{proof}
This is Lemma 7.5.2 of \cite{CiocanFontanineKim2014}, and is
essentially unchanged from Proposition 4.4 of \cite{Givental1998}.
\begin{lem}\label{lem:Recursion}
  $\mathfrak{Z}^{\epsilon,\theta}_\mu(t,q,\hbar)$ satisfies the recursion
  \begin{align}\label{eqn:Recursion}
    \mathfrak{Z}^{\epsilon,\theta}_\mu(t,q,\hbar)=R^{\epsilon,\theta}_\mu(t,q,\hbar)+\sum_{\substack{\nu\text{
    \emph{$T$-adjacent}}\\\text{\emph{to $\mu$}}}}\sum_{d=1}^\infty
    q^{d\beta(\mu,\nu)}\frac{C_{\mu,\nu,d}}{\hbar+\frac{w(\mu,\nu)}{d}}\mathfrak{Z}^{\epsilon,\theta}_\nu\left(t,q,\frac{w(\mu,\nu)}{d}\right),
  \end{align}
  such that 
  \begin{itemize}
  \item $R^{\epsilon,\theta}_\mu(t,q,\hbar)$ is a power series in $1/\hbar$
    such that for each $(\beta,(k_j)_j)$ with $\sum k_j=m,$ the
    coefficient of $q^{\beta-\beta_0(\theta,2+m)}\prod_jt_j^{k_j}$ is
    a polynomial in $1/\hbar,$
  \item $\beta(\mu,\nu)$ is a degree dependent only on $\mu$ and
    $\nu$,
  \item $w(\mu,\nu)$ is the tangent weight defined in Section
    \ref{sec:TorusAction}, and
  \item $C_{\mu,\nu,d}$ is independent of $\epsilon.$
  \end{itemize}
\end{lem}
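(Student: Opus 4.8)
The plan is to expand $\mathfrak{Z}^{\epsilon,\theta}_\mu(t,q,\hbar)$ via the fixed-point localization formula \eqref{eqn:LocalizedSTilde} and organize the sum over $T$-fixed components $F$ according to the trichotomy of Definition \ref{Def:ComponentTypes}. Components $F\in V(\mu,\beta,2+m)$ contribute nothing, since on them the first marking does not map to $\mu$ and hence $i_F^*\ev_1^*\delta_\mu=0$. The other two types of components produce, respectively, the polynomial remainder $R^{\epsilon,\theta}_\mu$ and the recursive sum. This is the structure of Lemma 7.5.2 of \cite{CiocanFontanineKim2014} (itself essentially Proposition 4.4 of \cite{Givental1998}), and the strategy is to check that each geometric input used there survives in our setting with only notational changes.

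For $F\in\In(\mu,\beta,2+m)$ the marking $b_1$ lies on a contracted component, so $\psi_1|_F$ is nilpotent and non-equivariant; thus $\tfrac{1}{\hbar-\psi_1}|_F$ expands as a polynomial in $1/\hbar$. Collecting all such contributions defines $R^{\epsilon,\theta}_\mu(t,q,\hbar)$, a power series in $1/\hbar$ whose coefficient of $q^{\beta-\beta_0(\theta,2+m)}\prod_j t_j^{k_j}$ is polynomial in $1/\hbar$, exactly as already recorded in Lemma \ref{lem:SMuRationalFunction}.

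The heart is the $\Rec(\mu,\beta,2+m)$ case. Here $b_1$ sits on a non-contracted component $C_1$ whose image $u^{\rig}(C_1)$ is a $T$-invariant curve $X_{\mu,\nu}\subseteq X(\theta)$, covered with some degree $d\ge1$, with $b_1$ over $\mu$ and exactly one node over $\nu$ joining $C_1$ to the rest of $C$. Since such a cover is rigid, $(T_{b_1}^*C_1)^{\otimes d}\cong T_\mu^*X_{\mu,\nu}$, so $\psi_1|_F=-w(\mu,\nu)/d$ and $\tfrac{1}{\hbar-\psi_1}|_F=\tfrac{1}{\hbar+w(\mu,\nu)/d}$. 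Using the normalization sequence for the node over $\nu$, the component $F$ factors, up to a finite (gerbe) map, as a product of an isolated ``tail'' locus $\Gamma_{\mu,\nu,d}$ parametrizing these degree-$d$ covers with one orbifold marking and one node, and a $T$-fixed locus of $\LGQ^\epsilon_{0,\bullet}(X(\theta),\beta-d\beta(\mu,\nu))$ in which the marking at the node maps to $\nu$; here $\beta(\mu,\nu)$ is the degree class of $X_{\mu,\nu}$. Both $e(N_F^{\vir})$ and the $T$-fixed part of $R^\bullet\pi_*\mathcal{E}$ factor along this decomposition, the node contributing a smoothing factor with weight $w(\mu,\nu)/d$. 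Treating the $\psi$-class at the node-marking on the second factor as the variable $\hbar$ of $\mathfrak{Z}_\nu$ in \eqref{eqn:SMu}, and specializing it to $w(\mu,\nu)/d$ as forced by $\psi_1|_F$, one reassembles the sum over the second factor (over all residual degrees and over $m$) into $\mathfrak{Z}^{\epsilon,\theta}_\nu\!\left(t,q,\tfrac{w(\mu,\nu)}{d}\right)$. The remaining universal contribution of $\Gamma_{\mu,\nu,d}$ together with the node-smoothing residue is, by definition, $C_{\mu,\nu,d}$; extracting the overall pole produces the factor $\tfrac{1}{\hbar+w(\mu,\nu)/d}$ and the degree weight $q^{d\beta(\mu,\nu)}$, yielding \eqref{eqn:Recursion}.

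The main obstacle is the last bookkeeping claim: that $C_{\mu,\nu,d}$ is genuinely independent of $\epsilon$. This holds because the tail locus $\Gamma_{\mu,\nu,d}$, its virtual normal bundle, and the $T$-fixed part of $R^\bullet\pi_*\mathcal{E}$ restricted to it are all determined by the fixed, $\epsilon$-free geometry of a $d$-fold cover of $X_{\mu,\nu}$ together with the prescribed line-bundle multiplicities at $b_1$ and at the node; the stability parameter $\epsilon$ enters only through the ampleness condition $\omega_{C,\log}\otimes\mathcal{L}_\vartheta^{\epsilon}$ imposed on the whole curve, which is absorbed entirely into the second factor and hence into $\mathfrak{Z}^{\epsilon,\theta}_\nu$, not into the tail. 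The one point that requires genuine checking, rather than transcription from \cite{CiocanFontanineKim2014}, is that the length condition $\ell^\sigma(P)\le1$ imposes no hidden $\epsilon$-dependence on which triples $(\mu,\nu,d)$ occur: this follows because $C_1$ meets only a single node and a single marking, so any excess vanishing of $\sigma$ there is already recorded by the basepoint-degree conventions of Definition \ref{Def:DegreeOfBasepoint}, which are $\epsilon$-independent. Once this is in place the recursion follows exactly as in \cite{CiocanFontanineKim2014,Givental1998}.
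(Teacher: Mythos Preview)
Your proposal is correct and follows essentially the same route as the paper: localize, discard the $V$-components, collect the $\In$-components into $R^{\epsilon,\theta}_\mu$, and factor each $\Rec$-component as (tail cover of $X_{\mu,\nu}$) $\times$ (remainder at $\nu$), with the node-smoothing producing the pole $\tfrac{1}{\hbar+w(\mu,\nu)/d}$ and the sum over the remainder reassembling into $\mathfrak{Z}^{\epsilon,\theta}_\nu$ at $\hbar=w(\mu,\nu)/d$.

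The one place where the paper's argument is sharper than yours is the justification that $C_{\mu,\nu,d}$ is independent of $\epsilon$. You argue by separating the two conditions in Definition~\ref{Def:EpsilonStability} and tracking where each lands; the paper's observation is simply that the tail $C_1$ is a rigid $T$-fixed cover of $X_{\mu,\nu}$ with a single marking and a single node, hence carries \emph{no basepoints at all}. Since $\epsilon$ only enters the moduli problem through constraints on basepoints (both the length bound and the ampleness of $\omega_{C,\log}\otimes\mathcal{L}_\vartheta^\epsilon$ are vacuous on a two-pointed irreducible curve with no basepoints), the integral over the tail is manifestly $\epsilon$-free. Your final paragraph about excess vanishing being absorbed into basepoint-degree conventions is not needed once you note that there is no vanishing on the tail to begin with.
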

The proof is similar to that in \cite{CiocanFontanineKim2014}. Note
also that in the case $\theta=\theta_{xyz}^a$ the second term is zero,
as every component is contracted.
\begin{proof}
  $\mathfrak{Z}^{\epsilon,\theta}_\mu(t,q,\hbar)$ has a single unstable
  term, from the unstable tuple $(\beta_0(\theta,2),2).$ Using Section
  \ref{sec:SOperator}, the contribution is
  $\langle{[Z(\theta)]\cup\tilde{\gamma},\delta_\mu}\rangle.$

  We analyze the contributions from $V(\mu,\beta,2+m),$
  $\In(\mu,\beta,2+m)$, and $\Rec(\mu,\beta,2+m)$ to
  \eqref{eqn:LocalizedSTilde}.  As in Lemma,
  \ref{lem:SMuRationalFunction}, the contribution of components in
  $V(\mu,\beta,2+m),$ is zero.

  Consider a fixed component in $\In(\mu,\beta,2+m).$ This
  parametrizes LG-quasimaps such that the associated quasimap
  $C\to[E^2/\mu_3]$ sends $b_1$ to $\mu$ and contracts the
  component containing $b_1.$ As in the proof of Lemma
  \ref{lem:SMuRationalFunction}, the contribution is a power series in
  $1/\hbar,$ whose
  $q^{\beta-\beta_0(\theta,2+m)}\prod_jt_j^{k_j}$-coefficient is an
  element of $K[1/\hbar].$ We define $R^{\epsilon,\theta}_\mu(t,q,\hbar)$ to
  be the sum of contributions from components in $\In(\mu,\beta,2+m)$.

  We now consider a fixed component $M\in\Rec(\mu,\beta,2+m),$
  corresponding to the term of \eqref{eqn:LocalizedSTilde}:
  \begin{align}\label{eqn:TermOfSMu}
    \frac{q^{\beta-\beta_0(\theta,2+m)}}{m!}\int_{[M]^{\vir}}\frac{\frac{\ev_1^*\delta_\mu}{\hbar-\psi_1}\cup\ev_2^*\tilde{\gamma}\cup\cdots\cup\ev_{m+2}^*\tilde{t}}{e(N_M^{\vir})}.
  \end{align}
  Let $\nu$ be as in Definition \ref{Def:ComponentTypes}.
  LG-quasimaps in (As with $\mu$, $\nu$ naturally lives in
  $\bar{I}X(\theta)^T$ rather than
  $X(\theta)^T.$) 
  By gluing LG-quasimaps, we can write $M$ as a fibered product
  $M'\times_{\bar{I}X(\theta)}M'',$ where $M'$ is a $T$-fixed
  component of $\LGQ_{0,1+\bullet}^\epsilon(X(\theta),\beta')$ and
  $M''$ is a $T$-fixed component of
  $\LGQ_{0,m+1+\check\bullet}^\epsilon(X(\theta),\beta-\beta')$. (Note
  that the meaning of $\check\bullet$ differs very slightly from that
  in Section \ref{sec:CStarAction}.) Here $\beta'$ is the degree of
  $(C',u',\kappa')$. The maps to $\bar{I}X(\theta)$ are, in the first
  case, the evaluation map at $\bullet$, and in the second case, the
  evaluation map at $\check\bullet,$ composed with the inversion map
  on $\bar{I}Z(\theta).$ (See Sections \ref{sec:3StableCurves} and
  \ref{sec:ChenRuan}.)

  As $C'$ has a single marked point, a single node, and no basepoints,
  we have $\beta_z'=0.$ Similarly $\beta_a'=0.$ Also, by the
  characterization of 1-dimensional $T$-orbits in Section
  \ref{sec:TorusAction}, either $\beta_x'=0$ or $\beta_y'=0,$ and by
  the noncontractedness of $C',$ the other is a positive integer. Thus
  it is of the form $d\beta(\mu,\nu),$ where $\beta(\mu,\nu)$ is
  either $(1,0,0,0)$ or $(0,1,0,0)$. In particular,
  $\beta'-\beta_0(\theta,2)\ne(0,0,0,0).$

  We wish to write \eqref{eqn:TermOfSMu} as a product of integrals
  over $M'$ and $M''.$ To do this, we need to compute the virtual
  class $[M]^{\vir}$ in terms of $[M']^{\vir}$ and $[M'']^{\vir}.$
  Smoothing the node $o:C'\cap C''$ gives the 
  distinguished triangle of relative perfect obstruction theories:
  \begin{align*}
    R^\bullet\pi_*\mathcal{E}\to R^\bullet\pi_*\mathcal{E}|_{C'}\oplus
    R^\bullet\pi_*\mathcal{E}|_{C'}\oplus\to R^\bullet\pi_*\mathcal{E}|_o\to R^\bullet\pi_*\mathcal{E}[1].
  \end{align*}
  The term $R^\bullet\pi_*\mathcal{E}|_o$ is isomorphic as a $G$-bundle
  over $M$ to the trivial bundle with fiber $V$, concentrated in
  degree zero.

  We instead need a perfect obstruction theory relative to the stack
  $\mathfrak{M}_{0,m}^{\tw}$ (Section \ref{sec:3StableCurves}). The
  difference comes from the relative tangent complex
  $\mathbb{T}_{\mathfrak{A}/\mathfrak{M}_{0,m}^{\tw}}$. This is equal
  to $\mathcal{P}\times_{G\times\C^*_R}\mathfrak{g}$, concentrated in
  degree -1, where $\mathfrak{g}$ is the Lie algebra of $G.$ Thus if
  we denote by $\mathfrak{F}^\bullet(C)$ the perfect obstruction
  theory of $\LGQ_{0,m}^\epsilon(X(\theta),\beta)$ relative to
  $\mathfrak{M}_{0,m}^{\tw}$, and by $\mathfrak{F}^\bullet(C')$, etc.,
  the corresponding perfect obstruction theories on $M',$ etc., the
  triangle above becomes
  \begin{align*}
    \mathfrak{F}^\bullet(C)\to\mathfrak{F}^\bullet(C')\oplus
    \mathfrak{F}^\bullet(C'')\to\mathfrak{F}^\bullet(o)\to\mathfrak{F}^\bullet[1],
  \end{align*}
  where every fiber of $\mathfrak{F}(o)$ can be canonically identified
  with $T_\nu\bar{I}X(\theta).$ As the $T$-fixed points of
  $\bar{I}X(\theta)$ are isolated, $T_\nu\bar{I}X(\theta)$ has no
  $T$-fixed part.

  Now, to be able to make statements about the \emph{absolute}
  obstruction theory of $\LGQ_{0,m}^\epsilon(X(\theta),\beta),$ we
  need to analyze the tangent complex of $\mathfrak{M}_{0,m}^{\tw}.$ Again
  we have a triangle
  \begin{align*}
    \mathbb{T}_{\mathfrak{M}_{0,m}^{\tw}}(C)\to\mathbb{T}_{\mathfrak{M}_{0,m}^{\tw}}(C')\oplus\mathbb{T}_{\mathfrak{M}_{0,m}^{\tw}}(C'')\to\mathbb{T}_{\mathfrak{M}_{0,m}^{\tw}}(o)\to\mathbb{T}_{\mathfrak{M}_{0,m}^{\tw}}[1],
  \end{align*}
  and here $\mathbb{T}_{\mathfrak{M}_{0,m}^{\tw}}(o)$ is the
  deformation space of the node $o$, with each fiber canonically
  isomorphic to $T_oC'\otimes T_oC''.$ The factor $T_oC'$ gives a
  topologically trivial bundle over $M$ up to torsion, with $T$-weight
  $w(\nu,\mu)/d$. The factor $T_oC''$ 
  may be topologically nontrivial (depending on $M''$), but in any
  case the $T$-action on $\mathbb{T}_{\mathfrak{M}_{0,m}^{\tw}}(o)$ is
  nontrivial. Thus we have, exactly as in
  \cite{CiocanFontanineKim2014}:
  \begin{align*}
    [M]^{\vir}&=[M']^{\vir}\times[M'']^{\vir}\\
    \frac{1}{e^T(N^{\vir}_M)}&=\frac{e^T(T_\nu\bar{I}X(\theta))}{e^T(N^{\vir}_{M'})e^T(N^{\vir}_{M''})(\frac{w(\nu,\mu)}{d}-\psi_1^{M''})}.
  \end{align*}
  Since $\ev_1|_{M''}$ is a constant map to $\nu\in\bar{I}X(\theta)$,
  and $w(\nu,\mu)=-w(\mu,\nu),$ \eqref{eqn:TermOfSMu} is equal to the
  product:
  \begin{align*}
    \left(q^{\beta'-\beta_0(\theta,2)}\int_{[M']^{\vir}}\frac{\ev_1^*\delta_\mu}{(\hbar+\frac{w(\mu,\nu)}{d})e^T(N^{\vir}_{M'})}\right)
    \left(\frac{q^{(\beta-\beta')-\beta_0(\theta,m+2)}}{m!}\int_{[M'']^{\vir}}\frac{\ev_1^*(\delta_\nu)\cup\ev_2^*(\tilde{\gamma})\cup\prod_i\ev_i^*(t)}{(\frac{-w(\mu,\nu)}{d}-\psi_1)e^T(N^{\vir}_{M''})}\right)
  \end{align*}
  The factor $\hbar+\frac{w(\mu,\nu)}{d}$ is pulled back from
  $H^*_{T\times\C^*}(\Spec\C,\C),$ so may be factored out. The resulting
  integral $C_{\mu,\nu,d}$ is over a moduli space of sections with
  \emph{no basepoints}, hence it is independent of $\epsilon.$ Summing
  over $M$, $\beta$, and $m$, we get \eqref{eqn:Recursion}.
\end{proof}

\begin{lem}\label{lem:Polynomiality}
  Define $(qe^{L_\rho})^\beta=q^\beta e^{\beta_\rho}.$ Then for any
  $\gamma\in\H(\theta)[[q]],$ the expression
  \begin{align*}
    D(\mathfrak{Z}_\mu^{\epsilon,\theta}):=\left(\mathfrak{Z}_\mu^{\epsilon,\theta}(t,qe^{YzL_\vartheta},\hbar)(\gamma)\right)\left(\mathfrak{Z}_{\upsilon(\mu)}^{\epsilon,\theta}(t,q,-\hbar)(\gamma)\right)
  \end{align*}
  is an element of $K[[q,Y,\hbar]].$
\end{lem}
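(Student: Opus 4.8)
The plan is to adapt the polynomiality argument of \cite{CiocanFontanineKim2014} (compare their Section~7.6, and the proof of Proposition~\ref{prop:SOperatorInverse} above) to the graph spaces $\LGQG_{0,2+m}^\epsilon(X(\theta),\beta)$ carrying the $\C^*$-action on $\P^1$ from Section~\ref{sec:CStarAction}. The idea is to exhibit $D(\mathfrak{Z}_\mu^{\epsilon,\theta})$ as the ``extreme'' part of the $\C^*$-fixed-point decomposition of a graph-space generating function that is \emph{manifestly} an element of $K[[q,Y,\hbar]]$. Concretely, I would introduce
\begin{align*}
  \mathbf{P}_\mu^{\epsilon,\theta}(t,q,Y,\hbar)(\gamma):=\sum_{m,\beta}\frac{(qe^{YzL_\vartheta})^{\beta-\beta_0(\theta,2+m)}}{m!}\int_{[\LGQG_{0,2+m}^\epsilon(X(\theta),\beta)]^{\vir}}\ev_1^*(\delta_\mu\otimes p_0)\cup\ev_2^*(\tilde\gamma\otimes p_\infty)\cup\prod_{i=3}^{2+m}\ev_i^*\tilde t,
\end{align*}
working $T\times\C^*$-equivariantly, where the evaluation maps now land in $\bar{I}X(\theta)\times\P^1$ and $p_0,p_\infty\in H^2_{\C^*}(\P^1,\C)$ are as in Section~\ref{sec:CStarAction}. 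By \eqref{eqn:VirtualCyclePushforward} (in its graph-space form) this virtual class is $\iota_*$ of the compactly supported one on $\LGQG_{0,2+m}^\epsilon(Z(\theta),\beta)$, so the integral is a well-defined equivariant pushforward to a point; since $\delta_\mu$ lies in the $\lambda$-localized ring $K$, the classes $\tilde\gamma,\tilde t$ carry no $\hbar$, and $p_0,p_\infty$ are polynomial in $\hbar$, each $q^\bullet Y^\bullet\prod_j t_j^\bullet$-coefficient of $\mathbf{P}_\mu^{\epsilon,\theta}$ is a polynomial in $\hbar$ over $K$, i.e. $\mathbf{P}_\mu^{\epsilon,\theta}\in K[[q,Y,\hbar]]$. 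This is the ``source of polynomiality''.

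The second step is to evaluate $\mathbf{P}_\mu^{\epsilon,\theta}$ by $\C^*$-localization over $\P^1$, using the classification of fixed loci $F_{B_0,\beta^0}^{B_\infty,\beta^\infty}$, the splitting \eqref{eqn:DecomposeFixedLocus}, and the virtual-normal-bundle computation of Section~\ref{sec:CStarAction}. Since $p_0|_\infty=p_\infty|_0=0$, only loci with $1\in B_0$ and $2\in B_\infty$ survive. On the ``extreme'' such loci --- those on which the parametrized component $\hat C$ is contracted and both $\bullet,\check\bullet$ are nodes --- \eqref{eqn:DecomposeFixedLocus} identifies the locus with a fiber product over $\bar{I}Z(\theta)$, and assembling the normal-bundle factor $(-\hbar^2)(\hbar-\psi_\bullet)(-\hbar-\psi_{\check\bullet})$, the restrictions $p_0|_0=\hbar$ and $p_\infty|_\infty=-\hbar$, the diagonal splitting $\sum_j\ev_\bullet^*\gamma^j\otimes\ev_{\check\bullet}^*\gamma_j$ across the node, and the $e^{YzL_\vartheta}$-twist (which distributes additively since $\beta_\vartheta=\beta_\vartheta^0+\beta_\vartheta^\infty$ and $\beta_0(\theta,m)_\vartheta=0$ by Remark~\ref{rem:BetaThetaZero}), the sum of these contributions over all partitions $B_0\sqcup B_\infty$, $\beta^0+\beta^\infty$ and over $m$ collapses precisely to
\begin{align*}
  \bigl(\mathfrak{Z}_\mu^{\epsilon,\theta}(t,qe^{YzL_\vartheta},\hbar)(\gamma)\bigr)\bigl(\mathfrak{Z}_{\upsilon(\mu)}^{\epsilon,\theta}(t,q,-\hbar)(\gamma)\bigr)=D(\mathfrak{Z}_\mu^{\epsilon,\theta}),
\end{align*}
the inversion $\upsilon$ appearing because gluing at a node exchanges a twisted sector with its inverse (Section~\ref{sec:ChenRuan}). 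Every remaining (``non-extreme'') locus --- where $\bullet$ or $\check\bullet$ is a smooth point, so that marking $1$ or $2$ sits on the contracted parametrized component --- contributes a term lying \emph{already} in $K[[q,Y,\hbar]]$: on such a locus the relevant $\psi$-class is either nonequivariant (hence nilpotent) or pulled back from $\P^1$ in a way producing only nonnegative powers of $\hbar$, exactly as in the treatment of $\In(\mu,\beta,2+m)$ in the proofs of Lemmas~\ref{lem:SMuRationalFunction} and~\ref{lem:Recursion}. Subtracting, $D(\mathfrak{Z}_\mu^{\epsilon,\theta})=\mathbf{P}_\mu^{\epsilon,\theta}-(\text{non-extreme terms})\in K[[q,Y,\hbar]]$. (For $\theta=\theta_{xyz}^a$ there are no non-extreme loci and the torus action is trivial, so the statement is immediate.)

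I expect the main obstacle to be the bookkeeping in this second step. One must match the fiber product \eqref{eqn:DecomposeFixedLocus} over the \emph{rigidified} inertia stack $\bar{I}Z(\theta)$ with the definitions \eqref{eqn:SMu} of $\mathfrak{Z}_\mu$ and $\mathfrak{Z}_{\upsilon(\mu)}$ --- getting the inversion $\upsilon$, the gerbe/band factors relating $IZ(\theta)$ and $\bar{I}Z(\theta)$ (cf. the footnote in Section~\ref{sec:GeneratingFunctions}), the dual-basis contraction, and the distribution of both the $e^{YzL_\vartheta}$-twist and the $\hbar$-weights across the node all exactly right --- and one must check that \emph{every} non-extreme fixed locus really contributes only nonnegative powers of $\hbar$, which requires re-running the $\psi$-class analysis of Lemma~\ref{lem:SMuRationalFunction} on the graph space, in particular verifying that the possibility $\hat C\cong\P_{3,1}$ rather than $\P^1$ introduces nothing new (as already observed in Section~\ref{sec:CStarAction}). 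Granting this, the manifest $\hbar$-regularity of $\mathbf{P}_\mu^{\epsilon,\theta}$ completes the argument.
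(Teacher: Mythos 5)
Your high-level strategy is exactly right: realize $D(\mathfrak{Z}_\mu^{\epsilon,\theta})$ as (part of) the $\C^*$-localization of a graph-space generating function that is manifestly $\hbar$-regular. But there is a genuine gap in the mechanism you choose for producing the $Y$-dependence. You insert $(qe^{YzL_\vartheta})^{\beta-\beta_0(\theta,2+m)}$ as a scalar prefactor. Under $\C^*$-localization this factor simply splits as $(qe^{YzL_\vartheta})^{\beta^0-\beta_0(\theta,|B_0|+1)}\cdot(qe^{YzL_\vartheta})^{\beta^\infty-\beta_0(\theta,|B_\infty|+1)}$ (using $\beta_0(\theta,m_0+m_\infty)=\beta_0(\theta,m_0+1)+\beta_0(\theta,m_\infty+1)$), so \emph{both} resulting factors carry the twist, and the extreme-loci sum collapses to $\mathfrak{Z}_\mu^{\epsilon,\theta}(t,qe^{YzL_\vartheta},\hbar)(\gamma)\cdot\mathfrak{Z}_{\upsilon(\mu)}^{\epsilon,\theta}(t,qe^{YzL_\vartheta},-\hbar)(\gamma)$ — not the asymmetric product $D(\mathfrak{Z}_\mu^{\epsilon,\theta})=\mathfrak{Z}_\mu^{\epsilon,\theta}(t,qe^{YzL_\vartheta},\hbar)(\gamma)\cdot\mathfrak{Z}_{\upsilon(\mu)}^{\epsilon,\theta}(t,q,-\hbar)(\gamma)$ that the lemma demands (and that the Uniqueness Lemma of \cite{CiocanFontanineKim2014} requires). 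Your claim that the contributions ``collapse precisely'' to $D(\mathfrak{Z}_\mu^{\epsilon,\theta})$ is therefore false for the series $\mathbf{P}_\mu^{\epsilon,\theta}$ you propose.

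The missing ingredient is a \emph{cohomology class} on the graph space whose $\C^*$-weight at a fixed locus sees only the degree concentrated over $0\in\P^1$, so that the $Y$-twist lands on the first factor only. The paper constructs this as the first Chern class of a line bundle $U(\mathcal{L})_\vartheta^{\beta(1),\beta(2)}$, obtained by mapping the graph space to a projective space $\mathcal{Q}'(\beta_\vartheta+\beta(1)_\vartheta+\beta(2)_\vartheta)$ of sections of a line bundle on $\P^1$ (via the $L_\vartheta$-embedding into a $\P^N$, a forget-and-contract map, and artificial basepoints at $\tau(b_1),\tau(b_2)$) and pulling back $\O(1)$. On the fixed locus the $T$-weight of this bundle is $w_{\mu,\vartheta}$ and the $\C^*$-weight is $(\beta^0_\vartheta+\beta(1)_\vartheta)\hbar$; inserting $e^{c_1(U(\mathcal{L})_\vartheta^{\beta(1),\beta(2)})Y}$ therefore produces $e^{w_{\mu,\vartheta}Y}\cdot e^{(\beta^0_\vartheta+\beta(1)_\vartheta)\hbar Y}$, which combines with $q^{\beta^0+\beta(1)}$ to give the twist in the first factor and leaves the second factor untouched. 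The scalar $e^{w_{\mu,\vartheta}Y}$ is a unit, so polynomiality of the graph-space series implies the lemma. Without some such asymmetric geometric input you cannot recover the one-sided twist from a $\C^*$-symmetric prefactor, so the proof as written does not close. (Your other deviations — using $\ev_1^*\delta_\mu$ on the full graph space rather than restricting to $F_\mu(2+m,\beta)$, and separating out non-extreme loci — are presentational and could be made to work, but the $Y$-twist mechanism is the essential obstruction.)
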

\begin{proof}

  There is an important line bundle
  $U(\mathcal{L})_\theta^{\beta_1,\beta_2}$ on each quasimap graph
  space, defined in \cite{CiocanFontanineKim2014}. We define a
  modified version for LG-quasimap graph spaces.

  The line bundle $L_\vartheta$ on $[V/(G\times\C^*_R)]$ induces an
  embedding $[V/(G\times\C^*_R)]\into[\C^{N+1}/\C^*].$ Given an
  LG-graph quasimap $(C,u,\kappa,\tau)$ of degree $\beta$, composing
  gives a prestable graph quasimap $C\to\P^N$ of degree
  $\beta_\vartheta$. (The fact that this quasimap is prestable in the
  sense of \cite{CiocanFontanineKim2014} comes from the fact that
  $V^{ss}(\vartheta)=V^{ss}(\theta).$) Note that
  $\mathcal{L}_\vartheta=u^*L_\vartheta$ has trivial monodromy at
  every marked point of $C$, and indeed the map $C\to\P^N$ factors
  through the coarse moduli space $\bar{C}$ of $C$ by the definition
  of $\bar{C}$. Done in families, this construction yields a map
  $\LGQG_{0,m}^\epsilon(X(\theta),\beta)\mathcal{Q}(\beta_\vartheta)$,
  where $\mathcal{Q}(\beta_\vartheta)$ is a stack of prestable
  \emph{nonorbifold} graph quasimaps to $\P^N$ of degree
  $\beta_\vartheta$.

  The stack $\mathcal{Q}(\beta_\vartheta)$ has a forget-and-contract
  map to a stack $\mathcal{Q}'(\beta_\vartheta)$ as in Section 3 of
  \cite{CiocanFontanineKim2014}, remembering only the restriction of a
  quasimap to the parametrized component; all marked points are
  forgotten (possible since the orbifold structure has been removed),
  and nodes are replaced with basepoints of degree equal to the total
  degree of the line bundle ``on the other side'' of the node. The
  stack $\mathcal{Q}'(\beta_\vartheta)$ parametrizes sections of line
  bundles on $\P^1$, with no stability conditions --- in fact, it is a
  projective space. Denote by $U(\mathcal{L})_\vartheta$ the pullback
  to $\LGQG_{0,m}^\epsilon(X(\theta),\beta)$ of
  $\O_{\mathcal{Q}'(\beta_\vartheta)}(1).$

  Instead of forgetting the marked points, one may replace them with
  basepoints. Fix degrees $\beta(1)$ and $\beta(2).$ Write
  $\beta_\vartheta(1)$ and $\beta_\vartheta(1)$ for the corresponding
  integers as in Definition \ref{Def:EpsilonStability}. Then there is
  a map
  $$\Phi:\LGQG_{0,m}^\epsilon(X(\theta),\beta)\to\mathcal{Q}'(\beta_\vartheta+\beta(1)_\vartheta+\beta(2)_\vartheta),$$
  which as above sends $(C,u,\kappa)$ to a quasimap $\P^1\to\P^N$,
  with ``artificial'' basepoints added at $\tau(b_1)$ and $\tau(b_2).$
  We define
  $$U(\mathcal{L})_\vartheta^{\beta(1),\beta(2)}:=\Phi^*
  \O_{\mathcal{Q}'(\beta_\vartheta+\beta(1)_\vartheta+\beta(2)_\vartheta)}(1).$$

  Let
  $F_{\mu}(2+m,\beta)\subseteq\LGQG_{0,2+m}^\epsilon(X(\theta),\beta)^T$
  be the open and closed substack of $T$-fixed (but not necessarily
  $\C^*$-fixed) LG-quasimaps for which the parametrized component is
  contracted to $\mu$. It is $\C^*$-invariant but not $\C^*$-fixed, as
  there may be basepoints, nodes, and marked points mapped by $\tau$
  to $\P^1\setminus\{0,\infty\}.$
  Write $$\gamma=\sum_{\beta}q^\beta\gamma_\beta\in\H(\theta)[[q]]$$
  and consider the series of $T$-equivariant integrals:
  \begin{align}\label{eqn:Convolution1}
    &\sum_{m,\beta}\frac{q^{\beta-\beta_0(\theta,2+m)}}{m!}\sum_{\beta(1),\beta(2)}q^{\beta(1)}q^{\beta(2)}\\
    &\quad\int_{[F_\mu(2+m,\beta)]^{\vir}}\frac{e^{c_1(U(\mathcal{L})_\vartheta^{\beta(1),\beta(2)})Y}\ev_1^*(\tilde{\gamma}_{\beta(1)}\otimes
      p_0)\ev_2^*(\tilde{\gamma}_{\beta(2)}\otimes
      p_{\infty})\prod_{i=3}^{2+m}\ev_{i}^*(t)}{e^T(N^{\vir}_{F_\mu(2+m,\beta)})}\in
      K[[\hbar]].\nonumber
  \end{align}
  Since the denominator is a class in the $T$-equivariant cohomology of
  $F_\mu(2+m,\beta)$, it does not contain $\hbar$.

  We apply $\C^*$-localization to compute the integral. The
  contribution from a fixed component
  $F_{B_0,\beta^0,\mu}^{B_\infty,\beta^\infty}:=F_{B_0,\beta^0}^{B_\infty,\beta^\infty}\cap
  F_\mu(2+m,\beta)$
  is zero unless $b_1\in B_0$ and $b_2\in B_\infty.$ In this case,
  since we have seen that
  $e(N^{\vir}_{F_{B_0,\beta^0,\mu}^{B_\infty,\beta^\infty}|F_\mu(2+m,\beta)})=(-\hbar^2)(\hbar-\psi_\bullet)(-\hbar-\psi_{\check\bullet})$,
  we get the integral:
  \begin{align}\label{eqn:Convolution3}
    \int_{[F_{B_0,\beta^0}^{B_\infty,\beta^\infty}\cap F_\mu(2+m,\beta)]^{\vir}}\frac{e^{c_1(U(\mathcal{L})_\vartheta^{\beta(1),\beta(2)})Y}\ev_1^*(\tilde{\gamma}_{\beta(1)})\ev_2^*(\tilde{\gamma}_{\beta(2)})\prod_{i=3}^{2+m}\ev_{i}^*(t)}{e^T\left(N^{\vir}_{F_\mu(2+m,\beta)}\right)(\hbar-\psi_\bullet)(-\hbar-\psi_{\check\bullet})}.
  \end{align}
  As before we may write $F_{B_0,\beta^0,\mu}^{B_\infty,\beta^\infty}$
  as a (not fibered) product $M_0\times\hat{M}\times M_{\infty}$,
  where 
  \begin{align*}
    M_0&=(\LGQ_{0,\abs{B_0}+\bullet}^\epsilon(X(\theta),\beta^0))^T_\mu\\
    \hat{M}&=(\LGQG_{0,2}^\epsilon(X(\theta),\beta_0(\theta,2)))^{T\times\C^*}_\mu\\
    M_\infty&=(\LGQ_{0,\abs{B_\infty}+\check\bullet}^\epsilon(X(\theta),\beta^\infty))^T_\mu.
  \end{align*}
  Here the superscript $\mu$ refers only to components where the extra
  marked point (or, for $\hat{M}$, the entire curve $C$) is mapped to
  $\mu$.  $\hat{M}$ is a union of points, each corresponding to
  choices of monodromies.

  As in Lemma \ref{lem:Recursion}, we write \eqref{eqn:Convolution3}
  as a product of integrals over $M_0$ and $M_\infty$. Smoothing the nodes
  $\bullet$ and $\check\bullet$ shows
  \begin{align*}
    N^{\vir}_{F_\mu(2+m,\beta)}=N^{\vir}_{M_0}\oplus N^{\vir}_{M_\infty},
  \end{align*}
  where the normal bundles on the right are taken relative to the
  ambient spaces
  $\LGQ_{0,\abs{B_0}+\bullet}^\epsilon(X(\theta),\beta^0)$ and
  $\LGQ_{0,\abs{B_\infty}+\check\bullet}^\epsilon(X(\theta),\beta^\infty)$.
  The line bundle $U(\mathcal{L})_\vartheta^{\beta(1),\beta(2)}$ can
  be expressed on the product $M_0\times\hat{M}\times M_\infty$ as
  follows. 
  As the construction above involves restricting to the parametrized
  component, the map
  $M_0\times\hat{M}\times
  M_\infty\to\mathcal{Q}'(\beta_\vartheta+\beta(1)_\vartheta+\beta(2)_\vartheta)$
  is constant, so the restriction of
  $U(\mathcal{L})_\vartheta^{\beta(1),\beta(2)}|_{M_0\times\hat{M}\times
    M_\infty}$ is topologically trivial.

  We compute the $(T\times\C^*)$-weight as follows. Let
  $(C,u,\kappa)\in M_0\times\hat{M}\times M_\infty$, with
  $C\cong C_0\cup\hat{C}\cup C_\infty.$ Then $\Phi(C,u,\kappa)$ is a
  quasimap $\P^1\to\P^N,$ given in coordinates by
  \begin{align*}
    [s:t]\mapsto[a_0s^{\beta^0_\vartheta+\beta(1)_\vartheta}t^{\beta^\infty_\vartheta+\beta(2)_\vartheta}:\cdots:a_Ns^{\beta^0_\vartheta+\beta(1)_\vartheta}t^{\beta^\infty_\vartheta+\beta(2)_\vartheta}].
  \end{align*}
  Here the $a_i$s are determined by $\mu$. The weight of
  $U(\mathcal{L})_\vartheta^{\beta(1),\beta(2)}$ at $(C,u,\kappa)$ is
  equal to the weight of
  $\O_{\mathcal{Q}'(\beta_\vartheta+\beta(1)_\vartheta+\beta(2)_\vartheta}(1)$
  at $\Phi(C,u,\kappa)$. By the definition of the map $\Phi$, this is
  the $T$-weight of $L_\vartheta$ at $\mu$, denoted
  $w_{\mu,\vartheta}$. From the choice of coordinates in Section
  \ref{sec:CStarAction}, the $\C^*$-weight is
  $(\beta^0_\vartheta+\beta(1)_\vartheta)\hbar$.
  
  Now we may factor the integral \eqref{eqn:Convolution3} as:
  \begin{align*}
    e^{(w_{\mu,\vartheta})
      Y}&\left(\int_{[(\LGQ_{0,\abs{B_0}+\bullet}^\epsilon(X(\theta),\beta^0))^T_\mu]^{\vir}}\frac{e^{(\beta^0_\vartheta+\beta_\vartheta(1))\hbar Y}\ev_\bullet(\delta_\mu)\ev_1(\tilde{\gamma}_{\beta(1)})\prod_{i=2}^{\abs{B_0}}\ev_i^*(t)}{e^T(N^{\vir}_{(\LGQ_{0,\abs{B_0}+\bullet}^\epsilon(X(\theta),\beta^0))^T_\mu})(\hbar-\psi_\bullet)}\right)\\
    &\cdot\left(\int_{[(\LGQ_{0,\abs{B_\infty}+\check\bullet}^\epsilon(X(\theta),\beta^\infty))^T_{\upsilon(\mu)}]^{\vir}}\frac{\ev_{\check\bullet}(\upsilon^*\delta_\mu)\ev_1(\tilde{\gamma}_{\beta(2)})\prod_{i=2}^{\abs{B_\infty}}\ev_i^*(t)}{e^T(N^{\vir}_{(\LGQ_{0,\abs{B_\infty}+\check\bullet}^\epsilon(X(\theta),\beta^\infty))^T_{\upsilon(\mu)}})(-\hbar-\psi_{\check\bullet})}\right).
  \end{align*}
  For compactness, we write this as
  $e^{(w_{\mu,\vartheta})
    Y}\mathcal{S}(\abs{B_0})\mathcal{S}(\abs{B_\infty}).$
  Summing gives:
  \begin{align*}
    e^{(w_{\mu,\vartheta})
      Y}&\sum_{\substack{B_0,B_\infty,\\\beta^0,\beta^\infty,\\\beta(1),\beta(2)}}\frac{q^{\beta^0+\beta^\infty+\beta(1)+\beta(2)-\beta_0(\theta,\abs{B_0}+\abs{B_\infty})}}{(\abs{B_0}+\abs{B_\infty})!}\mathcal{S}(\abs{B_0})\mathcal{S}(\abs{B_\infty})\\
    &=e^{(w_{\mu,\vartheta})
      Y}\sum_{\substack{m_0,m_\infty,\\\beta^0,\beta^\infty,\\\beta(1),\beta(2)}}\frac{q^{\beta^0+\beta^\infty+\beta(1)+\beta(2)-\beta_0(\theta,m_0+m_\infty)}}{m_0!m_\infty!}\mathcal{S}(m_0)\mathcal{S}(m_\infty)\\
    &=e^{(w_{\mu,\vartheta})
      Y}\left(\sum_{m_0,\beta^0,\beta(1)}\frac{q^{\beta^0+\beta(1)-\beta_0(\theta,m_0+1)}}{m_0!}\mathcal{S}(m_0)\right)\left(\sum_{m_\infty,\beta^\infty,\beta(2)}\frac{q^{\beta^\infty+\beta(2)-\beta_0(\theta,m_\infty+1)}}{m_\infty!}\mathcal{S}(m_\infty)\right)\\
    &=e^{(w_{\mu,\vartheta})
      Y}\left(\sum_{\beta(1)}(qe^{Y\hbar L_\vartheta})^{\beta(1)}\mathfrak{Z}_\mu^{\epsilon,\theta}(t,qe^{Y\hbar L_\vartheta},\hbar)(\gamma_{\beta(1)})\right)\left(\sum_{\beta(2)}q^{\beta(2)}\mathfrak{Z}_{\upsilon(\mu)}^{\epsilon,\theta}(t,q,-\hbar)(\gamma_{\beta(2)})\right)\\
    &=e^{(w_{\mu,\vartheta})
      Y}\left(\mathfrak{Z}_\mu^{\epsilon,\theta}(t,qe^{Y\hbar L_\vartheta},\hbar)(\gamma)\right)\left(\mathfrak{Z}_{\upsilon(\mu)}^{\epsilon,\theta}(t,q,-\hbar)(\gamma)\right).\qedhere
  \end{align*}
\end{proof}

\bigskip

We have now assembled all of the necessary pieces to prove our mirror
theorem.
\begin{proof}[Proof of Theorem \ref{thm:MirrorTheorem}]
  The theorem now follows from Uniqueness Lemma 7.7.1 of
  \cite{CiocanFontanineKim2014}, applied to the systems:
  \begin{gather*}
    \{\mathfrak{Z}_\mu^{\epsilon,\theta}(t,q,\hbar)(1_\theta),\mu\in \bar{I}X(\theta)^T\}\\
    \{\mathfrak{Z}_\mu^{\infty,\theta}(\mathscr{T}^{\epsilon,\theta}(t),q,\hbar)(1_\theta),\mu\in
    \bar{I}X(\theta)^T\}.
  \end{gather*}
  (As in Section 3.7.3, Item (3) of
  \cite{CheongCiocanFontanineKim2015}, we modify Condition (5) of the
  Uniqueness Lemma slightly.) In particular, the Uniqueness Lemma
  requires five properties to hold, and they are verified in:
  \begin{enumerate}
  \item Lemma \ref{lem:SMuRationalFunction},
  \item Lemma \ref{lem:Recursion},
  \item Lemma \ref{lem:Polynomiality},
  \item Observation \ref{obs:AgreeToFirstOrder}, and
  \item Remark \ref{rem:JIndependentOfEpsilon}.\qedhere
  \end{enumerate}
\end{proof}

\section{Calculating the $I$-functions}\label{sec:CalculatingI}
In this section, we compute $I^\theta(q,\hbar):=J^{0+,\theta}(0,q,\hbar)$
for any $\theta\in\Theta$. The following three observations allow
explicit computations.
\begin{observation}\label{obs:GraphSpaceProper}
  $\LGQG_{0,1}^{0+}(X(\theta),\beta)$ is proper. To see this, consider
  $(C,\mathcal{L},\sigma)=(C,u,\kappa)\in\LGQG_{0,1}^{0+}(X(\theta),\beta).$
  If $x$ is a superscript variable, then $\beta_x\ge0.$ Hence the
  bundle
  $\mathcal{L}_{p_x}\cong\mathcal{L}_x^{-3}\otimes\omega_{C,\log}$ has
  negative degree, since $\deg\omega_{C,\log}=-2+1=-1<0.$ If $x$ is a
  subscript variable, we saw in Proposition \ref{lem:Concave} that
  $\mathcal{L}_x$ and $\mathcal{L}_x\otimes\mathcal{L}_a^*$ have no
  global sections. From these, properness follows by a standard
  argument.

  Alternatively, one may show that $\LGQG_{0,1}^{0+}(X(\theta),\beta)$
  is isomorphic to $\LGQG_{0,1}^{0+}(Z'(\theta),\beta)$, where
  $Z'(\theta')\cong[((\P^2)^2\times B\mu_3)/\mu_3]$ is the critical
  locus of a polynomial $W'$ inside a quotient $X'(\theta').$ Then
  Theorem \ref{thm:DMStack} asserts that
  $\LGQG_{0,1}^{0+}(Z'(\theta),\beta)$ is proper.
\end{observation}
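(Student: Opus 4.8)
The plan is to prove properness through the valuative criterion. By Theorem~\ref{thm:DMStack} the stack $\LGQG_{0,1}^{0+}(X(\theta),\beta)$ is already finite-type, separated and Deligne--Mumford, so only universal closedness remains. The sole way a family of LG-graph quasimaps over a punctured disc could fail to extend across the puncture is for a section $\sigma$ to run off along one of the non-compact directions of $X(\theta)$ --- the fibers of the tautological $\O_{\P^2}(-3)$'s, or the $[\C^3/\mu_3]$-factors --- so the crux is to show these directions are never used; once that is done, properness follows by the usual stable-maps argument.

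\noindent\textbf{Key vanishing.} For every $(C,\mathcal{L},\sigma,\tau)\in\LGQG_{0,1}^{0+}(X(\theta),\beta)$: if $x$ is a superscript variable then $\sigma_{p_x}\equiv0$, and if $x$ is a subscript variable then $\sigma_{x_0}\equiv\sigma_{x_1}\equiv\sigma_{x_2}\equiv0$. The proof of Proposition~\ref{prop:CriterionForEffectiveness} uses only the condition of Definition~\ref{Def:LGQuasimapToX} (markings, nodes and generic points land in $V^{ss}(\theta)$) together with the explicit shape of $V^{\uns}(\theta)$, so it applies verbatim to LG-quasimaps to $X(\theta)$ and gives $\beta_a\ge0$, $\beta_x\ge0$ for superscript $x$, and $\beta_x\le\tfrac{m-2}{3}=-\tfrac13$ for subscript $x$. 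The same inclusions $\mathbb{V}(a),\mathbb{V}(x_0,x_1,x_2),\mathbb{V}(p_x)\subseteq V^{\uns}(\theta)$ force $\mathcal{L}_a^{\otimes3}$ (via $\sigma_a$), $\mathcal{L}_x^{\otimes3}$ (via the $\sigma_{x_i}$ and $\sigma_a$), and --- for subscript $x$ --- $\mathcal{L}_{p_x}$ (via $\sigma_{p_x}$) to carry a section nonvanishing at each generic point, hence to be effective. Feeding this into the $p_x$-summand $\mathcal{L}_x^{\otimes-3}\otimes\omega_{C,\log}$ of $\mathcal{E}$ gives: for superscript $x$, $\mathcal{L}_{p_x}=\omega_{C,\log}(-D)$ with $D$ effective; for subscript $x$, $\mathcal{L}_x^{\otimes3}=\mathcal{L}_{p_x}^{\otimes-1}\otimes\omega_{C,\log}=\omega_{C,\log}(-D')$ and $(\mathcal{L}_x\otimes\mathcal{L}_a^*)^{\otimes3}=\omega_{C,\log}(-D'')$ with $D',D''$ effective. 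Since $\deg\omega_{C,\log}=-2+1=-1<0$ and $\omega_{C,\log}$ has trivial monodromy (so is pulled back from the coarse genus-zero nodal curve $\bar C$, on which $H^0(\omega_{\bar C,\log})=0$), the generalization of Lemma~\ref{lem:Concave} recorded in its proof applies to each of these bundles; therefore $\mathcal{L}_{p_x}$ (superscript) resp.\ $\mathcal{L}_x$ and $\mathcal{L}_x\otimes\mathcal{L}_a^*$ (subscript) have no global sections, which is the claim.

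Granting the key vanishing, $u$ factors through the closed substack of $[X(\theta)/\C^*_R]$ on which all $p_x$ (superscript $x$) and all $x_i$ (subscript $x$) vanish, which by Remark~\ref{rem:AssociatedQuasimapToX} is a $\C^*_R$-gerbe over the proper stack $X_R^{\rig}(\theta)\cong[((\P^2)^i\times B\mu_3^{3-i})/\mu_3]$. Thus a point of $\LGQG_{0,1}^{0+}(X(\theta),\beta)$ amounts to a prestable graph quasimap to $X_R^{\rig}(\theta)$ together with a $3$rd root of $\omega_{C,\log}$ (namely $\mathcal{L}_z$ with $\kappa$), and the ``standard argument'' for properness is that such pairs form a proper stack: graph-quasimap spaces of proper targets are proper by the usual stable-maps-type valuative criterion, spaces of spin structures are proper by \cite{ChiodoZvonkine2009}, and our stack is a closed-and-open substack of their fiber product over $\Mbar_{0,1}$. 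Alternatively --- and more cleanly --- one repackages exactly the surviving data (the curve, $\mathcal{L}_x,\mathcal{L}_y,\mathcal{L}_z,\mathcal{L}_a$, the $\sigma_{x_i},\sigma_{y_i},\sigma_a,\sigma_{p_z}$, and $\kappa$) as a GLSM $(V',G',W',\theta')$ with proper critical-locus target $Z'(\theta')\cong[((\P^2)^2\times B\mu_3)/\mu_3]$, so that $\LGQG_{0,1}^{0+}(X(\theta),\beta)\cong\LGQG_{0,1}^{0+}(Z'(\theta'),\beta)$ canonically; properness is then immediate from the properness clause of Theorem~\ref{thm:DMStack} applied to $Z'(\theta')$.

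The step I expect to be the main obstacle is the key vanishing on a \emph{reducible} $C$: one must check that ``$\mathcal{L}^{\otimes n}=\omega_{C,\log}(-D)$ with $D$ effective'' really does imply $H^0(\mathcal{L})=0$ component-by-component on the nodal orbifold curves occurring here --- a bare degree count does not suffice, since a line bundle of negative \emph{total} degree on a reducible genus-zero curve can have sections --- and that passing to the coarse curve and to tensor powers introduces no spurious sections; that is, one must supply the details behind the cited generalization of Lemma~\ref{lem:Concave}. In the alternative route the analogous obstacle is writing $(V',G',W',\theta')$ down explicitly and verifying that the induced map of moduli functors is an isomorphism (naturality in families, compatibility with the orbifold structure); but for the statement of properness this identification is all that is needed and the perfect obstruction theories need not be tracked.
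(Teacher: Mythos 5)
Your proof matches the paper's approach: both deduce that the sections pointing along the non-compact directions of $X(\theta)$ vanish identically --- so that $u$ factors through the proper substack $X_R^{\rig}(\theta)$ --- and then either run the standard quasimap valuative-criterion argument or, alternatively, exhibit an isomorphism with $\LGQG_{0,1}^{0+}(Z'(\theta'),\beta)$ and cite the properness clause of Theorem~\ref{thm:DMStack}. You usefully make explicit what the paper leaves implicit in the superscript-$x$ case: the paper records only $\deg\mathcal{L}_{p_x}<0$, which by itself does not force $H^0(\mathcal{L}_{p_x})=0$ on a reducible $C$, whereas your factorization $\mathcal{L}_{p_x}\cong\omega_{C,\log}(-D)$ with $D$ effective (together with trivial monodromy, so that one may descend to $\bar C$) puts you squarely in the setting of the generalized Lemma~\ref{lem:Concave} and yields the needed vanishing of $\sigma_{p_x}$.
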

\begin{observation}\label{obs:UniversalCurveTrivial}
  The universal curve $U_\beta'$ over the distinguished fixed part
  $F_\beta'$ (see Definition \ref{Def:FBetaPrime}) is trivial, with
  fibers canonically isomorphic to $\P_{3,1}$, as follows. Recall that
  $F_\beta'$ parametrizes LG-quasimaps $(C,u,\kappa)$ where $C$ has a
  single marking $b_1$ with $\tau(b_1)=\infty\in\P^1$. Further, the
  degree $\beta$ is concentrated at $\tau^{-1}(0).$ The
  $\epsilon$-stability condition implies that $\tau^{-1}(0)$ is a
  single point, and $u$ has a basepoint of degree $\beta$ there. All
  fibers are canonically identified with $\P_{3,1},$ so
  $U_\beta'\to F_\beta'$ is a trivial family. We write $\varpi$ for
  the projection $U_\beta'\to\P_{3,1}$.
\end{observation}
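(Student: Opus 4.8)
The plan is to establish the fiberwise statement first and then promote it to the whole family. By Definition~\ref{Def:FBetaPrime} together with the analysis of Section~\ref{sec:CStarAction}, a point of $F_\beta'$ is a $\C^*$-fixed LG-graph quasimap $(C,u,\kappa,\tau)$ in which the parametrized component $\hat C$ --- the unique component on which $\tau$ has degree one, hence restricting to the coarse moduli map $\hat C\to\P^1$ --- is contracted by $u$, while $\hat C^\circ$ contains no marked points, nodes, or basepoints; moreover the marking $b_1$ lies over $\infty$ and the entire degree $\beta$ is carried at $\tau^{-1}(0)$. So I would first rule out any trees of components attached to $\hat C$ at the two special points $\bullet:=\tau^{-1}(0)$ and $\check\bullet:=\tau^{-1}(\infty)$.

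This is the crux, and it uses $\epsilon$-stability with $\epsilon=0+$ and the list of unstable tuples in Definition~\ref{Def:UnstableTuples}. A nontrivial tree attached at $\check\bullet$ would, together with the node and the marking $b_1$, be an $\epsilon$-stable LG-quasimap carrying zero degree (since $\beta$ is concentrated at $\tau^{-1}(0)$), i.e. the tuple $(\beta_0(\theta,2),2)$, because $\beta_0(\theta,2)=(0,0,0,0)$; but that tuple is unstable, a contradiction. A nontrivial tree attached at $\bullet$ would carry all of $\beta$ and only the node, hence be an $\epsilon$-stable LG-quasimap with a single marking; but every such one-marked tuple is unstable (or non-effective) for $\epsilon=0+$, again a contradiction. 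Therefore $C=\hat C$: it is a genus-zero orbifold curve with coarse space $\P^1$ whose only orbifold point is the order-three marking $b_1$ over $\infty$, while $\bullet$ is an ordinary point carrying the basepoint of $u$ that absorbs the degree. This is precisely $\P_{3,1}$, and the isomorphism $C\cong\P_{3,1}$ identifying $\tau$ with the coarse moduli map and $b_1$ with $\infty$ is unique, since $\P_{3,1}$ admits no nontrivial automorphism over $\P^1$ fixing its orbifold point. (Alternatively one may invoke the remark in Section~\ref{sec:Connections} that every curve occurring in $\LGQG_{0,1}^{0+}(X(\theta),\beta)$ is isomorphic to $\P_{3,1}$.)

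To globalize, I would run the same argument over $F_\beta'$: the universal map $\tau$ identifies the coarse universal curve $\bar{U_\beta'}$ with $F_\beta'\times\P^1$, carrying the universal marking section onto $F_\beta'\times\{\infty\}$, and every geometric fiber of $U_\beta'$ is smooth. Hence $U_\beta'$ is a smooth twisted curve over $F_\beta'$ whose only orbifold locus is this one $\mu_3$-section, so by the structure theory of twisted curves it is the third root stack of $F_\beta'\times\P^1$ along $F_\beta'\times\{\infty\}$. Since root stacks commute with base change and $\sqrt[3]{\{\infty\}/\P^1}\cong\P_{3,1}$, this root stack is canonically $F_\beta'\times\P_{3,1}$, and $\varpi$ is the second projection.

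I expect the main obstacle to be the bubbling exclusion of the second paragraph: one must read off exactly which degree and how many markings each hypothetical component at $\bullet$ and at $\check\bullet$ would carry, and verify it lands in the unstable (or non-effective) list. Once that is done, the family statement follows formally from the root-stack description of twisted curves.
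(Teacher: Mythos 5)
Your argument is correct and amounts to a careful unpacking of the paper's own terse reasoning: excluding bubbles at $\bullet$ and $\check\bullet$ by checking against the unstable tuples of Definition~\ref{Def:UnstableTuples} (with $(\beta_0(\theta,2),2)$ unstable for the side over $\infty$, and every one-marked tuple unstable at $\epsilon=0+$ for the side over $0$) is exactly what the paper compresses into ``the $\epsilon$-stability condition implies that $\tau^{-1}(0)$ is a single point.'' Your root-stack formalization of the globalization step is a clean way to justify the paper's statement that the fiberwise canonical identifications with $\P_{3,1}$ trivialize the family $U_\beta'\to F_\beta'$, and is consistent with the remark in Section~\ref{sec:Connections} that every source curve in $\LGQG_{0,1}^{0+}$ is $\P_{3,1}$.
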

\begin{observation}\label{obs:VectorBundles}
  For each summand $\mathcal{L}_\rho$ of $\mathcal{E},$ at least one
  of $H^0(C,\mathcal{L}_\rho)$ and $H^1(C,\mathcal{L}_\rho)$ vanishes,
  since $C\cong\P_{3,1}$ by Observation
  \ref{obs:UniversalCurveTrivial}. This implies that
  $R^\bullet\pi_*\mathcal{E}=\bigoplus_{\rho\in\mathbf{R}}R^\bullet\pi_*\mathcal{L}_\rho$
  is a complex of \emph{vector bundles}. A basic property of virtual
  fundamental classes (\cite{BehrendFantechi1997}, Proposition 5.6)
  now states that $[F_\beta']^{\vir}=e((R^1\pi_*\mathcal{E})^{\C^*})$.
\end{observation}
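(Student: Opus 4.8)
The plan is to establish the three assertions of the observation in turn, the only substantive input being a vanishing statement for line bundles on $\P_{3,1}$. First I would prove that for every $\rho\in\mathbf{R}$ and every fibre $C\cong\P_{3,1}$ of $\pi\colon U_\beta'\to F_\beta'$, at least one of $H^0(C,\mathcal{L}_\rho|_C)$ and $H^1(C,\mathcal{L}_\rho|_C)$ vanishes; this is an instance of the general fact that on $\P_{3,1}$ no line bundle $\mathcal{L}$ has both $H^0(\P_{3,1},\mathcal{L})$ and $H^1(\P_{3,1},\mathcal{L})$ nonzero. To see the latter, suppose both are nonzero. Serre duality on the smooth proper integral orbifold curve $\P_{3,1}$ gives $H^1(\P_{3,1},\mathcal{L})^\vee\cong H^0(\P_{3,1},\omega_{\P_{3,1}}\otimes\mathcal{L}^\vee)$, so $\mathcal{L}$ and $\omega_{\P_{3,1}}\otimes\mathcal{L}^\vee$ both carry nonzero global sections, whose tensor product is a nonzero global section of $\omega_{\P_{3,1}}$. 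But $\omega_{\P_{3,1}}=\omega_{\P_{3,1},\log}\otimes\O_{\P_{3,1}}(-1)\cong\O_{\P_{3,1}}(-4)$ has degree $-4/3<0$, and a negative-degree line bundle on a connected projective curve has no nonzero sections --- a contradiction. (Alternatively, since $\Pic(\P_{3,1})=\Z\cdot\O_{\P_{3,1}}(1)$ one may write $\mathcal{L}=\O_{\P_{3,1}}(n)$ and note $H^0\neq0\iff n\geq0$ while $H^1\neq0\iff n\leq-4$.)

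Second, since $R^\bullet\pi_*$ commutes with the finite direct sum $\mathcal{E}=\bigoplus_{\rho\in\mathbf{R}}\mathcal{L}_\rho$, it suffices to show each $R^\bullet\pi_*\mathcal{L}_\rho$ is a vector bundle placed in a single cohomological degree. By Observation \ref{obs:UniversalCurveTrivial} the family $\pi$ is the trivial one $F_\beta'\times\P_{3,1}$ with second projection $\varpi$, and $\mathcal{L}_\rho$ restricts on every fibre to $\O_{\P_{3,1}}(n_\rho)$ with $n_\rho$ determined by the fixed degree $\beta$, hence independent of the point of $F_\beta'$. Twisting by $\varpi^*\O_{\P_{3,1}}(-n_\rho)$ makes $\mathcal{L}_\rho$ fibrewise trivial, so (cohomology and base change over the reduced base $F_\beta'$, or directly for a product) $\mathcal{L}_\rho\otimes\varpi^*\O_{\P_{3,1}}(-n_\rho)\cong\pi^*M_\rho$ for a line bundle $M_\rho$ on $F_\beta'$, whence $R^\bullet\pi_*\mathcal{L}_\rho\cong M_\rho\otimes_\C H^\bullet(\P_{3,1},\O_{\P_{3,1}}(n_\rho))$ by the projection formula and flat base change. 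By the first step this is concentrated in one degree, so summing over $\rho$ exhibits $R^\bullet\pi_*\mathcal{E}$ as a two-term complex of vector bundles $[R^0\pi_*\mathcal{E}\xrightarrow{\,0\,}R^1\pi_*\mathcal{E}]$ with vanishing differential.

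Third, for the virtual class I would use the definition recalled in Section \ref{sec:MirrorTheorems}: $[F_\beta']^{\vir}$ is induced by the $\C^*$-fixed part of the obstruction theory $R^\bullet\pi_*\mathcal{E}$, its $\C^*$-moving part being the virtual normal bundle. By the second step this fixed part is again a two-term complex of vector bundles with zero differential, $[(R^0\pi_*\mathcal{E})^{\C^*}\to(R^1\pi_*\mathcal{E})^{\C^*}]$; since $F_\beta'$ is a smooth Deligne--Mumford stack (which can be read off the description in Observation \ref{obs:UniversalCurveTrivial}), its tangent complex is $(R^0\pi_*\mathcal{E})^{\C^*}$ in degree $0$ and $(R^1\pi_*\mathcal{E})^{\C^*}$ is precisely the obstruction bundle, so Proposition 5.6 of \cite{BehrendFantechi1997} yields $[F_\beta']^{\vir}=e\big((R^1\pi_*\mathcal{E})^{\C^*}\big)\cap[F_\beta']$. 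I expect this last paragraph to be the main obstacle: one has to be careful about the shift and duality conventions for $R^\bullet\pi_*\mathcal{E}$ as a perfect obstruction theory and, above all, to check that $F_\beta'$ is smooth and unobstructed in the $\C^*$-fixed directions, i.e.\ that $(R^0\pi_*\mathcal{E})^{\C^*}$ is its tangent bundle --- everything else reduces to the elementary facts established above.
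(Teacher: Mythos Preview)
Your proposal is correct and follows essentially the same approach as the paper. The paper treats the observation as self-evident (with the explicit cohomology groups written out later in the proof of Proposition \ref{prop:VirtualNormalBundle}), so your Serre duality argument for the vanishing on $\P_{3,1}$ is more detail than the paper supplies; your second and third steps match the paper's reasoning, and your anticipated ``main obstacle'' about identifying $(R^0\pi_*\mathcal{E})^{\C^*}$ with the tangent bundle of $F_\beta'$ is addressed in the paper by the remark after Proposition \ref{prop:VirtualNormalBundle}, which observes that the $\C^*$-fixed sections $s^{\beta_\rho}$ span $T F_\beta'$ and that $(R^1\pi_*\mathcal{E})^{\C^*}=0$.
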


By definition, $I^\theta(q,\hbar)$ is the contribution to the
equivariant integral
\begin{align*}
  \sum_{\beta_j}q^{\beta-\beta_0(\theta,1)}\gamma_j\langle\gamma^j\ev_1^*[\infty]\rangle^{0+,\theta,Gr}_{1,\beta}
\end{align*}
coming from the loci $F_\beta$ of Definition \ref{Def:FBeta}. By the
projection formula, this is the contribution from the loci $F_\beta'$
to:
\begin{align}\label{eqn:IFunctionIntegral}
  \sum_{\beta,j}q^{\beta-\beta_0(\theta,1)}\iota^*(\tilde{\gamma_j})\int_{[\LGQG_{0,1}^{0+}(X(\theta),\beta)]^{\vir}}\ev_1^*(\tilde{\gamma^j}\otimes[\infty]),
\end{align}
with $\iota^*(\tilde{\gamma_j})=\gamma_j,$
$\iota^*(\tilde{\gamma^j})=\gamma^j,$ and
$$\langle\underline{Z}\cup\tilde{\gamma_j},\tilde{\gamma^{j'}}\rangle=\delta_j^{j'}.$$

We may choose isomorphisms of $\mathcal{L}_x,$ $\mathcal{L}_y,$
$\mathcal{L}_z,$ and $\mathcal{L}_a$ with the line bundles
$\O_{\P_{3,1}}(\beta_x),$ $\O_{\P_{3,1}}(\beta_y),$
$\O_{\P_{3,1}}(\beta_z),$ $\O_{\P_{3,1}}(\beta_a)$, and write $\sigma$
as a tuple of sections
\begin{align*}
  (\sigma_{x_0}(s,t),\sigma_{x_1}(s,t),\sigma_{x_2}(s,t),\sigma_{y_0}(s,t),\ldots,\sigma_{p_y}(s,t),\sigma_{p_z}(s,t)),
\end{align*}
where the entries are homogeneous polynomials in $s$ and $t$ of the
appropriate degrees, and the degrees of $s$ and $t$ are 3 and 1,
respectively. The fact that $\sigma$ is $\C^*$-fixed implies that
$\sigma$ is of the form
\begin{align}\label{eqn:ExplicitSection}
  \sigma=(x_0s^{\beta_x-\beta_a},x_1s^{\beta_x},x_2s^{\beta_x},y_0s^{\beta_y-\beta_a},\ldots,p_ys^{-3\beta_y-1},p_zs^{-3\beta_z-1}).
\end{align}
In particular, this shows
\begin{prop}\label{prop:IdentifyFixedLocusWithZ}
  Fix $\beta$ so that $(\beta,1)$ is $\theta$-effective. The map
  $\ev_1:F_\beta'\to\bar{I}X(\theta)$ is an embedding.
\end{prop}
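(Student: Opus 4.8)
The plan is to use the explicit form \eqref{eqn:ExplicitSection} of a $\C^*$-fixed section to identify $F_\beta'$, via $\ev_1$, with a closed substack of a twisted sector of $X(\theta)$. First I would record $F_\beta'$ as a quotient stack. By Observation \ref{obs:UniversalCurveTrivial} the universal curve is the constant family with fibre $\P_{3,1}$, the marking $\star$ lying over $\infty=[1:0]$ and the degree-$\beta$ basepoint over $0=\tau^{-1}(0)$. Choosing fibrewise trivialisations of the $\mathcal{L}_\rho$ (the residual ambiguity being exactly a $G$-torsor on $F_\beta'$), the universal section is the tuple of monomials in \eqref{eqn:ExplicitSection} with a coefficient vector $(x_0,\dots,p_z)$, so $F_\beta'$ is cut out, inside a quotient of a subspace of $V$ by $G$, by: (i) the coefficient vector lies in $V^{ss}(\theta)$ (semistability of $\sigma$ at the generic point of $\P_{3,1}$, which coincides with semistability at $\star$ once (ii) holds); and (ii) $x_\rho=0$ whenever no $\C^*$-fixed section of $\O_{\P_{3,1}}(3\beta_\rho)$ realises both the prescribed vanishing at the basepoint $0$ and the vanishing forced by monodromy at $\star$. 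The key combinatorial input, checked uniformly in $\theta\in\Theta$, is that every coordinate of the extremal degree satisfies $\beta_0(\theta,1)_\rho\le 0$ (because $\deg\omega_{C,\log}=-1$ for a $1$-marked genus-zero curve); consequently a section with $\langle\beta_\rho\rangle\ne 0$ would have total vanishing at $0$ and $\star$ strictly exceeding $\deg\O_{\P_{3,1}}(3\beta_\rho)=\beta_\rho$, so \emph{every} coordinate $x_\rho$ with $\langle\beta_\rho\rangle\ne 0$ is identically zero on $F_\beta'$. Writing $g\in G$ for the monodromy of $u$ at $\star$ (the element with $\widehat{t_x}(g)=e^{2\pi i\langle\beta_x\rangle}$ and likewise for $y,z,a$), this exhibits $F_\beta'$ as a closed substack of the $g$-twisted sector $[(V^{ss}(\theta))^g/G]$.

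Next I would compute $\ev_1$ in this chart. Since $\star$ is an orbifold point with monodromy $g$, the evaluation map factors through $[(V^{ss}(\theta))^g/G]$, and $\ev_1(C,\mathcal{L},\sigma)$ is the class of $\sigma(\star)\in(V^{ss}(\theta))^g$. Each monomial $x_\rho s^{a_\rho}t^{b_\rho}$ takes the value $x_\rho$ at $\star=[1:0]$ if $\langle\beta_\rho\rangle=0$ and $0$ otherwise; by the previous paragraph every coordinate not identically zero on $F_\beta'$ has $\langle\beta_\rho\rangle=0$, so the assignment $(x_0,\dots,p_z)\mapsto\sigma(\star)$ is precisely the inclusion realising $F_\beta'$ as a substack of the $g$-twisted sector. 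Thus $\ev_1$ is identified with the inclusion $F_\beta'\hookrightarrow[(V^{ss}(\theta))^g/G]$ followed by the canonical map to $\bar{I}X(\theta)$; one then checks this composite is still a monomorphism, i.e.\ that the automorphisms of a $\C^*$-fixed LG-quasimap in $F_\beta'$ are detected by the isotropy recorded in $\bar{I}X(\theta)$, using that the $\C^*$-equivariant reparametrisation automorphisms act on the coefficient vector through characters of $G$. Finally, $\LGQG_{0,1}^{0+}(X(\theta),\beta)$ is proper (Observation \ref{obs:GraphSpaceProper}), so its closed substack $F_\beta'$ is proper, and $\bar{I}X(\theta)$ is separated; a proper monomorphism of Deligne--Mumford stacks is a closed immersion, so $\ev_1$ is an embedding.

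The step I expect to be the main obstacle is the uniform coordinate bookkeeping in the first paragraph: matching, for each $\theta$ and each $\rho\in\mathbf{R}$, the coordinates invisible to $\ev_1$ (those with nontrivial monodromy at $\star$) with the coordinates already forced to vanish on $F_\beta'$ (those for which the basepoint at $0$ exhausts the degree of $\O_{\P_{3,1}}(3\beta_\rho)$). It is precisely the inequality $\beta_0(\theta,1)_\rho\le 0$ that makes these two sets of coordinates coincide, hence makes $\ev_1$ injective rather than merely quasi-finite; a secondary point is verifying, in the second paragraph, that $\ev_1$ remains a monomorphism once the $G$-automorphisms and the rigidification of the inertia stack are accounted for.
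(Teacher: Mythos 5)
Your proposal is correct and follows essentially the same route as the paper: the paper deduces the proposition directly from the explicit form \eqref{eqn:ExplicitSection} of a $\C^*$-fixed section, which shows the quasimap is determined by the coefficient vector read off by evaluation at $\star$, with the coordinates invisible at the orbifold point (those with $\langle\beta_\rho\rangle\ne0$) being exactly those already forced to vanish --- the point you isolate as the key combinatorial input and which the paper records just afterwards in Lemma \ref{lem:ImageOfEv1}. Your additional care with automorphisms and the properness argument upgrading a monomorphism to a closed immersion fills in details the paper's one-line deduction leaves implicit, but the core argument is the same.
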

\begin{Def}\label{Def:DegreeDeterminesMonodromy}
  On $\P_{3,1}$, a line bundle $\mathcal{L}$ is determined up to
  isomorphism by its degree $\beta$; the fractional part
  $\langle\beta\rangle$ determines the multiplicity
  $\mult_\infty(\mathcal{L})$ at the orbifold point. In turn,
  $\mult_\infty(\mathcal{L})$ determines a component of
  $\bar{I}X(\theta)$, which we denote
  $X(\theta)_{\langle{\beta}\rangle}$, such that $\ev_1$ factors
  through $X(\theta)_{\langle{\beta}\rangle}\into\bar{I}X(\theta).$ We
  denote the fundamental class of this sector by
  $1_{\langle\beta\rangle}.$
\end{Def}
\begin{lem}\label{lem:ImageOfEv1}
  The image of $\ev_1:F_\beta'\into X(\theta)_{\langle\beta\rangle}$ is the substack
  of $X_R(\theta)_{\langle\beta\rangle}:=X(\theta)_{\langle\beta\rangle}\cap X_R(\theta)$ cut out by
  the vanishing of $x_0,$ if $x$ is a superscript variable and
  $\beta_x-\beta_a\in\Z_{<0}.$ (Similarly cut out by the vanishing of
  $y_0$ and $z_0$.)
\end{lem}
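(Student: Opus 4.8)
The plan is to unwind the explicit description of $\C^*$-fixed LG-quasimaps on $\P_{3,1}$ already worked out in \eqref{eqn:ExplicitSection} and simply read off the image. First I would recall that by Observation \ref{obs:UniversalCurveTrivial} every $(C,u,\kappa)\in F_\beta'$ has $C\cong\P_{3,1}$ with $\tau(b_1)=\infty$, and by \eqref{eqn:ExplicitSection} the data of such an LG-quasimap is exactly a point
$$(x_0,x_1,x_2,y_0,y_1,y_2,z_0,z_1,z_2,a,p_x,p_y,p_z)\in V,$$
where the corresponding section is $(x_0s^{\beta_x-\beta_a},x_1s^{\beta_x},x_2s^{\beta_x},\ldots,p_ys^{-3\beta_y-1},p_zs^{-3\beta_z-1})$. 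The point of the proof is that a monomial $s^k$ is a \emph{global holomorphic section} of $\O_{\P_{3,1}}(\deg)$ only when $k\ge0$: if $k<0$ the only way the tuple gives a well-defined LG-quasimap is for the corresponding coefficient to vanish. So I would go summand by summand through $\mathbf R$, checking for each $\rho$ whether the exponent of $s$ appearing in \eqref{eqn:ExplicitSection} is negative, and noting that this forces the associated coordinate to be zero.

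Next I would specialize to $\theta=\theta_z^{xya}$ (the running example) and make the bookkeeping explicit. Since $(\beta,1)$ is $\theta$-effective, Proposition \ref{prop:CriterionForEffectiveness} gives $\beta_a\ge0$, $\beta_x\ge0$, $\beta_y\ge0$, and $\beta_z\le\tfrac{1-2}{3}=-\tfrac13<0$; also $-3\beta_z-1\ge0$. Running through the summands: the exponents $\beta_x,\beta_y$ on $x_1,x_2,y_1,y_2$ are $\ge0$, the exponents $-3\beta_x-1,-3\beta_y-1$ on $p_x,p_y$ are negative (forcing $p_x=p_y=0$, consistent with landing in $X_R(\theta)$), the exponent $-3\beta_z-1\ge0$ on $p_z$, the exponent $3\beta_a-1$ on $a$ — which is $\ge0$ precisely because $\beta_a\ge0$ is an integer here — and, crucially, the exponent $\beta_x-\beta_a$ on $x_0$ (and $\beta_y-\beta_a$ on $y_0$, $\beta_z-\beta_a$ on $z_0$). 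When $\beta_x-\beta_a\in\Z_{<0}$, the section $x_0s^{\beta_x-\beta_a}$ is not a global section unless $x_0=0$; this is exactly the asserted vanishing. (Note $\beta_z-\beta_a<0$ always, so $z_0=0$ always — but $z$ is a subscript variable here; the statement is phrased for superscript variables, and one obtains the $y_0$-, $z_0$-versions by the symmetry of $x,y,z$ among superscript variables, exactly as in Remark \ref{rem:Interpolation}.) Conversely, when $\beta_x-\beta_a\ge0$ there is no constraint on $x_0$, and one checks the resulting tuple always gives a valid $\C^*$-fixed point; combined with Proposition \ref{prop:IdentifyFixedLocusWithZ} (that $\ev_1$ is an embedding) and Definition \ref{Def:DegreeDeterminesMonodromy} (that $\ev_1$ factors through $X(\theta)_{\langle\beta\rangle}$, and through $X_R(\theta)$ since $p_x=p_y=0$), this identifies the image precisely as the locus of $X_R(\theta)_{\langle\beta\rangle}$ cut out by $x_0=0$ exactly when $\beta_x-\beta_a\in\Z_{<0}$.

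The one genuinely delicate point — and the step I expect to be the main obstacle — is the interaction between the monodromy (fractional part of the degree) and the integrality condition ``$\beta_x-\beta_a\in\Z_{<0}$'' versus merely ``$\beta_x-\beta_a<0$''. The section $x_0$ is a section of $\mathcal{L}_x\otimes\mathcal{L}_a^*\cong\O_{\P_{3,1}}(\beta_x-\beta_a)$; the space of global sections of $\O_{\P_{3,1}}(d)$ is spanned by monomials $s^it^j$ with $3i+j=d$, $i,j\ge0$, so it is nonzero iff $d\ge0$, and when $d<0$ \emph{and} $d\notin\Z$ one must also be careful that the ``monodromy part'' is consistent — but a $\C^*$-fixed section can only be a power of $s$ (the coordinate vanishing at $0$), never involve $t$, since the marking and all degree sit over $0\in\P^1$; so the only $\C^*$-fixed section is $x_0 s^{\beta_x-\beta_a}$, and this extends over the orbifold point $\infty$ (where $s$ is invertible) precisely when $\beta_x-\beta_a\in\Z_{\ge0}$. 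When $\beta_x-\beta_a$ is a negative \emph{non-integer}, the monodromy of $\mathcal{L}_x\otimes\mathcal{L}_a^*$ at $\infty$ is nontrivial and every local section vanishes there anyway, so again $x_0$ is unconstrained only if the bundle has a global section, i.e. never; one sees the cleanest uniform statement is exactly the one in the lemma. I would therefore organize this last part as a short case analysis on $\langle\beta_x-\beta_a\rangle$ and the sign of $\beta_x-\beta_a$, citing Proposition \ref{prop:DivisorLineBundle} for the classification of line bundles and their sections on $\P_{3,1}$, and then the result follows.
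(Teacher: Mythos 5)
Your overall strategy — unwind \eqref{eqn:ExplicitSection} and read off which coordinates are forced to vanish — is exactly the paper's, and most of your bookkeeping is fine. But there is a genuine gap in the step you yourself flag as ``the main obstacle.'' You correctly establish that when $\beta_x-\beta_a$ is a negative non-integer, $x_0$ must also vanish (indeed $\mathcal{L}_x\otimes\mathcal{L}_a^*$ has no nonzero global sections), and then you wave your hands: ``one sees the cleanest uniform statement is exactly the one in the lemma.'' As written, what you have derived appears to \emph{contradict} the lemma, which asserts the cut $\{x_0=0\}$ is imposed only when $\beta_x-\beta_a\in\Z_{<0}$, not merely $\beta_x-\beta_a<0$. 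The missing observation — which is the heart of the paper's proof — is that when $\beta_\rho\notin\Z$, the coordinate $s_\rho$ already vanishes identically on the twisted sector $X(\theta)_{\langle\beta\rangle}$, because nontrivial monodromy of $\mathcal{L}_\rho$ at $\infty$ forces $\ev_1$ into a sector whose defining group element acts nontrivially on the $\rho$-coordinate. So the constraint from fractional $\beta_\rho$ is automatic and imposes \emph{no new restriction} on $X_R(\theta)_{\langle\beta\rangle}$, whereas the constraint from $\beta_\rho\in\Z_{<0}$ is a genuine additional cut. Your proof needs this dichotomy made explicit; otherwise the reader cannot see why the lemma's condition is $\beta_x-\beta_a\in\Z_{<0}$ rather than $\beta_x-\beta_a<0$, and your own conclusion in the final paragraph does not follow from what precedes it.

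Two smaller points: your parenthetical ``the exponent $3\beta_a-1$ on $a$ — which is $\ge0$ precisely because $\beta_a\ge0$ is an integer here'' is off; $\sigma_a\in H^0(\mathcal{L}_a^{\otimes3})$ so the relevant exponent is $3\beta_a$ with no $-1$ (there is no $\omega_{C,\log}$-twist in $\mathcal{L}_a^{\otimes3}$), and $\beta_a$ need only lie in $\tfrac13\Z_{\ge0}$, not $\Z_{\ge0}$. And the remark that ``$z_0=0$ always'' in the $\theta^{xya}_z$ case, while true, is potentially misleading without the same subsumption observation: for the subscript variable $z$, the vanishing is again absorbed either into the sector (when $\beta_z-\beta_a\notin\Z$) or into Lemma \ref{lem:Concave} together with the narrowness convention (Remark \ref{rem:BroadVanish}), which is why it does not appear as an extra defining equation in the lemma's statement.
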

\begin{proof}
  An entry of $\sigma$ in \eqref{eqn:ExplicitSection} is necessarily
  zero either of the following holds:
  \begin{enumerate}
  \item The corresponding line bundle $\mathcal{L}_\rho$ has degree
    $\beta_\rho\not\in\Z.$ In this case $s^{\beta_\rho}$ does not make
    sense.
  \item $\mathcal{L}_\rho$ has degree $\beta_\rho\in\Z_{<0}.$ In this
    case $\mathcal{L}_\rho$ has no nonzero global sections.
  \end{enumerate}
  The first case imposes no restriction on
  $X(\theta)_{\langle\beta\rangle}$. In other words, if
  $\beta_\rho\not\in\Z,$ then the the corresponding coordinate
  vanishes on $X(\theta)_{\langle\beta\rangle}$.

  The second case does impose restrictions. First, if $x$ is a
  superscript variable, we must have $\sigma_{p_x}=0.$ (In fact, we
  already observed that $u$ factors through $X_R(\theta).$) This shows
  that $\Im(\ev_1)\subseteq X_R(\theta)_{\langle\beta\rangle}$. Also,
  if $\beta_x-\beta_a\in\Z_{<0},$ then $\sigma_{x_0}=0.$
\end{proof}
\begin{prop}
  If $x$ is a subscript variable, only terms of $I^\theta(q,\hbar)$ with
  $\beta_x-\beta_0(\theta,1)\in\frac{1}{3}\Z_{<0}\setminus\Z$ are
  nonzero. If $x$ is a superscript variable, only terms of
  $I^\theta(q,\hbar)$ with $\beta_x\in\Z_{\ge0}$ are nonzero.
\end{prop}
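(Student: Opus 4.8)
The plan is to work through the decomposition of $I^\theta(q,\hbar)$ as a sum over $\theta$-effective degrees $\beta$ of the contributions of the fixed loci $F_\beta'$ (Definition \ref{Def:FBetaPrime}) to the graph-space integral \eqref{eqn:IFunctionIntegral}, and to pin down for which $\beta$ that contribution must vanish. I would keep $\theta=\theta_z^{xya}$ throughout and invoke the symmetry of $x,y,z$ from Remark \ref{rem:Interpolation} at the end. A term attached to $\beta$ fails to be nonzero for one of two reasons: either $F_\beta'$ is empty, so there is nothing to prove; or $\ev_1$ carries $F_\beta'$ into a sector $X(\theta)_{\langle\beta\rangle}$ of $\bar{I}X(\theta)$ on which every narrow class $\iota^*\tilde\gamma^j$ restricts to zero, so that by Remark \ref{rem:BroadVanish} the summand drops out of \eqref{eqn:IFunctionIntegral}. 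First I would extract the easy inequalities: nonemptiness of $F_\beta'$ forces $(\beta,1)$ to be $\theta$-effective, so Proposition \ref{prop:CriterionForEffectiveness} gives $\beta_a\ge0$, $\beta_x\ge0$ when $x$ is a superscript variable, and $\beta_x\le\tfrac{1-2}{3}=-\tfrac13$ when $x$ is a subscript variable; note that $\tfrac13\Z_{<0}\setminus\Z$ is precisely the set of non-integral elements of $\tfrac13\Z$ that are $\le-\tfrac13$. It remains, in each case, to treat the integrality of $\beta_x$.

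For a superscript variable $x$, suppose $\beta_x\in\tfrac13\Z_{>0}\setminus\Z$. By Observation \ref{obs:UniversalCurveTrivial} the curve underlying a point of $F_\beta'$ is $\P_{3,1}$, and by \eqref{eqn:ExplicitSection} all sections are $\C^*$-fixed; but $\mathcal{L}_{x_1}=\mathcal{L}_{x_2}=\mathcal{L}_x$ has non-integral degree, hence no $\C^*$-fixed section (such a section is a power of the weight-$3$ coordinate, which needs integral degree), so $\sigma_{x_1}=\sigma_{x_2}=0$. Since $\{(x_0,x_1,x_2)=0\}\subseteq V^{\uns}(\theta)$, the quasimap condition in Definition \ref{Def:LGQuasimapToX} forces $\sigma_{x_0}\ne0$, hence $\beta_x-\beta_a\in\Z_{\ge0}$; then by the proof of Lemma \ref{lem:ImageOfEv1} both $x_1$ and $x_2$ vanish identically on $X(\theta)_{\langle\beta\rangle}$, so this sector lies over the coordinate point $\widetilde{P_0}=[1:0:0]$ of the $x$-factor. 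Since $\widetilde{P_0}\notin E$, no such sector meets $\bar{I}Z(\theta)$; choosing lifts of the dual basis $\{\tilde\gamma^j\}$ supported on the sectors that do meet $\bar{I}Z(\theta)$ (these span $H^*_{CR}(X(\theta))^{\nar}/\ker\iota^*$), we get $\ev_1^*\tilde\gamma^j=0$ on $F_\beta'$, and the term vanishes. With $\beta_x\ge0$ this gives $\beta_x\in\Z_{\ge0}$.

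For a subscript variable $x$ (so $x=z$), suppose $\beta_z\in\Z$; effectiveness then gives $\beta_z\le-1$, so $\mathcal{L}_z$ has integral degree and hence trivial monodromy at $\infty$ (Definition \ref{Def:DegreeDeterminesMonodromy}), which means $X(\theta)_{\langle\beta\rangle}$ is the sector attached to a group element whose $[\C^3/\mu_3]$-component is the identity; that sector is noncompact, so by the definition of narrow it is not narrow, the narrow classes all restrict to zero on it, and the term vanishes. Hence a nonzero term requires $\beta_z\notin\Z$ together with $\beta_z\le-\tfrac13$, i.e. $\beta_z\in\tfrac13\Z_{<0}\setminus\Z$, which is the assertion. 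The symmetry of $x,y,z$ (Remark \ref{rem:Interpolation}) then handles the other variables and the other $\theta\in\Theta$.

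The delicate point I expect is the equivalence, in the subscript case, between ``$X(\theta)_{\langle\beta\rangle}$ is narrow'' and ``$\beta_z\notin\Z$'': one must read off the $\mu_3$-monodromy of the coordinates $z_0,z_1,z_2$ from $(\langle\beta_z\rangle,\langle\beta_a\rangle)$ using the characters $\mathbf{R}$ defining $G$, and check that the twist by $\mathcal{L}_a^*$ appearing in $\mathcal{L}_{z_0}$, together with $\beta_a\ge0$, neither creates nor destroys narrow sectors beyond those forced by the monodromy of $\mathcal{L}_z$ itself. The superscript case, by contrast, collapses quickly to the single geometric input $\widetilde{P_0}\notin E$ once the vanishing $\sigma_{x_1}=\sigma_{x_2}=0$ is established.
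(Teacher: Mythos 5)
Your argument is correct and follows essentially the same route as the paper's (which is much terser): the inequalities come from the effectiveness criterion of Proposition \ref{prop:CriterionForEffectiveness}, the subscript case reduces to the broad-vanishing convention of Remark \ref{rem:BroadVanish} since $\beta_z\in\Z$ forces trivial monodromy of $\mathcal{L}_z$ and hence a non-narrow sector, and the superscript case uses the explicit $\C^*$-fixed section \eqref{eqn:ExplicitSection} to force $x_1=x_2=0$ and then the fact that $\tilde{P_0}\notin E$. The only discrepancy is that, like the paper's own proof and the explicit formulas of Sections \ref{sec:CalculatingI}--\ref{sec:RelatingIFunctions}, you establish $\beta_z\in\frac13\Z_{<0}\setminus\Z$ rather than the literal condition on $\beta_z-\beta_0(\theta,1)$ in the statement; this is a notational slip in the statement, not a gap in your argument.
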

\begin{proof}
  The first claim is immediate from $\beta_x-\beta_0(\theta,1)<0$
  (Section \ref{sec:EffectiveDegrees}) and Remark
  \ref{rem:BroadVanish}. For the second, Definition
  \ref{Def:LGQuasimapToX} implies that $\beta_x\ge0.$ If
  $\beta_x\not\in\Z,$ by the fact that $\sigma$ does not vanish at
  $\infty\in\P_{3,1},$ we have $x_1=x_2=0$ in
  \eqref{eqn:ExplicitSection}. In particular, the sector
  $X_R(\theta)_{\langle\beta\rangle}$ is supported over the locus
  $x_1=x_2=0.$ Thus for any nonzero term $c\gamma_jq^\beta$ of
  $I^\theta(q,\hbar)$, where $c$ is a scalar and $\beta_x\not\in\Z,$ we
  must have $\gamma_j\in\iota^*(H^*(X_R(\theta)_{\beta^{-1}}))=0.$
\end{proof}

This calculation is adapted from \cite{CheongCiocanFontanineKim2015}.
\begin{prop}\label{prop:VirtualNormalBundle}
  The virtual normal bundle to $F_\beta'$ inside
  $\LGQG_{0,1}^{0+}(X(\theta),\beta)$ has $\C^*$-equivariant
  Euler class
  \begin{align*}
    e^{\C^*}(N_{F_\beta'}^{\vir})=(-\hbar)\frac{\prod_{\substack{\rho\in\mathbf{R}\\\beta_\rho\ge0}}\prod_{0\le\nu\le\ceil{\beta_\rho}-1}(\ev_1^*D_\rho+(\beta_\rho-\nu)\hbar)}{\prod_{\substack{\rho\in\mathbf{R}\\\beta_\rho<0}}\prod_{\floor{\beta_\rho}+1\le\nu\le-1}(\ev_1^*D_\rho+(\beta_\rho-\nu)\hbar)}.
  \end{align*}
  where the divisors $D_\rho$ were defined in Section
  \ref{sec:ToricDivisors}.
\end{prop}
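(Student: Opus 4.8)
The plan is to compute $e^{\C^*}(N_{F_\beta'}^{\vir})$ directly from the obstruction theory $R^\bullet\pi_*\mathcal E$, using the fact (Observation \ref{obs:VectorBundles}) that on the family $U_\beta'\cong F_\beta'\times\P_{3,1}$ this complex splits as $\bigoplus_{\rho\in\mathbf R}R^\bullet\pi_*\mathcal L_\rho$ and each summand is a single vector bundle (no higher cohomology in one degree). By definition the virtual normal bundle is the $\C^*$-moving part of $R^\bullet\pi_*\mathcal E$, so its Euler class is a product over $\rho\in\mathbf R$ of the Euler classes of the moving parts of $R^\bullet\pi_*\mathcal L_\rho$, with those in $R^1$ contributing inverses. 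First I would record the $\C^*$-action: from the explicit form \eqref{eqn:ExplicitSection}, the line bundle $\mathcal L_\rho$ is $\varpi^*\O_{\P_{3,1}}(\beta_\rho\cdot 3)$ twisted by the $\C^*$-weight coming from $s\mapsto\lambda s$, and the fibre of $\mathcal L_\rho$ at the basepoint $\tau^{-1}(0)$ carries weight $\beta_\rho\hbar$ relative to the fibre at $\infty$; meanwhile the $T$-weight of $\mathcal L_\rho$ restricted to $F_\beta'$ is exactly $\ev_1^*D_\rho$ by the definition of $D_\rho=c_1(L_\rho)$ in Section \ref{sec:ToricDivisors} and Proposition \ref{prop:IdentifyFixedLocusWithZ}.

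Next, for each $\rho$ I would compute $H^0$ and $H^1$ of $\O_{\P_{3,1}}(n)$ as $\C^*$-representations. A section of $\O_{\P_{3,1}}(n)$ is a polynomial in $s,t$ of weighted degree $n$ with $\deg s=3,\deg t=1$; writing such a monomial as $s^a t^b$ with $3a+b=n$, $a,b\ge 0$, the relevant count is $a$ ranging over $0\le a\le\lceil\beta_\rho\rceil-1$ after translating $n=3\beta_\rho$ into the fractional normalization used in the statement, and the $\C^*$-weight of the corresponding basis vector of the moving part is $(\beta_\rho-\nu)\hbar$ for $0\le\nu\le\ceil{\beta_\rho}-1$; adding the $T$-weight $\ev_1^*D_\rho$ gives the factor $(\ev_1^*D_\rho+(\beta_\rho-\nu)\hbar)$. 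By Serre duality on $\P_{3,1}$, $H^1(\O_{\P_{3,1}}(n))\cong H^0(\omega_{\P_{3,1},\log}\otimes\O_{\P_{3,1}}(-n))^\vee\cong H^0(\O_{\P_{3,1}}(-3-n))^\vee$ (using $\omega_{\P_{3,1},\log}\cong\O_{\P_{3,1}}(-3)$ from Section \ref{sec:3StableCurves}), which is nonzero precisely when $\beta_\rho<0$ and produces, after dualizing the weights, the denominator factors $(\ev_1^*D_\rho+(\beta_\rho-\nu)\hbar)$ for $\floor{\beta_\rho}+1\le\nu\le-1$. The solitary prefactor $(-\hbar)$ comes from the one piece of the moving deformations not accounted for by the bundle summands — the deformation of the map $\tau$, i.e.\ moving the image of $\infty$ on $\P^1$, which contributes $T_\infty\P^1$ with weight $-\hbar$, exactly as in the graph-space normal bundle computations of Section \ref{sec:CStarAction}.

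The main obstacle I anticipate is the bookkeeping of conventions: keeping the ``degree'' normalization $\beta_\rho\in\frac13\Z$ consistent with the integer grading $3\beta_\rho$ on $\O_{\P_{3,1}}$, correctly handling the orbifold point at $\infty$ (a section of $\O_{\P_{3,1}}(n)$ with $n\notin 3\Z$ must vanish at $\infty$, so the relevant $H^0$ is over the coarse space $\P^1$ after an appropriate twist), and tracking which summands land in $R^0$ versus $R^1$ so the factors end up in the numerator versus the denominator. The ceiling/floor cutoffs $\ceil{\beta_\rho}-1$ and $\floor{\beta_\rho}+1$ encode precisely these integrality subtleties, and verifying them amounts to a careful but routine case analysis on $\langle\beta_\rho\rangle$; once the weight computation for each $\O_{\P_{3,1}}(n)$ is pinned down, assembling the product over $\rho\in\mathbf R$ and multiplying by $(-\hbar)$ gives the claimed formula, with the $T$-equivariant refinement obtained by replacing each scalar weight by its sum with $\ev_1^*D_\rho$ via Proposition \ref{prop:VirtualNormalBundle}'s identification of $F_\beta'$ with a substack of $\bar I X(\theta)$.
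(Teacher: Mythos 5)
Your proposal follows the same route as the paper: split $R^\bullet\pi_*\mathcal{E}$ into the summands $R^\bullet\pi_*\mathcal{L}_\rho$ over the trivial family $U_\beta'\cong F_\beta'\times\P_{3,1}$, compute $H^0$ and $H^1$ of $\O_{\P_{3,1}}(3\beta_\rho)$ together with their $\C^*$-weights $(\beta_\rho-\nu)\hbar$, and tack on the factor $-\hbar$ from moving the marked point relative to the parametrization. The index ranges $0\le\nu\le\ceil{\beta_\rho}-1$ and $\floor{\beta_\rho}+1\le\nu\le-1$ and the weight bookkeeping all match the paper's computation.

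The one step you pass over too quickly is the source of the classes $\ev_1^*D_\rho$. To extract them from the projection formula you need the global identification $\mathcal{L}_\rho\cong\pi^*\ev_1^*L_\rho\otimes\varpi^*\O_{\P_{3,1}}(3\beta_\rho)$ over all of $U_\beta'$, and you justify this ``from the explicit form of \eqref{eqn:ExplicitSection}.'' That reasoning works only for the summands with $\beta_\rho\in\Z_{\ge0}$, where the universal section is generically nonzero and pins down the twist. For the summands with $\beta_\rho<0$ or $\beta_\rho\notin\Z$ --- exactly the ones contributing the denominator, where $\ev_1^*D_\rho$ actually matters --- the section is identically zero and carries no information about how $\mathcal{L}_\rho$ varies over $F_\beta'$. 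The paper closes this with a separate Claim: it first identifies $\mathcal{L}_a$ (whose cube has a nonvanishing section) and $\mathcal{L}_{\hat{t_R}}\cong\omega_{U_\beta'/F_\beta',\log}\cong\varpi^*\O_{\P_{3,1}}(-3)$, and then propagates the identification to the remaining summands via the tensor relations $\mathcal{L}_{\rho_{x_0}}=\mathcal{L}_x\otimes\mathcal{L}_a^*$, $\mathcal{L}_{\rho_{p_x}}=\mathcal{L}_x^{\otimes-3}\otimes\mathcal{L}_R$, etc., with a short case analysis over $\theta\in\Theta$. You should supply this argument (or some substitute) before invoking the projection formula. A smaller point: Serre duality on $\P_{3,1}$ uses $\omega_{\P_{3,1}}\cong\O_{\P_{3,1}}(-4)$, not $\omega_{\P_{3,1},\log}\cong\O_{\P_{3,1}}(-3)$; with $\omega_{\log}$ you would wrongly predict $H^1(\O_{\P_{3,1}}(-3))\ne0$, although the index ranges you ultimately quote are the correct ones.
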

\begin{proof}
  The factor $(-\hbar)$ is from moving the marked point on $C$, and the
  rest comes from the relative perfect obstruction theory
  $R^\bullet\pi_*\mathcal{E}$. By Observation \ref{obs:VectorBundles},
  each summand $\mathcal{L}_\rho$ of $\mathcal{E}$ contributes either
  $(\pi_*\mathcal{L})^{\mov}$ or $-(R^1\pi_*\mathcal{L})^{\mov}$ to
  the virtual normal bundle, whichever is nonzero, where the
  superscript `$\mov$' denotes the $\C^*$-moving invariant subbundle.

  We see from the form of \eqref{eqn:ExplicitSection} that for all
  $\rho\in\mathbf{R}$ with $\beta_\rho\in\Z_{\ge0},$ we have
  \begin{align}\label{eqn:LineBundlesOnFBeta}
    \mathcal{L}_\rho\cong\pi^*\ev_1^*L_\rho\otimes\varpi^*\O_{\P_{3,1}}(3\beta_\rho).
  \end{align}
  (Recall that $\mathcal{L}_\rho=u^*L_\rho$ and that
  $\varpi:U_\beta'\to\P_{3,1}$ is the projection.)

  \medskip
  
  \noindent\textit{Claim.} Equation \eqref{eqn:LineBundlesOnFBeta}
  holds, at least up to torsion, for all
  $\rho\in\mathbf{R}$.
  \begin{proof}[Proof of Claim]
    We check separately for each $\theta\in\Theta$. For all $\theta$,
    $\{a=0\}\subseteq V^{\uns}(\theta),$ so Definition
    \ref{Def:LGQuasimapToX} implies that
    $3\beta_a\in\Z_{\ge0}$. Up to torsion, this identifies
    \begin{align}\label{eqn:AlsoForA}
      \mathcal{L}_a\cong\pi^*\ev_1^*L_{\hat{t_a}}\otimes\varpi^*\O_{\P_{3,1}}(3\beta_a).
    \end{align}

    For $\theta=\theta^{xyza},$ at least one of
    $\beta_x-\beta_a$ and $\beta_x$ is a nonnegative integer. By
    commutativity of tensor products and pullbacks, the fact that
    Equation \eqref{eqn:LineBundlesOnFBeta} holds for one of
    $\mathcal{L}_x$ and $\mathcal{L}_x\otimes\mathcal{L}_a^*,$
    together with Equation \eqref{eqn:AlsoForA}, implies that Equation
    \eqref{eqn:LineBundlesOnFBeta} holds for the other. A similar
    argument works for $y$ and $z$.

    It remains to check Equation \eqref{eqn:LineBundlesOnFBeta} for
    $\rho=-3\hat{t_x}+\hat{t_R}$. Since
    $\mathcal{L}_R\cong\omega_{U_\beta'/F_\beta',\log}$, the
    triviality of $U_\beta'$ over $F_\beta'$ implies
    $\mathcal{L}_{\hat{t_R}}\cong\varpi^*\O_{\P_{3,1}}(-3).$
    
    For $\theta=\theta_{xyz}^a,$ we must have
    $\beta_{-3\hat{t_x}+\hat{t_R}}=-3\beta_x+\beta_{\hat{t_R}}\in\Z_{\ge0}.$
    Therefore Equation \eqref{eqn:LineBundlesOnFBeta} implies
    \begin{align*}
    \mathcal{L}_{-3\hat{t_x}+\hat{t_R}}&\cong\pi^*\ev_1^*L_{-3\hat{t_x}}\otimes\varpi^*\O_{\P_{3,1}}(-9\beta_x+3\beta_{\hat{R}})\\
&=\pi^*\ev_1^*L_{-3\hat{t_x}}\otimes\varpi^*\O_{\P_{3,1}}(-9\beta_x-3).
    \end{align*}
    Again we have
    $\mathcal{L}_{\hat{t_R}}\cong\varpi^*\O_{\P_{3,1}}(-3)$, so
\begin{align*}
  \mathcal{L}_{-3\hat{t_x}}=\mathcal{L}_x^{\otimes-3}\cong\pi^*\ev_1^*L_{-3\hat{t_x}}\otimes\varpi^*\O_{\P_{3,1}}(-9\beta_x).
    \end{align*}
    Thus up to torsion this identifies
    $\mathcal{L}_x\cong\pi^*\ev_1^*L_{\hat{t_x}}\otimes\varpi^*\O_{\P_{3,1}}(3\beta_x).$
    The argument used for $\theta^{xyza}$ to describe $\mathcal{L}_a$
    applies here to to show that Equation
    \eqref{eqn:LineBundlesOnFBeta} holds for
    $\mathcal{L}_{\hat{t_x}-\hat{t_a}}.$ The same argument works for
    the characters $\hat{t_y}-\hat{t_a},$ $\hat{t_y},$
    $\hat{t_z}-\hat{t_a},$ and $\hat{t_z}.$ These two arguments
    together prove the claim for $\theta^{xya}_z$ and
    $\theta_{yz}^{xa}$ also.
  \end{proof}

  Now, by the projection formula,
  \begin{align*}
    R^i\pi_*(\mathcal{E})&=\bigoplus_{\substack{\rho\in\mathbf{R}}}R^i\pi_*(\pi^*\ev_1^*L_\rho\otimes\varpi^*\O_{\P_{3,1}}(3\beta_\rho))\\
               &=\bigoplus_{\substack{\rho\in\mathbf{R}}}\ev_1^*L_\rho\otimes
    R^i\pi_*\varpi^*\O_{\P_{3,1}}(3\beta_\rho).
  \end{align*}
  Now $R^i\pi_*\varpi^*\O_{\P_{3,1}}(3\beta_\rho)$ is trivial with
  fiber $H^i(\P_{3,1},\O_{\P_{3,1}}(3\beta_\rho))$, i.e.
  \begin{align*}
    R^i\pi_*(\mathcal{E})&=\bigoplus_{\substack{\rho\in\mathbf{R}}}\ev_1^*L_\rho\otimes
    H^i(\P_{3,1},\O_{\P_{3,1}}(3\beta_\rho)).
  \end{align*}
  The $\C^*$-action on $\LGQG_{0,1}^{0+}(Z(\theta),\beta)$ is induced
  from an action on $\P_{3,1}$ via the universal map
  $\tau:\UQG_{0,1}^{0+}(Z(\theta),\beta)\to\P_{3,1}$. Restricting to
  $F_\beta'$ shows that the action on $R^i\pi_*(\mathcal{E})$ is
  induced by the projection $\varpi:U_\beta'\to\P_{3,1}$. Thus the
  $\C^*$-action on each factor
  $\ev_1^*L_\rho\otimes H^i(\P_{3,1},\O_{\P_{3,1}}(3\beta_\rho))$ is
  the natural action on $H^i(\P_{3,1},\O_{\P_{3,1}}(3\beta_\rho))$.

  As these groups are identified with the tangent and obstruction
  spaces at the point $\sigma$ of Equation
  \eqref{eqn:ExplicitSection}, the natural $\C^*$-action on a section
  $s^at^b\in H^i(\P_{3,1},\O_{\P_{3,1}}(3\beta_\rho))$ has weight
  $b/3.$ The sections of $H^i(\P_{3,1},\O_{\P_{3,1}}(3\beta_\rho))$
  are, explicitly,
  \begin{align*}
    H^0(\P_{3,1},\O_{\P_{3,1}}(3\beta_\rho))&=
                                              \begin{cases}
                                                \C\{t^{3\beta_\rho},st^{3(\beta_\rho-1)}\ldots,s^{\floor{\beta_\rho}}t^{3\langle\beta_\rho\rangle}\}&\beta_\rho\ge0\\
                                                0&\beta_\rho<0
                                              \end{cases}
\\
    H^1(\P_{3,1},\O_{\P_{3,1}}(3\beta_\rho))&=
                                              \begin{cases}
                                                0&\beta_\rho\ge-1\\
                                                \C\{s^{-1}t^{3(\beta_\rho+1)},s^{-2}t^{3(\beta_\rho+2)}\ldots,s^{\floor{\beta_\rho}+1}t^{3(\langle\beta_\rho\rangle-1)}\}&\beta_\rho<-1
                                              \end{cases}
.
  \end{align*}
  Recalling the notation $D_\rho$ of Section \ref{sec:ToricDivisors},
  we have
  \begin{align}
    e_{\C^*}(R^0\pi_*(\mathcal{E}))&=\prod_{\substack{\rho\in\mathbf{R}\\\beta_\rho\ge0}}\prod_{\substack{0\le\nu\le\floor{\beta_\rho}\\\nu\in\Z}}(\ev_1^*D_\rho+(\beta_\rho-\nu)\hbar)\label{eqn:EqEuler}\\
    e_{\C^*}(R^1\pi_*(\mathcal{E}))&=\prod_{\substack{\rho\in\mathbf{R}\\\beta_\rho<-1}}\prod_{\substack{\floor{\beta_\rho}+1\le\nu\le-1\\\nu\in\Z}}(\ev_1^*D_\rho+(\beta_\rho-\nu)\hbar)\nonumber\\&\nonumber\\\label{eqn:EulerClassNormalBundle}
    e_{\C^*}(N_{F_\beta'}^{\vir})&=(-\hbar)\frac{\prod_{\substack{\rho\in\mathbf{R}\\\beta_\rho\ge0}}\prod_{0\le\nu\le\ceil{\beta_\rho}-1}(\ev_1^*D_\rho+(\beta_\rho-\nu)\hbar)}{\prod_{\substack{\rho\in\mathbf{R}\\\beta_\rho<-1}}\prod_{\floor{\beta_\rho}+1\le\nu\le
    -1}(\ev_1^*D_\rho+(\beta_\rho-\nu)\hbar)}
  \end{align}
\end{proof}
\begin{observation}
  The calculation above also shows that the obstruction bundle
  $R^1\pi_*\mathcal{E}$ has no $\C^*$-fixed part, i.e.
  $[F_\beta']^{\vir}=[F_\beta'].$
  \end{observation}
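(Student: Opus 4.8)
The plan is to read the claim off the computation already carried out in the proof of Proposition~\ref{prop:VirtualNormalBundle}. There we showed that, up to torsion, $R^1\pi_*\mathcal{E}\cong\bigoplus_{\rho\in\mathbf{R}}\ev_1^*L_\rho\otimes H^1(\P_{3,1},\O_{\P_{3,1}}(3\beta_\rho))$, and that the $\C^*$-action on this bundle is induced from the action on $\P_{3,1}$, so that it acts only through the factor $H^1(\P_{3,1},\O_{\P_{3,1}}(3\beta_\rho))$; indeed $\ev_1^*L_\rho$ is $\C^*$-trivial, since $\C^*$ acts trivially on $F_\beta'$ and on $\bar{I}X(\theta)$. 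Thus it suffices to check that $H^1(\P_{3,1},\O_{\P_{3,1}}(3\beta_\rho))$ has no weight-zero subspace for any $\rho\in\mathbf{R}$.

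For this I would use the explicit basis recorded in the proof of Proposition~\ref{prop:VirtualNormalBundle}: the group $H^1(\P_{3,1},\O_{\P_{3,1}}(3\beta_\rho))$ is nonzero only when $\beta_\rho<-1$, in which case it is spanned by the monomials $s^\nu t^{3(\beta_\rho-\nu)}$ with $\nu\in\Z$ and $\floor{\beta_\rho}+1\le\nu\le-1$, the monomial $s^\nu t^{3(\beta_\rho-\nu)}$ carrying $\C^*$-weight $\beta_\rho-\nu$ (this is exactly the weight appearing as the factor $(\beta_\rho-\nu)\hbar$ in the formula for $e_{\C^*}(R^1\pi_*\mathcal{E})$). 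The weight $\beta_\rho-\nu$ vanishes precisely when $\nu=\beta_\rho$; but if $\beta_\rho\in\Z$ then $\floor{\beta_\rho}+1=\beta_\rho+1>\beta_\rho$, so $\nu=\beta_\rho$ lies outside the range, and if $\beta_\rho\notin\Z$ then $\nu=\beta_\rho$ is not an integer, so it is not one of the allowed indices. Either way no basis monomial has weight zero, hence $(R^1\pi_*\mathcal{E})^{\C^*}=0$.

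To conclude I would invoke Observation~\ref{obs:VectorBundles}: since $R^\bullet\pi_*\mathcal{E}$ is a complex of vector bundles on $F_\beta'$, the virtual class of the $\C^*$-fixed locus is $[F_\beta']^{\vir}=e\big((R^1\pi_*\mathcal{E})^{\C^*}\big)\cap[F_\beta']$ by Proposition~5.6 of \cite{BehrendFantechi1997}, and the vanishing just established makes this Euler class equal to $1$, so $[F_\beta']^{\vir}=[F_\beta']$. The only step requiring genuine care is the bookkeeping in the middle paragraph — ruling out a weight-zero section among the finitely many monomials spanning the $H^1$'s — but this is immediate once one separates the cases $\beta_\rho\in\Z$ and $\beta_\rho\notin\Z$ and uses the constraint $\nu\le-1$; everything else is a direct appeal to results already proved above, so I do not anticipate a real obstacle.
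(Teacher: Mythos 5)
Your argument is correct and is exactly the (implicit) content of the paper's observation: you unpack the explicit $\C^*$-weights $\beta_\rho-\nu$ from \eqref{eqn:EulerClassNormalBundle}, note that $\nu=\beta_\rho$ never lies in the index range $\floor{\beta_\rho}+1\le\nu\le-1$, and conclude via Observation~\ref{obs:VectorBundles}. This matches the paper's approach, which simply points back to the calculation in Proposition~\ref{prop:VirtualNormalBundle} without writing out these steps.
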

\begin{rem}
  For $\rho$ such that $\beta_\rho\in\Z,$ the section
  $s^{\floor{\beta_\rho}}t^{3\langle\beta_\rho\rangle}=s^{\beta_\rho}$
  is $\C^*$-fixed, and thus is not part of the virtual normal
  bundle. This explains the difference in indexing between
  \eqref{eqn:EqEuler} and \eqref{eqn:EulerClassNormalBundle}. The
  missing terms span the tangent space to $F_\beta'$.
\end{rem}
\begin{rem}\label{rem:BetaAGreaterThanBetaX}
  For a fixed $\theta$, the conditions of Definition
  \ref{Def:LGQuasimapToX} (with $m=1$) determine the signs of
  $\beta_x,$ $\beta_y,$ $\beta_z,$ and $\beta_a.$ This determines the
  signs of $\beta_\rho$ for all $\rho\in\mathbf{R}$ \emph{except} for
  $\rho\in\{\hat{t_x}-\hat{t_a},\hat{t_y}-\hat{t_a},\hat{t_z}-\hat{t_a}\}.$
  Specifically, if $\beta_x\ge0,$ the quantity $\beta_x-\beta_a$
  changes sign depending on whether $\beta_x\ge\beta_a$ or
  $\beta_x<\beta_a.$ (Similarly for $y$ and $z$.) Therefore, in view
  of Proposition \ref{prop:VirtualNormalBundle}, we will have to treat
  the case $0\le\beta_x<\beta_a$ separately in what follows. (See
  Lemma \ref{lem:BetaAGreaterThanBetaX}.)
\end{rem}
\begin{prop}\label{prop:IFunctionCalculation}
  \begin{align*}
    \iota^R_*(I^\theta(q,\hbar))&=\sum_{\beta}q^{\beta-\beta_0(\theta,1)}\frac{\prod_{\substack{\rho\in\mathbf{R}\\\beta_\rho<-1}}\prod_{\floor{\beta_\rho}+1\le\nu\le-1}(D_\rho+(\beta_\rho-\nu)\hbar)}{\prod_{\substack{\rho\in\mathbf{R}\\\beta_\rho\ge0}}\prod_{0\le\nu\le\ceil{\beta_\rho}-1}(D_\rho+(\beta_\rho-\nu)\hbar)}A_xA_yA_z1_{{\langle-\beta\rangle}},
  \end{align*}
  where $\iota_*^R$ is the embedding
  $Z(\theta)\into X_R^{\rig}(\theta)$,
  \begin{align*}
    A_x=
    \begin{cases}
      D_x-D_a&0\le\beta_x<\beta_a,\quad\beta_x-\beta_a\in\Z\\
      1&\mbox{otherwise},
    \end{cases}
  \end{align*}
  and similarly for $A_y$ and $A_z$.
\end{prop}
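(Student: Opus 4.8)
The plan is to assemble the preceding results through $\C^*$-localization on the graph spaces. By the discussion opening Section \ref{sec:CalculatingI} together with \eqref{eqn:IFunctionIntegral}, $\iota^R_*(I^\theta(q,\hbar))$ is the sum over all $\theta$-effective $(\beta,1)$ of the contributions of the distinguished fixed loci $F_\beta'$ to the equivariant integrals $\int_{[\LGQG_{0,1}^{0+}(X(\theta),\beta)]^{\vir}}\ev_1^*(\tilde{\gamma^j}\otimes[\infty])$, paired against $\tilde{\gamma_j}$. Since $\LGQG_{0,1}^{0+}(X(\theta),\beta)$ is proper (Observation \ref{obs:GraphSpaceProper}), the Atiyah--Bott formula applies, and, after restricting all classes to $F_\beta'$, the $F_\beta'$-contribution to the $\beta$-summand is
\begin{align*}
  q^{\beta-\beta_0(\theta,1)}\sum_j\tilde{\gamma_j}\int_{[F_\beta']^{\vir}}\frac{\ev_1^*(\tilde{\gamma^j}\otimes[\infty])}{e^{\C^*}(N_{F_\beta'}^{\vir})}.
\end{align*}

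First I would simplify the three ingredients of this expression. By the Observation following Proposition \ref{prop:VirtualNormalBundle}, $[F_\beta']^{\vir}=[F_\beta']$, since $R^1\pi_*\mathcal{E}$ has no $\C^*$-fixed part. By Observation \ref{obs:UniversalCurveTrivial} the $\P^1$-factor of $\ev_1|_{F_\beta'}$ is the constant map to $\infty$, so $\ev_1^*[\infty]|_{F_\beta'}=p_\infty|_\infty=-\hbar$, which cancels the leading factor $(-\hbar)$ in the Euler class of Proposition \ref{prop:VirtualNormalBundle}. That proposition then supplies the rest of $e^{\C^*}(N_{F_\beta'}^{\vir})$ as the ratio of products of linear factors $\ev_1^*D_\rho+(\beta_\rho-\nu)\hbar$; matching the indexing $0\le\nu\le\ceil{\beta_\rho}-1$ appearing there against the statement uses the Remark after Proposition \ref{prop:VirtualNormalBundle}, i.e. that the missing $\nu=\floor{\beta_\rho}$ sections for $\beta_\rho\in\Z$ are $\C^*$-fixed and span the tangent space to $F_\beta'$.

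Next I would push the integral forward along $\ev_1$. By Proposition \ref{prop:IdentifyFixedLocusWithZ}, $\ev_1$ embeds $F_\beta'$ into $\bar{I}X(\theta)$; since $u$ factors through $X_R(\theta)$ (Remark \ref{rem:AssociatedQuasimapToX}) and, by Definition \ref{Def:DegreeDeterminesMonodromy}, through the sector labelled $\langle\beta\rangle$, the image is a closed substack of the proper stack $X_R(\theta)_{\langle\beta\rangle}\subseteq X_R^{\rig}(\theta)$. Thus $\sum_j\tilde{\gamma_j}\int_{[F_\beta']}\ev_1^*(\tilde{\gamma^j})\cup(\,\cdot\,)$ computes the pushforward $(\ev_1)_*(\,\cdot\,)$ read back through the Poincar\'e pairing of $X_R^{\rig}(\theta)$; because that pairing matches a sector with its inverse (the $\upsilon^*$ in its definition), the result is supported on the sector $1_{\langle-\beta\rangle}$. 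The fundamental-class pushforward $(\ev_1)_*[F_\beta']$ is the class in $X_R(\theta)_{\langle\beta\rangle}$ of the image, which by Lemma \ref{lem:ImageOfEv1} is cut out by the vanishing of exactly those coordinates $x_0$ (resp. $y_0$, $z_0$) with $x$ (resp. $y$, $z$) a superscript variable and $\beta_x-\beta_a\in\Z_{<0}$. This produces the factors $A_x,A_y,A_z$: in that case $A_x$ is the class of $\mathbb{V}(x_0)$, namely $D_{\rho_{x_0}}=D_x-D_a$; otherwise $A_x=1$, because if $\beta_x-\beta_a\notin\Z$ the vanishing of $x_0$ is already built into the sector and imposes nothing new, while $\beta_x<0$ would force $x$ to be a subscript variable when $m=1$.

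Combining the Euler class from Proposition \ref{prop:VirtualNormalBundle} (after the $(-\hbar)$ cancellation), the factor $A_xA_yA_z$, and the class $1_{\langle-\beta\rangle}$, and summing over all $\theta$-effective $\beta$, yields the stated formula. I expect the main obstacle to be the bookkeeping in the previous paragraph: passing cleanly from the non-proper $\bar{I}X(\theta)$ to its proper retract $X_R^{\rig}(\theta)$ as in Section \ref{sec:MirrorSetup}, tracking the inversion that produces $1_{\langle-\beta\rangle}$ rather than $1_{\langle\beta\rangle}$, and treating the range $0\le\beta_x<\beta_a$ consistently with Remark \ref{rem:BetaAGreaterThanBetaX} and the forthcoming Lemma \ref{lem:BetaAGreaterThanBetaX}. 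The localization computation itself, together with the explicit cohomology of $\O_{\P_{3,1}}(3\beta_\rho)$, is routine given Observations \ref{obs:UniversalCurveTrivial} and \ref{obs:VectorBundles} and Proposition \ref{prop:VirtualNormalBundle}.
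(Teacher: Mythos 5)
Your argument is correct and follows the same route as the paper's own proof. Both proceed by reading $\iota^R_*(I^\theta)$ as the $F_\beta'$-localization contributions to the graph-space integrals in \eqref{eqn:IFunctionIntegral}, cancelling the factor $-\hbar=\ev_1^*[\infty]|_{F_\beta'}$ against the leading $-\hbar$ of the Euler class in Proposition~\ref{prop:VirtualNormalBundle}, pushing forward along the embedding $\ev_1$ using Lemma~\ref{lem:ImageOfEv1} to express $(\ev_1)_*[F_\beta']$ as $A_xA_yA_z1_{\langle\beta\rangle}$, and then resolving the sum over the dual basis via the Poincar\'e pairing on the proper retract $X_R^{\rig}(\theta)$, with the built-in inversion $\upsilon^*$ turning $1_{\langle\beta\rangle}$ into $1_{\langle-\beta\rangle}$ while fixing the untwisted-sector classes $\alpha$ and $A_x,A_y,A_z$. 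The paper phrases the extraction via the projection formula rather than explicitly invoking Atiyah--Bott, but this is a cosmetic difference: the $F_\beta'$-contribution is by definition the localization term, and your appeal to Observation~\ref{obs:GraphSpaceProper} for properness is exactly the justification the paper itself uses for manipulating these integrals.
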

\begin{proof}
  Write $e_{\C^*}(N_{F_\beta'}^{\vir})=(-\hbar)\ev_1^*\alpha$ from
  \eqref{eqn:EulerClassNormalBundle}. The projection formula gives
  \begin{align*}
    \iota_*\left(\frac{1}{e_{\C^*}(N_{F_\beta'}^{\vir})}\cap[F_\beta']\right)=\frac{1}{-\alpha
    \hbar}\cap(\ev_1)_*[F_\beta']\in H_*(\bar{I}X(\theta)),
  \end{align*}
  so we have
  \begin{align*}
    \underline{Z}\sum_{\beta,j}q^{\beta-\beta_0(\theta,1)}\tilde{\gamma_j}\int_{[F_\beta']^{\vir}}\frac{\ev_1^*(\tilde{\gamma^j}\otimes[\infty])}{e^{\C^*}(N_{F_\beta'}^{\vir})}&=\underline{Z}\sum_{\beta,j}q^{\beta-\beta_0(\theta,1)}\tilde{\gamma_j}\int_{(\ev_1)_*[F_\beta']}\frac{(-\hbar)\tilde{\gamma^j}}{-\alpha \hbar}\\
                                                                                                                                                                   &=\sum_{\beta,j}q^{\beta-\beta_0(\theta,1)}(\underline{Z}\cup\tilde{\gamma_j})\int_{[X_R^{\rig}(\theta)]}\frac{\tilde{\gamma^j}}{\alpha}\cup
                                                                                                                                                                     (A_xA_yA_z1_{\langle\beta\rangle})\\
                                                                                                                                                                   &=\sum_{\beta}q^{\beta-\beta_0(\theta,1)}\upsilon^*\left(\frac{A_xA_yA_z1_{\langle\beta\rangle}}{\alpha}\right)\\
                                                                                                                                                                   &=\sum_\beta q^{\beta-\beta_0(\theta,1)}\frac{A_xA_yA_z1_{\langle-\beta\rangle}}{\alpha}.
  \end{align*}
  The second equality follows from Lemma \ref{lem:ImageOfEv1} and the
  last equality follows from the fact that $\alpha$ and $A_xA_yA_z$
  are classes on the untwisted sector of $\bar{I}X(\theta).$
\end{proof}
\begin{Def}
  Write
  \begin{align*}
    \iota_*I^\theta(q,\hbar)=
    \sum_\beta q^\beta I^\theta_\beta(\hbar).
  \end{align*}
  The \emph{small Givental
    $I^\theta$-function} $I^{\theta,\Giv}(q,\hbar)$ is defined to be
  \begin{align}\label{eqn:IGiv}
    I^{\theta,\Giv}(q,\hbar)=\sum_{\beta}q^{\beta+\frac{1}{\hbar}(H_x,H_y,H_z,H_a)}(-1)^{3\beta_x+3\beta_y+3\beta_z}I^\theta_\beta(\hbar).
  \end{align}
  By Section \ref{sec:ToricDivisors} we have $H_a=0$, and $H_x=0$ if
  $x$ is a subscript variable. (Similarly for $y$ and $z$.)
\end{Def}
In particular, Proposition \ref{prop:IFunctionCalculation} gives
\begin{align*}
  I^{\theta,\Giv}(q,\hbar)&=\sum_{\beta}q_x^{\beta_x+H_x/\hbar}q_y^{\beta_y+H_y/\hbar}q_z^{\beta_z+H_z/\hbar}q_a^{\beta_a}(-1)^{3\beta_x+3\beta_y+3\beta_z}\\
                      &\quad\quad\quad\quad\cdot\left(\frac{\prod_{\substack{\rho\in\mathbf{R}\\\beta_\rho<-1}}\prod_{\floor{\beta_\rho}+1\le\nu\le-1}(D_\rho+(\beta_\rho-\nu)\hbar)}{\prod_{\substack{\rho\in\mathbf{R}\\\beta_\rho\ge0}}\prod_{0\le\nu\le\ceil{\beta_\rho}-1}(D_\rho+(\beta_\rho-\nu)\hbar)}\right)A_xA_yA_z1_{{\langle\beta\rangle}}.
\end{align*}
\begin{rem}
  In \cite{CheongCiocanFontanineKim2015}, there is defined a \emph{big
    $I$-function} $\mathbb{I}(t,q,\hbar)$, also on the Lagrangian
  cone, where $t$ is restricted to the untwisted sectors of
  $\H(\theta).$ We may mimic their construction with no
  modification. $I^{\theta,\Giv}(q,\hbar)$ is obtained from the result
  by restricting $t$ further to only untwisted degree 2 classes,
  adding the factor $(-1)^{3\beta_x+3\beta_y+3\beta_z}$, and finally
  by identifying $q=e^t.$

  This identification seems mysterious, especially as the symbol
  $q^{H_x/\hbar}$ is otherwise meaningless. In fact, the identification
  arises naturally from the divisor equation in Gromov-Witten theory,
  see Remark 3.1.2 of \cite{ChiodoRuan2010}. It is an important part
  of the formal analytic continuation of Section
  \ref{sec:AnalyticContinuation}.

  The choice of sign in \eqref{eqn:IGiv} comes from
  \cite{ChangLi2011}, in which degree $d$ Gromov-Witten invariants of
  a quintic threefold \emph{with fields} (which play a similar role to
  the LG-quasimap invariants we use) differ from the usual
  Gromov-Witten invariants of the quintic threefold by the sign
  $(-1)^{5d+1}.$
\end{rem}

\section{Relating the generating functions of the different quotients}\label{sec:RelatingIFunctions}

In this section, we use a similar method to that in
\cite{ChiodoRuan2010} to relate the $I$-functions
$I^{\theta,\Giv}(q,\hbar)$ for various $\theta.$ The method involves
analytic continuation of the $\Gamma$-function, and to make sense of
this we set up some minor formalism.

\subsection{The $\Gamma$-function on $\C\times\C$}\label{sec:ExtendedGamma}
\begin{Def}
  For $s+\xi\in\C\times\C,$ we define the \emph{extended
    $\Gamma$-function} $\tilde{\Gamma}:\C\times\C\to\C[[\xi]]$ by
  \begin{align*}
    \tilde{\Gamma}(s+\xi):=\sum_{k=0}^\infty\frac{\Gamma^{(k)}(s)}{k!}\xi^k.
  \end{align*}
  Intuitively, we take $\xi$ to be an extremely small complex number.
\end{Def}
\begin{observation}
  It is an easy exercise that $\tilde{\Gamma}$ satisfies the
  functional equation
  $\tilde{\Gamma}(s+\xi)=(s-1+\xi)\tilde{\Gamma}(s-1+\xi).$ This
  agrees with the intuition that $s+\xi$ is a complex number ``near''
  $s$.
\end{observation}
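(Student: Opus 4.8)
The plan is to reduce the functional equation for $\tilde\Gamma$ to the classical identity $\Gamma(u)=(u-1)\Gamma(u-1)$ together with the Leibniz rule; no genuine obstacle is expected, as the manipulation is purely formal once those two ingredients are in hand. First I would differentiate the classical functional equation $k$ times. Since $u\mapsto u-1$ is affine, all of its derivatives of order $\ge 2$ vanish, so the Leibniz rule collapses to
\[
\Gamma^{(k)}(s)=\frac{d^k}{du^k}\Big|_{u=s}\big[(u-1)\Gamma(u-1)\big]=(s-1)\,\Gamma^{(k)}(s-1)+k\,\Gamma^{(k-1)}(s-1)
\]
for every $k\ge 0$, with the harmless convention $\Gamma^{(-1)}\equiv 0$.

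Next I would substitute this into the defining series $\tilde\Gamma(s+\xi)=\sum_{k\ge 0}\frac{\Gamma^{(k)}(s)}{k!}\xi^k$ and split it into two sums. The first sum is $(s-1)\sum_{k\ge 0}\frac{\Gamma^{(k)}(s-1)}{k!}\xi^k=(s-1)\,\tilde\Gamma((s-1)+\xi)$. The second sum, after reindexing $j=k-1$, is $\sum_{k\ge 1}\frac{\Gamma^{(k-1)}(s-1)}{(k-1)!}\xi^k=\xi\sum_{j\ge 0}\frac{\Gamma^{(j)}(s-1)}{j!}\xi^j=\xi\,\tilde\Gamma((s-1)+\xi)$. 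Adding the two pieces gives $((s-1)+\xi)\,\tilde\Gamma((s-1)+\xi)$, which is exactly the asserted identity in $\C[[\xi]]$.

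The only point meriting a word of care is the domain: the coefficients $\Gamma^{(k)}(s)$ and $\Gamma^{(k)}(s-1)$ must make sense, so one should either restrict to $s\notin\Z_{\le 1}$, or interpret these as Laurent coefficients of the meromorphic continuation of $\Gamma$, in which case the identical computation applies verbatim. Equivalently, one could phrase the argument invariantly: $\tilde\Gamma(s+\xi)$ is the formal Taylor expansion at $s$ of $\Gamma$, the map $u\mapsto u-1$ has exact Taylor expansion $(s-1)+\xi$ at $s$, and the Taylor expansion of a product is the product of Taylor expansions; combining this with the classical functional equation yields the claim. I would present the direct coefficient computation above, as it is the shortest.
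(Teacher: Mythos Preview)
Your proof is correct. The paper does not actually supply a proof of this observation --- it simply declares it ``an easy exercise'' --- so there is nothing to compare against; your direct computation via the Leibniz rule is exactly the sort of argument intended.
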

It is easy to extend this to $\C\times\C^n,$ and we get a map
$\tilde{\Gamma}$ to $\C[[\xi_1,\ldots,\xi_n]].$ In the next section,
we use the functional equation to rewrite $I^{\theta,\Giv}(t,q,\hbar)$
in terms of $\tilde{\Gamma}$. This will allow us to carry out (formal)
analytic continuation.
\begin{notation}
  Hereafter we drop the tilde from $\tilde{\Gamma}$.
\end{notation}

\subsection{Analytic continuation}\label{sec:AnalyticContinuation}
First, we note that the seeming inconsistency in Remark
\ref{rem:BetaAGreaterThanBetaX} can be conveniently ignored, as
follows.
\begin{lem}\label{lem:BetaAGreaterThanBetaX}
  For $\beta_x\ge0,$ the factor in $I^{\theta,\Giv}(t,q,\hbar)$ corresponding to
  $\rho_{x_0}=\hat{t_x}-\hat{t_a}$ is equal to
  \begin{align*}
    \hbar^{-\ceil{\beta_x-\beta_a}}\frac{\Gamma(\ev_1^*D_\rho/\hbar+\beta_x-\beta_a-\ceil{\beta_x-\beta_a}+1)}{\Gamma(1+\ev_1^*D_\rho/\hbar+\beta_x-\beta_a)}.
  \end{align*}
  In particular, this holds whether $\beta_x<\beta_a$ or
  $\beta_x\ge\beta_a.$
\end{lem}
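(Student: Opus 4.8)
The plan is to compare the two explicit expressions directly, using the functional equation for $\Gamma$ established in Section \ref{sec:ExtendedGamma}. First I would recall from Proposition \ref{prop:VirtualNormalBundle} and the definition of $I^{\theta,\Giv}$ that the factor attached to $\rho = \hat{t_x}-\hat{t_a}$ in $\iota_*I^{\theta,\Giv}$ is, when $\beta_x - \beta_a \geq 0$, the denominator contribution
\begin{align*}
  \frac{1}{\prod_{0\le\nu\le\ceil{\beta_x-\beta_a}-1}(\ev_1^*D_\rho + (\beta_x-\beta_a-\nu)\hbar)},
\end{align*}
and when $0 \le \beta_x < \beta_a$ it is instead the numerator contribution $\prod_{\floor{\beta_x-\beta_a}+1\le\nu\le-1}(\ev_1^*D_\rho + (\beta_x-\beta_a-\nu)\hbar)$ multiplied by the extra factor $A_x = D_x - D_a = \ev_1^*D_\rho$ coming from Proposition \ref{prop:IFunctionCalculation}. (Here I am writing $D_\rho = D_x - D_a$ for $\rho = \hat t_x - \hat t_a$; note $\ev_1^*D_a = 0$ since $a$ is always a superscript variable, so $\ev_1^*D_\rho$ equals the pullback of $H_x$ or $0$.) The claim is that both cases are uniformly given by the single formula in the statement.

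The key computation is to pull out powers of $\hbar$ and rewrite each finite product as a ratio of $\Gamma$-values. Write $w := \ev_1^*D_\rho/\hbar$ and $c := \beta_x - \beta_a$; then $\ev_1^*D_\rho + (c-\nu)\hbar = \hbar(w + c - \nu)$. For $c \ge 0$ the denominator product over $0 \le \nu \le \ceil c - 1$ becomes $\hbar^{\ceil c}\prod_{\nu=0}^{\ceil c - 1}(w + c - \nu) = \hbar^{\ceil c}\,\frac{\Gamma(w+c+1)}{\Gamma(w+c+1-\ceil c)}$ by repeated application of the functional equation $\Gamma(s+\xi) = (s-1+\xi)\Gamma(s-1+\xi)$ (applied with $w$ playing the role of the formal nilpotent $\xi$, $s$ ranging over the integers involved), so its reciprocal is $\hbar^{-\ceil c}\frac{\Gamma(w + c + 1 - \ceil c)}{\Gamma(1 + w + c)}$, which is exactly the stated expression since $c + 1 - \ceil c = c - \ceil c + 1$. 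For the regime $0 \le c < \beta_a$, i.e. $c < 0$ strictly (note $c \notin (0,\beta_a)$ unless $c < 0$, since $\beta_x \ge 0$; the borderline $c = 0$ falls under "otherwise" and is handled by the first case with $\ceil c = 0$), we have $\ceil c = \floor c + 1 \le 0$, and the numerator product over $\floor c + 1 \le \nu \le -1$ equals $\hbar^{-\floor c - 1}\prod_{\nu=\floor c+1}^{-1}(w+c-\nu)$; combining this with the factor $A_x = \hbar w$ (so $A_x = \ev_1^*D_\rho = \hbar w$) gives $\hbar^{-\floor c}\, w\prod_{\nu=\floor c + 1}^{-1}(w + c - \nu) = \hbar^{-\floor c}\prod_{\nu=\floor c+1}^{0}(w+c-\nu)$, and since $-\floor c = -\ceil c + 1$ when $c \notin \Z$ — wait, here $c = \beta_x - \beta_a \in \Z$ in this sub-case by the hypothesis $\beta_x - \beta_a \in \Z$ — one has $\floor c = \ceil c = c$, so this is $\hbar^{-c}\prod_{\nu=c+1}^{0}(w + c - \nu) = \hbar^{-c}\,\frac{\Gamma(w+1)}{\Gamma(w+c+1)} = \hbar^{-\ceil c}\frac{\Gamma(w + c - \ceil c + 1)}{\Gamma(1 + w + c)}$, again matching the statement.

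The main obstacle, and the place to be careful, is the bookkeeping at the boundary: reconciling the index ranges $\{0,\dots,\ceil c - 1\}$ versus $\{\floor c + 1,\dots,-1\}$ across the sign change of $c$, tracking exactly where the extra factor $A_x$ and the power-of-$\hbar$ normalization in \eqref{eqn:IGiv} enter, and confirming that the integrality hypothesis $\beta_x - \beta_a \in \Z$ (which forces $\floor c = \ceil c = c$ in the second regime) is precisely what makes the two piecewise formulas collapse to one. I would verify that the identity $\ceil c = \floor c + 1$ fails exactly when $c \in \Z$, and handle the $c \in \Z$, $c \ge 0$ case (where $\ceil c = c$ and the empty-product conventions must be checked) as part of the first regime. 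Once the index arithmetic is pinned down, the proof is just the repeated functional equation of $\Gamma$ plus the observation that $w = \ev_1^*D_\rho/\hbar$ is nilpotent so all the $\Gamma$-expressions are honest elements of $\C[[w]]$, i.e. polynomials in $\ev_1^*D_\rho/\hbar$, matching the original rational expression term by term.
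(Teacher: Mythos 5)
Your proposal follows essentially the same route as the paper: split into $\beta_x \ge \beta_a$ and $0 \le \beta_x < \beta_a$ (with $\beta_x-\beta_a\in\Z$), and in each case convert the finite product to a $\Gamma$-ratio via the functional equation, then observe the two resulting expressions coincide. The final formula you obtain matches the statement and the paper's own derivation.

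There is, however, one index slip in the $\beta_x < \beta_a$ case that is worth flagging because as written two errors cancel. You claim
\begin{align*}
  w\prod_{\nu=\floor{c}+1}^{-1}(w+c-\nu) \;=\; \prod_{\nu=\floor{c}+1}^{0}(w+c-\nu),
\end{align*}
which would require the $\nu=0$ factor to equal $w$; but $w+c-0 = w+c \ne w$ since $c<0$. The extra factor $A_x = \ev_1^*D_\rho = \hbar w$ should instead be absorbed as the $\nu = c$ term of the product (that factor is $\ev_1^*D_\rho + (c-c)\hbar = \ev_1^*D_\rho$, which makes sense precisely because $c\in\Z$ here), so the correct rewriting extends the product at the \emph{lower} end: $\prod_{\nu=c}^{-1}(w+c-\nu)$. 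Your next step then applies the $\Gamma$-identity as though the product were $\prod_{\nu=c}^{-1}$ (giving $\Gamma(w+1)/\Gamma(w+c+1)$), whereas $\prod_{\nu=c+1}^{0}(w+c-\nu)$ actually equals $\Gamma(w)/\Gamma(w+c)$. The two slips cancel, and you land on the correct final formula $\hbar^{-\ceil{c}}\Gamma(w+c-\ceil{c}+1)/\Gamma(1+w+c)$, which is exactly what the paper obtains. Apart from this bookkeeping error (and the somewhat garbled parenthetical about ``$c\notin(0,\beta_a)$''), the argument is the paper's argument.
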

\begin{proof}
  If $\beta_x\ge\beta_a,$ we have $A_x=1,$ and the corresponding
  factor in $I^{\theta,\Giv}(t,q,\hbar)$ is by definition
  \begin{align*}
    \frac{A_x}{\prod_{0\le\nu\le\ceil{\beta_x-\beta_a}-1}(\ev_1^*D_\rho+(\beta_x-\beta_a-\nu)\hbar)}&=\hbar^{-\ceil{\beta_x-\beta_a}}\frac{1}{\prod_{0\le\nu\le\ceil{\beta_x-\beta_a}-1}(\ev_1^*D_\rho/\hbar+\beta_x-\beta_a-\nu)}.
  \end{align*}
  Formally using
  $\prod_{0\le\nu\le
    k}(\alpha-\nu)=\frac{\Gamma(1+\alpha)}{\Gamma(\alpha-k)}$,
  we can write this as
  \begin{align*}
    \hbar^{-\ceil{\beta_x-\beta_a}}\frac{1}{\frac{\Gamma(1+\ev_1^*D_\rho/\hbar+\beta_x-\beta_a)}{\Gamma(\ev_1^*D_\rho/\hbar+\beta_x-\beta_a-\ceil{\beta_x-\beta_a}+1)}}=\hbar^{-\ceil{\beta_x-\beta_a}}\frac{\Gamma(\ev_1^*D_\rho/\hbar+\beta_x-\beta_a-\ceil{\beta_x-\beta_a}+1)}{\Gamma(1+\ev_1^*D_\rho/\hbar+\beta_x-\beta_a)}.
  \end{align*}
Meanwhile, if $\beta_x<\beta_a,$ we have $A_x=D_\rho$, and the
corresponding factor in $I^{\theta,\Giv}(t,q,\hbar)$ is
  \begin{align*}
    &\left(\prod_{\floor{\beta_x-\beta_a}+1\le\nu\le-1}(\ev_1^*D_\rho+(\beta_x-\beta_a-\nu)\hbar)\right)A_x\\
          &\quad=\prod_{\ceil{\beta_x-\beta_a}\le\nu\le-1}(\ev_1^*D_\rho+(\beta_x-\beta_a-\nu)\hbar)\\
          &\quad=\hbar^{-\ceil{\beta_x-\beta_a}}\frac{\Gamma(1+\ev_1^*D_\rho/\hbar+\beta_x-\beta_a-\ceil{\beta_x-\beta_a})}{\Gamma(\ev_1^*D_\rho/\hbar+\beta_x-\beta_a+1)}.
  \end{align*}
\end{proof}

Because of this observation, we no longer need to treat the case
$0\le\beta_x<\beta_a$ separately (and similarly for $y$ and $z$). We
now introduce notation making use of this. Let:
\begin{align*}
  I^x(\beta)&=q_x^{\beta_x+H_x/\hbar}(-1)^{3\beta_x}\frac{\left(\frac{\Gamma(1-3H_x/\hbar-1)}{\Gamma(-3H_x/\hbar-3\beta_x)}\right)}{\left(\frac{\Gamma(1+H_x/\hbar+\beta_x-\beta_a)}{\Gamma(H_x/\hbar-\langle{\beta_a}\rangle+1)}\right)\left(\frac{\Gamma(1+H_x/\hbar+\beta_x)}{\Gamma(H_x/\hbar+1)}\right)^2}\\
&\\
  I_x(\beta)&=q_x^{\beta_x}(-1)^{3\beta_x}\frac{\left(\frac{\Gamma(\langle\beta_x-\beta_a\rangle)}{\Gamma(\beta_x-\beta_a+1)}\right)\left(\frac{\Gamma(\langle\beta_x\rangle)}{\Gamma(\beta_x+1)}\right)^2}{\left(\frac{\Gamma(-3\beta_x)}{\Gamma(1)}\right)}.
\end{align*}
We similarly define $I^y(\beta),I_y(\beta),I^z(\beta),I_z(\beta).$
Finally we let
\begin{align*}
  I^a(\beta)=q_a^{\beta_a}\left(\frac{1}{\Gamma(1+3\beta_a)}\right).
\end{align*}
It is now straightforward to check (if one is very careful with
indices of products) that:
\begin{align*}
  I^{xyza}(t,q,\hbar)&=\sum_{\substack{\beta_x,\beta_y,\beta_z\in\Z_{\ge0}\\\beta_a\in\frac{1}{3}\Z_{\ge0}}}I^x(\beta)I^y(\beta)I^z(\beta)I^a(\beta)1_{\langle\beta\rangle}\\
  I^{xya}_z(t,q,\hbar)&=\sum_{\substack{\beta_x,\beta_y\in\Z_{\ge0}\\\beta_a\in\frac{1}{3}\Z_{\ge0}\\\beta_z\in\frac{1}{3}\Z_{<0}\setminus\Z_{<0}}}I^x(\beta)I^y(\beta)I_z(\beta)I^a(\beta)1_{\langle\beta\rangle}
\end{align*}
\begin{align*}
  I^{xa}_{yz}(t,q,\hbar)&=\sum_{\substack{\beta_x\in\Z_{\ge0}\\\beta_a\in\frac{1}{3}\Z_{\ge0}\\\beta_y,\beta_z\in\frac{1}{3}\Z_{<0}\setminus\Z_{<0}}}I^x(\beta)I_y(\beta)I_z(\beta)I^a(\beta)1_{\langle\beta\rangle}\\
  I_{xyz}^{a}(t,q,\hbar)&=\sum_{\substack{\beta_a\in\frac{1}{3}\Z_{\ge0}\\\beta_x,\beta_y,\beta_z\in\frac{1}{3}\Z_{<0}\setminus\Z_{<0}}}I_x(\beta)I_y(\beta)I_z(\beta)I^a(\beta)1_{\langle\beta\rangle}
\end{align*}
Essentially all we have done is to rewrite these functions formally
using the identity
$$\prod_{0\le\nu\le
  k}(\alpha-\nu)=\frac{\Gamma(1+\alpha)}{\Gamma(\alpha-k)},$$
and using Lemma \ref{lem:BetaAGreaterThanBetaX} in exceptional cases.

\begin{lem}
  Fix $\beta_y,$ $\beta_z,$ and $\beta_a$. Then
  \begin{align*}
    \sum_{\beta_x\in\Z_{\ge0}}I^x(\beta)1_{\langle\beta\rangle}
  \end{align*}
  analytically continues to
  \begin{align*}
    \sum_{\substack{\beta_x=\frac{1}{3}\Z_{<0}\\\beta_x\not\in\Z\\\beta_x-\beta_a\not\in\Z}}\frac{-2\pi i}{3(e^{2\pi
    i(\beta_x-H_x/\hbar)}-1)}\frac{\Gamma(H_x/\hbar-\langle{\beta_a}\rangle+1)\Gamma(H_x/\hbar+1)^2}{\Gamma(\langle\beta_x-\beta_a\rangle)\Gamma(\langle\beta_x\rangle)^2\Gamma(3H_x/\hbar+1)}I_x(\beta)1_{\langle\beta\rangle}
  \end{align*}
\end{lem}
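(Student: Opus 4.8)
The plan is to run the contour-integral argument of \cite{ChiodoRuan2010}. Fix $\beta_y,\beta_z,\beta_a$ and view the $\beta_x$-dependent summand as the value at $s=\beta_x$ of a function $I^x(s)$ of one complex variable, the sector class $1_{\langle\beta\rangle}$ being the one attached, as in Definition \ref{Def:DegreeDeterminesMonodromy}, to whichever point of $\tfrac13\Z$ the argument is evaluated at. Using the functional equation of $\Gamma$ from Section \ref{sec:ExtendedGamma} one first checks that $I^x(s)$ (its defining $\Gamma$-quotient, times $q_x^{s+H_x/\hbar}$) is entire in $s$, since every would-be pole sits inside a $1/\Gamma$; thus $I^x(s)$ is an entire family of elements of the completed coefficient ring, in which $H_x$ is nilpotent. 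The point is then to write
\begin{align*}
  \sum_{\beta_x\in\Z_{\ge0}}I^x(\beta)\,1_{\langle\beta\rangle}
  \;=\;\frac{1}{2\pi i}\int_{\gamma}K(s)\,ds,
\end{align*}
where $K(s)=\mathcal K(s)\,I^x(s)$, the kernel $\mathcal K(s)$ has simple poles exactly along $\tfrac13\Z$ (a $\sin(3\pi s)$-type or $(e^{2\pi i s}-1)$-type factor divided by $\Gamma$-factors that cancel the unwanted would-be poles over $\tfrac13\Z_{\ge 0}\setminus\Z$ and over $\Z_{<0}$), and $\gamma$ is a vertical contour separating $\Z_{\ge0}$ from the remaining poles; $\mathcal K$ is normalised so that $\Res_{s=n}K(s)=I^x(\beta)|_{\beta_x=n}$ for $n\in\Z_{\ge0}$, so that closing $\gamma$ to the right recovers the left-hand side.

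Next I would move the contour to the left. The arc at infinity is controlled by Stirling's estimate on the ratio of $\Gamma$-factors together with the factor $q_x^{s}$: for $q_x$ in an appropriate sector the integrand decays and the integral over the moved contour tends to zero. As always this is only needed coefficientwise in the formal parameters $\hbar^{-1}$, $H_x$ and the remaining $q$'s, so ``analytic continuation'' here means re-expanding, on the relevant punctured neighbourhood, a genuine analytic identity in $q_x$. What is left is $-\sum_{\beta_x}\Res_{s=\beta_x}K(s)$ over the target poles $\beta_x\in\tfrac13\Z_{<0}\setminus\Z$; the extra constraint $\beta_x-\beta_a\notin\Z$ comes for free, since when $\beta_x-\beta_a\in\Z$ a factor $1/\Gamma(\langle\beta_x-\beta_a\rangle)=1/\Gamma(0)$ appearing after simplification kills the residue --- the same mechanism that forced the exceptional factor $A_x$ and Lemma \ref{lem:BetaAGreaterThanBetaX} on the other chamber.

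Finally I would compute $\Res_{s=\beta_x}K(s)$ for $\beta_x\in\tfrac13\Z_{<0}\setminus\Z$ and match it to the claimed summand. The $\tfrac13\Z$-pole of $\mathcal K$ contributes the factor $\tfrac13$ and, evaluated at $s=\beta_x$, the scalar $\dfrac{-2\pi i}{e^{2\pi i(\beta_x-H_x/\hbar)}-1}$, the shift by $H_x/\hbar$ coming from $q_x^{H_x/\hbar}$ read through the extended $\Gamma$; meanwhile $I^x(s)$ at the now negative fractional $s=\beta_x$ is rewritten, using the functional equation, the reflection formula $\Gamma(w)\Gamma(1-w)=\pi/\sin(\pi w)$, and the Gauss multiplication formula for $\Gamma(3w)$, in terms of $I_x(\beta)$; tracking which linear factors $(D_\rho+(\beta_\rho-\nu)\hbar)$ have passed from numerator to denominator produces exactly the extra ratio $\dfrac{\Gamma(H_x/\hbar-\langle\beta_a\rangle+1)\,\Gamma(H_x/\hbar+1)^2}{\Gamma(\langle\beta_x-\beta_a\rangle)\,\Gamma(\langle\beta_x\rangle)^2\,\Gamma(3H_x/\hbar+1)}$. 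The main obstacle is this last piece of bookkeeping together with the rigorous treatment of the nilpotent class $H_x$: ``poles'' and ``residues'' of $K(s)$ must be made sense of for a function valued in a ring in which $H_x$ is not invertible, i.e.\ the entire deformation must be carried out inside the extended-$\Gamma$ formalism of Section \ref{sec:ExtendedGamma}, after checking that the reflection and multiplication formulas survive the extension; once that is set up, matching the residues is a long but mechanical product comparison.
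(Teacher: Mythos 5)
Your overall strategy is the paper's: this is the Chiodo--Ruan Mellin--Barnes argument, rewriting the series as a sum of residues of a Barnes-type integrand with kernel $\frac{1}{e^{2\pi i s}-1}$, invoking the classical convergence statement (the paper cites Bateman rather than a Stirling estimate, but that is the same fact) to pass to the complementary set of poles, and then computing residues with $\Gamma$-identities. The bookkeeping differs in one place that matters, and that is where the gap is.

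The genuine gap is the treatment of the poles at $s\in\Z_{<0}$. You propose to keep $I^x(s)$ entire and to build a kernel $\mathcal{K}(s)$ whose $\Gamma$-factors ``cancel the unwanted would-be poles over $\Z_{<0}$.'' This cannot be done for free: any extra $\Gamma$-factors inserted to kill poles at $\Z_{<0}$ also change the values (hence the residues) at the surviving poles in $\tfrac13\Z_{<0}$, so the resulting ``analytic continuation'' is no longer the canonical one determined by the Barnes integral, and the claimed right-hand side would have to be re-derived from scratch for your modified kernel. The paper instead first flips the $s$-dependent $\Gamma$'s into the numerator via $\frac{\Gamma(1+\alpha)}{\Gamma(\alpha-k)}=(-1)^{k+1}\frac{\Gamma(1-\alpha+k)}{\Gamma(-\alpha)}$, producing $\Gamma(1+3H_x/\hbar+3s)$ upstairs; the continuation then genuinely picks up residues both at $s\in\Z_{<0}$ and at $s\in -H_x/\hbar+\tfrac13\Z_{<0}$ (note the shift by $H_x/\hbar$, which is what produces the factor $e^{2\pi i(\beta_x-H_x/\hbar)}-1$ --- it comes from the pole locations of this $\Gamma$, not from $q_x^{H_x/\hbar}$). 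The residues at $\Z_{<0}$ do \emph{not} vanish as complex numbers; they vanish only after cupping with $1_{\langle\beta\rangle}$, by a case analysis showing the integrand has a zero of order $1$ or $2$ at $H_x/\hbar=0$, combined with $H_x^2=0$ and the fact (Lemma \ref{lem:ImageOfEv1}) that $H_x$ restricts to zero on the relevant sector. This geometric/nilpotency input is absent from your proposal and cannot be replaced by a choice of kernel; without it the continuation acquires spurious terms supported on $\Z_{<0}$. The rest of your outline (residue of $\Gamma(1+3s)$ at $\tfrac13\Z_{<0}$, vanishing when $\beta_x\in\Z$ or $\beta_x-\beta_a\in\Z$ by numerator--denominator pole cancellation, and the final product comparison in the extended-$\Gamma$ formalism) matches the paper.
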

\begin{proof}
  We have
  \begin{align*}
    \sum_{\beta_x\in\Z_{\ge0}}I^x(\beta)1_{\langle\beta\rangle}&=\sum_{\beta_x\in\Z_{\ge0}}q_x^{\beta_x+H_x/\hbar}(-1)^{3\beta_x}\frac{\left(\frac{\Gamma(1-3H_x/\hbar-1)}{\Gamma(-3H_x/\hbar-3\beta_x)}\right)}{\left(\frac{\Gamma(1+H_x/\hbar+\beta_x-\beta_a)}{\Gamma(H_x/\hbar-\langle{\beta_a}\rangle+1)}\right)\left(\frac{\Gamma(1+H_x/\hbar+\beta_x)}{\Gamma(H_x/\hbar+1)}\right)^2}1_{\langle\beta\rangle}
  \end{align*}
  Using that fact
  that
  $$\frac{\Gamma(1+\alpha)}{\Gamma(\alpha-k)}=(-1)^{k+1}\frac{\Gamma(1-\alpha+k)}{\Gamma(-\alpha)},$$
  which can be easily deduced from the function equation, we can write
  \begin{align*}
    (-1)^{3\beta_x}\left(\frac{\Gamma(1-3H_x/\hbar-1)}{\Gamma(-3H_x/\hbar-3\beta_x)}\right)=\frac{\Gamma(1+3H_x/\hbar+3\beta_x)}{\Gamma(3H_x/\hbar+1)}
  \end{align*}
  Now we have
\begin{align*}
    \sum_{\beta_x\in\Z_{\ge0}}I^x(\beta)1_{\langle\beta\rangle}&=\sum_{\beta_x\in\Z_{\ge0}}q_x^{\beta_x+H_x/\hbar}\frac{\Gamma(1+3H_x/\hbar+3\beta_x)}{\Gamma(1+H_x/\hbar+\beta_x-\beta_a)\Gamma(1+H_x/\hbar+\beta_x)^2}\Phi(\langle{\beta_a}\rangle)1_{\langle\beta\rangle},
  \end{align*}
  where
  \begin{align*}
    \Phi(\langle{\beta_a}\rangle)=\frac{\Gamma(H_x/\hbar-\langle{\beta_a}\rangle+1)\Gamma(H_x/\hbar+1)^2}{\Gamma(3H_x/\hbar+1)}.
  \end{align*}
  We rewrite this last expression using residues:
  \begin{align}\label{eqn:ResiduesAtPositiveIntegers}
    \sum_{\beta_x\ge0}2\pi i\Res_{s=\beta_x}\left(\frac{q^{s+H_x/\hbar}}{e^{2\pi
    is}-1}\frac{\Gamma(1+3H_x/\hbar+3s)}{\Gamma(1+H_x/\hbar+(s-\beta_a))\Gamma(1+H_x/\hbar+s)^2}\right)\Phi(\langle{\beta_a}\rangle)1_{\langle\beta\rangle}.
  \end{align}
  The expression in the large parentheses may be thought of as having
  simple poles at $s\in\Z$ and at $s+H_x/\hbar\in\frac{1}{3}\Z_{<0}.$
  (These loci are unions copies of $\C$ inside $\C^2$. The sum of
  residues can therefore be written as the integral
  \begin{align*}
    \left(\int_{-i\infty}^{i\infty}\frac{q^{s+H_x/\hbar}}{e^{2\pi
    is}-1}\frac{\Gamma(1+3H_x/\hbar+3s)}{\Gamma(1+H_x/\hbar+(s-\beta_a))\Gamma(1+H_x/\hbar+s)^2}ds\right)\Phi(\langle{\beta_a}\rangle)1_{\langle\beta\rangle},
  \end{align*}
  along a contour in $\C^2$ such that the poles $s\in\Z_{<0}$ and
  $s+H_x/\hbar\in\frac{1}{3}\Z_{<0}$ are on one side, and the poles
  $s\in\Z_{\ge0}$ are on the other side. For simplicity, the contour
  may be chosen inside a slice $\C\times\{H'\},$ i.e. we may work with
  a contour integral in $\C$.

  Integrals of this form have been well-studied for a long time, see
  page 49 of \cite{Bateman1953}. From there we see that the integral
  converges to \eqref{eqn:ResiduesAtPositiveIntegers} when
  $q\not\in\R_{<0}$ and $\abs{q}<3^3$. When $q\not\in\R_{<0}$ and
  $\abs{q}>3^3,$ the integral converges to the sum over the remaining
  poles\footnote{Lemma 3.3 of \cite{Horja1999} instead puts $3^{-3}$ as
    the boundary, and \cite{ChiodoRuan2010} agrees.}.

  First we consider the poles $s\in\Z_{<0}$. Consider the expression
  \begin{align}\label{eqn:PolesAtIntegers}
    \frac{\Gamma(1+3H_x/\hbar+3s)}{\Gamma(1+H_x/\hbar+s-\beta_a)\Gamma(1+H_x/\hbar+s)^2}
  \end{align}
  as a function of $H_x/\hbar,$ treating $H_x/\hbar$ as a (small) complex
  number, and $s$ as a fixed negative integer. At $H_x/\hbar=0,$ the expression
  \eqref{eqn:PolesAtIntegers}
  \begin{itemize}
  \item has a zero of order 1 if $\langle\beta_a\rangle\ne0$ (a pole
    from the numerator and two zeroes from the denominator),
  \item has a zero of order 1 if $\beta_a\ge s$, and
  \item has a zero of order 2 if
    $\langle\beta_a\rangle=0$ and $\beta_a<s$.
  \end{itemize}
  In the first and second cases, $H_x$ restricts to zero on $F_\beta$,
  see Lemma \ref{lem:ImageOfEv1}. In the last case, we know that
  $H_x^2=0$, since $H_x$ is the hyperplane class on the 1-dimensional
  space $E$. Together, these say that the residues at the
  poles $s\in\Z_{<0}$ vanish when multiplied by
  $1_{\langle\beta\rangle}.$

  Thus the analytic continuation is the sum
  \begin{align}
    &\sum_{\tilde{\beta_x}\in\frac{H_x}{\hbar}+\frac{1}{3}\Z_{<0}}2\pi i\Res_{s=\tilde{\beta_x}}\left(\frac{q^{s+H_x/\hbar}}{e^{2\pi
          is}-1}\frac{\Gamma(1+3H_x/\hbar+3s)}{\Gamma(1+H_x/\hbar+(s-\beta_a))\Gamma(1+H_x/\hbar+s)^2}\right)\Phi(\langle{\beta_a}\rangle)1_{\langle\beta^{\old}\rangle}\nonumber
   \\
    &=\sum_{\beta_x\in\frac{1}{3}\Z_{<0}}2\pi i\Res_{s=\beta_x}\left(\frac{q^{s}}{e^{2\pi
          i(s-H_x/\hbar)}-1}\frac{\Gamma(1+3s)}{\Gamma(1+(s-\beta_a))\Gamma(1+s)^2}\right)\Phi(\langle{\beta_a}\rangle)1_{\langle\beta^{\old}\rangle}.\label{eqn:ResiduesAtH}
  \end{align}
  Here $e^{2\pi i(s-H_x/\hbar)}$ should be interpreted as in Section
  \ref{sec:ExtendedGamma}, via its expansion at $H_x/\hbar=0.$
  \begin{rem}
    We write $\beta^{\old}$ rather than $\beta$ to emphasize that it
    has not changed and in particular is independent of
    $\langle\beta_x\rangle$ (as it has been all along --- $\beta_x$
    has been an integer). Later in this section we will use
    $1_{\langle{\beta}\rangle}$ to refer to an element of $\H(\theta')$,
    where $\theta'$ is obtained by changing $x$ from a superscript
    variable to a subscript variable.
  \end{rem}
  What remains is to calculate the residues. When $\beta_x\in\Z$ or
  $\beta_x-\beta_a\in\Z,$ the residue in \eqref{eqn:ResiduesAtH}
  vanishes because the simple pole in $\Gamma(1+3s)$ is canceled by
  the poles in $\Gamma(1+(s-\beta_a)),$ or $\Gamma(1+s)$. The residue
  of $\Gamma(1+3s)$ at $\beta_x\in\frac{1}{3}\Z_{<0}$ is
  \begin{align*}
    \frac{(-1)^{3\beta_x+1}}{3\Gamma(-3\beta_x)}.
  \end{align*}
  Thus we rewrite \eqref{eqn:ResiduesAtH} as
  \begin{align*}
    \sum_{\substack{\beta_x\in\frac{1}{3}\Z_{<0}\\\beta_x\not\in\Z\\\beta_x-\beta_a\not\in\Z}}2\pi i\left(\frac{q^{\beta_x}}{e^{2\pi
          i(\beta_x-H_x/\hbar)}-1}\frac{\left(\frac{(-1)^{3\beta_x+1}}{3\Gamma(-3\beta_x)}\right)}{\Gamma(1+(\beta_x-\beta_a))\Gamma(1+\beta_x)^2}\right)\Phi(\langle{\beta_a}\rangle)1_{\langle\beta^{\old}\rangle}.
  \end{align*}
  Rearranging slightly gives
  \begin{align}\label{eqn:RewriteWithIX}
    \sum_{\substack{\beta_x\in\frac{1}{3}\Z_{<0}\\\beta_x\not\in\Z\\\beta_x-\beta_a\not\in\Z}}&\frac{2\pi
    iq^{\beta_x}(-1)^{3\beta_x+1}}{3(e^{2\pi
    i(\beta_x-H_x/\hbar)}-1)}\frac{\frac{\Gamma(\langle\beta_x-\beta_a\rangle)}{\Gamma(\beta_x-\beta_a+1)}\left(\frac{\Gamma(\langle\beta_x\rangle)}{\Gamma(\beta_x+1)}\right)^2}{\left(\frac{\Gamma(-3\beta_x)}{\Gamma(1)}\right)\Gamma(\langle\beta_x-\beta_a\rangle)\Gamma(\langle\beta_x\rangle)^2}\Phi(\langle{\beta_a}\rangle)1_{\langle\beta^{\old}\rangle}\\
 &=\sum_{\substack{\beta_x\in\frac{1}{3}\Z_{<0}\\\beta_x\not\in\Z\\\beta_x-\beta_a\not\in\Z}}\frac{-2\pi i}{3(e^{2\pi
    i(\beta_x-H_x/\hbar)}-1)}\frac{\Phi(\langle{\beta_a}\rangle)}{\Gamma(\langle\beta_x-\beta_a\rangle)\Gamma(\langle\beta_x\rangle)^2}I_x(\beta)1_{\langle\beta^{\old}\rangle}.\nonumber
  \end{align}
\end{proof}
We now use Section \ref{sec:ExtendedGamma} to expand the factors. That
is, again using $H_x^2=0$, we have
\begin{align*}
  \frac{1}{e^{2\pi i(\beta_x-H_x/\hbar)}-1}=\frac{1}{e^{2\pi
  i\beta_x}-1}+\frac{2\pi ie^{2\pi
  i\beta_x}}{(e^{2\pi i\beta_x}-1)^2}H_x/\hbar
\end{align*}
and
\begin{align*}
  \Phi(\langle\beta_a\rangle)=\Gamma(1-\langle\beta_a\rangle)+\Gamma(1-\langle\beta_a\rangle)(\mathfrak{h}_{-\langle\beta_a\rangle})H_x/\hbar,
\end{align*}
where $\mathfrak{h}_0=0$,
$\mathfrak{h}_{-1/3}=\frac{\pi}{2\sqrt{3}}-\frac{3\log3}{2},$ and
$\mathfrak{h}_{-1/3}=-\frac{\pi}{2\sqrt{3}}-\frac{3\log3}{2}.$
Finally we write \eqref{eqn:RewriteWithIX} as:
\begin{align*}
  \sum_{\substack{\beta_x\in\frac{1}{3}\Z_{<0}\\\beta_x\not\in\Z\\\beta_x-\beta_a\not\in\Z}}&\frac{-2\pi
                                                                                              i}{3\Gamma(\langle\beta_x-\beta_a\rangle)\Gamma(\langle\beta_x\rangle)^2}\cdot\\
                                                                                            &\cdot\left(\frac{\Gamma(1-\langle\beta_a\rangle)}{e^{2\pi
                                                                                              i\beta_x}-1}+\left(\frac{2\pi ie^{2\pi
                                                                                              i\beta_x}\Gamma(1-\langle\beta_a\rangle)}{(e^{2\pi
                                                                                              i\beta_x}-1)^2}+\frac{\Gamma(1-\langle\beta_a\rangle)(\mathfrak{h}_{-\langle\beta_a\rangle})}{e^{2\pi
    i\beta_x}-1}\right)\frac{H_x}{\hbar}\right)I_x(\beta)1_{\langle\beta^{\old}\rangle}.
\end{align*}
What remains is to make the identification in Section
\ref{sec:CompactTypeStateSpace}. Namely, write
$1_{\langle\beta^{\old}\rangle}=(1\otimes\alpha_y\otimes\gamma_z)_g\in\H(\theta),$
and define
\begin{align*}
  1_{\langle\beta^{\old}\rangle}\mapsto1_{\zeta,\langle\beta^{\old}\rangle}:&=(1_\zeta\otimes\alpha_y\otimes\gamma_z)_g\in\H(\theta')\\
  H_x1_{\langle\beta^{\old}\rangle}\mapsto1_{\zeta^2,\langle\beta^{\old}\rangle}:&=(1_{\zeta^2}\otimes\alpha_y\otimes\gamma_z)_g\in\H(\theta').
\end{align*}
Thus the coefficients
\begin{align*}
  \frac{-2\pi
  i}{3\Gamma(\langle\beta_x-\beta_a\rangle)\Gamma(\langle\beta_x\rangle)^2}\cdot\frac{\Gamma(1-\langle\beta_a\rangle)}{e^{2\pi
  i\beta_x}-1}
\end{align*}
and 
\begin{align*}
  \frac{-2\pi i}{3\Gamma(\langle\beta_x-\beta_a\rangle)\Gamma(\langle\beta_x\rangle)^2}\cdot\left(\frac{2\pi ie^{2\pi i\beta_x}\Gamma(1-\langle\beta_a\rangle)}{(e^{2\pi i\beta_x}-1)^2}+\frac{\Gamma(1-\langle\beta_a\rangle)(\mathfrak{h}_{-\langle\beta_a\rangle})}{e^{2\pi i\beta_x}-1}\right)
\end{align*}
define an isomorphism $\H(\theta)\to\H(\theta')$. We have proved:
\begin{thm}[LG/CY correspondence]\label{thm:LGCY}
  This isomorphism identifies the analytically continued $I$-function
  $I^{\theta,\Giv}(q,\hbar)$ with $I^{\theta',\Giv}(q,\hbar).$
\end{thm}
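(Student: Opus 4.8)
The plan is to reduce to a single ``wall-crossing'' step and then iterate. Since any two characters in $\Theta$ are joined by a chain in which each link moves exactly one of $x$, $y$, $z$ from the superscript to the subscript, and since by the symmetry of $x$, $y$, $z$ (Remark \ref{rem:Interpolation}) we may always take the moved variable to be $x$, it suffices to prove the identity for a pair $\theta,\theta'$ with $\theta'$ obtained from $\theta$ by moving $x$ to the subscript. For this I would use the factored expressions for $I^{\theta,\Giv}$ and $I^{\theta',\Giv}$ established above: both are sums over $\beta$ of products of $I^x(\beta)$ or $I_x(\beta)$, times $I^y(\beta)$ or $I_y(\beta)$, times $I^z(\beta)$ or $I_z(\beta)$, times $I^a(\beta)$, times $1_{\langle\beta\rangle}$, and the $y$-, $z$-, and $a$-factors together with their summation ranges are literally identical on the two sides. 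So the whole content is the comparison of the $x$-factors: $\sum_{\beta_x\in\Z_{\ge 0}}I^x(\beta)\,1_{\langle\beta\rangle}$ on the $\theta$-side versus $\sum_{\beta_x\in\frac13\Z_{<0}\setminus\Z}I_x(\beta)\,1_{\langle\beta\rangle}$ on the $\theta'$-side, with all other data held fixed.

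First I would fix $\beta_y,\beta_z,\beta_a$ (hence also $\alpha_y$, $\gamma_z$, and the group element $g$ entering $1_{\langle\beta\rangle}$) and apply the analytic-continuation lemma to the inner sum $\sum_{\beta_x\ge 0}I^x(\beta)\,1_{\langle\beta\rangle}$. This is legitimate because $I^x(\beta)$ depends only on $\beta_x$ and $\beta_a$, so it is unaffected by the data held fixed, and because the continuation is realized through the common contour-integral representation, valid (with $q_x\notin\R_{<0}$) for $\abs{q_x}<3^3$ on the positive-integer side and for $\abs{q_x}>3^3$ on the side of the poles with $\beta_x\in\frac13\Z_{<0}$; the $y$-, $z$-, and $a$-factors are untouched. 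As recorded in the proof of that lemma, the residues at the poles $\beta_x\in\Z_{<0}$ vanish against $1_{\langle\beta\rangle}$ (using $H_x|_{F_\beta}=0$, Lemma \ref{lem:ImageOfEv1}, or $H_x^2=0$ on the one-dimensional $E$), leaving precisely the sum over $\beta_x\in\frac13\Z_{<0}$ with $\beta_x\notin\Z$ and $\beta_x-\beta_a\notin\Z$, with the displayed coefficient times $I_x(\beta)\,1_{\langle\beta\rangle}$.

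Next I would expand that coefficient in powers of $H_x/\hbar$, using $H_x^2=0$ together with the extended-$\Gamma$ formalism of Section \ref{sec:ExtendedGamma}, exactly as in the displayed expansions of $1/(e^{2\pi i(\beta_x-H_x/\hbar)}-1)$ and of $\Phi(\langle\beta_a\rangle)$; this writes the coefficient as a scalar part plus an $H_x$-part, the two scalars being the ones named in the paragraph preceding the theorem. Applying the identification of Section \ref{sec:CompactTypeStateSpace}, namely $1_{\langle\beta^{\old}\rangle}\mapsto 1_{\zeta,\langle\beta^{\old}\rangle}$ and $H_x\,1_{\langle\beta^{\old}\rangle}\mapsto 1_{\zeta^2,\langle\beta^{\old}\rangle}$, then converts the continued $x$-factor into those two scalars applied to the narrow sectors $1_{\zeta,\langle\beta\rangle}$ and $1_{\zeta^2,\langle\beta\rangle}$ of $\H(\theta')$, each multiplied by $I_x(\beta)$; restoring the unchanged factors $I^y(\beta)I^z(\beta)I^a(\beta)$ and summing over all $\beta$ yields $I^{\theta',\Giv}(q,\hbar)$, by the very construction of the isomorphism $\H(\theta)\to\H(\theta')$ in the statement (the one assembled from those two scalars). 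Iterating — at the next step applying the same lemma with the remaining superscript variable in the role of $x$, which is valid since each factor depends only on its own degree and on $\beta_a$ — and invoking the symmetry of $x$, $y$, $z$ then completes the proof for arbitrary $\theta,\theta'\in\Theta$.

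The only place requiring genuine care, and hence the expected main obstacle, is the bookkeeping in the last step: checking that the scalars produced by the residue computation and the extended-$\Gamma$ expansion, once multiplied by $I_x(\beta)$, reassemble into exactly the coefficient of the correct narrow sector ($1_{\zeta,\langle\beta\rangle}$ or $1_{\zeta^2,\langle\beta\rangle}$) in $I^{\theta',\Giv}$, with the right powers of $\hbar$ and the right sign $(-1)^{3\beta_x}$; and, when several variables are moved in succession, verifying that the analytic continuations in the different variables decouple, so that the order in which they are performed is immaterial. Both points follow structurally from the fact that $I^x(\beta)$ and $I_x(\beta)$ involve only $\beta_x$ and $\beta_a$ and that $H_x^2=H_y^2=H_z^2=0$, which force all cross-terms to vanish, but carrying this out is still a somewhat delicate index chase.
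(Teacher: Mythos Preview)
Your proposal is correct and follows essentially the same route as the paper. The paper's proof is precisely the sequence of computations preceding the theorem statement: factor $I^{\theta,\Giv}$ into the pieces $I^x,I^y,I^z,I^a$, apply the analytic-continuation lemma to the $x$-factor with the remaining degrees fixed, expand the resulting coefficient to first order in $H_x/\hbar$ using $H_x^2=0$, and then read off the two scalars as the matrix entries of the state-space identification $1_{\langle\beta^{\old}\rangle}\mapsto 1_{\zeta,\langle\beta^{\old}\rangle}$, $H_x\,1_{\langle\beta^{\old}\rangle}\mapsto 1_{\zeta^2,\langle\beta^{\old}\rangle}$; the theorem is then announced with ``We have proved:''. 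Your remarks about iterating over successive wall-crossings and about the decoupling of the continuations in different variables are accurate elaborations (the paper treats only the single step explicitly and leaves the composition implicit via the symmetry of $x,y,z$).
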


\section{Notation Table}
\label{sec:NotationTable}
\begin{longtable}{p{5cm}|p{12cm}}
  Notation&Description\\\hline
  $1_\theta$&Generator of $\H^0(\theta)$\\
  $1_g$&The fundamental class of $g$ twisted sectors of $X$\\
  $1_{\langle\beta\rangle}$&Fundamental class of certain twisted
  sectors, Section \ref{sec:CalculatingI}\\
  $a$&Coordinates on $V$\\
  $b_i$&Marked point on $C$\\
  $BG$&Stack of principal $G$-bundles, i.e. $BG=[\Spec\C/G]$\\
  $B_0,B_\infty$&Sets of marked points of G-graph quasimap over $0,\infty\in\P^1$\\
  $\beta$&Shorthand for $(\beta_x,\beta_y,\beta_z,\beta_a)$\\
  $\beta(P)$&Degree of the basepoint of $\sigma$ at $P$, Section \ref{sec:ModuliSpaces}\\
  $\beta^0,\beta^\infty$&Degrees of LG-graph quasimap supported over $0,\infty\in\P^1$\\
  $\beta_0(\theta,m)$&Extremal degree of $m$-marked LG-quasimaps to $X(\theta)$\\
  $\beta_\rho$&Degree of $\mathcal{L}_\rho$\\
  $\beta_x,\beta_y,\beta_z,\beta_a,\beta_R$&Degrees of $\mathcal{L}_x,\mathcal{L}_y,\mathcal{L}_z,\mathcal{L}_a,\mathcal{L}_R$\\
  $\bullet,\check\bullet$&Points of $\hat{C}$ mapping to $0,\infty\in\P^1$\\
  $C$&$m$-marked genus zero twisted curve\\
  $C_0,C_\infty$&Components of a graph quasimap over $0,\infty\in\P^1$\\
  $U_\beta'$&Universal curve over $F_\beta'$\\
  $\C^*_R$&Group acting on $V$\\
  $\C_\rho$&The representation associated to a character $\rho$\\
  $\Crit(W)$&Critical locus of $W$ in $V$\\
  $\hat{C}$&Parametrized component of a graph quasimap\\
  $d_P$&The order of a point $P$ on $C$\\
  $D_\rho$&Toric divisor on $X(\theta)$\\
  $\ev_i$&Evaluation maps to $Z(\theta)$ or $X(\theta)$\\
  $E$&Elliptic curve in $\P^2$\\
  $\mathcal{E}$&$\mathcal{P}\times_{(G\times\C^*_R)}V$\\
  $\epsilon$&Stability parameter, in $\Q_{>0}$\\
  $F_\beta,F_\beta'$&Special components of $\C^*$-fixed LG-graph quasimaps to $Z(\theta),X(\theta)$\\
  $F_{B_0,\beta^0}^{B_\infty,\beta^\infty}$&$\C^*$-fixed LG-graph quasimaps inducing partitions $B_0\sqcup B_\infty$ and $\beta^0+\beta^\infty$\\
  $G$&$(\C^*)^4$\\
  $\{\gamma_j\},\{\gamma^j\}$&Basis and dual basis for $\H(\theta)$\\
  $H$&Class $[L_0]=[L']\in H^2(\P^2/\mu_3)$\\
  $H_x,H_y,H_z$&Divisor classes on $[E^3/\mu_3]$\\
  $\H(\theta)$&Compact type state space associated to $\theta$\\
  $\hbar$&Generator of $\C^*$-equivariant cohomology ring of a point\\
  $I\mathcal{X}$&(Nonrigidified) inertia stack of $\mathcal{X}$\\
  $\bar{I}\mathcal{X}$&Rigidified inertia stack of $\mathcal{X}$\\
  $IX^{\nar},\bar{I}X^{\nar}$&Narrow components of $IX,\bar{I}X$\\
  $I^\theta(q,\hbar)$&$I$-function\\
  $\iota$&Embedding $Z\into X$ or $Z(\theta)\into X(\theta)$\\
  $\kappa$&Isomorphism $\mathcal{L}_R\to\omega_{C,\log}$\\
  $\ell^\sigma(P)$&Length of $\sigma$ at $P$\\
  $L_0,L'$&Certain lines in $[\P^2/\mu_3]$\\
  $L_\rho$&Line bundle on $X(\theta)$ (resp. $[X(\theta)/\C^*_R]$) corresponding to character $\rho$ of $G$ (resp. $G\times\C^*_R$)\\
  $\LGQ_{0,m}^\epsilon(X(\theta),\beta),$  $\LGQ_{0,m}^\epsilon(Z(\theta),\beta)$&Stack of $\epsilon$-stable genus zero $m$-marked LG-quasimaps to $X(\theta)$ (resp, $Z(\theta)$) of degree $\beta$\\
  $\LGQ_{0,m}^\epsilon(X(\theta),\beta),$ $\LGQ_{0,m}^\epsilon(Z(\theta),\beta)$&Stack of LG-graph quasimaps to $X(\theta)$ (resp. $Z(\theta)$)\\
  $\mathcal{L}_\rho$&$u^*(L_\rho)$\\
  $\mathcal{L}_x,\mathcal{L}_y,\mathcal{L}_z,\mathcal{L}_a,\mathcal{L}_R$&Line bundles on $C$ built from $\mathcal{P}$\\
  $\mult_P(\mathcal{L})$&The multiplicity (monodromy) of $\mathcal{L}$ at $P$\\
  $m$&Number of marked points on $C$\\
  $\mu,\nu$&$T$-fixed points of $\bar{I}X(\theta)$\\
  $\mu_d$&The group of $d$th roots of unity in $\C^*$\\
  $N^{\vir}$&Virtual normal bundle\\
  $p_x,p_y,p_z$&Coordinates on $V$\\
  $P$&Class $[P_0]=[P']\in H^4(\P^2/\mu_3)$\\
  $P_0,P'$&Certain points in $[\P^2/\mu_3]$\\
  $\P_{3,1}$&$\P^1$ with an order 3 orbifold point at $[\infty]$\\
  $\mathcal{P}$&Principal $G\times\C^*_R$-bundle\\
  $\pi$&Map from universal curve to moduli stack\\
  $q,q_x,q_y,q_z,q_a$&Formal parameters keeping track of $\beta,\beta_x,\beta_y,\beta_z,\beta_a$\\
  $\mathbf{R}$&$\{\rho_{x_0},\ldots,\rho_{p_z}\}$\\
  $\rho_{x_0},\ldots,\rho_{p_z}$&Characters of $G\times\C^*_R$, which define $V$ as a direct sum\\
  $s$&Complexification of $\beta_x$\\
  $s,t$&Coordinates on $\P_{3,1}$\\
  $\sigma$&Section of $\mathcal{E}$\\
  $\sigma_{x_0},\ldots,\sigma_{p_z}$&Components of $\sigma$, sections of $\mathcal{L}_\rho$ for $\rho\in\mathbf{R}$\\
  $(t_x,t_y,t_z,t_a)$&Element of $G$\\
  $\hat{t_x},\hat{t_y},\hat{t_z},\hat{t_a},\hat{t_R}$&Characters $(t_x,t_y,t_z,t_a,t_R)\mapsto t_x$, etc., of $G\times\C^*_R$\\
  $t$&Coordinates of cohomology ring\\
  $t_R$&Element of $\C^*_R$\\
  $T$&$(\C^*)^{13}$\\
  $\tau$&Parametrization map $C\to\P^1$\\
  $\theta,\theta^{xyza},\ldots,\theta_{xyz}^a$&GIT characters of $G$\\
  $\theta,\theta^{xyza},\ldots,\theta_{xyz}^a$&Lifts of $\theta,\theta^{xyza},\ldots,\theta_{xyz}^a$ to $G\times\C^*_R$\\
  $\Theta$&$\{\theta^{xyza},\theta_z^{xya},\theta_{yz}^{xa},\theta_{xyz}^a\}$\\
  $u$&Map $C\to[V/(G\times\C^*_R)]$\\
  $V$&$\C^{13}$\\
  $V^{ss}(\theta)$&$\theta$-semistable locus of $V$\\
  $V^{uns}(\theta)$&$\theta$-unstable locus of $V$\\
  $[V\sslash_\theta G]$&The GIT stack quotient $[V^{ss}(\theta)/G]$\\
  $w(\mu,\nu)$&Tangent weight at $\mu$ along curve from $\mu$ to $\nu$\\
  $W$&Function $p_x(ax_0^3+x_1^3+x_2^3)+p_y(ay_0^3+y_1^3+y_2^3)+p_z(az_0^3+z_1^3+z_2^3)$\\
  $\omega_{C,\log}$&Log canonical bundle of $C$\\
  $x_0,x_1,x_2$&Coordinates on $V$\\
  $X$&$[V/G]$\\
  $X(\theta)$&$[V\sslash_{\theta}G]$\\
  $X(\theta)_{\langle\beta\rangle}$&Component of $\bar{I}X(\theta)$,
  Section \ref{sec:CalculatingI}\\
  $X_R(\theta)$&Points $P\in[X(\theta)/\C^*_R]$ with $\C^*_R\subseteq G_P$\\
  $y_0,y_1,y_2$&Coordinates on $V$\\
  $z_0,z_1,z_2$&Coordinates on $V$\\
  $Z$&$[\Crit(W)/G]$\\
  $Z(\theta)$&$[\Crit(W)\cap V^{ss}(\theta)/G]$\\
  $\zeta$&$e^{2\pi i/3}\in\mu_3$\\
\end{longtable}

\bibliographystyle{plain}
\bibliography{LGCYRefs}

\end{document}